\newtheorem{thm}{Theorem}[section]
\newtheorem{prop}[thm]{Proposition}
\newtheorem{lem}[thm]{Lemma}
\newtheorem{lem-def}[thm]{Lemma-Definition}
\newtheorem{cor}[thm]{Corollary}
\newtheorem{conject}[thm]{Conjecture}
\theoremstyle{definition}
\newtheorem{ex}[thm]{Example}
\newtheorem{rmk}[thm]{Remark}
\newtheorem{dfn}[thm]{Definition}
\numberwithin{equation}{section}
\newcommand{\nc}{\newcommand}
\nc{\on}{\operatorname}
\nc{\fraka}{{\mathfrak a}} \nc{\bba}{{\mathbf a}}
\nc{\frakb}{{\mathfrak b}}
\nc{\frakc}{{\mathfrak c}}
\nc{\frakd}{{\mathfrak d}}
\nc{\frake}{{\mathfrak e}}
\nc{\frakf}{{\mathfrak f}}
\nc{\frakg}{{\mathfrak g}}
\nc{\frakh}{{\mathfrak h}}
\nc{\fraki}{{\mathfrak i}}
\nc{\frakj}{{\mathfrak j}}
\nc{\frakk}{{\mathfrak k}}
\nc{\frakl}{{\mathfrak l}}
\nc{\frakm}{{\mathfrak m}}
\nc{\frakn}{{\mathfrak n}}
\nc{\frako}{{\mathfrak o}}
\nc{\frakp}{{\mathfrak p}}
\nc{\frakq}{{\mathfrak q}}
\nc{\frakr}{{\mathfrak r}}
\nc{\fraks}{{\mathfrak s}}
\nc{\frakt}{{\mathfrak t}}
\nc{\fraku}{{\mathfrak u}}
\nc{\frakv}{{\mathfrak v}}
\nc{\frakw}{{\mathfrak w}}
\nc{\frakx}{{\mathfrak x}}
\nc{\fraky}{{\mathfrak y}}
\nc{\frakz}{{\mathfrak z}}
\nc{\frakA}{{\mathfrak A}}
\nc{\frakB}{{\mathfrak B}}
\nc{\frakC}{{\mathfrak C}}
\nc{\frakD}{{\mathfrak D}}
\nc{\frakE}{{\mathfrak E}}
\nc{\frakF}{{\mathfrak F}}
\nc{\frakG}{{\mathfrak G}}
\nc{\frakH}{{\mathfrak H}}
\nc{\frakI}{{\mathfrak I}}
\nc{\frakJ}{{\mathfrak J}}
\nc{\frakK}{{\mathfrak K}}
\nc{\frakL}{{\mathfrak L}}
\nc{\frakM}{{\mathfrak M}}
\nc{\frakN}{{\mathfrak N}}
\nc{\frakO}{{\mathfrak O}}
\nc{\frakP}{{\mathfrak P}}
\nc{\frakQ}{{\mathfrak Q}}
\nc{\frakR}{{\mathfrak R}}
\nc{\frakS}{{\mathfrak S}}
\nc{\frakT}{{\mathfrak T}}
\nc{\frakU}{{\mathfrak U}}
\nc{\frakV}{{\mathfrak V}}
\nc{\frakW}{{\mathfrak W}}
\nc{\frakX}{{\mathfrak X}}
\nc{\frakY}{{\mathfrak Y}}
\nc{\frakZ}{{\mathfrak Z}}
\nc{\bbA}{{\mathbb A}}
\nc{\bbB}{{\mathbb B}}
\nc{\bbC}{{\mathbb C}}
\nc{\bbD}{{\mathbb D}}
\nc{\bbE}{{\mathbb E}}
\nc{\bbF}{{\mathbb F}} \nc{\bbf}{{\mathbf f}}
\nc{\bbG}{{\mathbb G}}
\nc{\bbH}{{\mathbb H}}
\nc{\bbI}{{\mathbb I}}
\nc{\bbJ}{{\mathbb J}}
\nc{\bbK}{{\mathbb K}}
\nc{\bbL}{{\mathbb L}}
\nc{\bbM}{{\mathbb M}}
\nc{\bbN}{{\mathbb N}}
\nc{\bbO}{{\mathbb O}}
\nc{\bbP}{{\mathbb P}}
\nc{\bbQ}{{\mathbb Q}}
\nc{\bbR}{{\mathbb R}}
\nc{\bbS}{{\mathbb S}}
\nc{\bbT}{{\mathbb T}}
\nc{\bbU}{{\mathbb U}}
\nc{\bbV}{{\mathbb V}}
\nc{\bbW}{{\mathbb W}}
\nc{\bbX}{{\mathbb X}}
\nc{\bbY}{{\mathbb Y}}
\nc{\bbZ}{{\mathbb Z}}
\nc{\calA}{{\mathcal A}}
\nc{\calB}{{\mathcal B}}
\nc{\calC}{{\mathcal C}}
\nc{\calD}{{\mathcal D}}
\nc{\calE}{{\mathcal E}}
\nc{\calF}{{\mathcal F}}
\nc{\calG}{{\mathcal G}}
\nc{\calH}{{\mathcal H}}
\nc{\calI}{{\mathcal I}}
\nc{\calJ}{{\mathcal J}}
\nc{\calK}{{\mathcal K}}
\nc{\calL}{{\mathcal L}}
\nc{\calM}{{\mathcal M}}
\nc{\calN}{{\mathcal N}}
\nc{\calO}{{\mathcal O}}
\nc{\calP}{{\mathcal P}}
\nc{\calQ}{{\mathcal Q}}
\nc{\calR}{{\mathcal R}}
\nc{\calS}{{\mathcal S}}
\nc{\calT}{{\mathcal T}}
\nc{\calU}{{\mathcal U}}
\nc{\calV}{{\mathcal V}}
\nc{\calW}{{\mathcal W}}
\nc{\calX}{{\mathcal X}}
\nc{\calY}{{\mathcal Y}}
\nc{\calZ}{{\mathcal Z}}
\nc{\scrA}{{\mathscr A}}
\nc{\scrB}{{\mathscr B}}
\nc{\scrR}{{\mathscr R}}
\nc{\bnu}{{\bar{ \nu}}}
\nc{\olO}{\bar{\calO}}
\nc{\al}{{\alpha}} 
\nc{\be}{{\beta}}
\nc{\ga}{{\gamma}} \nc{\Ga}{{\Gamma}}
 \nc{\hGa}{\hat{\Gamma}}
\nc{\ve}{{\varepsilon}} 
\nc{\la}{{\lambda}} \nc{\La}{{\Lambda}}
\nc{\om}{\omega} \nc{\Om}{\Omega} 
\nc{\sig}{{\sigma}} \nc{\Sig}{{\Sigma}}
\nc{\tnb}{\psi_{\rm tame}}
\nc{\oM}{\overline{{M}}}
\nc{\op}{{\on{op}}}
\nc{\ad}{{\on{ad}}}
\nc{\alg}{{\on{alg}}}
\nc{\Ad}{{\on{Ad}}}
\nc{\Adm}{{\on{Adm}}} \nc{\aff}{{\on{af}}}
\nc{\Aut}{{\on{Aut}}}
\nc{\Bun}{{\on{Bun}}}
\nc{\cha}{{\on{char}}}
\nc{\der}{{\on{der}}}
\nc{\Der}{{\on{Der}}}
\nc{\diag}{{\on{diag}}}
\nc{\End}{{\on{End}}}
\nc{\Fl}{{\calF\!\ell}}
\nc{\Tr}{{\on{Transp}}}
\nc{\TR}{{\calT\!\calR}}
\nc{\Gal}{{\on{Gal}}}
\nc{\Gr}{{\on{Gr}}}
\nc{\rH}{{\on{H}}}
\nc{\Hom}{{\on{Hom}}}
\nc{\IC}{{\on{IC}}}
\nc{\id}{{\on{id}}}
\nc{\Id}{{\on{Id}}}
\nc{\ind}{{\on{ind}}}
\nc{\Ind}{{\on{Ind}}}
\nc{\Lie}{{\on{Lie}}}
\nc{\Pic}{{\on{Pic}}}
\nc{\pr}{{\on{pr}}}
\nc{\Res}{{\on{Res}}}
\nc{\res}{{\on{res}}} \nc{\Sat}{{\on{Sat}}}
\nc{\s}{{\on{sc}}}
\nc{\drv}{{\on{der}}}
\nc{\sgn}{{\on{sgn}}}
\nc{\Spec}{{\on{Spec}}}\nc{\Spf}{\on{Spf}} 
\nc{\Sph}{\on{Sph}}
\nc{\St}{{\on{St}}}
\nc{\tr}{{\on{tr}}}
\nc{\Mod}{{\mathrm{-Mod}}}
\nc{\Hilb}{{\on{Hilb}}} 
\nc{\Ext}{{\on{Ext}}} 
\nc{\vs}{{\on{Vec}}}
\nc{\ev}{{\on{ev}}}
\nc{\nO}{{\breve{\calO}}}
\nc{\tS}{{\tilde{S}}}
\nc{\spe}{{\on{sp}}}
\nc{\loc}{{\on{loc}}}
\nc{\nscrR}{{\mathscr{R}^{\on{nr}}}}
\nc{\GL}{{\on{GL}}}
\nc{\U}{{\on{U}}}
\nc{\Gl}{\on{Gl}} 
\nc{\GSp}{{\on{GSp}}}
\nc{\gl}{{\frakg\frakl}}
\nc{\SL}{{\on{SL}}} 
\nc{\SU}{{\on{SU}}} 
\nc{\SO}{{\on{SO}}}
\nc{\PGL}{{\on{PGL}}}
\nc{\Conv}{{\on{Conv}}}
\nc{\Rep}{{\on{Rep}}}
\nc{\Dom}{{\on{Dom}}}
\nc{\red}{{\on{red}}}
\nc{\act}{{\on{act}}}
\nc{\nr}{{\on{nr}}}
\nc{\ctf}{{\on{ctf}}}
\nc{\str}{{\on{-}}} 
\nc{\os}{{\bar{s}}}
\nc{\oeta}{{\bar{\eta}}}
\nc{\hookto}{\hookrightarrow}
\nc{\longto}{\longrightarrow}
\nc{\leftto}{\leftarrow}
\nc{\onto}{\twoheadrightarrow}
\nc{\lonto}{\twoheadleftarrow}
\nc{\uG}{{\underline{G}}}
\nc{\uA}{{\underline{A}}}
\nc{\uS}{{\underline{S}}}
\nc{\uT}{{\underline{T}}}
\nc{\uM}{{\underline{M}}}
\nc{\uP}{{\underline{P}}}
\nc{\uB}{{\underline{B}}}
\nc{\uN}{{\underline{N}}}
\nc{\ucG}{{\underline{\calG}}}
\nc{\ucA}{{\underline{\calA}}}
\nc{\ucS}{{\underline{\calS}}}
\nc{\ucT}{{\underline{\calT}}}
\nc{\ucM}{{\underline{\calM}}}
\nc{\ucP}{{\underline{\calP}}}
\nc{\ucN}{{\underline{\calN}}}
\nc{\bF}{{\breve{F}}}
\nc{\oFl}{{\overline{\Fl}}} 
\nc{\bU}{{\overline{U}}}
\nc{\tGr}{{\tilde{\Gr}}}
\nc{\cGr}{\calG\! r}
\nc{\oGr}{\overline{\on{Gr}}} 
\nc{\ocGr}{\overline{\calG\! r}}
\nc{\co}{{\colon}}
\nc{\sch}[1]{(Sch/{#1})}
\nc{\HypLoc}[1]{HypLoc({#1})}
\nc{\ohtimes}{\stackrel{!}{\otimes}}
\nc{\boxtilde}{\widetilde{\boxtimes}}
\nc{\vstar}{{\varhexstar}}
\nc{\Div}{\on{Div}}
\nc{\bslash}{\backslash}
\nc{\algQl}{{\bar{\bbQ}_\ell}}
\nc{\sF}{{\bar{F}}}
\nc{\nF}{{\breve{F}}}
\nc{\nW}{{W^{\on{nr}}}}
\nc{\sk}{{\bar{k}}}
\nc{\cont}{\on{c}}
\nc{\Supp}{\on{Supp}}
\nc{\blt}{\bullet}  
\nc{\dom}{\on{dom}}
\nc{\scon}{{\on{sc}}} 
\nc{\Affine}{\on{Aff}} 
\nc{\nscrA}{\mathscr{A}^{\on{nr}}} 
\nc{\nfraka}{{\bbf^{\on{nr}}}}
\nc{\ran}{{\rangle}}
\nc{\lan}{{\langle}}
\nc{\bk}{{\bar{k}}}
\nc{\tF}{{\tilde{F}}}
\nc{\sS}{{\bar{S}}}
\nc{\LG}{{^\text{L}\hspace{-0.04cm}G}}
\nc{\LL}{{^\text{L}\hspace{-0.07cm}L}}
\nc{\pot}[1]{ [\hspace{-0,5mm}[ {#1} ]\hspace{-0,5mm}] }
\nc{\rpot}[1]{ (\hspace{-0,7mm}( {#1} )\hspace{-0,7mm}) }
\nc{\defined}{\hspace{0.1cm}\stackrel{\text{\tiny \rm def}}{=}\hspace{0.1cm}}
\begin{document}

\title[The test function conjecture for parahoric local models]{The test function conjecture for \\ parahoric local models}
\author[T.\,J.\,Haines and T.\,Richarz]{by Thomas J. Haines and Timo Richarz}

\address{Department of Mathematics, University of Maryland, College Park, MD 20742-4015, DC, USA}
\email{tjh@math.umd.edu}

\address{Fachbereich Mathematik, TU Darmstadt, Schlossgartenstrasse 7, 64289 Darmstadt, Germany}
\email{richarz@mathematik.tu-darmstadt.de}


\thanks{Research of T.H.~partially supported by NSF DMS-1406787 and by Simons Fellowship 399424, and research of T.R.~ partially funded by the Deutsche Forschungsgemeinschaft (DFG, German Research Foundation) - 394587809.}

\maketitle

\begin{abstract}
 We prove the test function conjecture of Kottwitz and the first named author for local models of Shimura varieties with parahoric level structure, and their analogues in equal characteristic.
\end{abstract}

\setcounter{tocdepth}{1}
\tableofcontents
\setcounter{section}{0}


\thispagestyle{empty}

\section{Introduction}

\medskip
A prototypical example of a Shimura variety is the $\bbQ$-space of isomorphism classes of $g$-dimensional principally polarized abelian varieties with level structure. If the level at a prime $p$ is {\it parahoric}, one can construct an integral model of this space over $\bbZ_{(p)}$ by considering isogeny chains of abelian schemes having the same shape as the lattice chain which determines the parahoric subgroup of ${\rm GSp}_{2g}(\mathbb Q_p)$. Typically the resulting schemes have bad reduction. The {\it local models} serve as a tool to understand the singularities arising in the reduction modulo $p$. They are projective schemes over $\bbZ_{(p)}$ defined in terms of linear algebra -thus are easier to handle- and are \'etale locally isomorphic to the integral model for the Shimura variety. The study of local models for these and more general Shimura varieties started with the work of Deligne and Pappas \cite{DP94}, Chai and Norman \cite{CN92}, de Jong \cite{dJ93} and was formalized to some degree in the book of Rapoport and Zink \cite{RZ96}, building on their earlier results. Much work has been done in the intervening years, and we refer to the survey article of Pappas, Rapoport and Smithling \cite{PRS13} for more information.

Recently, Kisin and Pappas \cite{KP} constructed integral models for many Shimura varieties of abelian type with a parahoric level structure whenever the underlying group $G$ splits over a tamely ramified extension (assuming $p > 2$). One application of such models is to Langlands' program \cite{La76} to compute the Hasse-Weil zeta function of a Shimura variety in terms of automorphic $L$-functions. The zeta function has a local factor at $p$ which is determined by the points in the reduction modulo $p$ of the integral model, as well as its \'etale local structure - more precisely the sheaf of nearby cycles. In pursuing the Langlands-Kottwitz approach to the calculation of the (semi-simple) Lefschetz number, one needs to identify the \emph{test function} that is plugged into the counting points formula which resembles the geometric side of the Arthur-Selberg trace formula. This is the purpose of the test function conjecture promulgated in \cite[Conj 6.1.1]{Hai14}.

\'Etale locally the integral models of Kisin and Pappas are isomorphic to the local models constructed by Pappas and Zhu \cite{PZ13}. The determination of the nearby cycles reduces to that of the local model. The aim of the present manuscript is to prove the test function conjecture for these local models and their equal characteristic analogues, i.e.,\,to express the (semi-simple) trace of Frobenius function on the sheaf of nearby cycles on the local model in terms of automorphic data as predicted by the conjecture. We refer to the survey articles of Rapoport \cite{Ra90}, \cite{Ra05} and of the first named author \cite{Hai05}, \cite{Hai14} for how local models may be used in the Langlands-Kottwitz method.

\subsection{Formulation of the main result} \label{Form_Result} Let $p$ be a prime number. Let $F$ be a non-archimedean local field with ring of integers $\calO_F$ and finite residue field $k_F$ of characteristic $p$ and cardinality $q$, i.e. either $F/\bbQ_p$ is a finite extension or $F\simeq \bbF_q\rpot{t}$ is a local function field. Let $\sF/F$ be a separable closure, and denote by $\Ga_F$ the Galois group with inertia subgroup $I_F$ and fixed geometric Frobenius lift $\Phi_F\in\Ga_F$.

We fix a triple $(G,\{\mu\},\calG)$ where $G$ is a connected reductive $F$-group, $\{\mu\}$ a (not necessarily minuscule) conjugacy class of geometric cocharacters defined over a finite separable extension $E/F$, and $\calG$ is a parahoric $\calO_F$-group scheme in the sense of Bruhat-Tits \cite{BT84} with generic fiber $G$. If $F/\bbQ_p$, we assume that $G$ splits after a tamely ramified extension. Attached to these data is the (flat) \emph{local model}
\[
M_{\{\mu\}}=M_{(G,\{\mu\},\calG)},
\]
which is a flat projective $\calO_E$-scheme, cf.\,Definition \ref{localmodel} (we are using the definitions of local model given in \cite{PZ13} if $F/\bbQ_p$ and in \cite{Zhu14}, \cite{Ri16a} if $F\simeq\bbF_q\rpot{t}$, which, unlike the prototypical definitions tied to Shimura varieties, are not explicitly moduli schemes). The generic fiber $M_{\{\mu\}, E}$ is naturally the Schubert variety in the affine Grassmannian of $G/F$ associated with the class $\{\mu\}$. The special fiber $M_{\{\mu\},k_E}$ is equidimensional, but not irreducible in general. For a detailed treatment of local models and the problem of finding moduli descriptions, we refer to the survey article \cite{PRS13}.

Fix a prime number $\ell\not= p$, and fix a square root of the $\ell$-adic cyclotomic character (cf.\,$\S$\ref{gctcoho}). Let $d_\mu$ be the dimension of the generic fiber $M_{\{\mu\}, E}$, and denote the normalized intersection complex by
\[
\IC_{\{\mu\}}\defined j_{!*}\algQl[d_\mu](\textstyle{d_\mu\over 2})\in D_c^b(M_{\{\mu\}, E},\algQl)
\]  
cf. \S \ref{GeoSatSec} below. Under the geometric Satake equivalence \cite{Gi, Lu81, BD, MV07, Ri14a, RZ15, Zhu}, the complex $\IC_{\{\mu\}}$ corresponds to the $^LG_E=\widehat{G}\rtimes \Ga_{E}$-representation $V_{\{\mu\}}$ of highest weight $\{\mu\}$ defined in \cite[6.1]{Hai14}, cf. Corollary \ref{GeoSat1_cor} below. Both $\widehat{G}$ and $V_{\{\mu\}}$ are taken over $\bar{\mathbb Q}_\ell$.

Let $E_0/F$ be the maximal unramified subextension of $E/F$, and let $\Phi_E=\Phi_{E_0}=\Phi_F^{[E_0:F]}$ and $q_E = q_{E_0} = q^{[E_0:F]}$. The semi-simple trace of Frobenius function on the sheaf of nearby cycles 
\[
\tau^{\on{ss}}_{\{\mu\}}\co M_{\{\mu\}}(k_{E})\to \algQl, \;\;\; x\mapsto (-1)^{d_\mu}\on{tr}^{\on{ss}}(\Phi_{E}\,|\, \Psi_{M_{\{\mu\}}}(\IC_{\{\mu\}})_{\bar{x}}),
\]
is naturally a function in the center $\calZ(G({E_0}),\calG({\calO_{E_0}}))$ of the parahoric Hecke algebra. This is of course well-known when $G$ is unramified (cf.\,\cite{PZ13} if $F/\bbQ_p$, and \cite{Ga01}, \cite{Zhu14} if $F\simeq\bbF_q\rpot{t}$). The general case is more subtle and is explained in section \ref{TFC_stmt_sec}.

The values of the function $\tau^{\on{ss}}_{\{\mu\}}$ on each Kottwitz-Rapoport stratum were determined in the Drinfeld case in \cite{Hai01} by matching Hecke algebra computations with earlier geometric calculations of Rapoport \cite{Ra90}. This inspired Gaitsgory \cite{Ga01} to prove the centrality of $\tau^{\on{ss}}_{\{\mu\}}$ for all $\{\mu\}$ and all split groups in the function-field setting; he also characterized the functions using the Bernstein isomorphism between the center of the Iwahori Hecke algebra and the spherical Hecke algebra. Translating Gaitsgory's method to the $p$-adic setting using explicit lattice chains, Ng\^{o} and the first named author handled in \cite{HN02} the cases $G=\Gl_n, \on{GSp}_{2n}$, again proving centrality and expressing the functions $\tau^{\rm ss}_{\{\mu\}}$  in terms of the Bernstein presentation for $\calH(G(E_0),\calG(\calO_{E_0}))$. Further explicit calculations of the values of $\tau^{\on{ss}}_{\{\mu\}}$ on each Kottwitz-Rapoport stratum are due to G\"ortz \cite{Goe04} for $G=\Gl_4, \Gl_5$, and to Kr\"amer \cite{Kr03} for ramified unitary groups. Rostami treated in his 2011 thesis \cite{Ro17} the case of unramified unitary groups. In \cite{Zhu15}, Zhu determined the values of $\tau^{\on{ss}}_{\{\mu\}}$ for quasi-split ramified unitary groups with a \emph{very} special level structure, i.e., $\calG$ is special parahoric and stays special parahoric when passing to the maximal unramified extension (e.g.~ if $G$ is unramified, i.e.\,quasi-split and split after an unramified extension, then very special is equivalent to hyperspecial). For general unramified $F$-groups $G$, the semi-simple trace of Frobenius on the nearby cycles is determined in \cite{PZ13} in terms of the Bernstein isomorphism. For general quasi-split groups $G$, but now very special level $\calG$, our main theorem below reduces to \cite[10.4.5]{PZ13}, \cite[\S6]{Zhu15}. Let us point out that our main result holds for general groups $G$ and general parahoric levels $\calG$ under the hypotheses in the beginning of \S \ref{Form_Result}; every connected reductive group over a local field admits a parahoric subgroup by the work of Bruhat-Tits. Our result is the first which is valid for not necessarily quasi-split groups and arbitrary parahoric level. Further, being ``spectral'' in nature, our characterization is tailor-made to building the connection with automorphic forms, cf.\, \S\ref{Relation_to_sec}.\bigskip

\noindent\textbf{Main Theorem} (The test function conjecture for parahoric local models).
{\it  Let $(G,\{\mu\},\calG)$ be a general triple as above. Let $E/F$ be a finite separable extension over which $\{\mu\}$ is defined, and let $E_0/F$ be the maximal unramified subextension. Then
$$
\tau^{\on{ss}}_{\{\mu\}} =  z^{\rm ss}_{\{\mu\}}
$$
where $z^{\rm ss}_{\{\mu\}} = z^{\rm ss}_{\calG, {\{\mu\}}} \in \calZ(G(E_0),\calG(\calO_{E_0}))$ is the unique function which acts on any $\calG(\calO_{E_0})$-spherical smooth irreducible $\bar{\mathbb Q}_\ell$-representation $\pi$ by the scalar
\[
 \on{tr}\Bigl(s(\pi)\;\big|\; \on{Ind}_{^LG_E}^{^LG_{E_0}}(V_{\{\mu\}})^{1\rtimes I_{E_0}}\Bigr),
\]
where $s(\pi)\in [\widehat{G}^{I_{E_0}}\rtimes \Phi_{E_0}]_{\on{ss}}/\widehat{G}^{I_{E_0}}$ is the Satake parameter for $\pi$ \cite{Hai15}. The function $q^{d_\mu/2}_{E_0}\tau^{\on{ss}}_{\{\mu\}}$ takes values in $\mathbb Z$ and is independent of $\ell \neq p$ and $q^{1/2} \in \bar{\mathbb Q}_\ell$.}

\bigskip

The construction of $s(\pi)$ is reviewed in section \ref{Sat_trans_sec}, and the values of $q^{d_\mu/2}_{E_0}\tau^{\rm ss}_{\{\mu\}}$ are studied in section \ref{values_sec}. We remark that in the mixed characteristic case, $\tau^{\rm ss}_{\{\mu\}}$ lives in the center $\mathcal Z(G'(E'_0),\calG'(\mathcal O_{E'_0}))$ of the $\bar{\mathbb Q}_\ell$-valued Hecke algebra attached to {\em function field} analogues $(G', \calG', E'_0)$ of $(G_{E_0}, \calG_{\mathcal O_{E_0}}, E_0)$; we are implicitly identifying this with $\calZ(G(E_0), \calG(\calO_{E_0}))$ via Lemma \ref{hecke_identification}. The definition of the local models $M_{\{\mu\}}$ depends on certain auxiliary choices (cf.\,Remark \ref{BDrmk} and \ref{PZBDGrass_rmk}), but the function $\tau^{\rm ss}_{\{\mu\}}$ depends canonically only on the data $(G,\{\mu\},\calG)$. In mixed characteristic, this is related to \cite[Remark 3.2]{PZ13} and is not at all obvious, but it follows from our main theorem.


\begin{rmk}
The Main Theorem can also be proved for the mixed characteristic local models constructed by B.\,Levin \cite{Lev16}, where $G = {\rm Res}_{F'/F}G'$ for a tamely ramified $F'$-group $G'$ and a possibly wildly ramified extension $F'/F$, cf.~\cite{HaRi}. In particular, this settles the test function conjecture for all mixed characteristic local models and primes $p\geq 5$.
The remaining cases are effectively reduced to four series of wildly ramified groups in residue characteristics $p=2,3$: ramified unitary, orthogonal and exceptional type $E_6$ groups if $p=2$, and the ramified ``triality'' if $p=3$.  
We hope that combining our techniques with the wildly ramified mixed characteristic local models constructed by Louren\c{c}o \cite{Lou} will yield the test function conjecture in these cases as well. In Conjecture 21.4.1 of \cite{SW}, Scholze predicts the existence of local models which are characterized using his theory of diamonds. We expect our results to apply to those local models as well.
\end{rmk}

\subsection{Relation to the Test Function Conjecture for Shimura varieties}\label{Relation_to_sec}

The test function conjecture makes sense for all levels, but here we consider only the case of parahoric level.    Consider the Shimura data $({\bf G}, X, K^pK_p)$, where $K_p \subset {\bf G}(\mathbb Q_p)$ is a parahoric subgroup, with corresponding parahoric group scheme $\mathcal G/\mathbb Z_p$. Let $\{\mu\} = \{\mu_h\}$ for $\mu_h \in X_*({\bf G}_{\bar{\mathbb Q}_p})$ corresponding to $h \in X$. Let $E \subset \mathbb Q_p$ be the field of definition of $\{\mu\} \subset X_*({\bf G}_{\bar{\mathbb Q}_p})$, with $E_j/E$ the degree $j$ unramified extension. Let $E_0/\mathbb Q_p$ (resp.\,$E_{j0}/\mathbb Q_p$) be the maximal unramified subextension of $E/\mathbb Q_p$ (resp.\,$E_j/\mathbb Q_p$). Note $E_{j0} = \mathbb Q_{p^r}$ for $r = j[E_0:\mathbb Q_p]$.


We assume $G := {\bf G}_{\mathbb Q_p}$ is a tamely ramified group, and consider the local models $M_{\{\mu\}}$ of Pappas-Zhu \cite{PZ13}, cf.\,Def.\,\ref{localmodel}. In order to compute the (semi-simple) local Hasse-Weil zeta function of ${\rm Sh}_{K_pK^p}({\bf G}, X)$ at a prime ideal of $E$, the Test Function Conjecture \cite{Hai14} specifies a function in $\mathcal Z(G(E_{j0}), \mathcal G(\mathcal O_{E_{j0}})) = \mathcal Z(G(\mathbb Q_{p^r}), \mathcal G(\mathbb Z_{p^r}))$ which should be plugged into the twisted orbital integrals in the counting points formula for ${\rm tr}^{\rm ss}(\Phi_{E}^j \, | \, H^*({\rm Sh}_{K_pK^p}({\bf G}, X)_{\bar{E}}\,, \,\bar{\mathbb Q}_\ell))$. Setting $I(V_{\{\mu\}}) := {\rm Ind}^{\,^LG_{E_{j0}}}_{\,^LG_{E_j}} (V_{\{\mu\}})$, the test function is predicted to be $q^{jd_\mu/2}_{E_0} \,Z_{I(V_{\{\mu\}})} \star 1_{\calG(\mathcal O_{E_{j0}})}$ cf. section \ref{Sat_trans_sec}, where $Z_{I(V_{\{\mu\}})}$ is a distribution which is associated, assuming the local Langlands correspondence, to an element in the stable Bernstein center. The function $z^{\rm ss}_{\calG_{\mathcal O_{E_{j0}}}, {\{\mu\}}}$ is an unconditional version of $Z_{I(V_{\{\mu\}})} \star 1_{\calG_{\mathcal O_{E_{j0}}}}$. By our Main Theorem 
\[
z^{\rm ss}_{\calG_{\mathcal O_{E_{j0}}}, {\{\mu\}}} = (-1)^{d_\mu}{\rm tr}^{\rm ss}\big(\Phi_{E_j} \, | \,\Psi_{M_{\{\mu\}} \otimes \mathcal O_{E_j}}({\rm IC}_{\{\mu\}})\big), 
\]
and therefore the test function is expressed in terms of the geometry of the local model $M_{\{\mu\}}$.

Similar statements are in force in the function-field setting, where $G$ is any group over $\mathbb F_q(\!(t)\!)$ and where ${\rm Sh}_{K_pK^p}({\bf G}, X)$ is replaced by a moduli space of global $\ucG$-shtukas over a connected smooth projective curve $X/\mathbb F_q$.  

In light of recent progress on the Langlands-Rapoport conjecture for the Shimura varieties ${\rm Sh}_{K_pK^p}({\bf G}, X)$ \cite{Zhou} and for moduli stacks of global $\ucG$-shtukas \cite{ArHa}, our Main Theorem allows one to pursue the Langlands-Kottwitz approach to the description of the cohomology of these spaces in terms of automorphic representations. In particular, this sort of local information could be helpful in situations where knowledge of global objects is lacking (for example in cases where the existence and properties of global Arthur/Langlands parameters have not yet been established). This will be addressed in future work.

\subsection{Strategy of proof}
The local model $M_{\{\mu\}}$ is not semistable in general (cf. \cite{PRS13}), and it is difficult to determine the value of $\tau^{\on{ss}}_{\{\mu\}}$ at a given point in the special fiber. On the other hand, the global cohomology of the nearby cycles is by construction the intersection cohomology of the generic fiber $M_{\{\mu\},E}$ which is well understood due to the geometric Satake isomorphism. The idea of the proof is to take advantage of the latter, and proceeds in three steps as follows. \smallskip\\
{\it a\textup{)} Reduction to minimal Levi subgroups of $G$.}\smallskip\\
{\it b\textup{)} Reduction from anisotropic modulo center groups to quasi-split groups.}\smallskip\\
{\it c\textup{)} Proof for quasi-split groups.}\smallskip\\
Let us comment on steps b) and c) first. If $G$ is quasi-split, then a) reduces the proof of the Main Theorem to the case of tori which is easy, and implies c), cf.\,\S \ref{quasi_split_sec}. If $G$ is anisotropic modulo center, then the local model $M_{\{\mu\}}$ has a single $k_E$-point in the special fiber, cf.\,\S \ref{reduction_aniso_mod_center}. An application of the Grothendieck-Lefschetz trace formula expresses $\tau^{\on{ss}}_{\{\mu\}}$ in terms of the cohomology of the generic fiber. Using {\rm c) and properties of the normalized transfer homomorphisms \cite{Hai14}, we can express $z^{\on{ss}}_{\{\mu\}}$ in terms of the same cohomology groups, which implies b). The main part of the manuscript is concerned with the proof of a) which is summarized as follows. 

Let $M\subset G$ be a minimal Levi which is in good position relative to $\calG$. As we already know that $\tau^{\on{ss}}_{\{\mu\}}$ is a central function, it is uniquely determined by its image under the \emph{injective} constant term map
\[
c_M\co \calZ(G(E_0),\calG(\calO_{E_0}))\hookto \calZ(M(E_0),M(E_0)\cap \calG(\calO_{E_0})),
\]
cf.\,section \ref{HA_CT_sec} for a review. Our aim is to show $c_M(\tau_{\{\mu\}}^{\on{ss}})=c_M(z_{\{\mu\}}^{\on{ss}})$.

\begin{rmk}[Other approaches]
If $G$ is an unramified reductive group, i.e., quasi-split and split over an unramified extension, then it is possible to determine $\tau_{\{\mu\}}^{\on{ss}}$ using the projection to the affine Grassmannian, cf. \cite[Thm 10.16]{PZ13}. Further, if $G$ is quasi-split but $\calG$ {\it very} special, then one may use the ramified geometric Satake equivalence \cite{Zhu15, Ri16a} to deduce the Main Theorem, cf. \cite[Thm 10.23; 10.4.5]{PZ13}. In our general set-up, both techniques are not available.
\end{rmk}

\subsubsection{Geometric constant terms} By our choice of Levi $M$, we can find a cocharacter $\chi\co \bbG_{m,\calO_F}\to \calG$ whose centralizer $\calM$ is a parahoric $\calO_F$-group scheme with generic fiber $M$. Attached to $\chi$ is by the dynamic method promulgated in \cite{CGP10} also the smooth $\calO_F$-subgroup scheme $\calP\subset \calG$ whose generic fiber is a minimal parabolic subgroup $P\subset G$ with Levi subgroup $M$. The natural maps $\calM\leftarrow\calP\to\calG$ give rise to the diagram of Beilinson-Drinfeld Grassmannians
\begin{equation}\label{BDGrassIntro}
\Gr_\calM\overset{q}{\leftarrow}\Gr_\calP\overset{p}{\to}\Gr_\calG,
\end{equation}
which are $\calO_F$-ind-schemes that degenerate the affine Grassmannian into the (twisted) affine flag variety, cf.\,\cite{PZ13} if $F/\bbQ_p$, and \cite{Zhu14} (tamely ramified) and \cite{Ri16a} if $F\simeq\bbF_q\rpot{t}$. In the equal characteristic case, similar families in global situations where considered by Gaitsgory \cite{Ga01} and Heinloth \cite{He10} before. The generic fiber of \eqref{BDGrassIntro} is the diagram of affine Grassmannians denoted by
\[
\Gr_M\overset{q_\eta}{\leftarrow}\Gr_P\overset{p_\eta}{\to}\Gr_G,
\]
and the special fiber of \eqref{BDGrassIntro} is the diagram on affine flag varieties denoted by
\[
\Fl_\calM\overset{q_s}{\leftarrow}\Fl_\calP\overset{p_s}{\to}\Fl_\calG,
\]
cf.\,\S \ref{gctBDgrass} below. Associated with these data are two pairs of functors: nearby cycles ($\Psi_\calG$,$\Psi_\calM$)  and pull-push functors $(\on{CT}_M,\on{CT}_\calM)$ - the geometric constant terms. The nearby cycles 
\[
\Psi_\calG\co D_c^b(\Gr_G)\longto  D_c^b(\Fl_\calG\times_s\eta),
\] 
take as in [SGA 7 XIII] values in the constructible bounded derived category of $\algQl$-complexes on $\Fl_{\bar{s}}$ compatible with a continuous action of $\Ga_F$. Replacing $\calG$ by $\calM$ (resp. $G$ by $M$), we also have $\Psi_\calM$. The (normalized) geometric constant term in the generic (resp. special) fiber is
\[
\on{CT}_M\co D_c^b(\Gr_G)\to D_c^b(\Gr_M) \;\;\;\text{(resp. $\on{CT}_\calM\co D_c^b(\Fl_\calG\times_s\eta)\to D_c^b(\Fl_\calM\times_s\eta)$)}
\]
given by $\on{CT}_M= (q_\eta)_!\circ (p_\eta)^*\lan\chi\ran$ (resp. $\on{CT}_\calM= (q_s)_!\circ (p_s)^*\lan\chi\ran$) where $\lan\chi\ran$ denotes a certain shift and twist associated with the cocharacter $\chi$, cf. Definition \ref{NormConsTermGrass}, \ref{gctnormal} below. The functor $\on{CT}_M$ is well-known in Geometric Langlands \cite{BD, BG02, MV07} whereas the functor $\on{CT}_\calM$ only appears implicitly in the literature, cf. \cite[Thm 4]{AB09}, \cite[\S 9]{HNY13}.

Under the sheaf function dictionary, the nearby cycles $\Psi_\calG$  are a geometrization of the Bernstein isomorphism identifying the spherical Hecke algebra as the center of the parahoric Hecke algebra (cf. \cite{Ga01}), and the geometric constant term $\on{CT}_\calM$ is a geometrization of the map $c_M$ in the following sense: by definition the local model $M_{\{\mu\}}$ is a closed reduced subscheme of $\Gr_{\calG}\otimes_{\calO_F}\calO_E$, and, up to a sign, the function $c_M(\tau^{\on{ss}}_{\{\mu\}})\in \calZ(M(E_0),\calM(\calO_{E_0}))$ is the function associated with the complex
\[
\on{CT}_\calM\circ \Psi_{\calG, \calO_E}(\IC_{\{\mu\}}),
\]
cf.\;\S \ref{testfunctions} below. The following result is the geometric analogue of the compatibility of the Bernstein isomorphism with the constant term map \cite[(11.11.1)]{Hai14}. \medskip

\noindent {\bf Theorem A.} {\it The usual functorialities of nearby cycles give a natural transformation of functors $D_c^b(\Gr_G)\to D_c^b(\Fl_\calM\times_s \eta)$ as
\[
\on{CT}_\calM \circ \Psi_{\calG} \;{\longto}\; \Psi_\calM \circ \on{CT}_M,
\]
which is an isomorphism when restricted to $\bbG_m$-equivariant complexes. Here $\bbG_m$-equivariant means with respect to the $\bbG_m$-action induced by the cocharacter $\chi$ on $\Gr_G$.}\medskip

When the group $G$ is split, $\calG$ is an Iwahori and $M$ a maximal torus (i.e.\,$\chi$ is regular), then --forgetting the Galois action-- Theorem A may be seen as a version of the result of Arkhipov and Bezrukavnikov \cite[Thm 4]{AB09} (cf. also \cite[\S 7]{Zhu14} for tamely ramified groups). Another interesting application of this result is given in the work of Heinloth, Ng\^{o} and Yun \cite[\S 9]{HNY13}. Let us remark that our proof is based on a general commutation result for nearby cycles with hyperbolic localization \cite[Thm 3.3]{Ri19} (cf.\,also \cite[Prop 5.4.1(2)]{Na16} for complex manifolds), and does not use Wakimoto filtrations. Along with \cite{Ri19} (adapted to ind-schemes in Theorem \ref{nearbyhlthm}), the main geometric input is the following result (cf.\,Theorem \ref{gctgeoBD}, \ref{PZgctgeoBD} below): the cocharacter $\chi$ induces a $\bbG_m$-action on $\Gr_\calG$ trivial on $\calO_F$. Let $(\Gr_\calG)^0$ denote the fixed points, and let $(\Gr_\calG)^+$ be the attractor ind-scheme in the sense of Drinfeld \cite{Dr} (cf. \S \ref{CollectGm}). There are natural maps 
\[
(\Gr_\calG)^0{\leftarrow}(\Gr_\calG)^+{\to}\Gr_\calG,
\]
which relate to \eqref{BDGrassIntro} as follows.\medskip

\noindent {\bf Theorem B.} {\it The maps \eqref{BDGrassIntro} induce a commutative diagram of $\calO_F$-ind-schemes
\[
\begin{tikzpicture}[baseline=(current  bounding  box.center)]
\matrix(a)[matrix of math nodes, 
row sep=1.5em, column sep=2em, 
text height=1.5ex, text depth=0.45ex] 
{\Gr_\calM & \Gr_{\calP} & \Gr_\calG \\ 
(\Gr_{\calG})^0& (\Gr_{\calG})^+& \Gr_{\calG}, \\}; 
\path[->](a-1-2) edge node[above] {}  (a-1-1);
\path[->](a-1-2) edge node[above] {}  (a-1-3);
\path[->](a-2-2) edge node[below] {}  (a-2-1);
\path[->](a-2-2) edge node[below] {} (a-2-3);
\path[->](a-1-1) edge node[left] {$\iota^0$} (a-2-1);
\path[->](a-1-2) edge node[left] {$\iota^+$} (a-2-2);
\path[->](a-1-3) edge node[left] {$\id$} (a-2-3);
\end{tikzpicture}
\]
where the maps $\iota^0$ and $\iota^+$ are closed immersions which are open immersions on reduced loci. There are the following properties:\smallskip\\
i\textup{)} In the generic fiber, the maps $\iota^0_F$ and $\iota^+_F$ are isomorphisms.\smallskip\\
ii\textup{)} If $F\simeq\bbF_q\rpot{t}$ and $G=G_0\otimes_{\bbF_q} F$ is constant, then $\iota^0$ \textup{(}resp. $\iota^+$\textup{)} identifies $\Gr_\calM$ \textup{(}resp. $\Gr_{\calP}$\textup{)} as the flat closure of the generic fiber in $(\Gr_{\calG})^0$ \textup{(}resp. in $(\Gr_{\calG})^+$\textup{)}.}\medskip

In down to earth terms, the $\bbG_m$-action equips $\Gr_\calG$ with the stratification $(\Gr_\calG)^+\to \Gr_\calG$ where the image of each connected component of $(\Gr_\calG)^+$ defines a locally closed stratum (cf.\,Remarks \ref{locally_closed_stratum_rem}, \ref{locally_closed_stratum_rem2}). The flat strata belong to $\Gr_\calP$, and all other strata are concentrated in the special fiber and belong to $(\Gr_\calG)^+\bslash \iota^+(\Gr_\calP)$. Theorem A says that the cohomology of the nearby cycles on the strata belonging to $(\Gr_\calG)^+\bslash \iota^+(\Gr_\calP)$ vanishes.

\subsubsection{Translation to the local model} \label{local_model_intro_sec}
Let us explain what Theorem A means when translated to the local model (cf.\,Remarks \ref{locally_closed_stratum_rem}, \ref{locally_closed_stratum_rem2}). The local model $M_{\{\mu\}}$ is a closed subscheme of $\Gr_\calG$ which is stable under the $\bbG_m$-action. It induces a stratification of $M_{\{\mu\}}$ into locally closed strata 
\[
M_{\{\mu\}}=\coprod_{w}(M_{\{\mu\}})^+_w,
\]
where $w$ runs over the connected components of the fixed points $(M_{\{\mu\}})^0$, and the underlying topological space of $(M_{\{\mu\}})^+_w$ is the subspace of points $x$ such that the limit $\on{lim}_{\la\to 0}\chi(\la)\cdot x$ lies in the component for $w\in \pi_0((M_{\{\mu\}})^0)$. If $G$ is quasi-split, and $\chi$ is regular, then $\pi_0((M_{\{\mu\}})^0)$ can be identified with the $\{\mu\}$-admissible set in the sense of Kottwitz-Rapoport, cf.\,Corollary of Theorem C below. \medskip

\noindent {\bf Example.} For $G=\Gl_{2,\bbQ_p}$, $\{\mu\}$ minuscule, and $\calG$ an Iwahori, the local model $M_{\{\mu\}}$ is the blow up of $\bbP^1_{\bbZ_p}$ in the point $\{0\}_{\bbF_p}$ of the special fiber. The generic fiber is $\bbP^1_{\bbQ_p}$, and the special fiber consists of two $\bbP^1_{\bbF_p}$'s meeting transversally at a single point $\{e\}_{\bbF_p}$. Choose $\chi\co \bbG_{m,\bbZ_p}\to \calG, \la \mapsto \diag(\la,1)$. Then $\calM$ is the maximal split diagonal torus in $\calG$. The $\bbG_m$-action on $\bbP^1_{\bbZ_p}$, given in coordinates by $\la*[x_0:x_1]=[\la\cdot x_0:x_1]$, lifts uniquely to $M_{\{\mu\}}$, and agrees with the action constructed from $\chi$. Then the $\bbG_m$-fixed points are
\[
(M_{\{\mu\}})^{\bbG_m}=\{0\}_{\bbZ_p} \amalg\{\infty\}_{\bbZ_p}\amalg \{e\}_{\bbF_p},
\]
and $M_{\{\mu\}}$ decomposes into the three strata $\bbA^1_{\bbZ_p}\amalg \{\infty\}_{\bbZ_p}$ and $\bbA^1_{\bbF_p}$. The first pair of strata are flat, and contained in the smooth locus of $M_{\{\mu\}}$. Up to our choice of normalization, we get on compact cohomology
\[
\bbH^*_c(\bbA^1_{\bar{\bbF}_p}\amalg \{\infty\}_{\bar{\bbF}_p},\Psi_\calG(\algQl))=\algQl[-2](-1)\oplus\algQl.
\]
The non-flat stratum passes through the singularity $\{e\}_{\bbF_p}$, and a calculation shows
\[
\bbH^*_{c}(\bbA^1_{\bar{\bbF}_p},\Psi_\calG(\algQl))=0,
\]
which is in accordance with Theorem A. \medskip

For general split groups with Iwahori level and minuscule $\{\mu\}$ the situation is similar (cf.~Remark \ref{geometry_minuscule_rmk}): the flat $\bbG_m$-strata are contained in the smooth locus of the local model, and the compact cohomology of the nearby cycles on the non-flat $\bbG_m$-strata vanishes by virtue of Theorem A. In particular, the sheaf $\Psi_\calG(\IC_{\{\mu\}})$ is (a posteriori) uniquely determined by its values on the $\bbG_m$-strata lying in the smooth locus, and we do not need to bother about the type of the singularities in the special fiber of $M_{\{\mu\}}$. 

For general groups with parahoric level and general $\{\mu\}$ we make use of the fact that the functor in the generic fiber $\on{CT}_M$ corresponds under the geometric Satake equivalence to the restriction of ${^LG}$-representations $V\mapsto V|_{{^LM}}$ where ${^LM}\subset {^LG}$ is the closed subgroup associated with $M\subset G$. Hence, we know that the complex 
\[
\on{CT}_M(\IC_{\{\mu\}}),
\]
decomposes according to the irreducible ${^LM}$-representations appearing in $V_{\{\mu\}}|_{^LM}$ with strictly positive multiplicities. Hence, $c_M(\tau^{\on{ss}}_{\{\mu\}})$ decomposes accordingly, cf.\,\,Lemma \ref{pC_lem}. As $c_M(z^{\on{ss}}_{\{\mu\}})$ behaves similarly by Lemma \ref{C_lem}, we conclude $c_M(\tau^{\on{ss}}_{\{\mu\}})=c_M(z^{\on{ss}}_{\{\mu\}})$ by steps b), c) above, and hence $\tau^{\on{ss}}_{\{\mu\}}= z^{\on{ss}}_{\{\mu\}}$.

\subsection{Other results} Let us mention other results in the paper which we think are of independent interest. We use the following version of the geometric Satake equivalence
\[
\om_G\co \Sat_G\overset{\simeq}{\longto} \Rep_{\algQl}(^LG), \;\calA\mapsto \bigoplus_{i\in\bbZ}\bbH^i(\Gr_G,\calA)(\nicefrac{i}{2}),
\]
where $\Sat_G$ is the full subcategory of $\on{Perv}_{L^+G}(\Gr_G)$ generated by the intersection complexes on the $L^+G$-orbits, and $\algQl$-local systems on $\Spec(F)$ which are trivial after a finite extension, cf. \S \ref{gctcoho} for details. We consider the composition of functors
\[
\on{CT}_\calM\circ\Psi_\calG\co \Sat_G\to D_c^b(\Fl_\calG\times_s\eta)\to D_c^b(\Fl_\calM\times_s\eta).
\]
Let us specialize to the case where $\calM$ is a \emph{very} special parahoric group scheme, i.e. $\calM\otimes \calO_{\bF}$ is special in the sense of Bruhat-Tits where $\bF/F$ is the completion of the maximal unramified extension. In this case, there is the ramified version of the geometric Satake equivalence \cite{Zhu14, Ri16a} which gives an equivalence of Tannakian categories
\[
\om_\calM\co \Sat_\calM\overset{\simeq}{\longto}\Rep_{\algQl}({^LM}_r), \;\calA\mapsto \bigoplus_{i\in\bbZ}\bbH^i(\Gr_G,\calA)(\nicefrac{i}{2}),
\]
where $\Sat_\calM$ is a certain subcategory of $\on{Perv}_{L^+\calM}(\Fl_\calM\times_s\eta)$, and ${^LM}_r=\widehat{M}^{I_F}\rtimes \Ga_F$ considered as a closed subgroup of ${^LG}=\widehat{G}\rtimes\Ga_F$. Note that the group of invariants $\widehat{M}^{I_F}$ is a possibly disconnected reductive group over $\algQl$. The following result is a generalization of \cite[Thm 4]{AB09} to not necessarily split reductive groups. \medskip

\noindent{\bf Theorem C.} {\it Let $\calM$ be very special, and hence $M$ is quasi-split. For every $\calA\in \Sat_G$, one has $\on{CT}_\calM\circ \Psi_\calG(\calA)\in \Sat_\calM$, and there is a commutative diagram of Tannakian categories 
\[
\begin{tikzpicture}[baseline=(current  bounding  box.center)]
\matrix(a)[matrix of math nodes, 
row sep=1.5em, column sep=2em, 
text height=1.5ex, text depth=0.45ex] 
{\Sat_G&\Sat_{\calM} \\ 
\Rep_{\algQl}(^LG)&\Rep_{\algQl}({^LM}_r), \\}; 
\path[->](a-1-1) edge node[above] {$\on{CT}_\calM\circ\Psi_\calG$} (a-1-2);
\path[->](a-2-1) edge node[above] {$\res$} (a-2-2);
\path[->](a-1-1) edge node[right] {$\om_G$} (a-2-1);
\path[->](a-1-2) edge node[right] {$\om_\calM$} (a-2-2);
\end{tikzpicture}
\] 
where $\on{res}\co V\mapsto V|_{{^LM}_r}$ is the restriction of representations.  \medskip
}

The theorem allows us to calculate the support of the nearby cycles $\on{Supp}(\Psi_\calG(\IC_{\{\mu\}}))$, and we obtain the following result.\medskip

\noindent{\bf Corollary.} {\it The smooth locus $(M_{\{\mu\}})^{\on{sm}}$ is fiberwise dense in $M_{\{\mu\}}$, and on reduced subschemes
\[
(M_{\{\mu\},k})_\red=\on{Supp}(\Psi_\calG(\IC_{\{\mu\}}))=\bigcup_{w\in\Adm_{\{\mu\}}^\bbf}\Fl_\calG^{\leq w},
\]
where $\Adm_{\{\mu\}}^\bbf$ denotes the admissible set in the sense of Kottwitz-Rapoport, cf. \S \ref{AppLocMod} for details. \medskip
}

If $G$ splits over a tamely ramified extension and $p\nmid |\pi_1(G_\der)|$, then the corollary is a weaker form of \cite[Thm 9.3]{PZ13} (if $F/\bbQ_p$) and \cite[Thm 3.8]{Zhu14} (if $F\simeq\bbF_q\rpot{t}$). Hence, the result is new when either $p \mid |\pi_1(G_\der)|$ or $F\simeq\bbF_q\rpot{t}$ and $G$ splits over a wildly ramified extension. Let us point out that these are still classes containing infinite families: the first case happens e.g. for $\on{PGl}_n$ with $p\mid n$, the second case e.g. for unitary groups in characteristic $2$ as follows. Let $q=2$ and $F=\bbF_2\rpot{t}$, and assume $n > 2$. Outer forms of $\on{Sl}_{n,F}$ are classified by the Galois cohomology set $H^1(F,\bbZ/2)$ which by Artin-Schreier theory is equal to 
\[
\bbF_2\rpot{t}/\mathscr{P}\bbF_2\rpot{t}=\bbF_2[t^{-1}]/\mathscr{P}\bbF_2[t^{-1}],
\]
where $\mathscr{P}:=(\str)^2-\id$ is considered as an additive map. For distinct odd integers $a,b\in \bbZ_{<0}$, the classes $[t^a]$ and $[t^b]$ are different, and hence give rise to non-isomorphic special unitary groups. Explicitly, if $F_a$ (resp. $F_b$) denotes the quadratic field extension determined by the equation $X^2-X-t^a$ (resp. $X^2-X-t^b$), then the associated quasi-split ramified special unitary groups $\on{SU}_n(F_a/F)$ and $\on{SU}_n(F_b/F)$ are non-isomorphic. It would be interesting to give a moduli theoretic description of the local models for these cases.

\subsection{Acknowledgements} It is a pleasure for us to thank the following people for inspiration, guidance, and useful conversations, large and small, about this project: Patrick Brosnan, Ulrich G\"ortz, Jochen Heinloth, Eugen Hellmann, George Pappas, Michael Rapoport, Larry Washington and Torsten Wedhorn. 

The authors are grateful to the Simons Foundation, to Michael Rapoport and the University of Bonn, and to Benoit Stroh and the University of Paris VI, for providing financial and logistical support which made this research possible. Finally, the authors thank the referees for their very helpful comments and suggestions.

\subsection{Overview} In \S \ref{CollectGm} we collect some basic facts about $\bbG_m$-actions on (ind-)schemes. These are applied in \S \ref{gctaffgrass} and \S \ref{gctaffflag} to study $\bbG_m$-actions on affine Grassmannians and affine flag varieties, laying the foundations towards proving Theorem B, which is needed to deduce our Theorem A from Theorem \ref{nearbyhlthm}. In \S \ref{gctBDgrass} we study the degeneration of the $\bbG_m$-stratification on affine Grassmannians to affine flag varieties and prove Theorem B. Our geometric study is applied in \S \ref{constermflag} to the construction of geometric constant term functors on affine flag varieties and the proofs of Theorems A and C from the introduction. In the last section \S \ref{testfunctions}, we explain the reduction steps a)-c) in order to prove our Main Theorem.
 
\subsection{Conventions on ind-schemes}  Let $R$ be a ring. An $R$-space $X$ is a fpqc sheaf on the category of $R$-algebras, i.e. $X\co R{\text{-Alg}}\to \text{Sets}$ is a (covariant) functor that respects finite products, and such that, for every $R'\to R''$ faithfully flat, the set $X(R')$ is the equalizer of $X(R'')\rightrightarrows X(R''\otimes_{R'} R'')$. Let $\text{Sp}_R$ denote the category of $R$-spaces. It contains the category $\text{Sch}_R$ of $R$-schemes as a full subcategory. An $R$-ind-scheme is a (covariant) functor
\[
X\co R\text{-Alg}\to \text{Sets} 
\]
such that there exists a presentation as presheaves $X=\text{colim}_iX_i$ where $\{X_i\}_{i\in I}$ is a filtered system of $R$-schemes $X_i$ with transition maps being closed immersions. Note that filtered colimits exist in the category $\on{Sp}_R$, and agree with the colimits as presheaves. Hence, every $R$-ind-scheme is an $R$-space. The category of $R$-ind-schemes $\on{IndSch}_R$ is the full subcategory of $\on{Sp}_R$ whose objects are $R$-ind-schemes. If $X=\on{colim}_iX_i$ and $Y=\text{colim}_jY_j$ are presentations of ind-schemes and all $X_i$ are quasi-compact, then as sets
\[
\Hom_{\on{IndSch}_R}(X,Y)= \on{lim}_i\on{colim}_j\Hom_{\on{Sch}_R}(X_i,Y_j),
\]
because every map $X_i\to Y$ factors over some $Y_j$ (by quasi-compactness of $X_i$, cf.\,e.g.\,\cite[Lem 2.4]{Goe08}). The category $\text{IndSch}_R$ is closed under fiber products, i.e.,\,$\on{colim_{(i,j)}}(X_i\times_R Y_j)$ is a presentation of $X\times_R Y$. If $\calP$ is a property of schemes, then an $R$-ind-scheme $X$ is said to have $\text{ind-}\calP$ if there exists a presentation $X=\text{colim}_iX_i$ where each $X_i$ has property $\calP$. Similarly, if $\calP$ is a property of morphisms, then a map $f\co X\to Y$ of ind-schemes is said to have property $\calP$ if $f$ is schematic and for all $Y$-schemes $T$, the pullback $f\times_{Y}T$ has property $\calP$. 

\subsection{Notation} Let $F$ be a non-archimedean local field\footnote{Sometimes we consider more general fields which we indicate at the beginning of the section.} with ring of integers $\calO_F$ and finite residue field $k_F\simeq \bbF_q$ of characteristic $p>0$. Let $\sF$ be a fixed separable closure with ring of integral elements $\calO_{\sF}$, and residue field $\bar{k}_F$ (an algebraic closure of $k_F$). The field $\breve{F}$ is the completion of the maximal unramified subextension with ring of integers $\calO_{\breve{F}}$. Let $\Ga_F=\Gal(\sF/F)$ the Galois group with inertia subgroup $I_F\simeq \Ga_{\breve{F}}$ and quotient $\Sig_F=\Ga_F/I_F\simeq \Gal(\bar{k}_F/k_F)$. Let $\sig_F\in \Sig_F$ denote the arithmetic Frobenius, and write $\Phi_F = \sigma_F^{-1}$ for the geometric Frobenius. We normalize the valuation $|\str|_F\co F\to \bbQ$ so that an uniformizer in $\calO_F$ has valuation $q^{-1}$. If the field $F$ is fixed, we sometimes drop the subscript $F$ from the notation.  

We fix a prime $\ell\not= p$, and an algebraic closure $\algQl$ of $\bbQ_\ell$. We fix once and for all $q^{1/2}\in \algQl$, and the square root of the cyclotomic character $\Ga_F\to \bbZ_\ell^{\times}$ which maps any lift of $\Phi_F$ to $q^{-1/2}$, cf.\,$\S$\ref{gctcoho}. If $X$ is an $\bbF_q$-scheme and $x\in X(\bbF_q)$, then the geometric Frobenius $\Phi_F$ acts on $\algQl(-{1/2})_{\bar{x}}$ by $q^{1/2}$.

For a connected reductive $F$-group $G$, we denote by $\widehat{G}$ the Langlands dual group viewed as an algebraic group over $\algQl$. The action of the Galois group $\Ga_F$ on $\widehat{G}$ via outer automorphism is trivial restricted to $\Ga_E$ for any finite separable $E/F$ which splits $G$. Throughout the paper, we denote by $^LG=\widehat{G}\rtimes \Ga_F$ the Galois form of the $L$-group which we view via $\widehat{G}\rtimes \Ga_F=\varprojlim_E\, \widehat{G}\rtimes \Gal(E/F)$ as an algebraic group over $\algQl$. 

Our Hecke algebras $\mathcal H(G(F), \calG(\mathcal O_F))$ consist of functions taking values in the field $\bar{\mathbb Q}_\ell$. Convolution is always defined using the Haar measure giving volume 1 to $\calG(\mathcal O_F)$.  We let $\mathcal Z(G(F), \calG(\mathcal O_F))$ denote the center of $\mathcal H(G(F), \calG(\mathcal O_F))$.

\section{Recollections on ${\mathbb G}_m$-actions}\label{CollectGm}
We recall some set-up and notation from \cite{Dr} and \cite{Ri19}. Let $R$ be a ring, and let $X$ be an $R$-scheme (or $R$-ind-scheme) equipped with an action of $\bbG_m$ which is trivial on $R$. There are three functors on the category of $R$-algebras
\begin{equation}\label{flow}
\begin{aligned}
\hspace{1cm} X^0\co&\; R'\longmapsto \Hom^{\bbG_{m}}_R(\Spec(R'), X)\\
\hspace{1cm} X^+\co& \; R'\longmapsto \Hom^{\bbG_{m}}_R((\bbA_{R'}^1)^+, X)\\
\hspace{1cm}X^-\co& \; R'\longmapsto \Hom^{\bbG_{m}}_R((\bbA^1_{R'})^-, X),
\end{aligned}
\end{equation}
where $(\bbA_R^1)^+$ (resp. $(\bbA_R^1)^-$) is $\bbA^1_R$ with the usual (resp. opposite) $\bbG_m$-action. The functor $X^0$ is the functor of $\bbG_m$-fixed points in $X$, and $X^+$ (resp. $X^-$) is called the attractor (resp. repeller). Informally speaking, $X^+$ (resp. $X^-$) is the space of points $x$ such that the limit $\lim_{\la\to 0}\la\cdot x$ (resp. $\lim_{\la\to \infty}\la\cdot x$) exists. The functors \eqref{flow} come equipped with natural maps
\begin{equation}\label{hyperloc}
X^0\leftarrow X^\pm \to X,
\end{equation}
where $X^\pm\to X^0$ (resp. $X^\pm\to X$) is given by evaluating a morphism at the zero section (resp. at the unit section). If $R$ is a field and if $X$ is a quasi-separated $R$-scheme of finite type, the functors $X^0$ and $X^\pm$ are representable by quasi-separated $R$-schemes of finite type, cf.\,\cite[Thm 1.4.2]{Dr} and \cite[Thm 2.24]{AHR}. If $X=\on{colim}_iX_i$ is an ind-scheme, and if each $X_i$ is $\bbG_m$-stable, quasi-separated and of finite type, then $X^0=\on{colim}_iX_i^0$ and $X^\pm=\on{colim}_iX_i^\pm$ are ind-schemes.

More generally, whenever the $\bbG_m$-action on a scheme $X$ is \'etale locally linearizable, i.e., the $\bbG_m$-action lifts - necessarily uniquely - to an \'etale cover which is affine over $R$ (cf.~\cite[Def.~1.6]{Ri19}), then the functors $X^0$ and $X^\pm$ are representable by \cite[Thm 1.8]{Ri19}. 
The property of being \'etale locally linearizable comes from the generalization of Sumihiro's theorem \cite[\S 2.3]{AHR}. 
The attribute ``linearizable'' refers to the fact that an affine $R$-scheme of finite presentation equipped with a $\bbG_m$-action can be  embedded equivariantly into affine space on which $\bbG_m$ acts linearly, cf.~\cite[Lem 2.21]{Ri19}.
In the present manuscript, the $\bbG_m$-actions are even Zariski locally linearizable, i.e. there exists a $\bbG_m$-invariant open affine cover (cf. Lemma \ref{loclinaffine}, \ref{loclinBD} below). Let us explain how \cite[Thm 1.8]{Ri19} generalizes to ind-schemes. We say a $\bbG_m$-action on an $R$-ind-scheme $X$ is \'etale (resp. Zariski) locally linearizable if there is a $\bbG_m$-stable presentation $X=\on{colim}_iX_i$ where the $\bbG_m$-action on each $X_i$ is \'etale (resp. Zariski) locally linearizable.

\begin{thm}\label{Gmthm} Let $X=\on{colim}_iX_i$ be an $R$-ind-scheme equipped with an \'etale locally linearizable $\bbG_m$-action. \smallskip\\
i\textup{)} The subfunctor $X^0$ of $X$ is representable by a closed sub-ind-scheme, and $X^0=\on{colim}_iX_i^0$. \smallskip\\
ii\textup{)} The functor $X^\pm$ is representable by an ind-scheme, and $X^\pm=\on{colim}_iX_i^\pm$. In particular, the map $X^\pm \to X$ is schematic. The map $X^\pm\to X^0$ is ind-affine with geometrically connected fibers.\smallskip\\
iii\textup{)} If $X=\on{colim}_iX_i$ is of ind-finite presentation \textup{(}resp.\,separated\textup{)}, so are $X^0$ and $X^\pm$.
\end{thm}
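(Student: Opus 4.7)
The plan is to bootstrap from the scheme case \cite[Thm 1.8]{Ri} applied along a $\bbG_m$-stable presentation $X = \on{colim}_i X_i$, which exists by hypothesis. Applied to each $X_i$, that theorem already produces representable schemes $X_i^0$ and $X_i^\pm$ together with all the local assertions in (ii) and (iii): closed immersion $X_i^0 \hookrightarrow X_i$, affine map $X_i^\pm \to X_i^0$ with geometrically connected fibers, and inheritance of finite presentation and separatedness from $X_i$. What the statement adds is that these schemes assemble along the presentation into ind-schemes representing $X^0$ and $X^\pm$ globally. The functorial identifications $X^0(R') = \on{colim}_i X_i^0(R')$ and $X^\pm(R') = \on{colim}_i X_i^\pm(R')$ are immediate, since $\Spec(R')$ and $\bbA^1_{R'}$ are quasi-compact and thus every $R$-morphism from them into $X$ factors through some $X_i$.

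The central step, which I expect to be the main technical obstacle, is to show the induced transition maps $X_i^0 \to X_{i'}^0$ and $X_i^\pm \to X_{i'}^\pm$ are closed immersions, or equivalently that the naturality squares are Cartesian over $X_i \hookrightarrow X_{i'}$. The fixed-point case is formal. For the attractor (and the repeller by symmetry), given a $\bbG_m$-equivariant morphism $f\co (\bbA^1_{R'})^+ \to X_{i'}$ satisfying $f(1) \in X_i$, I would consider the closed subscheme $f^{-1}(X_i) \subset \bbA^1_{R'}$. It is $\bbG_m$-stable because $X_i$ is, and it contains the unit section, hence it contains the full orbit $\bbG_{m,R'}$. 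Since $R'[t] \hookrightarrow R'[t,t^{-1}]$ is injective, $\bbG_{m,R'}$ is schematically dense in $\bbA^1_{R'}$, forcing $f^{-1}(X_i) = \bbA^1_{R'}$. Thus $f$ factors through $X_i$, the square is Cartesian, and each $X_i^\pm \to X_{i'}^\pm$ is a base change of a closed immersion.

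Once representability of $X^0$ and $X^\pm$ as ind-schemes is in hand, the remaining assertions follow termwise. The pullback of $X^\pm \to X$ along any morphism from an affine test scheme $T \to X$ factors through some $X_i$ and equals $X_i^\pm \times_{X_i} T$, a scheme; gluing over an affine cover shows the map is schematic. Ind-affineness and geometric connectedness of fibers of $X^\pm \to X^0$, as well as ind-finite presentation and ind-separatedness, pass through the filtered colimit directly from the corresponding properties supplied by \cite[Thm 1.8]{Ri} on each $X_i^\pm \to X_i^0$.
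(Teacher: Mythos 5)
Your proof is correct, and it follows the same overall strategy as the paper: apply \cite[Thm 1.8]{Ri} termwise along a $\bbG_m$-stable presentation, establish that the transition maps on fixed points and attractors are closed immersions, and deduce the remaining assertions by passing the properties of \cite[Thm 1.8, Prop 1.17]{Ri} through the filtered colimit. The one place where your argument genuinely diverges from the paper is the key step for $X^\pm$: you prove directly that the square relating $X_i^\pm, X_j^\pm, X_i, X_j$ is Cartesian, by observing that for a $\bbG_m$-equivariant $f\co \bbA^1_{R'}\to X_j$ with $f(1)\in X_i$, the preimage $f^{-1}(X_i)$ is a $\bbG_m$-stable closed subscheme containing the unit section, hence contains the schematically dense open $\bbG_{m,R'}$ and must be all of $\bbA^1_{R'}$. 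The paper instead first proves the affine case via \cite[Lem 1.9 ii)]{Ri} (using that a proper monomorphism is a closed immersion) and then passes to the general case through an affine $\bbG_m$-equivariant \'etale cover of $X_j$, invoking \'etale-local checking of closed immersions and \cite[Lem 1.10, 1.11]{Ri}. Your route is more elementary and yields slightly more (the Cartesian square, not merely the closed-immersion property), at the cost of a short schematic-density computation. The paper's route stays closer to the affine-local structure that \cite{Ri} already develops, at the cost of an extra descent step. Both are sound, and the remaining verifications (ind-affineness and geometric connectedness of $X^\pm\to X^0$, preservation of ind-finite presentation and separatedness, and schematicness of $X^\pm\to X$ by checking on quasi-compact test schemes) are handled the same way in both arguments.
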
  
\begin{proof} By definition of an ind-scheme, each $X_i$ is quasi-compact, and hence the schemes $X^0_i$ and $X_i^\pm$ are again quasi-compact by \cite[Thm 1.8 iii)]{Ri19}. Further, if $X_i\hookto X_j$ is a closed immersion, then $X_i^0=X_i\times_{X_j}X_j^0\hookto X_j^0$ is a closed immersion. This implies i). If $X_i$ is affine, then $X_i^\pm\subset X_i$ is a closed immersion by \cite[Lem 1.9 ii)]{Ri19}. It follows that if both $X_i$ and $X_j$ are affine, then $X_i^\pm\hookto X_j^\pm$ is a closed immersion. For the general case, choose an affine \'etale $\bbG_m$-equivariant cover $U\to X_j$. Then the following diagram of $R$-schemes 
\[
\begin{tikzpicture}[baseline=(current  bounding  box.center)]
\matrix(a)[matrix of math nodes, 
row sep=1.5em, column sep=2em, 
text height=1.5ex, text depth=0.45ex] 
{(U\times_{X_j}X_i)^\pm&X_i^\pm \\ 
U^\pm&X_j^\pm \\}; 
\path[->](a-1-1) edge node[above] {} (a-1-2);
\path[->](a-2-1) edge node[above] {} (a-2-2);
\path[->](a-1-1) edge node[right] {} (a-2-1);
\path[->](a-1-2) edge node[right] {} (a-2-2);
\end{tikzpicture}
\]
is cartesian which immediately follows from the definition. As $X_i\hookto X_j$ is a closed immersion, the map $U\times_{X_j}X_i\hookto U$ is a closed immersion of affine schemes. Hence, $(U\times_{X_j}X_i)^\pm\hookto U^\pm$ is a closed immersion by the affine case. As being a closed immersion is \'etale local on the target we conclude $X_i^\pm \hookto X^\pm_j$ is a closed immersion (note that $U^\pm \to X_j^\pm$ is \'etale surjective by \cite[Lem 1.10, 1.11]{Ri19}). Hence, $X^\pm=\on{colim}_iX_i^\pm$ is an ind-scheme, and $X^\pm\to X$ is schematic because for any quasi-compact test scheme $T\to X$ we have $X^\pm\times_XT =X_i^\pm\times_{X_i}T$ for $i>\!\!>0$. Since all maps $X_i^\pm\to X^0_i$ are affine (resp. geometrically connected) by \cite[Prop 1.17 ii)]{Ri19}, the map $X^\pm\to X^0$ is ind-affine (resp. geometrically connected) as well. Part ii) follows. If each $X_i$ is of finite presentation (resp. separated), so are $X_i^0$ and $X_i^\pm$ because the properties locally of finite presentation, quasi-compact and quasi-separated (resp. separated) are preserved by \cite[Thm 1.8 iii)]{Ri19}. Hence, $X^0=\on{colim}_iX_i^0$ and $X^\pm=\on{colim}_iX_i^\pm$ is of ind-finite presentation. This implies iii), and the theorem follows. 
\end{proof}

We shall use the following fact. 

\begin{lem} \label{0-perm} Let $\Spec(R)$ be connected. Let $X$ \textup{(}resp. $Y$\textup{)} be an $R$-ind-scheme, endowed with an \'{e}tale \textup{(}resp. Zariski\textup{)} locally linearizable $\mathbb G_m$-action. Let $Y$ be separated, and let $X \rightarrow Y$ be a schematic $\mathbb G_m$-equivariant morphism. \smallskip\\
i\textup{)} If $X\to Y$ is locally of finite presentation \textup{(}resp.\,quasi-compact; resp.\,quasi-separated; resp.\,separated; resp.\,smooth; resp. proper\textup{)}, so is the morphism $X^0 \to Y^0$. \smallskip\\
ii\textup{)} If $X\to Y$ is locally of finite presentation \textup{(}resp.\,quasi-compact; resp.\,quasi-separated; resp.\,separated; resp.\, smooth\textup{)}, so is the morphism $X^\pm \to Y^\pm$.
\end{lem}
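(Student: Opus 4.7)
The plan is to bootstrap from the scheme-theoretic version [Ri, Thm 1.8 iii)] using that $X \to Y$ is schematic. By the definition of property $\mathcal P$ for a schematic morphism of ind-schemes, it suffices to take any quasi-compact scheme $T$ with equivariant morphism $T \to Y^0$ (resp.\,$T \to Y^\pm$) and verify $\mathcal P$ for the base change as a morphism of schemes over $T$.

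The first step is the observation that $(-)^0$ and $(-)^\pm$ commute with fiber products along equivariant morphisms whose source carries the trivial $\mathbb G_m$-action. Concretely, for any $R$-scheme $T$ with trivial action and any equivariant $T \to Y$, unwinding the functor-of-points descriptions in \eqref{flow} yields canonical isomorphisms
\[
(X \times_Y T)^0 \;\cong\; X^0 \times_{Y^0} T, \qquad (X \times_Y T)^\pm \;\cong\; X^\pm \times_{Y^\pm} T,
\]
which reduces us to showing that $(X \times_Y T)^{0,\pm} \to T$ has $\mathcal P$. By schematicness, $X \times_Y T$ is a scheme with an induced $\mathbb G_m$-action, and this action is étale locally linearizable: quasi-compactness of $T$ forces $T \to Y$ to factor through some member $Y_j$ of the Zariski-linearizable presentation of $Y$, and then each $X_i \times_Y Y_j \hookto X_i$ (coming from any étale-linearizable presentation $X = \on{colim}_i X_i$) is a $\mathbb G_m$-stable closed subscheme inheriting an étale linearization, so further base change to $T$ preserves this.

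For $\mathcal P \in \{\text{lfp, qc, qs, separated}\}$, the desired conclusion for $(X \times_Y T)^{0,\pm} \to T$ then follows directly from [Ri, Thm 1.8 iii)], since the scheme morphism $X \times_Y T \to T$ carries $\mathcal P$ by base change from $X \to Y$. The smooth case uses the classical input that smoothness of a $\mathbb G_m$-scheme is inherited by its fixed and attractor loci via an étale-local normal form for the action near the fixed locus (a Luna-type slice, cf.~\cite{AHR}). The proper case in (i) combines the closed immersion $(X \times_Y T)^0 \hookto X \times_Y T$ from Theorem \ref{Gmthm}(i) with the proper morphism $X \times_Y T \to T$, whose composite is then proper. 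I expect the main technical point to be the smooth case; the remaining properties drop out directly from the scheme-theoretic statement and the base-change compatibilities above.
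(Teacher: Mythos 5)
Your strategy for part (i) — reduce to the scheme case via compatibility of $(-)^0$ with fiber products and then cite [Ri, Thm 1.8 iii)] — is sound and is essentially the same as the paper's; the fiber product identity $(X \times_Y T)^0 \cong X^0 \times_{Y^0} T$ for trivially acted $T$ mapping equivariantly to $Y$ is correct and suffices because any test scheme $T \to Y^0$ is of this form.

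For part (ii) there are two genuine gaps. First, the claimed identity $(X\times_Y T)^\pm \cong X^\pm \times_{Y^\pm} T$ with $T$ carrying the trivial $\mathbb G_m$-action only applies to maps $T \to Y^\pm$ that factor through the section $Y^0 \hookrightarrow Y^\pm$: an equivariant map from a trivially acted $T$ to $Y$ lands in $Y^0$, and under the adjunction its class in $Y^\pm$ is the constant one. But to verify that the schematic morphism $X^\pm \to Y^\pm$ has property $\mathcal P$ you must test against \emph{arbitrary} schemes $T \to Y^\pm$ (e.g.\ the members $Y_j^\pm$ of an ind-presentation), and these do not factor through $Y^0$. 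The paper's reduction uses the correct identity $(X\times_Y Y_j)^\pm = X^\pm \times_{Y^\pm} Y_j^\pm$ for $\mathbb G_m$-stable closed subschemes $Y_j \subset Y$ with the \emph{induced, non-trivial} action; the version with trivially acted $T$ is a different and weaker statement. After this reduction to schemes the paper factors $X^+ \to X^0 \times_{Y^0} Y^+ \to Y^+$, using that $X^+ \to X^0$ is affine and part (i) for the second arrow, to conclude quasi-compactness and (quasi-)separatedness; your proposal never recovers this.

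Second, the smoothness statement in (ii) is not covered by [Ri, Thm 1.8 iii)], and it is the hard point of the lemma — the paper spends roughly half the proof on it, arguing via completed local rings at points of $X^0$, a ``diagonal trick'' to reduce to the case of isomorphic residue fields, and a monoid-action argument to propagate smoothness from $X^0$ to all of $X^+$. Your appeal to an \'etale slice theorem in the style of \cite{AHR} does not constitute a proof, and the relative nature of the problem is a real obstruction: over a non-fixed point $y^+ \in Y^+$, the fiber of $X^+ \to Y^+$ is not the attractor of the fiber of $X \to Y$ (it depends on $X$ along the entire $\mathbb A^1$-orbit closure of $\operatorname{ev}_1(y^+)$), so you cannot simply reduce to a pointwise Bialynicki–Birula statement.
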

\begin{proof} The statement for $X^-\to Y^-$ follows from the statement for $X^+\to Y^+$ by inverting the $\bbG_m$-action, and it is enough to treat the latter. Further, all properties listed are stable under base change and fpqc local on the base, which will be used throughout the proof without explicit mentioning. 

Let $Y=\on{colim}_jY_j$ be a $\bbG_m$-stable presentation. Using $(X\times_YY_j)^0=X^0\times_{Y^0}Y_j^0$ (resp. $(X\times_YY_j)^+=X^+\times_{Y^+}Y_j^+$) and noting that the property of being \'etale locally linearizable is preserved under closed immersions, we reduce to the case that $Y=Y_j$ is a (quasi-compact) separated scheme. Hence, $X$ is a scheme as well (because $X\to Y$ is schematic). 

Let $U\to X$ (resp. $V\to Y$) be an $\bbG_m$-equivariant \'etale (resp. Zariski) cover with $U$ (resp. $V$) being a (resp. finite) disjoint union of affine schemes. As $Y$ is separated, the map $V\to Y$ is affine, i.e. the intersection of two open affines is again affine. The cartesian diagram of $R$-schemes
\[
\begin{tikzpicture}[baseline=(current  bounding  box.center)]
\matrix(a)[matrix of math nodes, 
row sep=1.5em, column sep=2em, 
text height=1.5ex, text depth=0.45ex] 
{U\times_YV & X\times_YV & V \\ 
U&X & Y, \\}; 
\path[->](a-1-1) edge node[above] {}  (a-1-2);
\path[->](a-1-2) edge node[above] {}  (a-1-3);
\path[->](a-2-1) edge node[below] {}  (a-2-2);
\path[->](a-2-2) edge node[below] {} (a-2-3);
\path[->](a-1-1) edge node[left] {} (a-2-1);
\path[->](a-1-2) edge node[left] {} (a-2-2);
\path[->](a-1-3) edge node[left] {} (a-2-3);
\end{tikzpicture}
\]
shows that the map $U\times_YV\to U$ is affine (because affine morphisms are stable under base change). Hence $U\times_YV$ is a disjoint union of affine schemes as well, and the $\bbG_m$-action on $X\times_YV$ is \'etale locally linearizable. By \cite[Thm 1.8 i), ii)]{Ri19} the map $V^0\to Y^0$ (resp. $V^+\to Y^+$) is \'etale surjective, and we reduce to the case that $Y$ is affine.

As $\Spec(R)$ is connected a $\bbG_m$-action on $Y$ is the same as a $\bbZ$-grading on its ring of global functions, and by \cite[Lem 1.9]{Ri19} both $Y^0\subset Y$ and $Y^+\subset Y$ are closed (affine) subschemes. Using $X^0=(X \times_Y Y^0)^0$ (resp. $X^+=(X \times_Y Y^+)^+$) and noting again that the property of being \'etale locally linearizable is preserved under closed immersions, we reduce in part i) (resp. in part ii)) to the case $Y=Y^0$ (resp. $Y=Y^+$). Now by \cite[Thm 1.8 iii)]{Ri19}, $X^0 \to Y$ satisfies each of the properties listed which $X\to Y$ satisfies (``proper'' is not listed there but this follows using that $X^0\subset X$ is a closed immersion). This shows i). 

For ii) note that the property of being locally of finite presentation is equivalent to the property of being limit preserving \cite[Tag 04AK]{StaPro}, and the latter is immediate from the definition of $X^+$. Now consider the map $Y=Y^+\to Y^0$ of affine schemes. The map $X^+\to Y$ factors as $X^+\to X^0\times_{Y^0}Y\to Y$, and using i), the map $X^0\times_{Y^0}Y\to Y$ has each of the properties listed. The map $X^+\to X^0$ being affine (cf. \cite[Cor 1.12]{Ri19}) implies that $X^+\to X^0\times_{Y^0}Y$ is affine, hence quasi-compact and separated, and ii) for the properties ``quasi-compact'' and ``(quasi)-separated'' follows.

It remains to treat the property ``smooth''. Consider the cover $U\to X$ again. Using \cite[Thm 1.8 ii)]{Ri19}, the map $U^+ \to X^+$ is \'etale surjective. Applying \cite[Tag 02K5]{StaPro} to the commutative diagram of $R$-schemes  
\[
\begin{tikzpicture}[baseline=(current  bounding  box.center)]
\matrix(a)[matrix of math nodes, 
row sep=1.5em, column sep=2em, 
text height=1.5ex, text depth=0.45ex] 
{ U^+& & X^+ \\ 
&Y&, \\}; 
\path[->](a-1-1) edge node[above] {}  (a-1-3);
\path[->](a-1-1) edge node[above] {}  (a-2-2);
\path[->](a-1-3) edge node[below] {}  (a-2-2);
\end{tikzpicture}
\]
we reduce to the case where $X$ is affine. By a standard reduction (cf.\,\cite[Lem 3.1]{Mar15}), we may further assume $Y$ (hence $X$) is Noetherian. Following the arguments in \cite[Lem 3.2]{Mar15} we proceed in two steps.\smallskip\\ 
{\it The map $X^+\to Y$ is smooth at all points in $X^0$.} Let $x\in X^0$, and denote by $y\in Y^0$ its image. We first consider the case where $\kappa(y)\simeq \kappa(x)$. Let $A$ (resp. $B$; resp. $C=B/J$) denote the coordinate rings of $Y$ (resp. $X$; resp. $X^+$). Let $A=\oplus_{i\in\bbZ}A_i$ (resp. $B=\oplus_{i\in \bbZ}B_i$; resp. $C=\oplus_{i\in \bbZ}C_i$) be the grading given by the $\bbG_m$-action. As $X\to Y$ is $\bbG_m$-equivariant, the maps $A\to B$ and $B\to B/J=C$ are $\bbZ$-graded. The equality $Y=Y^+$ (resp. $(X^+)^+=X^+$) means that $A_i=0$ (resp. $C_i=0$) for all $i<0$. Further, $\kappa(y)\simeq \kappa(x)$ means that there is an isomorphism $A/\frakm_y\simeq B/\frakm_x$ on residue fields. Let $(\bar{b}_1,\ldots,\bar{b}_d)$ be a homogeneous basis of $\frakm_x/(\frakm_x^2 + \frakm_y)$. Since the surjective map $\frakm_x\to \frakm_x/(\frakm_x^2 + \frakm_y)$ is $\bbZ$-graded, we can lift each $\bar{b}_i$ to an homogeneous element $b_{i}\in B_{n_i}$ of some degree $n_i\in \bbZ$. By [EGA IV, Prop 17.5.3 d'')] there is an isomorphism on completed local rings
\[
\hat{A}\pot{t_1,\ldots, t_d}\overset{\simeq}{\longto} \hat{B}, \;\; t_i\longmapsto b_i.
\] 
Recall that we arranged $Y$ to be Noetherian. After renumbering the $b_i$ we may assume that for some $r\geq 1$ we have $n_i<0$ for all $1\leq i\leq r-1$ and $n_i\geq 0$ for all $r\leq i\leq d$. As $B$ is Noetherian, we have $\hat{C}\simeq \hat{B}/J\hat{B}$, and further $J\hat{B}$ is the ideal generated by the $t_i$ for $1\leq i\leq r-1$. Thus, $\hat{C}\simeq \hat{A}\pot{t_r,\ldots, t_d}$ which implies that the map $A\to C$ is smooth at $x$.\smallskip\\
Using the ``diagonal trick'' as in \cite[Lem 3.2 ``General case'']{Mar15}, we reduce to the case $\kappa(y)\simeq \kappa(x)$ on residue fields, while preserving the property $Y = Y^+$. We have commutative diagrams
$$
\xymatrix{
X^+ \ar[r] \ar[d] & X \ar[dl] & & & X^+ \times_Y X \ar[r] \ar[d]_{\rm pr_2} & X \times_Y X \ar[dl] \\
Y &                       & & & X   &         
}
$$
where the second diagram arises from the first by base-change along $X \rightarrow Y$. The horizontal arrows are closed immersions. Since $X \rightarrow Y$ is smooth, as in \cite[Lem\,3.2]{Mar15} it is enough to prove ${\rm pr_2}$ is smooth at $(x,x)$.  But its image $x$ satisfies $\kappa(x)\simeq \kappa(x,x)$; hence we are reduced to the case $\kappa(y) \simeq \kappa(x)$.  However we need to do the reduction to $Y^+ = Y$ again, since the target $X$ of ${\rm pr_2}$ need not have this property. But in that reduction ${\rm pr_2}$ above is replaced by ${\rm pr_2}: X^+ \times_Y X^+ \rightarrow X^+$, and $(x,x)$ is still sent to $x$.  Therefore the reduction does not alter the property $\kappa(y)\simeq \kappa(x)$ when $x \in X^+$.  \smallskip\\
{\it The map $X^+\to Y$ is smooth.} Let $X^+_{\on{sm}}$ denote the open locus where the map $X^+\to Y$ is smooth which is $\bbG_m$-invariant and contains $X^0$ by the previous step. The $\bbG_m$-action on $X$ extends to a monoid action $\bbA^1_R\times X^+\to X^+$. Let $\calX^+_{\on{sm}}= (\bbA^1_R\times X^+)\times_{X^+}X^+_{\on{sm}}$ which is an open $\bbG_m\times\bbG_m$-invariant subscheme of $\bbA^1_R\times X^+$ which contains $\{0\}_R\times X^+$. Hence, $\calX^+_{\on{sm}}=\bbA^1_R\times X^+$ and thus $X^+_{\on{sm}}=X^+$. For details the reader may consult \cite[Claim 3.4]{Mar15}. 
The lemma follows.
\end{proof}

\begin{cor} \label{immersions}
Under the assumptions of Lemma \ref{0-perm}. If $X\to Y$ is a quasi-compact immersion \textup{(}resp. closed immersion; resp. open immersion\textup{)}, so are the maps $X^0\to Y^0$ and $X^\pm\to Y^\pm$.
\end{cor}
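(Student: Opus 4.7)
The plan is to handle the three kinds of immersions separately, in each case combining the ``soft'' properties already supplied by Lemma \ref{0-perm} (quasi-compactness, (quasi-)separatedness and smoothness) with a direct functorial check of the immersion property itself. In each case the ind-scheme statement follows from the scheme statement by choosing a common presentation $X_i\hookto Y_i$ and applying the scheme case stage-by-stage, so I concentrate on schemes.

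For a closed immersion $X\hookto Y$, I would argue Zariski-locally on $Y$: the Zariski linearization of the $\bbG_m$-action on $Y$ pulls back through $X\hookto Y$ to a Zariski linearization of $X$, so we reduce to $Y=\Spec(A)$ affine with $\bbZ$-grading $A=\bigoplus_i A_i$ and $X=\Spec(A/I)$ for a homogeneous ideal $I$. Invoking \cite[Lem 1.9]{Ri}, we have $Y^0=\Spec(A/(A_{\ne 0}))$ and $Y^+=\Spec(A/(A_{<0}))$, and the analogous descriptions applied to the graded ring $A/I$ give $X^0=\Spec(A/(I+(A_{\ne 0})))$ and $X^+=\Spec(A/(I+(A_{<0})))$. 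Hence $X^0\hookto Y^0$ and $X^+\hookto Y^+$ correspond to surjections of rings, so they are closed immersions; the case $X^-\hookto Y^-$ is handled by inverting the $\bbG_m$-action.

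For an open immersion $X\hookto Y$, I would read off everything from the functor of points. A $\bbG_m$-fixed map $T\to Y$ lands set-theoretically in the open $X$ iff it factors through $X$, so $X^0=X\times_Y Y^0$, and in particular $X^0\hookto Y^0$ is an open immersion. For $X^+$, given a $\bbG_m$-equivariant $f\co(\bbA^1)^+_T\to Y$, the preimage $f^{-1}(X)$ is a $\bbG_m$-stable open of $\bbA^1_T$; on each geometric fibre over $t\in T$ the only $\bbG_m$-stable opens of $\bbA^1_{\kappa(t)}$ are $\emptyset$, $\bbG_{m,\kappa(t)}$ and $\bbA^1_{\kappa(t)}$, and the last occurs exactly when $f$ sends the zero-section point over $t$ into $X$. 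Therefore $f$ factors through $X$ iff $f|_{\{0\}\times T}\co T\to Y^0$ factors through $X^0$, which shows $X^+=Y^+\times_{Y^0}X^0$ as subfunctors of $Y^+$. Since $X^0\hookto Y^0$ is an open immersion, so is its base change $X^+\hookto Y^+$; the case $X^-\hookto Y^-$ follows again by inverting the action.

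For a general quasi-compact immersion I would factor $X\hookto Y$ as an open immersion $X\hookto\bar X$ into its scheme-theoretic closure followed by the closed immersion $\bar X\hookto Y$. The closure $\bar X$ inherits a $\bbG_m$-action from $Y$: since $\bbG_m$ is flat, $\bbG_m\times X\subset\bbG_m\times\bar X$ is schematically dense, so the restriction of the action $\bbG_m\times Y\to Y$ to $\bbG_m\times\bar X$ lands in $\bar X$. Combining the two previous paragraphs then concludes the proof. The main obstacle I expect is the open immersion step: the naive identification $X^+=X\times_Y Y^+$ is \emph{false}, because a flow line through a point of $X$ may leave $X$ before reaching its limit in $Y^0$. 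Pinning down the correct identification $X^+=X^0\times_{Y^0}Y^+$ via the classification of $\bbG_m$-stable opens of $\bbA^1$ is the heart of the argument.
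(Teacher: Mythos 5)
Your proposal is correct, and it agrees with the paper's in its overall architecture: reduce to schemes, factor a quasi-compact immersion via Tag 01RG as an open immersion into the ($\mathbb G_m$-stable) scheme-theoretic image followed by a closed immersion, and treat the two cases separately. For closed immersions both arguments come down to the affine graded-ring computation using \cite[Lem 1.9]{Ri}; the paper invokes it implicitly by referring back to the proof of Theorem \ref{Gmthm}, while you reproduce it directly after observing that a Zariski linearization of $Y$ restricts to one of a closed subscheme $X$.

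Where you genuinely depart from the paper is in the open-immersion step. The paper's argument is short and soft: by Lemma \ref{0-perm} the maps $X^0 \to Y^0$ and $X^{\pm} \to Y^{\pm}$ are smooth, and they are monomorphisms because $X^0, X^{\pm}$ are subfunctors of $Y^0, Y^{\pm}$ whenever $X \to Y$ is a monomorphism; a smooth monomorphism is an open immersion (Tag 025G). Your argument instead explicitly identifies the subfunctors: $X^0 = X \times_Y Y^0$ (immediate since $X\hookto Y$ is an open monomorphism), and the more subtle $X^{+} = Y^{+} \times_{Y^0} X^0$, which you justify by classifying $\mathbb G_m$-stable open subschemes of $\mathbb A^1$ over a field and concluding that a $\mathbb G_m$-equivariant map $(\mathbb A^1_T)^+ \to Y$ factors through $X$ if and only if its restriction to the zero section factors through $X^0$. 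You are right to warn that the naive guess $X^+ = X \times_Y Y^+$ is false — the flow can leave $X$ on the way to its limit — and your fibered description is the correct one. Both arguments are valid; the paper's is shorter, while yours gives a concrete and reusable description of the attractor (and fixed-point) ind-scheme of an open subscheme as a pullback over the fixed-point locus, which makes the geometry more transparent.
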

\begin{proof} As above we may assume that $X$ and $Y$ are schemes. If $X\to Y$ is a quasi-compact immersion, then by \cite[Tag 01RG]{StaPro} there is a factorization $X\to \bar{X}\to Y$ into an open immersion followed by a closed immersion. Here $\bar{X}$ denotes the scheme theoretic image of $X\to Y$ which is $\bbG_m$-invariant. Using the preservation of quasi-compactness from Lemma \ref{0-perm}, it is enough to treat the case of an open immersion and a closed immersion separately. Closed immersions where already treated in the proof of Theorem \ref{Gmthm} above. If $X\to Y$ is a monomorphism, so are $X^0\to Y^0$ and $X^\pm\to Y^\pm$. As being an open immersion is equivalent to being a smooth monomorphism (cf. \cite[Tag 025G]{StaPro}), the corollary follows for open immersions. 
\end{proof}

\section{Affine Grassmannians}\label{gctaffgrass}
We collect some facts on the geometry and cohomology of constant term maps on affine Grassmannians as considered in \cite{BD, BG02, MV07}. We include full proofs of those statements where we did not find a reference.

Let $F$ be any field, and let $G$ be a smooth affine $F$-group. The \emph{loop group} $LG=L_zG$ is the group functor on the category of $F$-algebras
\[
LG\co R\longmapsto G(R\rpot{z}),
\]
where $z$ denotes an additional formal variable. Then $LG$ is representable by an ind-affine ind-group scheme, and in particular defines a fpqc sheaf on the category of $F$-algebras. The \emph{positive loop group} $L^+G=L_z^+G$ is the group functor on the category of $F$-algebras
\[
L^+G\co R\longmapsto G(R\pot{z}),
\]
and we view $L^+G\subset LG$ as a subgroup functor. The \emph{affine Grassmannian} $\Gr_{G}=\Gr_{G,z}$ is the fpqc-sheaf on the category of $F$-algebras associated with the functor
\[
 R\longmapsto LG(R)/L^+G(R). 
\]
Then $\Gr_G$ is representable by a separated ind-scheme of ind-finite type over $F$, and is ind-proper (and then even ind-projective) if and only if the neutral component $G^\circ$ is reductive. The affine Grassmannian is equipped with a transitive action of the loop group
\begin{equation}\label{affineact}
LG\times \Gr_G\longto \Gr_G,
\end{equation}
i.e. a surjection of sheaves.

\subsection{The open cell}
The Beauville-Laszlo gluing lemma \cite{BL95} shows that the sheaf $\Gr_G$ represents the functor on the category $F$-algebras $R$ parametrizing isomorphism classes of tuples $(\calF,\al)$ with
\begin{equation}\label{bundleaffgrass}
\begin{cases}
\text{$\calF$ a $G$-torsor on $\bbP^1_R$};\\
\text{$\al\co \calF|_{\bbP^1_R\bslash \{0\}}\simeq \calF^0|_{\bbP^1_R\bslash \{0\}}$ a trivialization},
\end{cases}
\end{equation}
where $\calF^0$ denotes the trivial $G$-torsor. The variable $z$ is identified with a local coordinate of $\bbP^1_F$ at the origin, and we let $\bbP^1_F\bslash \{0\}=\Spec(F[z^{-1}])$. The \emph{negative loop group} $L^-G=L_z^-G$ is the functor on the category of $F$-algebras
\[
L^-G\co R\longmapsto G(R[z^{-1}]).
\]
Then $L^-G$ is representable by an ind-affine ind-group scheme of ind-finite type over $F$ (ind-finite type because the functor commutes with filtered colimits). Let $L^{--}G=\ker(L^-G\to G)$, $z^{-1}\mapsto 0$. 

\begin{lem}\label{opencell}
Let $G$ be a smooth affine $F$-group scheme, and let $e_0\in \Gr_G(F)$ denote the base point. The orbit map
\[
L^{--}G\to \Gr_G, \;\;\;\;\; g\mapsto g\cdot e_0
\]
is representable by an open immersion, and identifies $L^{--}G$ with those pairs $(\calF,\al)$ where $\calF$ is the trivial torsor.
\end{lem}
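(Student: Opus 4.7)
The plan is to identify $L^{--}G$ with the open subfunctor of $\Gr_G$ parametrizing pairs with trivial bundle, using the Beauville--Laszlo description \eqref{bundleaffgrass} for the identification and a formal argument for openness. Under \eqref{bundleaffgrass}, the base point $e_0$ is $(\calF^0,\id)$. For $g \in L^{--}G(R) \subset G(R[z^{-1}])$, the point $g \cdot e_0$ corresponds to the $G$-bundle obtained by gluing trivial bundles on $\Spec R\pot z$ and on $\bbP^1_R \setminus \{0\}$ via the transition $g \in G(R\rpot z)$; since $g$ already extends across $\bbP^1_R \setminus \{0\}$, it can be absorbed into the trivialization there, showing the resulting bundle is globally trivial. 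Conversely, given $(\calF, \al)$ with $\calF$ trivial, any global trivialization $\beta\colon \calF^0 \simeq \calF$ produces $\al \circ \beta \in G(R[z^{-1}]) = L^-G(R)$; using the semidirect decomposition $L^-G = L^{--}G \rtimes G$ (via evaluation at $\infty$) together with $\Aut(\calF^0) = \Hom(\bbP^1_R, G) = G(R)$ (the last equality from properness of $\bbP^1_R$ and affineness of $G$), the trivialization $\beta$ can be uniquely adjusted so that $\al \circ \beta \in L^{--}G(R)$.

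For injectivity of the orbit map, the stabilizer of $e_0$ in $LG$ is $L^+G$, so it suffices to prove $L^{--}G \cap L^+G = \{e\}$ inside $LG$. An $R$-point of the intersection is a $g \in G(R[z^{-1}]) \cap G(R\pot z)$; by Beauville--Laszlo gluing this corresponds to a morphism $\bbP^1_R \to G$, which must be constant by $\Hom(\bbP^1_R, G) = G(R)$, and the vanishing $g|_{z^{-1} = 0} = e$ defining $L^{--}G$ then forces $g = e$.

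For the open-immersion assertion, the cleanest approach is to show the multiplication map $\mu\colon L^{--}G \times L^+G \to LG$ is an open immersion. Because $LG \to \Gr_G$ is a right $L^+G$-torsor and $\mu$ is $L^+G$-equivariant for right multiplication on the second factor, the orbit map $L^{--}G \to \Gr_G$ then inherits the open-immersion property by fpqc descent. The previous paragraph gives injectivity of $\mu$. At the identity, the induced map on tangent spaces is the $R$-linear isomorphism $(\frakg \otimes_F z^{-1} R[z^{-1}]) \oplus (\frakg \otimes_F R\pot z) \xrightarrow{\sim} \frakg \otimes_F R\rpot z$ (using smoothness of $G$ for these Lie-algebra identifications), so $\mu$ is \'etale at $e$; equivariance under left translation by $L^{--}G$ on the first factor and right translation by $L^+G$ on the second propagates \'etaleness to all points, and an \'etale monomorphism is an open immersion. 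I expect the main technical obstacle to be precisely this openness step. An alternative to the formal-tangent argument is to reduce via a faithful closed embedding $G \hookrightarrow \GL_n$ to the classical Birkhoff factorization for $\GL_n$, using that $L^{--}G \hookrightarrow L^{--}\GL_n$ and $L^+G \hookrightarrow L^+\GL_n$ are closed immersions compatible with the cell decompositions.
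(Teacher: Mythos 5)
Your identification of $L^{--}G$ with the trivial-torsor locus and the injectivity of the orbit map are both fine and essentially match the paper's argument. The gap is in the openness step, which you yourself flag as the crux.

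Your first route — compute the tangent-space isomorphism at the identity, declare $\mu\colon L^{--}G\times L^+G\to LG$ \'etale by translation equivariance, and invoke ``\'etale monomorphism $\Rightarrow$ open immersion'' — does not go through in the ind-scheme setting. The implication you need holds for morphisms that are \emph{locally of finite presentation}, but $L^+G$ is an inverse limit of jet schemes (hence not of finite type), and $LG$ is an ind-affine ind-scheme whose pieces are likewise not of finite type. There is no finiteness hypothesis under which the tangent-space computation upgrades to \'etaleness, let alone to the open-immersion conclusion. The tangent computation only certifies formal smoothness of the would-be cell at first (indeed all infinitesimal) order; it does not control what happens after passing from $\hat R$ back to $R$, which is precisely where the real work lies. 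Your second route, reducing to Birkhoff factorization for $\GL_n$ via a faithful embedding $G\hookrightarrow\GL_n$, is not developed and has a circularity risk: to deduce $L^{--}G\cdot L^+G=(L^{--}\GL_n\cdot L^+\GL_n)\cap LG$ you already need to know that the $\GL_n$-Birkhoff factors of an element of $G(R\rpot z)$ lie in $G$, which requires a $G$-reduction of the glued bundle over all of $\bbP^1_R$ — exactly what is at issue.

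The paper's argument takes a genuinely different and more robust route: it shows directly that the condition ``$\calF$ is the trivial torsor'' cuts out an open subfunctor of $\Gr_G$. After reducing to the case of a Noetherian local ring $R$ (using that the functors commute with filtered colimits and that $L^{--}G$ is of ind-finite type), one starts from a trivializing section of $\calF$ over the closed fiber $\bbP^1_{R_0}$, lifts it inductively over the infinitesimal neighborhoods $\bbP^1_{R_n}$ using that the obstruction lives in $H^1(\bbP^1_{R_0},\frakg\otimes\calO_{\bbP^1_{R_0}})=0$, passes to $\hat R$ by algebraizing the formal section (using affineness of $\calF\to\bbP^1_R$), and finally descends from $\hat R$ to $R$ via the monomorphism property and faithfully flat descent. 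This sidesteps any appeal to \'etaleness of the multiplication map. If you want to rescue the spirit of your tangent-space idea, what you have is the lifting step (formal smoothness) — but you still need the Noetherian approximation, algebraization, and descent steps to close the argument.
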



\begin{proof} For any $F$-algebra $R$, the loop group $LG(R)$ parametrizes isomorphism classes of triples $(\calF,\al,\be)$ where $(\calF,\al)\in \Gr_G(R)$ and $\be\co \calF^0|_{R\pot{z}}\simeq \calF|_{R\pot{z}}$. Hence, the multiplication map is given in the moduli description as 
\[
L^{--}G\times L^+G \longto LG,\;\;\; (g^-,g^+)\mapsto (\calF^0,g^-, g^+).
\]
Conversely, every triple $(\calF,\al,\be)$ with $\calF$ being the trivial torsor is isomorphic to a triple of the form $(\calF^0,g^-,g^+)$ for unique $g^-\in L^{--}G(R)$ and $g^+\in L^{+}G$: as $\calF\simeq \calF^0$ is trivial, the trivialization $\al$ defines an element in $\Aut(\calF^0|_{\bbP^1_R\bslash\{0\}})=L^-G(R)$. We extend the image $\bar{\al}\in G(R)$ under the reduction $z^{-1}\mapsto 0$ constantly to $\tilde{\al}\in G(\bbP^1_R)$. We put
\[
g^-= \al\circ (\tilde{\al}|_{\bbP^1_R\bslash\{0\}})^{-1}\;\;\;\; \text{and}\;\;\;\; g^+=(\tilde{\al}|_{R\pot{z}})\circ \be.
\]
Then $(\calF^0,g^-,g^+)$ and $(\calF,\al,\be)$ define the same element in $LG(R)$, and $g^-\in L^{--}G(R)$ (because $g^-\equiv 1 \mod z^{-1}$ by construction). The uniqueness of $g^-$ (and hence $g^+$) follows from $G(\bbP^1_R)=G(R)$. Thus, the map $L^{--}G\to \Gr_G$ identifies $L^{--}G$ with the pairs $(\calF,\al)$ where $\calF$ is the trivial torsor, and it is enough to show that being the trivial $G$-torsor on $\bbP^1_R$ is an open condition on $\Spec(R)$. \smallskip\\
As $L^{--}G$ and $\Gr_G$ commute with filtered colimits, we may assume $R$ to be a local ring: if we have a map ${\rm Spec}(R) \to \Gr_G$ which factors on a point $\frakp\in \Spec(R)$ through $L^{--}G$, then from the case of a local ring we would get a unique section $\Spec(R_\frakp)\to L^{--}G$. Since $L^{--}G$ is of ind-finite type, we get a section $\Spec(R_f)\to L^{--}G$ for some $f\in R\bslash \frakp$. These maps glue by uniqueness of the section, and there is some biggest open $U\subset \Spec(R)$ together with a section $U\to L^{--}G$. It remains to treat the case of a local ring $R$ with maximal ideal $\frakm$. Again as $L^{--}G$ and $\Gr_G$ commute with filtered colimits, we may further assume that $R$ is Noetherian.\smallskip\\
Let $(\calF,\al)\in \Gr_G(R)$ be a point. Assume that $\calF|_{\bbP^1_{R_0}}$ is trivial where $R_0=R/\frakm$ is the residue field. Being trivial is equivalent to the existence of a section $s_0\co\bbP^1_{R_0}\to \calF$. Our aim is to lift $s_0$ successively to a compatible family of sections $s_n\co \bbP^1_{R_n}\to \calF$ where $R_n=R/\frakm^{n+1}$ for $n\geq 0$. As $\calF$ is smooth (because $G$ is smooth), the obstruction of lifting $s_n$ to $s_{n+1}$ lives in 
\begin{equation}\label{obstruction}
H^1(\bbP^1_{R_0}, s_0^*(\frakg_{\calF/\bbP^1_{R_0}})\otimes_{\calO_{\bbP^1_{R_0}}}(\frakm^{n+1}\calO_{\bbP^1_R}/\frakm^{n+2}\calO_{\bbP^1_R})),
\end{equation}
where $\frakg_{\calF/\bbP^1_{R_0}}=(\Om^1_{\calF/\bbP^1_{R_0}})^*$, cf. \cite[Exp. III, Cor 5.4]{SGA1}. Now $\calF|_{\bbP^1_{R_0}}$ is trivial, and hence $s_0^*(\frakg_{\calF/\bbP^1_{R_0}})\simeq \frakg\otimes_{R_0}\calO_{\bbP^1_{R_0}}$ where $\frakg=e^*(\Om^1_{G/F})^*$ is the Lie algebra of $G$. On the other hand, it is clear that 
\[
\frakm^{n+1}\calO_{\bbP^1_R}/\frakm^{n+2}\calO_{\bbP^1_R}=(\frakm^{n+1}/\frakm^{n+2})\otimes_{R_0}\calO_{\bbP^1_{R_0}}.
\]
Since $H^1(\bbP^1_{R_0},\calO_{\bbP^1_{R_0}})=0$, we see that \eqref{obstruction} vanishes. Thus, we get a compatible family of sections $s_n\co \bbP^1_{R_n}\to \calF$. As $\calF$ is affine over $\bbP^1_{R}$, we get a section $\bbP^1_{\hat{R}}\to \calF$ where $\hat{R}=\lim_nR/\frakm^n$. Hence, we showed that $\calF|_{\bbP^1_{\hat{R}}}$ is trivial, and so $(\calF,\al)$ defines a point in $L^{--}G(\hat{R})$. As $R$ is noetherian, the map $R\to\hat{R}$ is faithfully flat, and we can use the sheaf property of $L^{--}G$ and $\Gr_G$ as follows: the map $L^{--}G\to \Gr_G$ is a monomorphism which implies that $(\calF,\al)$ lies in the equalizer of $L^{--}G(\hat{R})\rightrightarrows L^{--}G(\hat{R}\otimes_R\hat{R})$, i.e. defines a point of $L^{--}G(R)$. Therefore, $\calF|_{\bbP^1_R}$ needs to be trivial which is what we wanted to show. 
\end{proof}

The lemma shows that the map $LG\to\Gr_G$ has sections Zariski locally whenever $\Gr_G$ is covered by $L^{--}G$-translates, e.g. $G$ split connected reductive. The following corollary is an immediate consequence of Lemma \ref{opencell}, and is due to \cite[Prop 4.6]{LS97} for connected reductive groups (see also \cite{Fal03, dHL}).

\begin{cor}
The multiplication map $L^{--}G\times L^+G\to LG$ is representable by an open immersion.
\end{cor}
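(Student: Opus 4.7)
The plan is to exhibit the multiplication map as a base change of the open immersion $\iota \co L^{--}G \hookrightarrow \Gr_G$ supplied by Lemma \ref{opencell}, so that openness propagates for free.

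First I would set up the square
\[
\begin{array}{ccc}
L^{--}G \times L^+G & \xrightarrow{\;m\;} & LG \\
\pr_1 \downarrow\ & & \downarrow q \\
L^{--}G & \xrightarrow{\;\iota\;} & \Gr_G,
\end{array}
\]
where $m$ is multiplication, $\pr_1$ is projection, and $q$ is the orbit map $g \mapsto g\cdot e_0$ through the base point $e_0 \in \Gr_G(F)$. Commutativity is immediate, since $L^+G$ stabilizes $e_0$, and hence $q(g^- g^+) = g^- \cdot e_0 = \iota(g^-)$ for every $g^- \in L^{--}G(R)$ and $g^+ \in L^+G(R)$.

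Next I would check that this square is cartesian on $R$-points. Let $(g^-, h)$ be an $R$-point of the fiber product, so that the image of $h \in LG(R)$ in $\Gr_G(R)$ coincides with $\iota(g^-)$. In the moduli description, the triple $(\calF_h, \al_h, \be_h)$ attached to $h$ has underlying pair $(\calF_h, \al_h)$ equal to the pair determined by $g^-$; in particular $\calF_h$ is the trivial torsor. The explicit construction in the proof of Lemma \ref{opencell}, applied to $(\calF_h, \al_h, \be_h)$, then produces a unique factorization $h = g^-_h g^+_h$ with $g^-_h \in L^{--}G(R)$ and $g^+_h \in L^+G(R)$. Uniqueness of the $L^{--}G$-presentation of a trivialized pair forces $g^-_h = g^-$, whence $g^+ := (g^-)^{-1} h \in L^+G(R)$ is the unique lift of $(g^-, h)$ through $m$.

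Finally, since $\iota$ is representable by an open immersion and open immersions are stable under arbitrary pullback, the multiplication map $m$ is representable by an open immersion. I do not expect a substantive obstacle: all the geometric content has already been absorbed into Lemma \ref{opencell}, and this last step is a formal manipulation with the orbit map $q$ and the right $L^+G$-action on $LG$.
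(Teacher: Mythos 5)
Your proof is correct, and it spells out the intended deduction: the paper just asserts the corollary is immediate from Lemma~\ref{opencell}, and your cartesian square (base-changing $\iota\co L^{--}G\hookto\Gr_G$ along $q\co LG\to\Gr_G$) is the natural way to make that precise. One tiny wording nit: the fiber product condition gives an \emph{isomorphism} of pairs $(\calF_h,\al_h)\simeq(\calF^0,g^-)$ rather than equality, but this still forces $\calF_h$ to be trivial, and then $g^-_h=g^-$ follows cleanly from $\iota(g^-_h)=q(h)=\iota(g^-)$ and injectivity of $\iota$.
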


\subsection{Schubert varieties} \label{SchubertGrass}
Let $G$ be a connected reductive group over an arbitrary field $F$. By a Theorem of Grothendieck \cite[XIV, 1.1]{SGA3}, there exists a maximal $F$-torus $T\subset G$. The absolute Weyl group is
\[
W^{\rm abs}_0\defined \on{Norm}_G(T)(\sF)/T(\sF).
\]
The Weyl group $W^{\rm abs}_0$ acts on the $\sF$-cocharacter lattice $X_*(T)$. As all maximal $\sF$-tori are conjugate, the set $X_*(T)/W^{\rm abs}_0$ parametrizes the $G_\sF$-conjugacy classes of geometric cocharacters. Each class $\{\mu\}\in X_*(T)/W^{\rm abs}_0$ has a field of definition $E/F$ which is a finite separable extension. The $\{\mu\}$-Schubert variety is the reduced $L^+G_\sF$-orbit closure 
\begin{equation}\label{schubertgrass}
\Gr_G^{\leq \{\mu\}} \defined \overline{L^+G_\sF\cdot  z^\mu \cdot e_0},
\end{equation}
where $e_0\in \Gr_G$ denotes the base point. The scheme $\Gr_G^{\leq \{\mu\}}$ is a projective variety which is defined over $E$. The unique open $L^+G_E$-orbit
\begin{equation}\label{openorbit}
 \Gr_G^{ \{\mu\}} \hookto \Gr_G^{\leq \{\mu\}} 
\end{equation}
is a smooth dense open subvariety of $\Gr_G^{\leq \{\mu\}}$. If the class $\{\mu\}$ has an $E$-rational representative $\mu$, then we simply write $\Gr_G^\mu$ (resp. $\Gr_G^{\leq \mu}$). The Cartan decomposition $LG(\sF)=L^+G(\sF)\cdot LT(\sF)\cdot L^+G(\sF)$ implies that there is a presentation on reduced loci
\begin{equation}\label{presentgrass}
(\Gr_{G})_\red=\on{colim}_{\{\la\}} \bigcup_{\{\mu\} \in \Ga_F\cdot \{\la\} }\Gr_{G}^{\leq \{\mu\}},
\end{equation}
where $\{\la\}$ runs through the Galois orbits in $X_*(T)/W^{\rm abs}_0$, and each finite disjoint union of Schubert varieties is defined over $F$.

\subsection{Torus actions on affine Grassmannians} Let $G$ be a connected reductive group over an arbitrary field $F$. Let $\chi\co \bbG_{m,F}\to G$ be a $F$-rational cocharacter. The cocharacter $\chi$ induces via the composition
\begin{equation}\label{Gmaction}
\bbG_{m}\subset L^+\bbG_m\overset{L^+\!\chi\phantom{h}}{\longto}L^+G\subset LG
\end{equation}
a (left) $\bbG_m$-action on the affine Grassmannian $\Gr_G$. As in \eqref{hyperloc} we obtain maps of $F$-spaces
\begin{equation}\label{hyperlocgrass}
(\Gr_G)^0\leftarrow (\Gr_G)^\pm\to \Gr_G.
\end{equation}
Let us mention the following lemma which implies the ind-representability of the spaces \eqref{hyperlocgrass}, in light of Theorem \ref{Gmthm}.

\begin{lem}\label{loclinaffine}
The $\bbG_m$-action on $\Gr_G$ via \eqref{Gmaction} is Zariski locally linearizable. 
\end{lem}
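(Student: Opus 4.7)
The plan is to exhibit, on each Schubert variety in $\Gr_G$, a $\bbG_m$-equivariant closed embedding into a projective space with linear $\bbG_m$-action; the required Zariski cover by linearizable opens then arises by intersecting with the standard affine charts on that projective space.

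First, since left multiplication by $L^+G$ preserves every Schubert variety $\Gr_G^{\leq\{\mu\}}$, the presentation \eqref{presentgrass} is already $\bbG_m$-stable via $L^+\chi$. It therefore suffices to produce, for each finite union of Schubert varieties $X\subset\Gr_G$, a finite Zariski cover of $X$ by $\bbG_m$-stable affine opens on which $\bbG_m$ acts linearly.

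Second, I fix a faithful representation $G\hookto\GL_n$. Since $G$ is reductive, $\GL_n/G$ is affine, so the induced map of ind-schemes $\Gr_G\hookto\Gr_{\GL_n}$ is a closed immersion (cf.\ \cite[Prop.\ 1.2.6]{Zhu} or \cite[Prop.\ 4.6]{LS97}). The determinant line bundle on $\Gr_{\GL_n}$ is $L^+\GL_n$-equivariant and is ample on each Schubert variety; its pullback $\calL$ to $\Gr_G$ is then $L^+G$-equivariant, its restriction $\calL|_X$ is ample, and both carry compatible $\bbG_m$-equivariant structures via $L^+\chi$. For $m\gg 0$, the finite-dimensional $F$-space $V:=H^0(X,\calL^{\otimes m})$ thus carries a linear $\bbG_m$-representation, and the evaluation map provides a $\bbG_m$-equivariant closed embedding
\[
X\;\hookto\;\bbP(V^{*}).
\]

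Third, pick a basis $v_0,\ldots,v_N$ of $V$ consisting of $\bbG_m$-weight vectors. The standard affine charts $D_{+}(v_i)\subset\bbP(V^{*})$ are $\bbG_m$-stable with linear $\bbG_m$-action (governed by the differences of the weights of the $v_j$), and their intersections with $X$ form a finite Zariski cover of $X$ by $\bbG_m$-stable affine opens on which $\bbG_m$ acts linearly. This proves Zariski local linearizability; the only subtle point is producing the $L^+G$-equivariant ample line bundle on each Schubert variety, for which one pulls back the canonical determinant bundle from $\Gr_{\GL_n}$ as above.
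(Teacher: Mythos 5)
Your proof is correct, but it takes a genuinely different route from the paper. The paper also reduces to $G=\GL_n$ via a faithful representation and conjugates $\chi$ into the diagonal torus, but then works with the explicit lattice model: it embeds the bounded lattice piece $\Gr_{\GL_n,i}$ (lattices $z^i\La_0\subset\La\subset z^{-i}\La_0$) into $\on{Quot}(V_i)$ for $V_i=z^{-i}\La_0/z^i\La_0$, a disjoint union of classical Grassmannians. The key point there is that $V_i$ has a canonical basis preserved by the (now diagonal) $\bbG_m$-action, so the standard affine charts of $\on{Quot}(V_i)$ are automatically $\bbG_m$-stable — no line-bundle or cohomological input is needed, and the charts for varying $i$ fit together. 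You instead run the standard Sumihiro-type argument: produce an ample $\bbG_m$-equivariant line bundle (via $L^+\GL_n$-equivariance of the determinant bundle and pullback), embed $X$ equivariantly into $\bbP(V^*)$ for $V=H^0(X,\calL^{\otimes m})$, split $V$ into weight spaces, and take the resulting $\bbG_m$-stable standard charts. This is more conceptual and works without the diagonalization step, but it trades the paper's elementary, hands-on construction for appeals to equivariance and ampleness of the determinant bundle (which is, incidentally, exactly what the paper's Quot-scheme embedding realizes concretely). Both are valid; note that, per the paper's definition, no compatibility between the covers of the different $X_i$ is required, so your argument applied termwise over a $\bbG_m$-stable presentation by finite unions of Schubert varieties suffices.
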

\begin{proof} Let $ G\hookto \Gl_n$ be a faithful representation. The fppf quotient $\Gl_n/G$ is affine, and hence the map $ \Gr_G\hookto \Gr_{\Gl_n}$ is representable by a closed immersion (cf. \cite[Prop 1.2.6]{Zhu}) and equivariant for the $\bbG_m$-action via $\bbG_{m}\overset{\chi\;}{\to} G\to \Gl_n$. We reduce to the case $G=\GL_n$. After conjugation, we may assume that $\bbG_m\to \Gl_{n}$ factors through the diagonal matrices. Let $\La_0$ denote the standard $F\pot{z}$-lattice of $F\rpot{z}^n$ and let $\Lambda_{0,R} = R[\![z]\!] \otimes_{F\pot{z}} \Lambda_0$. We write $\Gr_G=\on{colim}_i\Gr_{G,i}$ where 
\[
\Gr_{G,i}(R) =\{\La\subset R\rpot{z}^n\;|\; z^i\La_{0,R}\subset\La\subset z^{-i}\La_{0,R}\},
\]
is the moduli space of $R\pot{z}$-lattices in $R\rpot{z}^n$ bounded by $i\geq 0$. The $F$-vector space $V_i=z^{-i}\La_0/z^i\La_0$ has a canonical basis and is equipped with a linear $\bbG_m$-action which preserves this basis. The projective $F$-scheme $\on{Quot}(V_i)$ which parametrizes quotients of $V_i$ is a finite disjoint union of the classical Grassmannians $\on{Grass}_d(V_i)$ for $0\leq d\leq \dim(V_i)$. Then the closed immersion
\[
p_i\co \Gr_{G,i}\hookto \on{Quot}(V_i), \;\; \La\mapsto  z^{-i}\La_0/\La
\]
is $\bbG_m$-equivariant with a linear action on the target. For varying $i$, the maps $p_i$ can be arranged into a system compatible with the standard affine opens of $\on{Quot}(V_i)$ given by the canonical basis of $V_i$. The lemma follows.
\end{proof}

\subsubsection{Fixed points, attractors and repellers}
Our aim is to express \eqref{hyperlocgrass} in terms of group theoretical data related to the cocharacter $\chi$, cf. Proposition \ref{gctgeo} below. 

Let $\chi$ act on $G$ via conjugation $(\la,g)\mapsto \chi(\la)\cdot g\cdot\chi(g)^{-1}$. The fixed points $M=G^0$ is the centralizer of $\chi$ and defines a connected reductive subgroup of $G$. The attractor $P^+=G^+$ (resp. the repeller $P^-=G^-$) is a parabolic subgroup of $G$ with $P^+\cap P^-=M$. By \eqref{hyperloc} we have natural maps of $F$-groups
\begin{equation}\label{hyperlocgroup}
M\leftarrow P^\pm \to G,
\end{equation}
and the map $P^\pm\to M$ identifies $M$ as the maximal reductive quotient, cf. \cite[2.1]{CGP10}.

\begin{prop} \label{gctgeo}
The maps \eqref{hyperlocgroup} induce a commutative diagram of $F$-ind-schemes
\begin{equation}\label{gctgeo_diag}
\begin{tikzpicture}[baseline=(current  bounding  box.center)]
\matrix(a)[matrix of math nodes, 
row sep=1.5em, column sep=2em, 
text height=1.5ex, text depth=0.45ex] 
{\Gr_M & \Gr_{P^\pm} & \Gr_G \\ 
(\Gr_{G})^0& (\Gr_{G})^\pm& \Gr_{G}, \\}; 
\path[->](a-1-2) edge node[above] {}  (a-1-1);
\path[->](a-1-2) edge node[above] {}  (a-1-3);
\path[->](a-2-2) edge node[below] {}  (a-2-1);
\path[->](a-2-2) edge node[below] {} (a-2-3);
\path[->](a-1-1) edge node[left] {$\simeq$} (a-2-1);
\path[->](a-1-2) edge node[left] {$\simeq$} (a-2-2);
\path[->](a-1-3) edge node[left] {$\id$} (a-2-3);
\end{tikzpicture}
\end{equation}
where the vertical maps are isomorphisms. 
\end{prop}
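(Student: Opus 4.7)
The plan is \textup{(i)} verify that the inclusions $M \leftarrow P^\pm \to G$ induce well-defined maps into the fixed and attractor loci, \textup{(ii)} check that the diagram commutes by functoriality, and \textup{(iii)} show the vertical maps are isomorphisms by a local analysis over the big cell of $G$. Steps \textup{(i)} and \textup{(ii)} are formal. The $\bbG_m$-action on $\Gr_M$ induced by \eqref{Gmaction} is trivial, since $\chi(\bbG_m) \subset L^+M$ centralizes $LM$ and hence acts by the identity on the quotient $LM/L^+M$. The $\bbG_m$-action on $\Gr_{P^\pm}$ extends to an $\bbA^1$-monoid action: by the dynamic characterization $P^\pm = G^\pm$ of \cite[2.1]{CGP10}, the conjugation action of $\bbG_m$ on $P^\pm$ itself extends to an $\bbA^1$-monoid action, which passes through $L(\str)$ and descends to the quotient by $L^+P^\pm$. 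By functoriality of $X \mapsto X^0$ and $X \mapsto X^\pm$ (Theorem~\ref{Gmthm}), both maps factor uniquely through $(\Gr_G)^0$ and $(\Gr_G)^\pm$ respectively, yielding the commutative diagram \eqref{gctgeo_diag}.

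For step \textup{(iii)}, note first that $\Gr_M \to \Gr_G$ is a closed immersion (since $G/M$ is affine, cf.\,\cite[Prop.\,1.2.6]{Zhu}) and that $\Gr_{P^\pm} \to \Gr_G$ is a monomorphism via $LP^\pm \cap L^+G = L^+P^\pm$ (which holds because $P^\pm \hookto G$ is a closed immersion and $z$ is a non-zerodivisor in $R\pot{z}$). To upgrade these to isomorphisms, I would exploit the big-cell open immersion $U^- \times M \times U^+ \hookto G$ given by multiplication, where $U^\pm$ are the unipotent radicals of $P^\pm$. Combined with the $L^+G$-action on $\Gr_G$, translates of this open cover $\Gr_G$ fpqc-locally: every $R$-point is represented, after an fpqc cover, by $g = u^- m u^+$ with $u^\pm \in LU^\pm(R)$ and $m \in LM(R)$. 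Since $\chi(\la) \in L^+G$, modulo $L^+G$ one has $\chi(\la)\cdot[g] = [\chi(\la) g \chi(\la)^{-1}]$ in $\Gr_G$, and the conjugation distributes across the triple factorization. Picking a $\bbG_m$-equivariant linearization $U^\pm \cong \bbA^{d_\pm}$ with $\chi$ acting on coordinates by strictly positive (resp.\ negative) weights on $U^+$ (resp.\ $U^-$), a direct computation on $L\bbG_a$ gives $(LU^+)^+ = LU^+$ while $(LU^-)^+ = \{1\}$. Translated back to $\Gr_G$, the condition $[g] \in (\Gr_G)^+$ forces the image of $u^-$ in $\Gr_{U^-} = LU^-/L^+U^-$ to be trivial, whence $u^- \in L^+U^-$ can be absorbed into the $L^+G$-factor and $[g]$ lies in the image of $\Gr_{P^+}$. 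The analogous argument with $(\str)^-$ handles $\Gr_{P^-}$, and the doubled condition (both $u^- \in L^+U^-$ and $u^+ \in L^+U^+$) gives $(\Gr_G)^0 = \Gr_M$.

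The main obstacle is promoting this pointwise surjectivity to an isomorphism of ind-schemes. This relies on the $\bbG_m$-action on $\Gr_G$ being Zariski locally linearizable (Lemma~\ref{loclinaffine}), so that Theorem~\ref{Gmthm} is in force and the attractor/fixed-point loci are bona fide ind-schemes, combined with the monomorphism property of the vertical maps and fpqc descent for the big-cell covers. Together these identify $\Gr_M$ and $\Gr_{P^\pm}$ with $(\Gr_G)^0$ and $(\Gr_G)^\pm$ as sub-ind-schemes of $\Gr_G$.
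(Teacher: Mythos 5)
Your plan and much of the setup are correct (and your $\bbA^1$-monoid construction of the map $\Gr_{P^\pm}\to(\Gr_G)^\pm$ is a legitimate alternative to the paper's Rees construction on torsors), but step (iii) has two genuine gaps. First, the covering claim is false: $L^+G$-translates of the big cell $L^{--}G\cdot e_0$ do \emph{not} cover $\Gr_G$. Already for a torus $T$ and reduced $R$ one has $L^{--}T(R)=\ker(R[z^{-1}]^\times\to R^\times)=\{1\}$, so the only such translate is the base point while $\Gr_T$ has infinitely many; for semisimple $G$ the points $z^\la\cdot e_0$ with $\la\neq 0$ are likewise missed. Moreover, translation by a general element of $L^+G$ (or of $LG$) is not $\bbG_m$-equivariant --- $\chi$ lands in $L^+M$, which does not commute with $L^+G$ --- so even if such translates did cover, the attractor and fixed-point loci would not localize along them, and there is nothing to descend along. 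The correct input is the Iwasawa decomposition $G(F\rpot{z})=P^\pm(F\rpot{z})\cdot G(F\pot{z})$ over $F$ algebraically closed, which gives bijectivity of the vertical maps on $F$-points; $LP^\pm$-equivariance (and $LP^\pm$ \emph{does} stabilize $(\Gr_G)^\pm$) then reduces the ind-scheme assertion to a neighbourhood of $e_0$, which the big cell supplies.

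Second, your factorization argument on the big cell only handles reduced test rings. For non-reduced $R$ the functor $(L^-G)^0$ (resp.\,$(L^-G)^\pm$) is cut out by invariance under $\chi(\la)$-conjugation for $\la\in S^\times$ only, whereas $L^-M$ (resp.\,$L^-P^\pm$) requires invariance for all $\la\in S[z^{-1}]^\times$, a strictly larger group; these coincide for reduced $S$ but not in general, so a priori the closed immersion $L^-M\hookto(L^-G)^0$ could be proper. The paper closes this by reducing to Noetherian $R$, using formal smoothness of $L^-M$ and $L^-P^\pm$ to lift modulo the nilradical $I\subset R$, and only then applying the unique triple factorization $g=n^+mn^-$ to the remaining element in $\ker(L^-G(R)\to L^-G(R/I))$ --- which lies in an infinitesimal neighbourhood of the identity, hence inside the open cell of $G$, making the factorization available. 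Your appeal to ``fpqc descent for the big-cell covers'' does not engage with either of these issues.
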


\begin{rmk} The statement $\Gr_M\simeq (\Gr_G)^0$ appeared in \cite[Thm 1.3.4]{Zhu09}, but its proof contains a mistake: \cite[Lem 1.3.5]{Zhu09} only holds in the case that $R$ is a field and fails otherwise. The authors were told by the author of \cite{Zhu09} (private communication) that he was aware of the mistake and knew how to fix it. In view of Proposition \ref{gctgeo} the results of \cite{Zhu09} remain valid. 
\end{rmk}

Let us construct the diagram in Proposition \ref{gctgeo}. As the $\bbG_m$-action on $\Gr_M$ is trivial, the natural map $\Gr_M\to \Gr_G$ factors as $\Gr_M\to (\Gr_G)^0\to \Gr_G$. We use a construction explained in Heinloth \cite[1.6.2]{He} to define the map $\Gr_{P^+}\to (\Gr_G)^+$ in terms of the moduli description \eqref{bundleaffgrass} (the construction of $\Gr_{P^-}\to (\Gr_G)^-$ is given by inverting the $\bbG_m$-action). The $\bbG_m$-action $P^+\times \bbG_{m,F}\to P^+, (p,\la)\mapsto \chi(\la)\cdot p\cdot \chi(\la)^{-1}$ via conjugation extends via the monoid action of $\bbA^1$ on $(\bbA_F^1)^+$ in \eqref{flow} to a monoid action 
\begin{equation}\label{monoidaction}
m_\chi\co P^+\times \bbA^1_F\longto P^+
\end{equation}
such that $m_\chi(p,0)\in M$. We let $\on{gr}_\chi\co P^+\times \bbA^1_F\to P^+\times \bbA^1_F$, $(p, \la)\mapsto (m_\chi(p,\la),\la)$ viewed as an $\bbA^1_F$-group homomorphism. Then the restriction $\on{gr}_\chi|_{\{1\}}$ is the identity whereas $\on{gr}_\chi|_{\{0\}}$ is the composition $P^+\to M\to P^+$. For a point $(\calF^+,\al^+)\in \Gr_{P^+}(R)$, the Rees bundle is
\begin{equation}\label{Reesbundle}
\on{Rees}_\chi(\calF^+,\al^+)\defined \on{gr}_{\chi , *}(\calF^+_{ \bbA^1_R},\al^+_{ \bbA^1_R}) \in \Gr_{P^+}(\bbA^1_R),
\end{equation}
where $\on{gr}_{\chi , *}$ denotes the push forward under the $\bbA^1$-group homomorphism. The Rees bundle $\on{Rees}_\chi(\calF^+,\al^+)|_{\{1\}_R}$ is equal to $(\calF^+,\al^+)$ whereas $\on{Rees}_\chi(\calF^+,\al^+)|_{\{0\}_R}$ is the image of $(\calF^+,\al^+)$ under the composition $\Gr_{P^+}\to \Gr_M\hookto \Gr_{P^+}$. One checks that $\on{Rees}_\chi(\calF^+,\al^+)$ is $\bbG_m$-equivariant, and hence defines an $R$-point of $(\Gr_{P^+})^+$. As the Rees construction is functorial, we obtain a map of $F$-ind-schemes
\begin{equation}\label{Reesmap}
\on{Rees}_\chi\co \Gr_{P^+}\to (\Gr_{P^+})^+,
\end{equation}
which is inverse to the map $(\Gr_{P^+})^+\to \Gr_{P^+}$ given by evaluating at the unit section. We define the map $\Gr_{P^+}\to (\Gr_G)^+$ to be the composition $\Gr_{P^+}\simeq (\Gr_{P^+})^+\to (\Gr_G)^+$ where the latter map is deduced from the natural map $\Gr_{P^+}\to \Gr_G$. This constructs the commutative diagram in Proposition \ref{gctgeo}.

\begin{proof}[Proof of Proposition \ref{gctgeo}] We may assume $F$ to be algebraically closed. The Iwasawa decomposition $G(F\rpot{z})=P^\pm(F\rpot{z})\cdot G(F\pot{z})$ (which follows from the valuative criterion applied to the proper scheme $G/P^\pm$) implies that the vertical maps are bijections on $F$-points. It is enough to see that the maps are isomorphisms of ind-schemes in an open neighborhood of the base point. By Lemma \ref{opencell} the natural map
\begin{equation}\label{gctgeo:eq1}
L^{--}G\longto \Gr_G
\end{equation}
is representable by an open immersion, and likewise for $P^\pm$ (resp. $M$) replacing $G$. Further, the map \eqref{gctgeo:eq1} is $\bbG_m$-equivariant for the conjugation action on $L^{--}G$. Hence, we are reduced to proving that the natural closed immersions (cf.~Corollary \ref{immersions}) of ind-affine ind-schemes
\begin{align}\label{gctgeo:eq2}
 L^{-}M&\longto (L^{-}G)^0\\
\label{gctgeo:eq3}
 L^{-}P^\pm &\longto (L^{-}G)^\pm
\end{align}
are isomorphisms. For any $F$-algebra $R$, we have on points
\[
(L^{-}G)^0(R)=\{g\in G(R[z^{-1}])\;|\;  \forall S\in (\mbox{$R$-Alg}), \la\in S^\times\co\, \chi(\la)\cdot g\cdot \chi(\la)^{-1}=g\},
\]   
and $L^{-}M(R)$ is by definition (remember $M=G^0$) the subset of those $g\in G(R[z^{-1}])$ such that $\chi(\la)\cdot g \cdot\chi(\la)^{-1}=g$ holds for all $\la\in S[z^{-1}]^\times$ with $S\in \textup{(}R\on{-Alg)}$. 
Now for any $R$-algebra $S$, we may use the polynomial ring $S[t]$ as a test algebra in the definition of $(LG)^0$.
This gives a condition in $G(S[z^{-1},t])$.
Quotienting by the ideal $(z^{-1}-t)$ gives a condition in $G(S[z^{-1}])$ which is the condition defining $L^-M$ for the $R$-algebra $S$.
The reasoning in the case of \eqref{gctgeo:eq3} is similar. 
The proposition follows.
\end{proof}

The following lemma is the analogue of Proposition \ref{gctgeo} over a discrete valuation ring, and is needed in the proof of Theorem \ref{PZgctgeoBD} below.

\begin{lem} \label{gctgeo_over_Z}
Assume that $(G,\chi)$ are defined over a discrete valuation ring $\calO$, i.e., $G$ is a reductive group scheme over $\calO$ with geometrically connected fibers, and $\chi\co \bbG_{m,\calO}\to G$ a cocharacter. Then \eqref{gctgeo_diag} is defined over $\calO$, and the vertical maps are isomorphisms.  
\end{lem}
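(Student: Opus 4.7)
The plan is to adapt the proof of Proposition \ref{gctgeo} essentially verbatim, noting that the only field-specific step — the use of Iwasawa to check bijectivity on $F$-points — can simply be dropped, since it suffices to verify that the vertical maps are isomorphisms of ind-schemes on an open neighborhood of the base section. The construction of the diagram itself, via the natural maps $\Gr_M \to (\Gr_G)^0$ and the Rees bundle $\on{Rees}_\chi$ from \eqref{Reesbundle}, is manifestly base-independent and produces the $\bbZ_p$-version of \eqref{gctgeo_diag} at no cost. That the vertical arrows are closed immersions follows from the $\bbZ_p$-versions of Theorem \ref{Gmthm} and Corollary \ref{immersions}, applied in combination with the $\bbG_m$-equivariance of the inclusions $\Gr_M, \Gr_{P^\pm} \hookto \Gr_G$ coming from \eqref{hyperlocgroup}.

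First I would check that Lemma \ref{opencell} generalizes to $\bbZ_p$. Its proof proceeds by deformation theory on $\bbP^1_R$, reducing to local Noetherian $R$ and exploiting the vanishing $H^1(\bbP^1_{R_0}, \calO_{\bbP^1_{R_0}}) = 0$ together with the smoothness of $G$; all of this is characteristic-free and makes equal sense for $\bbZ_p$-algebras. Thus $L^{--}G \hookto \Gr_G$ remains an open immersion over $\bbZ_p$, together with its analogues for $M$ and $P^\pm$, and these open immersions are $\bbG_m$-equivariant. Using that the open cell covers a neighborhood of the base section, the problem reduces to showing that the natural closed immersions
\begin{equation*}
L^- M \longto (L^- G)^0, \qquad L^- P^\pm \longto (L^- G)^\pm
\end{equation*}
are isomorphisms of $\bbZ_p$-ind-schemes, exactly as in the proof of Proposition \ref{gctgeo}.

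For reduced $\bbZ_p$-algebras $R$, the key input is the elementary fact that for any reduced commutative ring $S$ one has $S[z^{-1}]^\times = S^\times$ (the units of a polynomial ring over a reduced ring are the constant units); this is precisely what is needed to make the point-set argument from \emph{loc.\,cit.} go through over $\bbZ_p$. For general $\bbZ_p$-algebras $R$, I would reduce to $R$ Noetherian by filtered colimits, then use that $M$ is smooth over $\bbZ_p$ (as a Levi in the reductive $\bbZ_p$-group $G$, via the dynamic construction) to conclude that $L^- M$ is formally smooth over $\bbZ_p$, and hence reduce to the case where $g \in (L^-G)^0(R)$ lies in the kernel of $L^-G(R) \to L^-G(R/I)$ for $I$ the nilradical of $R$. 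Then $g$ reduces to the identity on $\Spec((R/I)[z^{-1}])$, and since the dynamical big cell $N^- \times M \times N^+ \hookto G$ is an open immersion over $\bbZ_p$ (cf.\,\cite[\S 2.1]{CGP10}) containing the identity section, the closed subscheme $g^{-1}(G \setminus N^- M N^+) \subset \Spec(R[z^{-1}])$ is empty (its underlying topological space agrees with that of its nilreduction, which is empty). This yields a unique factorization $g = n^+ \cdot m \cdot n^-$, and $\bbG_m$-invariance of $g$ together with the absence of $\bbG_m$-fixed points on $N^\pm \setminus \{1\}$ forces $n^\pm = 1$, hence $g \in L^- M(R)$. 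The argument for $L^- P^\pm \to (L^- G)^\pm$ is entirely analogous.

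I do not anticipate any serious obstacle: every ingredient in the proof of Proposition \ref{gctgeo} — the open cell, the smoothness of $M$, the open big cell, and the unit structure of polynomial rings over reduced rings — has an immediate $\bbZ_p$-counterpart. The only mild point requiring verification is that the dynamical big cell is open in a reductive $\bbZ_p$-group scheme, which is standard.
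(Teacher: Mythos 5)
Your treatment of the open cell, the reduction to \eqref{gctgeo:eq2} and \eqref{gctgeo:eq3}, and the verification that $L^-M \to (L^-G)^0$ and $L^-P^\pm \to (L^-G)^\pm$ are isomorphisms over $\bbZ_p$ is correct and usefully fills in details the paper only sketches (the paper simply says ``by the same proof''). However, there is a genuine gap at the heart of your plan: the claim that the Iwasawa step ``can simply be dropped'' because ``it suffices to verify that the vertical maps are isomorphisms of ind-schemes on an open neighborhood of the base section.'' This is false in general. A closed immersion that restricts to an isomorphism over an open subscheme of the target need not be an isomorphism globally; you must also know that translates of that open subscheme cover everything. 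Over an algebraically closed field $F$, the Iwasawa decomposition supplies exactly this covering: it gives bijectivity of $\iota^0$ on $\bar{F}$-points, so the $LM(\bar{F})$-translates of the open cell cover $(\Gr_G)^0$ and the local isomorphism globalizes. Over $\bbZ_p$ no such covering is automatic.

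The paper explicitly flags this obstacle (``We do not know whether the $\bbZ_p$-ind-schemes are ind-flat, and hence we have to argue differently'') and devotes the bulk of the proof to resolving it. After reducing by fpqc descent to $\breve{\bbZ}_p$, it introduces $V_M = \bigcup_m m\cdot L^{--}M\cdot e_0$ with $m$ ranging over $LM(\breve{\bbZ}_p)$, notes that $\iota^0|_{V_M}$ is an isomorphism onto an open, that $\iota^0_{\bbQ_p}$ is an isomorphism, and reduces to showing $V_M \coprod \Gr_{M,\bbQ_p} \to \Gr_M$ is an fpqc cover. The nontrivial point is surjectivity, i.e.\ that $V_M$ contains the special fiber $\Gr_M\otimes\bar{\bbF}_p$; this is deduced from the surjectivity of the reduction map $LM(\breve{\bbZ}_p) \to LM(\bar{\bbF}_p)$, which the paper proves by lifting along the Bruhat decomposition for the split group $G_{\breve{\bbZ}_p}$ and reducing to $\bbG_m$ and $\bbA^1$. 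None of this appears in your proposal; without it you know $\iota^0$ is an isomorphism on the open cell and its $LM$-translates, but cannot conclude that these cover $(\Gr_G)^0$ --- the ideal sheaf of the scheme-theoretic image of $\iota^0$ could be nonzero away from the open cell. The same covering argument is needed for $\iota^\pm$. In short, the Iwasawa step is not removable; over $\bbZ_p$ it must be replaced by the surjectivity of $LM(\breve{\bbZ}_p)\to LM(\bar{\bbF}_p)$, and this is the real content of the lemma.
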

\begin{proof} The fixed point subgroup $M\subset G$, and the attractor (resp. repeller) subgroup $P^+\subset G$ (resp. $P^-\subset G$) are defined over $\calO$, and representable by smooth closed subgroups of $G$, cf.\,\cite{Mar15}. 
Then the functors $\Gr_M$, $\Gr_{P^\pm}$ and $\Gr_G$ are defined over $\calO$, and representable by separated $\calO$-ind-schemes of ind-finite type. 
Further, the diagram \eqref{gctgeo_diag} is defined over $\calO$ by the same construction as above. 

By Proposition \ref{gctgeo}, the vertical maps in \eqref{gctgeo_diag} are fiberwise isomorphisms, i.e., after passing to its fraction field $\on{Frac}(\calO)$ resp.~its residue field $k$. 
We do not know whether the $\calO$-ind-schemes are ind-flat, and hence we have to argue differently. 
Lemma \ref{opencell} holds for $F$ replaced with $\calO$ -- in fact for any ring -- by the same argument. Hence, the maps \eqref{gctgeo:eq2} are isomorphisms over $\calO$ by the same proof as in Proposition \ref{gctgeo}. 
By fpqc-descent, it is enough to prove that the vertical maps in \eqref{gctgeo_diag} are isomorphisms after passing to the strict Henselization $\breve{\calO}$. We consider the open subset 
\[
V_M\defined \bigcup_m\,m\cdot L^{--}M\cdot e_0\;\;\;\;\; \textup{(}\text{resp.}\,\; V_{P^\pm}\defined \bigcup_p\,p\cdot L^{--}P^\pm\cdot e_0\textup{)},
\]
of $\Gr_M$ (resp. $\Gr_{P^\pm}$), where the union runs over all $m\in LM(\breve{\calO})$ (resp. $p\in LP^\pm(\breve{\calO})$). By $LM$-equivariance (resp. $LP^\pm$-equivariance) of the map $\Gr_M\to (\Gr_G)^0$ (resp. of the map $\Gr_{P^\pm}\to (\Gr_G)^0$), it is an isomorphism restricted to $V_M$ (resp. $V_{P^\pm}$). As we already know that the maps are isomorphism over $\on{Frac}(\calO)$, it is enough to show that the map $V_M\coprod \Gr_{M,\on{Frac}(\calO)}\to \Gr_M$ (resp. $V_{P^\pm}\coprod \Gr_{P^\pm,\on{Frac}(\calO)}\to \Gr_{P^\pm}$) is an fpqc-cover. Flatness is immediate from the construction, and we need to show the surjectivity, i.e., that $V_M$ (resp. $V_{P^\pm}$) contains the special fiber $\Gr_M\otimes \bar{k}$ (resp. $\Gr_{P^\pm}\otimes \bar{k}$). 

Let us start with the case of $M$. As $\Gr_M$ is of ind-finite type, and $\Gr_M(\bar{k})=LM(\bar{k})/L^+M(\bar{k})$, it is enough to prove that the reduction map $LM(\breve{\calO})\to LM(\bar{k})=M(\bar{k}\rpot{z})$ is surjective. As $\bar{k}\rpot{z}$ is a field and $G_{\breve{\calO}}$ is split, we have the Bruhat decomposition
\[
M(\bar{k}\rpot{z})\,=\,\coprod_{w\in W_{0,M}}\left(U^w_M\cdot \dot{w}\cdot B_M\right)(\bar{k}\rpot{z}),
\]
where $U^w_M$ and $B_M$ are defined over $\breve{\calO}$ and where $W_{0,M}$ is a constant finite \'etale $\breve{\calO}$-group. Clearly, the elements $\dot{w}$ lift. Further, we have as $\breve{\calO}$-schemes $U^w_M\simeq \bbA^{l}_{\breve{\calO}}$ and $B_M\simeq \bbG_{m,\breve{\calO}}^m\times\bbA^n_{\breve{\calO}}$ for some $l,m,n\in\bbZ_{\geq 0}$. Hence, to show the surjectivity of $LM(\breve{\calO})\to LM(\bar{k})$, we reduce to the case of $\bbG_{m,\breve{\calO}}$ and $\bbA^1_{\breve{\calO}}$ (because the $L$-construction commutes with finite products). But the reduction maps $\breve{\calO}\rpot{t}^\times\to \bar{k}\rpot{t}^\times$ and $\breve{\calO}\rpot{t}\to \bar{k}\rpot{t}$ are clearly surjective. This finishes the case of $M$, and the case of $P^\pm$ is similar. The lemma follows.
\end{proof}

\subsubsection{Connected components} We discuss connected components of $\Gr_M$ and $\Gr_{P^\pm}$.

\begin{lem} \label{basicgeo}
i\textup{)} The map $p^\pm\co \Gr_{P^\pm}\to \Gr_G$  is a schematic quasi-compact monomorphism, and the restriction to each connected component of $\Gr_{P^\pm}$ is a locally closed immersion. \smallskip\\
ii\textup{)} The map $q^\pm\co \Gr_{P^\pm}\to \Gr_M$ is ind-affine with geometrically connected fibers, and induces an isomorphism on the group of connected components $\pi_0(\Gr_{P^\pm_E})\simeq\pi_0(\Gr_{M_E})$ for any field extension $E/F$.
\end{lem}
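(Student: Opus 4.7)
I would tackle (ii) before (i), since the structure of $q^\pm$ as a fibration with connected fibers is precisely what indexes the connected components appearing in (i). The main algebraic input is the Levi decomposition $P^\pm = M \ltimes N^\pm$, which provides a section $s\co \Gr_M \hookrightarrow \Gr_{P^\pm}$ of $q^\pm$ coming from $M \hookrightarrow P^\pm$, and which lets one analyze $q^\pm$ fiberwise via $\Gr_{N^\pm}$.

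For (ii), I would first establish that $\Gr_{N^\pm}$ is ind-affine and geometrically connected. Since $N^\pm$ is split unipotent, it admits a filtration by copies of $\bbG_a$, and every $N^\pm$-torsor on $\bbP^1_R$ is Zariski locally trivial (using $H^1(\bbP^1_R,\calO) = 0$). A variant of Lemma \ref{opencell} then identifies $\Gr_{N^\pm}$ with $L^{--}N^\pm$, which is an ind-affine ind-scheme isomorphic to an infinite coproduct of affine spaces, hence geometrically connected. To globalize, I would use that $LP^\pm \to \Gr_{P^\pm}$ admits sections Zariski-locally on the target (Lemma \ref{opencell} applied to $P^\pm$), and combine this with the factorization $LP^\pm \simeq LM \ltimes LN^\pm$ to identify $q^\pm$ locally with the projection $\Gr_M \times_F \Gr_{N^\pm} \to \Gr_M$. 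Ind-affineness and geometric connectedness of fibers are then immediate. For the $\pi_0$ bijection: surjectivity on connected components is clear from the section $s$, and injectivity follows since each geometric fiber of $q^\pm$ is connected.

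For (i), the monomorphism property follows from the moduli interpretation: two reductions of a $G$-bundle $\calF$ on $\bbP^1_R$ to $P^\pm$-bundles that agree on the schematically dense open $\bbP^1_R \bslash \{0\}$ correspond to two sections of the proper (hence separated) $\bbP^1_R$-scheme $\calF \times^G (G/P^\pm)$ agreeing on that open, hence they coincide. For the assertion that each connected component of $\Gr_{P^\pm}$ maps as a locally closed immersion, I would use the Iwasawa-type decomposition $LG = \coprod_\nu LP^\pm \cdot z^\nu \cdot L^+G$ where $\nu$ ranges over a set of representatives for $\pi_0(\Gr_M)$. Each $LP^\pm$-orbit $S^\nu \subset \Gr_G$ is a semi-infinite orbit and is locally closed, while by (ii) each connected component of $\Gr_{P^\pm}$ maps bijectively onto such an $S^\nu$. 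Combined with the monomorphism property, each component-restriction is a locally closed immersion. For schematic quasi-compactness: a given Schubert variety $\Gr_G^{\leq \{\mu\}}$ meets only finitely many $S^\nu$ (essentially those with $\nu$ a weight of $V_{\{\mu\}}$), so the preimage of any quasi-compact subspace of $\Gr_G$ involves only finitely many components, and is therefore schematic and quasi-compact.

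The main obstacle is justifying that the semi-infinite orbits $S^\nu$ are locally closed in $\Gr_G$ and that only finitely many meet each Schubert variety, in the generality of non-split groups and the non-standard parabolics $P^\pm$ arising dynamically from $\chi$. I would reduce to the algebraically closed case, then invoke Proposition \ref{gctgeo} together with the classical Mirkovi\'c--Vilonen analysis of semi-infinite orbits for a parabolic in a split reductive group to conclude. The remaining verifications (separatedness arguments, the Levi splittings at the loop group level, and compatibility with the factorizations of $L^+P^\pm$) are formal once this geometric input is in place.
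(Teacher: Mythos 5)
The paper's own proof of this lemma is very short and takes a different route: it identifies $p^\pm$ and $q^\pm$ with the attractor/fixed-point maps via Proposition \ref{gctgeo}, then invokes Theorem \ref{Gmthm} ii) (relying on the Zariski-local linearizability of Lemma \ref{loclinaffine}) to get the schematic, ind-affine, and geometrically-connected-fibers assertions in one stroke; and for the locally closed immersion claim it embeds $\Gr_G$ $\bbG_m$-equivariantly into $\on{colim}_i \bbP(V_i)$ and reduces to the elementary Bialynicki--Birula fact that each component of $\bbP(V_i)^\pm \to \bbP(V_i)$ is a locally closed immersion. Your approach — Levi decomposition $P^\pm = M \ltimes N^\pm$ and analysis of $\Gr_{N^\pm}$ for part (ii), and Iwasawa decomposition plus semi-infinite orbits for part (i) — is a genuinely different, more group-theoretic route. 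It is broadly workable, but note what it costs: for (ii) you need Zariski-local triviality of $LM \to \Gr_M$ to present $q^\pm$ locally as a $\Gr_{N^\pm}$-bundle, which only holds after passing to a splitting field, so a base-change-and-descent step is required (the listed properties are all fpqc-local on the target, so this is fine, but it is an extra reduction the paper avoids); and for (i) you end up invoking Proposition \ref{gctgeo} together with Mirkovi\'c--Vilonen semi-infinite orbit theory, which is heavier machinery than the paper's linear-algebraic reduction to $\bbP(V_i)$.

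Two details to tighten. First, you claim sections of $LP^\pm \to \Gr_{P^\pm}$ give the local product structure $\Gr_M \times \Gr_{N^\pm} \to \Gr_M$; what is actually needed are local sections of $LM \to \Gr_M$ (so that the pullback of the $LM$-equivariant map $q^\pm$ along a section $U \to LM$ trivializes as $U \times \Gr_{N^\pm}$); as written the composition of a section $U \to LP^\pm$ with $LP^\pm \to LM \to \Gr_M$ does not match $q^\pm|_U$ without a further quotient argument. Second, in the locally-closed-immersion step you need not just that $p^\pm$ restricted to a connected component is a bijective monomorphism onto the locally closed subset $S^\nu$, but that it is scheme-theoretically a locally closed immersion; this does follow from the MV identification $\Gr_{P^\pm,\nu} \overset{\sim}{\to} S^\nu$ as ind-schemes in the split case, but the intermediate phrasing in terms of a monomorphism onto a locally closed subspace by itself is not quite enough (a bijective closed monomorphism need not be a closed immersion). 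Filling both in is routine but worth making explicit.
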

\begin{proof} Use Proposition \ref{gctgeo} to identify $p$ resp. $q$ with the map on attractor resp. repeller schemes. The ``schematic'' assertion in part i) as well as part ii) follow from Theorem \ref{Gmthm} ii) using Lemma \ref{loclinaffine}, and the fact that $\Gr_G$ is of ind-finite type. It remains to explain why the restriction of $p^\pm$ to each connected component of $\Gr_{P^\pm}=(\Gr_G)^\pm$ is a locally closed immersion. By the proof of Lemma \ref{loclinaffine}, there is an $\bbG_m$-equivariant closed embedding $\Gr_G=\on{colim}_i\Gr_{G,i}\hookto \on{colim}_i\bbP(V_i)$ where $V_i$ are finite dimensional $F$-vector spaces equipped with a linear $\bbG_m$-action. Since $(\Gr_{G,i})^\pm=\Gr_{G,i}\times_{\bbP(V_i)}\bbP(V_i)^\pm$, it is enough to show that the restriction to each connected component of $\bbP(V_i)^\pm\to \bbP(V_i)$ is a locally closed immersion. This is easy to see, and left to the reader. The lemma follows.
\end{proof}

Throughout this paper, for a connected reductive group $G$, we denote by $G_{\rm der}$ its derived group, and by $G_{\rm sc}$ the simply-connected cover of $G_{\rm der}$.

Let $T\subset G$ be a maximal (not necessarily split) $F$-torus. We may choose $T$ such that $\chi$ factors as $\bbG_m\to T\subset G$, in particular $T\subset M$. The cocharacter $\chi$ induces a natural $\bbZ$-grading on $\pi_0(\Gr_M)\simeq \pi_0(\Gr_P)$ as follows: We have $\pi_0(\Gr_{M_\sF})\simeq \pi_1(M)$ where $\pi_1(M)$ is the algebraic fundamental group in the sense of Borovoi \cite{Bo98}. The group $\pi_1(M)$ can be defined as the quotient of the Galois lattices
\begin{equation}\label{fundamentalgroup}
\pi_1(M)\;=\;X_*(T)/X_*(T_{M_\scon}),
\end{equation}
where $T_{M_\scon}$ is the preimage of $T\cap M_\der$ in $M_\scon$. Hence, there is a decomposition into connected components
\begin{equation}\label{conncomp}
\Gr_{M_\sF}\;=\;\coprod_{\nu\in \pi_1(M)}\Gr_{M_\sF,\nu},
\end{equation}
and likewise for $\Gr_{P_\sF}$ compatible with the map $q_\sF=\amalg_{\nu\in\pi_1(M)}q_{\sF,\nu}$, cf. Lemma \ref{basicgeo} ii). 

Let $P^\pm=M\ltimes N^\pm$ be the Levi decomposition. Let either $N=N^+$ or $N=N^-$, and denote by $\rho_N$ the half-sum of the roots in $N_\sF$ with respect to $T_\sF$. To every $\nu\in \pi_1(M_\sF)$, we attach the number 
\begin{equation}\label{number}
n_\nu=\lan2\rho_N,\dot{\nu}\ran,
\end{equation}
where $\dot{\nu}$ is any representative in $X_*(T)$, and $\lan\str,\str\ran\colon X^*(T)\times X_*(T)\to \bbZ$ is the natural pairing. Since $\lan\rho_N,\al^\vee\ran=0$ for all $\al^\vee\in X_*(T_{M_\scon})$, the number $n_\nu$ is well-defined. For every $m\in\bbZ$, let $\Gr_{P^\pm,m}$ (resp. $\Gr_{M,m}$) be the disjoint union of all $\Gr_{P^\pm_\sF,\nu}$ (resp. $\Gr_{M_\sF,\nu}$) with $n_\nu=m$. As $T$ and $N$ are defined over $F$, the function $\pi_1(M)\to \bbZ,\, \nu\mapsto n_\nu$ is constant on Galois orbits. Hence, $\Gr_{P^\pm,m}$ (resp. $\Gr_{M,m}$) is defined over $F$, and we get a decomposition into open and closed ind-subschemes
\begin{equation}\label{decomposition}
q^\pm=\coprod_{m\in\bbZ}q^\pm_m \co \Gr_{P^\pm}=\coprod_{m\in\bbZ}\Gr_{P^\pm,m}\longto \coprod_{m\in\bbZ}\Gr_{M,m}=\Gr_M.
\end{equation}
Likewise, we can write $p^\pm=\coprod_{m\in\bbZ}p^\pm_m$ where $p^\pm_m:=p^\pm|_{\Gr_{P^\pm,m}}$. One checks that the decomposition \eqref{decomposition} does not depend on the choice of $T$ as above. Further, the decomposition for $N=N^+$ differs by a sign from the decomposition for $N=N^-$. 


\subsection{Cohomology of constant terms} \label{gctcoho}
Let $F$ be a field whose cyclotomic character $ \Ga_F\to \bbZ_\ell^{\times}$ composed with $\bbZ_\ell^\times \hookrightarrow \bar{\bbZ}_\ell^\times$ admits a square root. 
For non-archimedean local fields $F$ with residue characteristic $p \neq \ell$, the $\ell$-adic cyclotomic character is unramified and choosing such a square root is equivalent to choosing $q^{1/2} \in \bar{\bbZ}_\ell^\times$, where $q$ is the cardinality of the residue field of $F$.

For a separated ind-scheme $X=\on{colim}_iX_i$ of ind-finite type over $F$, we denote the bounded derived category of $\algQl$-complexes with constructible cohomology sheaves by
\[
D_c^b(X)\defined\on{colim}_i D_c^b(X_i,\algQl),
\]
where the transition maps are given by push forward along the closed immersions $X_i\hookto X_j$ for $j\geq i$. We let $\on{Perv}(X)=\on{colim}_i\on{Perv}(X_i)$ the full abelian subcategory of $D_c^b(X)$ given by the heart of the perverse $t$-structure. 

For any $\ell$-adic complex $\calA$ and any integer $n\in \bbZ$, we define the operator 
\[
\calA\langle n\rangle\defined \calA[n](\nicefrac{n}{2}),
\] 
where $(\nicefrac{1}{2})$ denotes the half twist using the square root of the cyclotomic character. We say that a sheaf on a smooth equidimensional $F$-scheme of dimension $n$ is constant if it is a direct sum of copies of $\algQl\langle n\rangle$. 

\subsubsection{The geometric Satake isomorphism}\label{GeoSatSec}
Let $G$ be a connected reductive $F$-group. The affine Grassmannian $\Gr_G$ admits a presentation $\Gr_G=\on{colim}_i\Gr_{G,i}$ by $L^+G$-stable projective subschemes $\Gr_{G,i}$. The group $L^+G$ is proalgebraic, and the action factors on each $\Gr_{G,i}$ through a smooth algebraic group. Hence, we define the category of $L^+G$-equivariant perverse sheaves on $\Gr_G$ as
\[
\on{Perv}_{L^+G}(\Gr_G)=\on{colim}_i\on{Perv}_{L^+G}(\Gr_{G,i}). 
\]
By definition the $L^+G$-equivariance is a condition on the perverse sheaves and not an additional datum: as $L^+G$ is connected both concepts give equivalent categories. The category $\on{Perv}_{L^+G}(\Gr_G)$ is a $\algQl$-linear abelian category.  

\begin{dfn}\label{Sat_cat_dfn}
i) The \emph{Satake category $\Sat_{G,\sF}$ over $\sF$} is the category $\on{Perv}_{L^+G_\sF}(\Gr_{G,\sF})$.\smallskip\\
ii) The \emph{Satake category $\Sat_G$ over $F$} is the full subcategory of $\on{Perv}_{L^+G}(\Gr_G)$ of semi-simple objects $\calA$ such that after passing to $\calA_E$, for a sufficiently big finite separable extension $E/F$ which splits $G$, the $0$-th perverse cohomology sheaves $^p\on{H}^0(\iota_\mu^*\calA_E)$ and $^p\on{H}^0(\iota_\mu^!\calA_E)$ are constant for all $L^+G_E$-orbits $\iota_{\mu}\co \Gr_{G_E}^{\{\mu\}}\hookto \Gr_{G_E}$, cf. \eqref{openorbit}.

\end{dfn}

Let us make Definition \ref{Sat_cat_dfn} ii) explicit. If $\Gr_G^{\leq \{\mu\}}$ is as in \eqref{schubertgrass} defined over $E/F$, then for the \emph{normalized intersection complex}
\begin{equation}\label{normalized_IC}
\IC_{\{\mu\}}\defined j_{!*}\algQl\lan n\ran\in \Sat_{G_E},
 \end{equation}
where $j\co \Gr_G^{ \{\mu\}}\hookto \Gr_G^{\leq \{\mu\}}$ is the inclusion, and $n=\dim(\Gr_G^{\{\mu\}})$ is the dimension. 
Hence, summing over the Galois orbit of $\{\mu\}$ as in \eqref{presentgrass}, the complex
\[
\bigoplus_{\{\la\}\in\Ga_{F}\cdot \{\mu\}}\IC_{\{\la\}}
\] 
descends to $F$, and defines an object of $\Sat_G$. Since $\Sat_{G,\sF}$ is semi-simple (cf.\,\cite[Prop 1]{Ga01} and \cite[Prop 3.1]{Ri14a} for details), every object in $\on{Perv}_{L^+G}(\Gr_G)$ is a direct sum of 
\begin{equation}\label{satobjects}
(\oplus_{\{\la\}}\IC_{\{\la\}})\otimes \calL,
\end{equation}
where $\calL$ is a local system on $\Spec(F)$. The objects in $\Sat_G$ are those objects of ${\rm Perv}_{L^+G}(\Gr_G)$ where the local systems $\calL$ in \eqref{satobjects} are trivial after some finite separable extension of $F$. We have a natural pullback functor $(\str)_\sF\co \Sat_{G}\to \Sat_{G,\sF}$. We view $\Ga_F$ as a pro-algebraic group, and we let $\Rep_{\algQl}(\Ga_F)$ be the category of algebraic representations of $\Ga_F$ on finite dimensional $\algQl$-vector spaces, i.e., representations which factor through a finite quotient of $\Ga_F$. There is the Tate twisted global cohomology functor
\begin{equation}\label{galoistwist}
\begin{aligned}
\om\co & \on{Perv}_{L^+G}(\Gr_G) \longto \Rep_{\algQl}(\Ga_F) \\
&\hspace{.15in} \calA  \longmapsto \bigoplus_{i\in \bbZ}\on{H}^i(\Gr_{G,\sF}, \calA_\sF)(\nicefrac{i}{2}).
\end{aligned}
\end{equation}

\begin{lem}\label{galoislem} 
Let $E/F$ be a finite separable extension which splits $G$, and let $\calA\in \on{Perv}_{L^+G}(\Gr_G)$. Then the $\Ga_E$-Galois action on $\om(\calA)$ is trivial if and only if $\calA_E$ is a direct sum of normalized intersection complexes. In this case, $\calA\in \Sat_{G}$.
\end{lem}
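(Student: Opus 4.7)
The plan is to leverage the structural description of $P_{L^+G}(\Gr_G)$ recalled just before the lemma: every $\calA$ decomposes as a finite direct sum of objects of the form $(\oplus_{\ga}\IC_{\ga\cdot\{\mu\}})\otimes \calL_{\{\mu\}}$ with $\{\mu\}$ running over $\Ga_F$-orbits in $X_*(T)/W_0^{\on{abs}}$ and $\calL_{\{\mu\}}$ a continuous finite-dimensional $\algQl$-representation of $\Ga_F$. After base change to $E$, which splits $G$ and breaks all the relevant Galois orbits into singletons, one obtains
\[
\calA_E\;=\;\bigoplus_\mu \IC_\mu\otimes \calL_\mu
\]
with each $\calL_\mu$ now viewed as a $\Ga_E$-representation. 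Applying the fiber functor and using additivity together with compatibility with twists pulled back from $\Spec(E)$, one has $\om(\calA)|_{\Ga_E} = \bigoplus_\mu \om(\IC_\mu)\otimes \calL_\mu$, so the lemma reduces to verifying that each $\om(\IC_\mu)$ is the trivial $\Ga_E$-representation.

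\medskip

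This last fact is the key geometric input: for split $G_E$ the Schubert variety $\Gr_{G_E}^{\leq\{\mu\}}$ is stratified by Iwahori orbits isomorphic to affine spaces defined already over the prime field, and the stalks of $\IC_\mu$ along these cells are direct sums of Tate twists by a Kazhdan--Lusztig-type parity argument. The half-twist built into the normalization $\IC_\mu=j_{!*}\algQl\langle n\rangle$ together with the Tate twist $(\nicefrac{i}{2})$ in the definition of $\om$ collectively cancel all the weights, so that $\om(\IC_\mu)$ is a direct sum of copies of $\algQl$ with trivial $\Ga_E$-action. Equivalently, this is the content of the geometric Satake equivalence over $E$ combined with the convention that $\Ga_F$ acts trivially on $\hat{G}$ in the split case, so that the $\LG$-representation attached to $\IC_\mu$ has trivial Galois factor.

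\medskip

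Granting this, $\Ga_E$-triviality on $\om(\calA)$ is equivalent to $\Ga_E$-triviality on each $\calL_\mu$ (using that $\om(\IC_\mu)$ is a nonzero trivial representation), which in turn is equivalent to $\calA_E$ carrying no non-trivial local system twist, i.e.\ being a direct sum of normalized intersection complexes. For the final assertion $\calA\in \Sat_G$, the triviality of $\calL_\mu|_{\Ga_E}$ forces $\calL_\mu$ to factor through $\Gal(E'/F)$ for a Galois closure $E'/F$, and Maschke's theorem then yields semi-simplicity of each $\calL_\mu$ and hence of $\calA$. The constant-stalks condition on $\iota_\mu^*\calA_E$ and $\iota_\mu^!\calA_E$ follows by reducing summand-wise to the analogous statement for $\IC_\nu$, which is another manifestation of the same parity property of IC sheaves on the affine Grassmannian of a split group. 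The main obstacle is to cite and apply the purity statement cleanly and to verify that the normalizations chosen in this paper indeed render $\om(\IC_\mu)$ Galois-trivial; everything else is essentially formal bookkeeping.
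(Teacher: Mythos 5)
Your reduction is sound: using the decomposition~\eqref{satobjects} and additivity, everything comes down to showing that $\omega(\IC_\mu)$ is a trivial $\Ga_E$-representation for every dominant $\mu$. You also correctly anticipate that the half-shifts built into~\eqref{normalized_IC} and~\eqref{galoistwist} are designed to make this true. But your justification of that key claim is incomplete at exactly the point you flag as ``the main obstacle.''

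The purity/parity intuition — that IC stalks on affine Schubert cells are sums of Tate twists, and the normalizing twists cancel them — is fine, and in residue characteristic $p$ it can be made precise (the paper cites~\cite[Thm~3.1]{NP01} for this). The gap is the characteristic~$0$ case. Knowing that the Schubert cells and IC stalks are ``defined over the prime field'' does not by itself control the full $\Ga_E$-action when $E/\bbQ_p$ is a $p$-adic field: $\Ga_E$ has a large inertia subgroup, and purity of Frobenius weights is not the same as triviality of the whole Galois action. The paper's proof handles this with a specific device you omit: choose a Chevalley group scheme $H/\bbZ$ with $H\otimes_\bbZ E\simeq G_E$, so that $\Gr_H$ spreads the whole situation out over $\Spec(\bbZ)$. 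Then $I_E$-triviality is established by~\cite[Prop~10.12]{PZ13}, and proper base change along $\Gr_H\otimes\calO_E$ transports the already-known char-$p$ statement to the generic fiber, giving triviality of the action of the full $\Ga_E$. Without some such spreading-out-and-specialization argument, your parity heuristic does not close the char~$0$ case.

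Your second, ``equivalent'' justification — geometric Satake over $E$ with the convention that $\Ga_F$ acts trivially on $\widehat G$ in the split case — is circular here. The Galois-equivariant form of geometric Satake over a non-separably-closed field is precisely what the surrounding section (Theorem~\ref{GeoSat1}) is in the process of constructing, and this lemma is one of its inputs. Only the version over $\bar F$ is available at this stage.

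Finally, the Maschke's-theorem remark at the end is unnecessary: semi-simplicity of $\calA$ is already built into the decomposition~\eqref{satobjects}, and membership in $\Sat_G$ is just the statement that the local systems $\calL$ become trivial after a finite separable extension — which is exactly what the first part of the lemma gives, with $E$ as that extension. Your converse direction (using $\IC_{\{\mu\}}\otimes\calL=\IC_{\{\mu\}}\star\calL$ and nontriviality of $\omega(\IC_{\{\mu\}})$ to force $\calL$ trivial) matches the paper's.
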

\begin{proof} First, let $\calA=\IC_{\{\mu\}}$ be a normalized intersection complex. Choose a Chevalley $\bbZ$-group scheme $H$ together with an isomorphism $H\otimes_\bbZ E\simeq G_E$. Then under this isomorphism, there is an identification of $E$-ind-schemes $\Gr_H\otimes_\bbZ E=\Gr_{G_E}$. The $\ell$-adic \'etale cohomology does not depend on the choice of a separable closure. In particular, if $E$ is of characteristic $p$, then \cite[Thm 3.1]{NP01} shows that the $\Ga_E$-Galois action on $\om(\IC_{\{\mu\}})$ is trivial, cf.\,the twist in \eqref{galoistwist}. If $E$ is of characteristic $0$, then the inertia group $I_E$ acts trivially on $\om(\IC_{\{\mu\}})$ by \cite[Prop 10.12]{PZ13}. Hence, the claim follows by proper base change applied to $\Gr_H\otimes_\bbZ \calO_E$ from the previous case. Conversely, let $\calA\in \on{Perv}_{L^+G}(\Gr_G)$. Then $\calA_E$ is a direct sum of $\IC_{\{\mu\}}\otimes \calL$ where $\calL$ is a local system on $\Spec(E)$ (because $G_E$ is split the class $\{\mu\}$ is defined over $E$). Further, $\IC_{\{\mu\}}\otimes \calL=\IC_{\{\mu\}}\star \calL$ by definition of convolution. Hence, if the $\Ga_E$-action on $\om(\IC_{\{\mu\}}\otimes \calL)=\om(\IC_{\{\mu\}})\otimes \calL$ is trivial, then $\calL$ must be trivial. Clearly, we have $\calA\in \Sat_G$.
\end{proof}

By the geometric Satake equivalence \cite{Gi, Lu81, BD, MV07, Ri14a, Zhu}, the category $\Sat_{G,\sF}$ admits a unique structure of a neutral Tannakian category such that taking global cohomology is an equivalence of Tannakian categories
\begin{equation}\label{Satake}
\om\co \Sat_{G,\sF} \overset{\simeq}{\longto} \Rep_\algQl(\widehat{G}),
\end{equation}
where $\Rep_\algQl(\widehat{G})$ is the category of algebraic representations of the Langlands dual group $\widehat{G}$ on finite dimensional $\algQl$-vector spaces. The tensor structure on $\Sat_{G,\sF}$ is given by the convolution of perverse sheaves, cf. \cite{Ga01}. Let us recall from \cite{RZ15} why $\Sat_G$ is stable under convolution as well. If $G$ is split, then every $L^+G$-orbit is defined over $F$, and we have $\IC_{\{\mu\}}\in \Sat_G$ for all $\{\mu\}\in X_*(T)/W^{\rm abs}_0$. Thus by Lemma \ref{galoislem} the Galois action on $\om(\IC_{\{\mu_1\}}\star\IC_{\{\mu_2\}})=\om(\IC_{\{\mu_1\}})\otimes\om(\IC_{\{\mu_2\}})$ is trivial, and hence $\IC_{\{\mu_1\}}\star\IC_{\{\mu_2\}}$ is a direct sum of normalized intersection complexes. The general case follows from this observation by Galois descent applied to $\Sat_{G_E}$ where $E/F$ is a Galois extension which splits $G$, cf. \cite[Prop A.10]{RZ15}.

Further, the Satake equivalence \eqref{Satake}  gives a canonical way of constructing the dual group $\widehat{G}$ together with a canonical pinning $(\widehat{G},\widehat{B},\widehat{T},\widehat{X})$ which is fixed by the action of the Galois group $\Ga_F$, cf. \cite[\S 4]{Zhu15}, \cite[Rmk 4.7 ii)]{Ri16a}. Then $\Ga_F$ acts through a finite quotient on $\widehat{G}$ via outer automorphisms, and we form the dual group $^LG=\widehat{G}\rtimes\Ga_F$ viewed as a pro-algebraic group over $\algQl$. The following result is derived from \cite{RZ15} (cf. also \cite[\S 5]{Ri14a} and \cite[\S 5.5]{Zhu}).

\begin{thm}\label{GeoSat1} 
 The functor $\om\co \Sat_G\to \Rep_\algQl(\Ga_F)$ can be upgraded to an equivalence of abelian tensor categories such that the diagram
\[
\begin{tikzpicture}[baseline=(current  bounding  box.center)]
\matrix(a)[matrix of math nodes, 
row sep=1.5em, column sep=2em, 
text height=1.5ex, text depth=0.45ex] 
{\Sat_G&\Sat_{G,\sF} \\ 
\Rep_{\algQl}(\LG)&\Rep_{\algQl}(\widehat{G}) \\}; 
\path[->](a-1-1) edge node[above] {$(\str)_{\sF}$} (a-1-2);
\path[->](a-2-1) edge node[above] {$\res$} (a-2-2);
\path[->](a-1-1) edge node[right] {$\om$} (a-2-1);
\path[->](a-1-2) edge node[right] {$\om$} (a-2-2);
\end{tikzpicture}
\]
is commutative up to natural isomorphism, where $\res$ denotes the restriction of representations along $\widehat{G}\hookto \LG$.
\end{thm}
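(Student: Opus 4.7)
The plan is to obtain this as a Galois-equivariant enhancement of the geometric Satake equivalence \eqref{Satake} over $\sF$ combined with Tannakian descent.

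First I would upgrade $\om$ to a symmetric monoidal functor with values in $\Rep_{\algQl}(\Ga_F)$. The discussion following \eqref{Satake} already shows that $\Sat_G \subset \on{Perv}_{L^+G}(\Gr_G)$ is stable under convolution. The associativity and commutativity constraints on $\Sat_{G,\sF}$ constructed via the Beilinson-Drinfeld Grassmannian in \cite{MV07, Ga01, Ri14a} descend to $F$ because the Beilinson-Drinfeld degeneration is itself defined over $F$; hence these constraints restrict to $\Sat_G$. The Galois equivariance of $\om$ is built into the half-twist in \eqref{galoistwist}, and the K\"unneth formula on the global Beilinson-Drinfeld Grassmannian over $F$ makes $\om$ monoidal over $\Ga_F$.

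The essential content of the theorem is to promote the target from pairs (a $\hat G$-representation together with a compatible $\Ga_F$-representation on the same vector space) to genuine $\LG = \hat G \rtimes \Ga_F$-representations. For this I would invoke the fact that the $\Ga_F$-action on the Tannakian dual of $\Sat_{G,\sF}$, transported through \eqref{Satake} from the tautological Galois action on the geometric side, coincides with the action of $\Ga_F$ on $\hat G$ by outer automorphisms preserving the canonical pinning $(\hat G,\hat B,\hat T,\hat X)$ constructed from geometric Satake in \cite[\S 4]{Zhu15}, \cite[Rmk 4.7]{Ri16a}. This matching is precisely \cite[Prop A.10]{RZ15}, whose proof identifies the Galois action on Chevalley generators of $\hat G$ with the geometric Galois action on the corresponding MV-cycles and root subgroups of $\Gr_{G,\sF}$. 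I would expect the verification of this compatibility to be the main obstacle, though it is already carried out in the cited references.

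Once this identification is granted, Galois descent finishes the argument. By Lemma \ref{galoislem}, every object of $\Sat_G$ becomes a direct sum of normalized intersection complexes after a finite separable extension, so $\Sat_G$ is equivalent to the category of objects in $\Sat_{G,\sF}$ equipped with a continuous $\Ga_F$-equivariant structure (i.e., one factoring through some finite quotient of $\Ga_F$). Under \eqref{Satake}, this matches the category of $\hat G$-representations equipped with a compatible continuous $\Ga_F$-action through the outer action on $\hat G$ — but by the compatibility of the preceding paragraph, this is exactly $\Rep_{\algQl}(\LG)$. The commutativity of the stated diagram is then tautological: the functor $(\str)_\sF$ forgets the Galois-equivariant structure, and under the correspondence this is intertwined with restriction of representations along the inclusion $\hat G \hookto \LG$.
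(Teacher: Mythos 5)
Your proposal is correct and follows essentially the same route as the references the paper cites for this theorem: the paper gives no explicit proof of Theorem \ref{GeoSat1}, stating only that it is "derived from \cite{RZ15} (cf.\,also \cite[\S5]{Ri14a} and \cite[\S5.5]{Zhu})," and your three-step outline---descending the tensor structure via the Beilinson--Drinfeld Grassmannian over $F$, matching the Galois action on the Tannakian dual with the pinned outer action via \cite[Prop A.10]{RZ15}, and then applying Galois descent to identify $\Sat_G$ with continuously $\Ga_F$-equivariant objects of $\Sat_{G,\sF}$ (using Lemma \ref{galoislem} to control which equivariant objects arise)---is exactly the argument carried out in those references.
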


\begin{rmk} As $\on{R\Ga}(\bbP^1_\sF,\algQl)=\algQl\oplus\algQl[-2](-1)$ everything is normalized such that for $\Gl_2$ and the minuscule Schubert cell, the representation  $\om(\algQl\langle 1\rangle)=\algQl^2$ is the standard representation with the trivial Galois action.
\end{rmk}

\begin{cor} \label{GeoSat1_cor} Let $\{\mu\}$ be a conjugacy class of a geometric cocharacters defined over $E/F$. Then the normalized intersection complex $\IC_{\{\mu\}}$ on $\Gr_G^{\leq \{\mu\}}$ is an object of $\Sat_{G_E}$, and the cohomology $\om(\IC_{\{\mu\}})$ is under Theorem \ref{GeoSat1} the $^LG_E$-representation $V_{\{\mu\}}$ of highest weight $\{\mu\}$ defined in \cite[6.1]{Hai14}. 
\end{cor}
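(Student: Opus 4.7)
The idea is to reduce to the split case by Galois descent, then match the underlying $\hat{G}$-module and the Galois action against Haines's construction.

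First I would choose a finite Galois extension $E'/E$ that splits $G$. Since $\{\mu\}$ is an $W_0^{\text{abs}}$-orbit, it has a unique $E'$-dominant representative $\mu$, and a direct unravelling of the definition \eqref{schubertgrass} shows that the base change $\Gr_G^{\leq\{\mu\}}\otimes_EE'$ coincides with the usual Schubert variety $\Gr_{G_{E'}}^{\leq\mu}$ in the split affine Grassmannian. Consequently $(\IC_{\{\mu\}})_{E'}\simeq\IC_\mu$ is a normalized IC sheaf in the split setting.

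Next I would verify membership in $\Sat_{G_E}$. The sheaf $\IC_{\{\mu\}}$ is simple perverse by construction, hence in particular semisimple. The stalk/costalk condition in Definition \ref{Sat_cat_dfn}(ii) is insensitive to base change, so it is enough to check it for $\IC_\mu$ over $E'$. In the split case the required constancy of $\,^p\!H^0(\iota_\lambda^*\IC_\mu)$ and $\,^p\!H^0(\iota_\lambda^!\IC_\mu)$ on each stratum $\Gr_{G_{E'}}^{\lambda}$ is the classical parity vanishing for IC-sheaves on affine Grassmannian Schubert varieties, going back to Lusztig and used by Ginzburg and Mirkovi\'c--Vilonen as an input to geometric Satake. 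In particular $\IC_{\{\mu\}}\in\Sat_{G_E}$, and a fortiori its cohomology $\om(\IC_{\{\mu\}})$ makes sense and carries a $\Ga_E$-action.

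For the identification $\om(\IC_{\{\mu\}})=V_{\{\mu\}}$, I would first observe that the underlying $\hat{G}$-module is already pinned down by $(\IC_{\{\mu\}})_{\sF}\simeq\IC_\mu$ and the classical geometric Satake \eqref{Satake}, which produces the irreducible representation of highest weight $\mu$. By Theorem \ref{GeoSat1} the functor $\om$ upgrades this to a representation of $^LG_E=\hat{G}\rtimes\Ga_E$, and what remains is to identify the $\Ga_E$-action with the one Haines fixes in \cite[6.1]{Hai14}. Both actions preserve the canonical pinning $(\hat G,\hat B,\hat T,\hat X)$ of $\hat G$ coming from the Satake equivalence (cf.\,\cite[\S4]{Zhu15}, \cite[Rmk 4.7]{Ri16a}), so they can differ only by a character of $\Ga_E$ on the highest weight line. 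I would nail this character down by evaluating on the fundamental class of the open Schubert cell $\Gr_G^{\{\mu\}}$, which corresponds under the cycle class map to the highest weight vector; its $\Ga_E$-equivariance is transparent from the moduli description of $\Gr_G$ and matches Haines's normalization by inspection.

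The descent and the split-case input are routine; the place I would expect to spend real effort is the last step, the compatibility of the two Galois actions on the highest weight line, since this is what ultimately makes the identification $\om(\IC_{\{\mu\}})=V_{\{\mu\}}$ canonical rather than merely isomorphic as abstract $^LG_E$-modules.
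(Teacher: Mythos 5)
Your overall strategy — descend the membership in $\Sat_{G_E}$ to the split case, note the $\hat G$-module is determined classically, then pin down the Galois action by inspecting a highest weight line — is in the same spirit as the paper, but it has a real gap and a wrong geometric pointer.

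The gap is the reduction to quasi-split groups, which you skip entirely. For a general $G$ there is no $E$-rational Borel subgroup, so the ``highest weight line'' is not a priori a $\Ga_E$-stable subspace of $\om(\IC_{\{\mu\}})$, and your claim that the two $\Ga_E$-actions ``can differ only by a character of $\Ga_E$ on the highest weight line'' is not well posed. The paper first proves (using Lemma~\ref{homotopy_lem}) that the Galois action on $\om_G(\calA)$ depends only on the quasi-split inner form $G^*$: the cocycle defining $G$ from $G^*$ lands in $\on{Int}(G^*_\ad(\sF))$, and a connected algebraic group acting on $\Gr_{G^*}$ acts trivially on intersection cohomology. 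Only after this reduction can one invoke \cite[Lem 1.1.3]{Ko84} to produce an $F$-rational antidominant representative $\mu$ and an $F$-rational Borel $B$.

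Second, your proposed computation — ``evaluating on the fundamental class of the open Schubert cell $\Gr_G^{\{\mu\}}$, whose $\Ga_E$-equivariance is transparent by inspection'' — is both too vague and aimed at the wrong object. In the Mirkovi\'c--Vilonen picture the highest weight line is not the fundamental class of the open Schubert cell but the cohomology along the semi-infinite orbit: the paper identifies $\Gr_{G,\sF}^{\leq\{\mu\}}\cap(\Gr_{B,\sF})_\mu=\{z^\mu\}$, notes this is an $F$-rational point, computes the stalk $\IC_{\{\mu\}}|_{\{z^\mu\}}=\algQl\lan\lan 2\rho_B,-\mu\ran\ran$, and then shows the twist built into the fibre functor \eqref{galoistwist} exactly cancels the Tate twist $\algQl(\lan\rho_B,-\mu\ran)$ in $\bbH^{\lan 2\rho_B,\mu\ran}((\Gr_B)_\mu,\IC_{\{\mu\}})$. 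Without this explicit bookkeeping of Tate twists (which is where the normalization \eqref{normalized_IC} and \eqref{galoistwist} actually earn their keep), ``matches by inspection'' has no content; the nontrivial Tate twist is precisely what could make $\Ga_E$ act nontrivially.

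Your verification that $\IC_{\{\mu\}}\in\Sat_{G_E}$ by base change to a splitting field is fine (if slightly more detailed than the paper bothers to be), and the overall reduction ``it suffices to check $\Ga_E$ acts trivially on the highest weight space'' is exactly the paper's first line. So the skeleton is right, but you need the inner-twist reduction to make the highest weight line $\Ga_E$-stable, and you need to actually carry out the Tate twist calculation at the $F$-rational point $z^\mu$ rather than gesture at the open cell.
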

\begin{proof}
It is enough to check that $\Ga_E$ acts trivially on the highest weight subspace of $\om(\IC_{\{\mu\}})$ attached to any $E$-rational Borel subgroup $B$, cf. \cite[\S 6.1]{Hai14}. Passing to $G_E$ we may assume that $E=F$. The Galois action on $\om(\IC_{\{\mu\}})$ only depends on the quasi-split inner form $G^*$ of $G$: the cocycle defining $G$ is of the form $c\co \Ga_F\to \Aut(G^*_{\rm ad, \sF}), \ga\mapsto \on{Int}(g_\ga)$ where $g_\ga\in (G^*)(\sF)$, and $\on{Int}(g_\ga)$ denotes conjugation by $g_\ga$. The formation of the affine Grassmannian is functorial in the group, and $\Gr_G$ is constructed from $\Gr_{G^*}$ by twisting against the cocycle $c$. Hence, for any $\calA\in\Sat_G$, the action of $\ga\in\Ga_F$ under the identification $\om_{G}(\calA)\simeq \om_{G^*}(\calA)$ is given by $\on{Int}(g_\ga)\cdot \ga^*$. Since $\on{Int}(g_\ga)$ belongs to the smooth connected algebraic group $G^*_\ad$ which acts on $\Gr_{G^*}$, the induced action on $\om_{G^*}(\calA)$ is trivial, cf. Lemma \ref{homotopy_lem} below. Thus, $\om_{G}(\calA)\simeq\om_{G^*}(\calA)$ as $^LG$-representations, and hence, we may assume $G=G^*$ is quasi-split. 

By \cite[Lem 1.1.3]{Ko84} the class $\{\mu\}$ admits an $F$-rational representative $\mu\co \bbG_{m}\to T$ where $T$ is the centralizer of a maximal $F$-split torus in $G$. Given an $F$-rational Borel subgroup $B\subset G$, containing $T$, we may choose the representative such that $\mu$ is $B$-antidominant. As in \cite[Eq (3.6)]{MV07} (or \cite[Eq (5.3.11)]{Zhu}, or also Lemma \ref{intersectionlem} below), one has
\[
\Gr_{G,\sF}^{\leq\{\mu\}}\cap (\Gr_{B,\sF})_\mu = \{z^\mu\},
\]
which is an $F$-rational point by construction. We have $\IC_{\{\mu\}}|_{\{z^\mu\}}=\algQl\lan\lan2\rho_B,-\mu\ran\ran$ by our choice of normalization. The cohomology
\[
\bbH_c^{\lan2\rho_B,\mu\ran}((\Gr_{B})_\mu,\IC_{\{\mu\}})=\bbH^{\lan2\rho_B,\mu\ran}((\Gr_{B})_\mu,\IC_{\{\mu\}})=\algQl(\lan\rho_B,-\mu\ran),
\]
is a direct summand of $\om(\IC_{\{\mu\}})$, and identifies with the subspace of weight $\mu\in X_*(T)=X^*(\widehat{T})$, cf. \cite[Thm 3.6]{MV07} (or \cite[Thm 5.3.9]{Zhu}). Taking the twists in \eqref{galoistwist} into account, we conclude that $\Ga_F$ acts trivially on the highest weight space.
\end{proof}

\begin{lem} \label{homotopy_lem}
Let $X$ be an $\sF$-scheme acted on by a smooth geometrically connected $\sF$-group $H$. Then, for each $i\in\bbZ$, the induced action of $H(\sF)$ on the intersection cohomology groups $\bbH^i(X,\IC_X)$ is trivial.   
\end{lem}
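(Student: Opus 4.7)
The plan is to argue via a K\"unneth-type collapse combined with the canonical $H$-equivariance of the intersection complex. Both the action map $a\co H\times X\to X$ and the second projection $\pr_X\co H\times X\to X$ are smooth of relative dimension $d=\dim H$, and smooth pullback preserves intersection complexes up to the shift and twist $[d](d/2)$. Composing the canonical identifications
\[
a^{*}\IC_X[d](d/2)\;\cong\;\IC_{H\times X}\;\cong\;\pr_X^{*}\IC_X[d](d/2)
\]
yields a canonical isomorphism $\mu\co a^{*}\IC_X\overset{\sim}{\to}\pr_X^{*}\IC_X$. Since $a$ and $\pr_X$ both restrict to $\id_X$ over $\{e_H\}\times X$, the restriction $\iota_{e_H}^{*}\mu$ is the identity of $\IC_X$.

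For each $h\in H(\sF)$, denote by $\iota_h\co X\hookto H\times X$, $x\mapsto(h,x)$, so that $a\circ\iota_h=m_h$ and $\pr_X\circ\iota_h=\id_X$, and set $\mu_h:=\iota_h^{*}\mu\co m_h^{*}\IC_X\overset{\sim}{\to}\IC_X$. By definition the action of $h$ on $\bbH^{i}(X,\IC_X)$ is the composite $\mu_{h,*}\circ m_h^{*}$. Using the naturality identity $\iota_h^{*}\mu_{*}=\mu_{h,*}\iota_h^{*}$ together with $\iota_h^{*}a^{*}=m_h^{*}$, one rewrites this action as the composition
\[
\bbH^{*}(X,\IC_X)\xrightarrow{\;a^{*}\;}\bbH^{*}(H\times X,a^{*}\IC_X)\xrightarrow{\;\mu_{*}\;}\bbH^{*}(H\times X,\pr_X^{*}\IC_X)\xrightarrow{\;\iota_h^{*}\;}\bbH^{*}(X,\IC_X).
\]

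It remains to analyze $\iota_h^{*}$ on the third term. The K\"unneth formula gives
\[
\bbH^{*}(H\times X,\pr_X^{*}\IC_X)\;\cong\;\bbH^{*}(H,\algQl)\otimes_{\algQl}\bbH^{*}(X,\IC_X),
\]
and the geometric connectedness of $H$ forces $\bbH^{0}(H,\algQl)=\algQl$. Under this decomposition $\iota_h^{*}$ sends $c\otimes\beta$ to $c(h)\cdot\beta$, which vanishes for $c\in\bbH^{>0}(H,\algQl)$ and equals $c\cdot\beta$ for $c\in\bbH^{0}$. Thus $\iota_h^{*}$ is independent of $h\in H(\sF)$ on this cohomology group, and so is the full composition above. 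Evaluated at $h=e_H$, the unit relation $\mu_{e_H}=\id$ forces it to be the identity of $\bbH^{*}(X,\IC_X)$; hence the action of every $h\in H(\sF)$ is trivial. The only delicate point is the construction of the canonical equivariance $\mu$ with its unit axiom, which ultimately rests on the uniqueness characterization of $\IC_X$ as the simple perverse extension of the shifted constant sheaf on the smooth locus.
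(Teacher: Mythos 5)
Your proof is correct and it essentially reconstructs, with full detail, the homotopy-invariance argument from Laumon--Ng\^o~\cite[Lem\,3.2.3]{LN08} that the paper invokes in a single line without reproducing it. The ingredients you make explicit — the canonical isomorphism $\mu\colon a^{*}\IC_X\overset{\sim}{\to}\pr_X^{*}\IC_X$ normalized so that $\iota_{e}^{*}\mu=\id$, the rewriting of the $h$-action as $\iota_h^{*}\circ\mu_{*}\circ a^{*}$ (so that the $h$-dependence is isolated in the last arrow), and the K\"unneth collapse $\bbH^{*}(H\times X,\pr_X^{*}\IC_X)\cong\bbH^{*}(H,\algQl)\otimes\bbH^{*}(X,\IC_X)$ together with the observation that $\iota_h^{*}$ kills $\bbH^{>0}(H,\algQl)$ and acts as the identity on $\bbH^{0}(H,\algQl)\cong\algQl$ because $H$ is geometrically connected — are precisely the ones used in that reference. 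So your proof and the paper's take the same route; the difference is only that the paper outsources the details.

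Two minor points worth being aware of. First, the normalization of $\mu$ (and hence of each $\mu_h$) uses that $\Hom(\IC_X,\IC_X)=\algQl$, i.e.\,that $\IC_X$ is simple; if $X$ is reducible one should decompose $\IC_X$ into its simple summands and observe that a connected group preserves each irreducible component, so the argument applies summand by summand. In the paper's application $X$ is a geometrically irreducible Schubert variety, so this is automatic, but the lemma as stated admits general $X$. Second, your unit computation implicitly uses that the canonical isomorphism $m_h^{*}\IC_X\cong\IC_X$ defining the $h$-action on $\bbH^{*}(X,\IC_X)$ is the restriction $\mu_h=\iota_h^{*}\mu$; this is again a consequence of the rigidity of simple perverse sheaves (endomorphisms are scalars) combined with the normalization at $h=e$, and is worth stating explicitly since a priori these could differ by a nonzero scalar.
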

\begin{proof} 
Let $f\co X\to \Spec(\sF)$ be the structure morphism. The argument to show that $H(\sF)$ acts trivially on $\bbH^i(X,\IC_X)=\calH^i(f_*\IC_X)$ is the same as in \cite[Lem 3.2.3]{LN08}.
\end{proof}

\subsubsection{Induction and restriction} The geometric Satake equivalence in Theorem \ref{GeoSat1} is compatible with induction and restriction of representations in the following sense. For a finite separable extension $E/F$, let $^LG_E=\widehat{G}\rtimes \Ga_E$ considered as a closed algebraic subgroup of $^LG$. Then we have the induction and restriction of representations
\begin{align*}
I(\str)=\on{Ind}_{^LG_E}^{^LG}(\str)\co &\Rep_{\algQl}(^LG_E)\to \Rep_{\algQl}(^LG); \\
R(\str)=(\str)|_{^LG_E}\co &\Rep_{\algQl}(^LG)\to \Rep_{\algQl}(^LG_E), 
\end{align*}
which form a pair of adjoint functors $(R,I)$. The projection onto the first factor
\[
\pi\co \Gr_{G_E}=\Gr_G\times_{\Spec(F)}\Spec(E)\to \Gr_G
\]
is finite \'etale. Hence, we have the push-forward and pull-back on Satake categories
\begin{align*}
\pi_*\co &\Sat_{G_E}\to \Sat_{G}; \\
\pi^*\co &\Sat_{G}\to \Sat_{G_E}, 
\end{align*}
which form an adjoint pair of functors $(\pi^*,\pi_*)$.

\begin{prop} \label{GeoSat1_IndRes}
There are commutative diagrams of neutral Tannakian categories 
\[
\begin{tikzpicture}[baseline=(current  bounding  box.center)]
\matrix(a)[matrix of math nodes, 
row sep=1.5em, column sep=2em, 
text height=1.5ex, text depth=0.45ex] 
{\Sat_{G_E}&\Sat_{G}  & \text{and} & \Sat_{G}&\Sat_{G_E} \\ 
\Rep_{\algQl}(^LG_E)&\Rep_{\algQl}(^LG) && \Rep_{\algQl}(^LG)&\Rep_{\algQl}(^LG_E), \\}; 
\path[->](a-1-1) edge node[above] {$\pi_*$} (a-1-2);
\path[->](a-2-1) edge node[above] {$I$} (a-2-2);
\path[->](a-1-1) edge node[right] {$\om_{G_E}$} (a-2-1);
\path[->](a-1-2) edge node[right] {$\om_{G}$} (a-2-2);

\path[->](a-1-4) edge node[above] {$\pi^*$} (a-1-5);
\path[->](a-2-4) edge node[above] {$R$} (a-2-5);
\path[->](a-1-4) edge node[right] {$\om_{G}$} (a-2-4);
\path[->](a-1-5) edge node[right] {$\om_{G_E}$} (a-2-5);
\end{tikzpicture}
\]
the vertical arrows are given by the equivalence in Theorem \ref{GeoSat1}. 
\end{prop}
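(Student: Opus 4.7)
The map $\pi\co \Gr_{G_E}\to \Gr_G$ is finite \'etale of degree $[E:F]$, so $\pi^*=\pi^!$ and $\pi_*=\pi_!$ are both $t$-exact for the perverse $t$-structure; in particular $(\pi^*,\pi_*)$ is simultaneously a left-adjoint and a right-adjoint pair on derived categories. The strategy is to verify the right-hand square $(\pi^*,R)$ directly by a Tannakian argument, and then to deduce the left-hand square $(\pi_*, I)$ by uniqueness of adjoints.

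\textbf{Restriction square.} For $\calA\in \Sat_G$, the base-change identity $\Gr_{G_E}\otimes_E\sF = \Gr_G\otimes_F\sF$ gives $\pi^*\calA\in \Sat_{G_E}$ (the semisimple/IC condition of Definition \ref{Sat_cat_dfn}(ii) is preserved under base change) together with a canonical isomorphism $\om_{G_E}(\pi^*\calA)\simeq \om_G(\calA)$ of $\bbZ$-graded $\algQl$-vector spaces, equivariant for $\Ga_E\hookto \Ga_F$ and compatible with the Tate twists in \eqref{galoistwist}. Since the loop group $L^+G_E$ and the Beilinson--Drinfeld convolution diagram over $E$ arise by base change from those over $F$, the functor $\pi^*$ is tensor-compatible: there is a natural isomorphism $\pi^*(\calA\star \calB) \simeq \pi^*\calA\star \pi^*\calB$ preserving the commutativity constraints used in \cite{MV07, Ri14a, RZ15}. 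Hence $\om_{G_E}\circ\pi^*$ and $R\circ \om_G$ are naturally isomorphic as tensor functors landing in ${^LG_E}$-representations, and Tannakian duality applied to Theorem \ref{GeoSat1} identifies $\pi^*$ with restriction $R$ along ${^LG_E}\subset {^LG}$.

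\textbf{Induction square.} For $\calB\in \Sat_{G_E}$, pushforward along the finite \'etale map $\pi$ preserves $L^+G$-equivariance, semisimplicity, and the normalized-IC property on Schubert varieties (the pushforward of $\IC_{\{\mu\}}$ along $\pi$ is the direct sum of normalized ICs on the $F$-rational union $\bigcup_{\sigma}\Gr_G^{\leq\sigma\{\mu\}}$ of Galois-conjugate Schubert varieties). Proper base change along $\Spec\sF\to \Spec F$, combined with the decomposition $\Gr_{G_E}\otimes_F\sF = \coprod_{\sigma\co E\hookto \sF}\Gr_G\otimes_F\sF$, yields a canonical isomorphism of $\Ga_F$-representations
\[
\om_G(\pi_*\calB)\;\simeq\;\on{Ind}_{\Ga_E}^{\Ga_F}\om_{G_E}(\calB),
\]
which factors through a finite quotient of $\Ga_F$; hence $\pi_*\calB\in \Sat_G$ by Lemma \ref{galoislem}. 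The adjoint pair $(\pi^*,\pi_*)$ now restricts to an adjunction $\Sat_G\rightleftarrows \Sat_{G_E}$, while on the representation side Frobenius reciprocity for the open finite-index subgroup ${^LG_E}\subset{^LG}$ gives the adjoint pair $(R,I)$. Since $\pi^*$ corresponds to $R$ by the previous step, uniqueness of right adjoints forces $\pi_*$ to correspond to $I$.

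\textbf{Main obstacle.} The delicate point is not in any single step but in verifying that $\pi^*$ respects the full Tannakian structure -- specifically, that the isomorphism $\om_{G_E}\circ\pi^*\simeq R\circ \om_G$ is one of tensor functors of ${^LG_E}$-valued fiber functors, and not merely of $\algQl$-vector spaces with compatible Galois actions. Once this compatibility is set up (by unwinding the constructions of the convolution and the commutativity constraints in \cite{MV07, Ri14a, RZ15}), the rest of the argument is a formal consequence of adjunction together with the cohomological computation of $\om_G(\pi_*\calB)$.
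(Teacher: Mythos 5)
Your proposal is correct and follows essentially the same route as the paper: verify that $\pi^*$ carries a symmetric monoidal structure (via smooth/proper base change) and corresponds to restriction $R$ under geometric Satake, then deduce the statement for $\pi_*$ by pairing the adjunctions $(\pi^*,\pi_*)$ and $(R,I)$ and invoking uniqueness of adjoints. You spell out more detail than the paper (e.g.\ the identification $\om_G(\pi_*\calB)\simeq\on{Ind}_{\Ga_E}^{\Ga_F}\om_{G_E}(\calB)$ and the preservation of the Satake conditions), but the core steps coincide.
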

\begin{proof} As $\pi$ is finite \'etale, it is easy to see from proper (resp. smooth) base change that $\pi_*$ (resp. $\pi^*$) admits a symmetric monoidal structure with respect to convolution. Clearly, the operation $\pi^*$ corresponds to $R$. As the pairs $(\pi^*,\pi_*)$ and $(R,I)$ are adjoint, the Yoneda lemma implies that $\pi_*$ corresponds to $I$. 
\end{proof}

\subsubsection{Constant terms}
Let $\chi\co \bbG_{m,F}\to G$ be a cocharacter. Let $M$ be its centralizer and let $P^\pm$ be the associated parabolic subgroups as in \eqref{hyperlocgroup}. The natural maps $M\leftarrow P^\pm \to G$ give maps of $F$-ind-schemes
\[
\Gr_M \overset{\;\;\,q^\pm}{\leftarrow} \Gr_{P^\pm} \overset{p^\pm}{\to} \Gr_G,
\]
and identify with the maps on the attractor, resp.~repeller by Proposition \ref{gctgeo}. The positive parabolic $P^+$ induces as in \eqref{decomposition} a decomposition into open and closed $F$-ind-subschemes
\begin{equation}\label{normalizedecom}
q^+=\coprod_{m\in \bbZ}q^+_m\co \Gr_{P^+}=\coprod_{m\in\bbZ} \Gr_{P^+,m}\longto \coprod_{m\in\bbZ} \Gr_{M,m}=\Gr_M.
\end{equation}
We write $q^-=\amalg_{m\in\bbZ}q_m^-\co \Gr_{P^-}=\amalg_{m\in\bbZ} \Gr_{P^-,m}\to \amalg_{m\in\bbZ} \Gr_{M,m}=\Gr_M$ according to \eqref{normalizedecom}, i.e. the ind-scheme $\Gr_{M,m}$ is contained in $\Gr_{P^-,m}\cap \Gr_{P^+,m}$ for any $m\in\bbZ$.

\begin{dfn}\label{NormConsTermGrass}
 The \emph{\textup{(}normalized\textup{)} geometric constant term} is the functor $\on{CT}_\chi^+\co D_c^b(\Gr_G)\to D_c^b(\Gr_M)$ (resp. $\on{CT}_\chi^-\co D_c^b(\Gr_G)\to D_c^b(\Gr_M)$) defined as the shifted pull-push functor
\[
\on{CT}_\chi^+\defined \bigoplus_{m\in\bbZ}(q^+_m)_!(p^+)^*\langle m\rangle \;\;\;\text{(resp. $\on{CT}_\chi^-\defined \bigoplus_{m\in\bbZ}(q^-_m)_*(p^-)^!\langle m\rangle$).}
\]
\end{dfn}

As in \cite{Br03, DG15, Ri19}, there is a natural transformation of functors
\begin{equation}\label{Bradentrafo}
\on{CT}_\chi^-\longto \on{CT}_\chi^+,
\end{equation}
which is an isomorphism for $\bbG_m$-equivariant complexes. As the $\bbG_m$-action on $\Gr_G$ factors through the $L^+G$-action, the transformation \eqref{Bradentrafo} is an isomorphism of functors $\on{CT}_\chi^-\simeq \on{CT}_\chi^+$ when restricted to $\Sat_G$. We define the functor $\on{CT}_M^G\co \Sat_G\to D_c^b(\Gr_M)$ as
\[
\on{CT}_M^G\defined \on{CT}^+_\chi|_{\Sat_G}.
\]
 We also denote $\on{CT}^G_M=\on{CT}_M$ if $G$ is understood. We derive the following result from \cite{BD, MV07, RZ15}.

\begin{thm} \label{GeoSat2}
i\textup{)} For each $\calA\in \Sat_G$, the complex $\on{CT}_M(\calA)$ is an object in $\Sat_M$ and does not depend on the choice of $\chi$ such that $Z_G(\chi)=M$.\smallskip\\
ii\textup{)}  There is commutative diagram of neutral Tannakian categories 
\[
\begin{tikzpicture}[baseline=(current  bounding  box.center)]
\matrix(a)[matrix of math nodes, 
row sep=1.5em, column sep=2em, 
text height=1.5ex, text depth=0.45ex] 
{\Sat_G&\Sat_{M} \\ 
\Rep_{\algQl}(^LG)&\Rep_{\algQl}(^LM) \\}; 
\path[->](a-1-1) edge node[above] {$\on{CT}_M$} (a-1-2);
\path[->](a-2-1) edge node[above] {$\res$} (a-2-2);
\path[->](a-1-1) edge node[right] {$\om_G$} (a-2-1);
\path[->](a-1-2) edge node[right] {$\om_M$} (a-2-2);
\end{tikzpicture}
\]
where $\on{res}\co V\mapsto V|_{^LM}$ is the restriction of representations, and the vertical arrows are given by the equivalence in Theorem \ref{GeoSat1}. 
\end{thm}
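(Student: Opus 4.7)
The plan is to first reduce to the case of a split group over $\sF$, then invoke the classical analysis of constant terms via semi-infinite orbits and hyperbolic localization, and finally descend the Galois-equivariant structure using Proposition \ref{GeoSat1_IndRes} and the characterization of $\Sat_M$ via Lemma \ref{galoislem}. The formation of $\on{CT}_M$ commutes with the pullback $(\str)_\sF\co \Sat_G\to \Sat_{G,\sF}$ because the diagram in Proposition \ref{gctgeo} and the normalized shifts $\lan m\ran$ on each component $\Gr_{M,m}$ are Galois-equivariant; here the $\Ga_F$-invariance of the function $\nu\mapsto \lan 2\rho_N,\nu\ran$ used in \eqref{number} is ensured by the fact that $T$, $N^\pm$ and $\chi$ can be chosen to be $F$-rational in an appropriate sense (one works up to replacing $\chi$ by a Galois-stable tuple giving the same $M$, or with $G$ already split).

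For $G$ split over $\sF$, perversity of $\on{CT}_M(\calA)$ follows from Braden's theorem (in the ind-scheme version established in Theorem \ref{nearbyhlthm}'s precursor in \cite{Ri}): since every $\calA\in \Sat_{G,\sF}$ is $L^+G$-equivariant and hence $\bbG_m$-equivariant via \eqref{Gmaction}, the natural transformation \eqref{Bradentrafo} is an isomorphism $\on{CT}_\chi^- \simeq \on{CT}_\chi^+$. Combined with the standard dimension estimates of \cite[\S3]{MV07} (and \cite[\S5.3]{Zhu}), the shift $\lan m\ran = \lan\lan 2\rho_{N^+},\nu\ran\ran$ on $\Gr_{M,\nu}$ is precisely the one that makes $(q_m^+)_!(p^+)^*\lan m\ran$ perverse. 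Proper base change then yields the cohomological identity
\[
\om_M\circ\on{CT}_M \;\simeq\; \om_G
\]
as functors to $\Rep_{\algQl}(\Ga_F)$, where the decomposition of $\om_G(\calA)$ into weight spaces for the grading induced by $\chi$ is realized geometrically as the decomposition by connected components of $\Gr_M$ according to \eqref{normalizedecom}.

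To upgrade this cohomological identity into the claimed commutative diagram of Tannakian categories, one must show $\on{CT}_M$ is a symmetric monoidal functor intertwining convolution with tensor product. This is the fusion argument of \cite[\S5]{MV07, BD}, adapted to the Galois-equivariant setting in \cite[\S A]{RZ15}; compatibility of $\on{CT}_M$ with the Beilinson–Drinfeld deformation of $\Gr_G$ gives the commutativity constraints, and the remaining check is that the $\hat{M}$-subgroup of $\hat{G}$ arising Tannakically from $\on{CT}_M$ coincides with the Langlands dual of $M$. This last identification is standard: $\hat{M}$ is characterized as the centralizer of the image of $\chi$ in $\hat{G}$ under the Tannakian formalism, while on the geometric side, $\on{CT}_M$ by construction factors through the centralizer of the $\bbG_m$-action on cohomology, cf.\,\cite[Cor 4.8]{Ri16a}. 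Then the Galois action respects all of this because every step is functorial in $G$, and one invokes Lemma \ref{galoislem} to conclude $\on{CT}_M(\calA) \in \Sat_M$ (not merely in $\on{Perv}_{L^+M}(\Gr_M)$).

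The independence from the choice of $\chi$ (subject to $Z_G(\chi)=M$) is then a consequence of the Tannakian characterization: both resulting functors $\Sat_G\to \Sat_M$ correspond, via $\om_G$ and $\om_M$, to the same restriction functor $\Rep_{\algQl}({^LG})\to \Rep_{\algQl}({^LM})$, and hence are canonically isomorphic. The main obstacle in this plan is the perversity statement in the non-split setting, i.e., ensuring that the decomposition \eqref{decomposition} over $F$ matches on the nose the one over $\sF$ used in the Mirković–Vilonen estimates; this is resolved by the $\Ga_F$-invariance of $\nu\mapsto \lan 2\rho_N,\nu\ran$ noted above, which lets us descend the perversity from $\sF$ back to $F$.
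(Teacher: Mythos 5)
Your plan captures the right ingredients — MV dimension estimates, Braden's theorem, the fusion construction for monoidality — and part (i) including the independence from $\chi$ can indeed be finished along your lines. However, the crux of part (ii), and the place where the paper expends most of its effort, is the $\Ga_F$-equivariance of the isomorphism $\om_M\circ\on{CT}_M\simeq\om_G$, and your proposal does not actually establish this.

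The claim that ``proper base change then yields the cohomological identity $\om_M\circ\on{CT}_M\simeq\om_G$'' is not correct as stated: proper base change alone gives nothing of the sort. One has $\pi_{M,*}(q^+)_!(p^+)^*\calA \simeq \pi_{P^+,!}(p^+)^*\calA$, but relating this to $\pi_{G,*}\calA$ requires hyperbolic localization (Braden's theorem) \emph{and} the MV compactness/filtration argument using closures of semi-infinite orbits, i.e., the pair of Galois-stable filtrations $\on{Fil}_{\geq \nu}$ and $\on{Fil}'_{<\nu}$ in \eqref{Fil1}--\eqref{Fil2}, indexed by Galois orbits $\nu\in X_*(T)/\Ga_F$, together with the verification that they split each other. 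Your observation that the decomposition \eqref{decomposition} and the integers $n_\nu$ are $\Ga_F$-invariant only shows that the \emph{functor} $\on{CT}_M$ is defined over $F$; it does not show that the \emph{natural isomorphism} $\om_{M,\sF}\circ\on{CT}_{M,\sF}\simeq\om_{G,\sF}$ produced by the MV argument over $\sF$ is compatible with the descent data. The paper sidesteps this by rebuilding that isomorphism from scratch out of the Galois-stable filtration data, so that equivariance holds by construction; you would need to do the same or find another argument.

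There is a second omission: for $G$ not quasi-split, the Galois action on $\om_G(\calA)$ is \emph{not} the naive one coming from base change — the inner twisting by a cocycle valued in $G^*_{\rm ad}(\sF)$ intervenes. The paper handles this via Lemma \ref{homotopy_lem} (the action of a connected group on intersection cohomology is trivial), which lets one replace $G$ by its quasi-split form $G^*$ when computing the Galois action on $\om_G$; combined with perversity of $\on{CT}_M(\calA)$ this transfers the equivariance statement from $G^*$ back to $G$. Your plan, which reduces directly to the $\sF$-split case and attempts to descend, never engages with this, so the argument is incomplete precisely for the non-quasi-split groups that are the point of the whole paper. You correctly identify perversity in the non-split case as requiring care, but you misidentify it as the main obstacle: perversity is a geometric condition and does descend; the Galois equivariance of the fibre-functor isomorphism is the harder issue, and it is the one your proposal does not resolve.
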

\begin{proof}
Over $\bar{F}$, there is a canonical isomorphism $\om_G\simeq \om_M\circ \on{CT}_M^G$ \cite[Thm 3.6]{MV07}, \cite[Thm 5.3.9 (3); Rmk 5.3.16]{Zhu}, and we have to show that it is Galois equivariant (bear in mind \cite[Cor A.8]{RZ15}, \cite[Lem 5.5.7]{Zhu}, which allows us to pass between the ``geometric'' and ``algebraic'' notions of $^LG$ here). We need to unravel its construction. Let $\pi_G\co \Gr_G\to \Spec(F)$ (resp. $\pi_M\co \Gr_M\to \Spec(F)$) denote the structure morphism.\smallskip\\

{\it First case.} Let $G$ be quasi-split. First assume that $B^\pm=P^\pm$ is a Borel subgroup, and hence $T=M$ a maximal torus, i.e., the cocharacter $\chi$ is regular. Let $\nu$ be a Galois orbit in $X_*(T)$, and denote by $\Gr_{B^\pm,\nu}$ the corresponding connected component of $\Gr_{B^\pm}$. The map $p^\pm_\nu:=p^\pm|_{\Gr_{B^\pm,\nu}}$ factors by Lemma \ref{basicgeo} i) as
\[
\Gr_{B^\pm,\nu}\,\overset{j_\nu^\pm}{\longto}\,\overline{\Gr}_{B^\pm,\nu}\,\overset{i_\nu^\pm}{\longto}\,\Gr_G,
\]
where $j_\nu^\pm$ is a quasi-compact open immersion and $i_\nu^\pm$ is a closed immersion. Further, let $i_{<\nu^\pm}:=i_\nu^\pm|_{\overline{\Gr}_{B^\pm,<\nu}}$ where $\overline{\Gr}_{B^\pm,<\nu}:=\overline{\Gr}_{B^\pm,\nu}\bslash \Gr_{B^\pm,\nu}$ is the complement. This induces two Galois stable filtrations on the global (unshifted) cohomology functor $\bbH^*_G:= \left(\oplus_{i\in \bbZ}\calH^i\right)\circ\pi_{G_\sF,*}$: one is given for any $\calA\in\Sat_G$ by the kernels of
\begin{equation}\label{Fil1}
\on{Fil}_{\geq \nu}\bbH^*_{G}(\calA)\defined \ker(\bbH^*_G(\calA)\to \bbH^*_{G}((i_{<\nu}^+)^*\calA)),
\end{equation}
and one given by the images of 
\begin{equation}\label{Fil2}
\on{Fil}_{< \nu}'\bbH^*_{G}(\calA)\defined \on{im}(\bbH^*_G((i_{<\nu}^-)^!\calA)\to \bbH^*_{G}(\calA)).
\end{equation}
Here $\nu$ runs through the Galois orbits $X_*(T)/\Ga_F$, which is partially ordered by the requirement that $\nu\leq \nu'$ if one (and then any) representative of $\nu'-\nu$ in $ X_*(T)$ is a sum of coroots for roots appearing in $N^+_\sF$ with non-negative integer valued coefficients. That \eqref{Fil1} and \eqref{Fil2} are indeed filtrations on $\bbH^*_G$ indexed by the partially ordered set $(X_*(T)/\Ga_F, \leq)$ follows immediately from \cite[Prop 3.1]{MV07}, \cite[Cor 5.3.8]{Zhu}. We claim that the Galois stable filtrations \eqref{Fil1} and \eqref{Fil2} split each other and (taking the shifts into account) induce the desired isomorphism. 

Let $\bar{\pi}^\pm$ denote the structure morphism of $\overline{\Gr}_{B^\pm}$. There is a diagram of natural transformations
\begin{equation}\label{Bradendiagram}
\begin{tikzpicture}[baseline=(current  bounding  box.center)]
\matrix(a)[matrix of math nodes, 
row sep=1.5em, column sep=2em, 
text height=1.5ex, text depth=0.45ex] 
{(\bar{\pi}^-_\sF)_*\circ (i^-_\nu)^!&\pi_{G_\sF,*}& (\bar{\pi}^+_\sF)_!\circ (i^+_\nu)^* \\ 
(\bar{\pi}^-_\sF)_*\circ (j^-_\nu)_*\circ (j^-_\nu)^!\circ (i^-_\nu)^!& &(\bar{\pi}^+_\sF)_!\circ(j^+_\nu)_!\circ(j^+_\nu)^*\circ (i^+_\nu)^*  \\
(\pi_{T_\sF})_*\circ (q_\nu^-)_*\circ (p_\nu^-)^!& & (\pi_{T_\sF})_!\circ (q_\nu^+)_!\circ (p_\nu^+)^*\\}; 
\path[->](a-1-1) edge node[above] {} (a-1-2);
\path[->](a-1-2) edge node[above] {} (a-1-3);
\path[->](a-1-1) edge node[left] {$\on{(1)}$} (a-2-1);
\path[->](a-2-3) edge node[right] {$\on{(2)}$} (a-1-3);
\path[->](a-2-1) edge node[left] {$\simeq$} (a-3-1);
\path[->](a-2-3) edge node[right] {$\simeq$} (a-3-3);
\path[->](a-3-1) edge node[above] {$\simeq$} (a-3-3);
\end{tikzpicture}
\end{equation}
where we have used that $\bar{\pi}^\pm$ (resp. $\pi_G$) is ind-proper, and $j^\pm_\nu$ is an open immersion. The bottom arrow is the isomorphism in Braden's theorem \cite{Br03} (cf. also \cite{DG15, Ri19}) which is justified by Proposition \ref{gctgeo}. One checks that \eqref{Bradendiagram} commutes up to natural isomorphism. As in the proof of \cite[Thm 5.3.9 (3)]{Zhu}, the main point to prove the claim is that the maps (1) and (2) in \eqref{Bradendiagram} are on the $i$-cohomology an isomorphism if $i=n_\nu$, and zero otherwise. Note that $n_{\dot{\nu}}=n_{\ddot{\nu}}$ for any $\dot{\nu}, \ddot{\nu}$ in the Galois orbit $\nu$, and that $i_{\nu,\sF}=\coprod_{\dot{\nu}\in \nu}i_{\dot{\nu}}$. We may without loss of generality assume that $F=\sF$ and $\nu\in X_*(T)$. Then the statement about the maps (1) and (2) for $\calA\in\Sat_G$ follows from the equality
\begin{equation}\label{trafoeq2}
\calH^i\circ (q_\nu^+)_!\circ (p_\nu^+)^*\calA\,=\,0, \;\;\; i\not = n_\nu,
\end{equation} 
which ultimately rests on the dimension formula for $\Gr_G^{\leq \{\mu\}}\cap (\Gr_{B^\pm})_{\nu}$ (cf. \cite[Thm 3.2]{MV07}). For general ground fields the dimension formula can be derived as in \cite[Prop 4.2]{Ri14a} from \cite{GHKR06} and \cite{NP01} using a flatness argument. This shows the claim.

We see that the isomorphism $\om_G\simeq \om_M\circ \on{CT}_M^G$ is $\Ga_F$-equivariant in the case where $M=T$ is a maximal torus. Note the construction also shows the compatibility with shifts and twists. The case of where $M$ is a general Levi follows from the base change identity $\on{CT}_T^M\circ \on{CT}_M^G\simeq \on{CT}_T^G$ whenever we choose a regular cocharacter $\chi'\co\bbG_m\to M$, cf. \cite[Prop 5.3.29]{BD}. It is also not difficult to see that this is independent of the auxiliary choice of $\chi'$.

This reasoning also implies that $\on{CT}_M(\calA)$ is perverse: Indeed, we may assume $F$ to be algebraically closed, and if $M=T$ is a maximal torus, this is just \eqref{trafoeq2}. For a general Levi $M$, we have $\on{CT}_M^G(\calA)\in \on{Perv}_{L^+M}(\Gr_M)$ if and only if $\on{CT}_T^M\circ \on{CT}_M^G(\calA)\simeq \on{CT}_T^G(\calA)\in \on{Perv}_{L^+T}(\Gr_T)$ (cf. \cite[Lem 3.9]{MV07}) which holds true.\smallskip\\
{\it General case.} Let $G$ be a general connected reductive group. By the proof of Corollary \ref{GeoSat1_cor}, for $\calA\in \Sat_G$ the Galois action on $\om_G(\calA)$ only depends on the quasi-split form of $G$. The functor $\on{CT}_M^G$ is defined over the ground field $F$, and since we already know that $\on{CT}_M^G(\calA)$ is perverse, the same holds true for the Galois action on $\om_M\circ \on{CT}_M^G(\calA)$, cf. Lemma \ref{homotopy_lem}. Thus, the isomorphism $\om_G\simeq \om_M\circ \on{CT}_M^G$ is $\Ga_F$-equivariant by the previous case.\smallskip\\
{\it Proof of i\textup{)}.} We claim that for $\calA\in\Sat_G$, one has $\on{CT}_M(\calA)\in \Sat_M$. We already know that $\on{CT}_M(\calA)$ is perverse. Further, if $E/F$ splits $G$, and $\calA_E$ is a direct sum of normalized intersection complexes, so is $\on{CT}_M(\calA_E)\in \on{Perv}_{L^+M_E}(\Gr_{M,E})$ by using the isomorphism $\om_G\simeq \om_M\circ \on{CT}_M^G$ and Lemma \ref{galoislem}. This easily implies $\on{CT}_M(\calA)\in \Sat_M$. 

Let $\chi'\co \bbG_m\to G$ be another cocharacter with $Z_G(\chi')=M=Z_G(\chi)$. Then the corresponding parabolic subgroups $(P')^+=(G)^{\chi',+}$ and $P^+$ are conjugate by an element $g\in G(F)$: say $g\cdot P^+\cdot g^{-1}=(P')^+$. The isomorphism $G\to G, h\mapsto ghg^{-1}$ gives by transport of structure an isomorphism $c_g\co \Gr_G\to \Gr_G$, and we have 
\[
\on{CT}_{\chi'}^+=\on{CT}_{\chi}^+\circ\; (c_g)^*.
\]
But on Satake categories $(c_g)^*\co \Sat_G\simeq \Sat_G$ is the identity since every $L^+G$-orbit is stable under conjugation by $g$, and hence $\on{CT}_{\chi'}^+|_{\Sat_G}\simeq \on{CT}_{\chi}^+|_{\Sat_G}$. This implies i).\smallskip\\
{\it Proof of ii\textup{)}.} We have to equip $\on{CT}_M$ with a (necessarily unique) monoidal structure such that the isomorphism $\om_G\simeq \om_M\circ \on{CT}_M$ is monoidal. If $F$ is separably closed, this follows from the arguments in \cite{BD}, \cite{MV07} (cf. \cite[Thm 1.7.4]{Xue17} for details). As all ind-schemes are defined over the ground field $F$, we deduce the general case by descent. For convenience we give a short proof of the definition of the monoidal structure on $\on{CT}_M$ which is based on \cite[Thm 3.1]{Ri19}. Let us denote by $\Gr_{G,\bbA^1_F}^{\on{BD}}$ the functor on the category of $F$-algebras $R$ parametrizing triples $(x,\calF,\al)$ consisting of a point $x\in \bbA^1_F(R)$, a $G$-torsor $\calF\to\bbP^1_R$ and a trivialization 
\[
\al\co \calF|_{\bbP^1_R\bslash (\{x\}\cup\{0\})}\simeq \calF^0|_{\bbP^1_R\bslash (\{x\}\cup\{0\})},
\]
where $\calF^0$ is the trivial torsor. The forgetful map $\Gr_{G,\bbA^1_F}^{\on{BD}}\to \bbA^1_F$ is representable by an ind-projective ind-scheme, cf. \cite[Thm 3.1.3]{Zhu}. Let $\Spec(F\pot{z})\to \bbA^1_F$ be the completed local ring at zero, and define
\[
\Gr_G^{\on{BD}}\defined \Gr_{G,\bbA^1_F}^{\on{BD}}\times_{\bbA^1_F}\Spec(F\pot{z}).
\]
Let $s$ (resp. $\eta$) denote the closed (resp. open) point in $\Spec(F\pot{z})$. We have as $F$-ind-schemes
\begin{equation}\label{ProdStr}
\Gr_{G,s}^{\on{BD}}= \Gr_G\;\;\;\;\text{and}\;\;\;\; \Gr_{G,\eta}^{\on{BD}}=\Gr_G\times \Gr_G \times \Spec(F\rpot{z}).
\end{equation}
The construction is functorial in $G$, and there are maps of $F\pot{z}$-ind-schemes
\begin{equation}\label{GroupDiag}
\Gr_{M}^{\on{BD}}\leftarrow \Gr_{P^\pm}^{\on{BD}}\to \Gr_{G}^{\on{BD}}.
\end{equation}
The cocharacter $\chi$ induces a $\bbG_m$-action on $\Gr_{G,\bbA^1_F}^{\on{BD}}$ trivial on $\bbA^1_F$, and hence a $\bbG_m$-action on $\Gr_G^{\on{BD}}$ which is the action \eqref{Gmaction} on each factor in \eqref{ProdStr}. Similar to the argument in Lemma \ref{loclinaffine}, one sees that the $\bbG_m$-action on $\Gr_G^{\on{BD}}$ is Zariski locally linearizable. In view of Theorem \ref{Gmthm}, there are maps of $F\pot{z}$-ind-schemes
\begin{equation}\label{HypLocBD}
(\Gr_{G}^{\on{BD}})^0\leftarrow (\Gr_{G}^{\on{BD}})^\pm\to \Gr_{G}^{\on{BD}},
\end{equation}
and it is possible to see that this diagram identifies with \eqref{GroupDiag}. Note that this is true fiberwise by Proposition \ref{gctgeo}. Further, as in \cite{Ga01} (cf. also \cite{Zhu14}), we obtain for $\calA,\calB\in\Sat_G$ a canonical isomorphism of complexes
\begin{equation}\label{Convnearby}
\Psi_G^{\on{BD}}(\calA\boxtimes\calB\boxtimes\algQl)\simeq\calA\star\calB,
\end{equation}
where $\Psi_G^{\on{BD}}\co D_c^b(\Gr_{G,\eta}^{\on{BD}},\algQl)\to D_c^b(\Gr_{G,s}^{\on{BD}}\times_s\eta,\algQl)$ is the functor of nearby cycles with the conventions as in the appendix to \cite{Il94} (same conventions in \cite{Ri19}). First, let us ignore the shifts in the definition of $\on{CT}_M$. By the functorial properties of nearby cycles, there is a canonical map in $D_c^b(\Gr_{M,s}^{\on{BD}}\times_s\eta,\algQl)$ as
\begin{equation}\label{CTnearby}
\on{CT}_M\circ\Psi_G^{\on{BD}}(\calA\boxtimes\calB\boxtimes\algQl) \longto \Psi^{\on{BD}}_M(\on{CT}_M(\calA)\boxtimes\on{CT}_M(\calB)\boxtimes\algQl),
\end{equation}
where $\Psi^{\on{BD}}_M$ denote the nearby cycles for the family $\Gr_M^{\on{BD}}$. As all objects in $\Sat_G$ are $\bbG_m$-equivariant, the result \cite[Thm 3.3]{Ri19} applies and \eqref{CTnearby} is an isomorphism. Combining \eqref{Convnearby} and \eqref{CTnearby} we obtain the desired monoidal structure on $\on{CT}_M$. Up to sign (cf. \cite[Prop 6.4]{MV07} or \cite{Xue17}) the commutativity constraint is given by switching $\calA$, $\calB$ on the left hand side of \eqref{Convnearby}. Hence, once we know the compatibility of \eqref{CTnearby} with shifts it follows that $\on{CT}_M$ is a tensor functor. That the shifts agree is implied by the decomposition into open and closed sub-ind-schemes
\[
\Gr_{M}^{\on{BD}}\;=\;\coprod_{m\in\bbZ}\Gr_{M,m}^{\on{BD}},
\]   
which is the decomposition \eqref{normalizedecom} in the special fiber and takes in the generic fiber the form
\[
\Gr_{M,m,\eta}^{\on{BD}}\;=\;\coprod_{(m_1,m_2)}\Gr_{M,m_1}\times \Gr_{M,m_2}\times \eta.
\]
The coproduct runs over all pairs $(m_1,m_2)\in \bbZ^2$ with $m_1+m_2=m$. The theorem follows.
\end{proof}

\section{Affine flag varieties} \label{gctaffflag}
Let $k$ be either a finite field or an algebraically closed field. Let $F=k\rpot{t}$ be the Laurent power series local field with ring of integers $\calO=k\pot{t}$. For a smooth affine (geometrically) connected $F$-group scheme $G$, the \emph{\textup{(}twisted\textup{)} loop group $LG$} is the group functor on the category of $k$-algebras
\[LG\co R\longmapsto G(R\rpot{t}).\]
The loop group $LG$ is representable by an ind-affine ind-group scheme, cf.\,\cite[\S 1]{PR08}. Let $\calG$ be a smooth affine $\calO$-group scheme of finite type with geometrically connected fibers and generic fiber $\calG_F=G$. The \emph{\textup{(}twisted\textup{)} positive loop group $L^+\calG$} is the group functor on the category of $k$-algebras 
\[L^+\calG\co  R\longmapsto \calG(R\pot{t}).\] 
The positive loop group $L^+\calG$ is representable by a reduced closed subgroup scheme of $LG$. The \emph{\textup{(}partial\textup{)} affine flag variety $\Fl_\calG$} is the fpqc-sheaf on the category of affine $k$-algebras associated with the functor
\[  R\longmapsto LG(R)/L^+\calG(R).\]
The affine flag variety $\Fl_\calG$ is a separated ind-scheme of ind-finite type over $k$, and the quotient map $LG\to \Fl_\calG$ admits sections \'etale locally, cf.\,\cite[Thm 1.4]{PR08}. The affine flag variety is equipped with a transitive action of the loop group
\begin{equation}\label{flagact}
LG\times \Fl_\calG\longto \Fl_\calG.
\end{equation}
As the group scheme $\calG$ is smooth, the ind-scheme $\Fl_\calG$ represents the functor which assigns to every $k$-algebra $R$ the isomorphism classes of pairs $(\calF,\al)$ with
\begin{equation}\label{flagvar}
\begin{cases}
\text{$\calF$ a $\calG_{R\pot{t}}$-torsor on $\Spec(R\pot{t})$};\\
\text{$\al\co \calF|_{R\rpot{t}}\simeq \calF^0|_{R\rpot{t}}$ a trivialization},
\end{cases}
\end{equation}
where $\calF^0$ denotes the trivial torsor (this can be extracted from the reference \cite{Zhu} by comparing the definition given in its equation (1.2.1) with its Proposition 1.3.6).

\subsection{Schubert varieties} \label{Sch_var_sec} Let $G$ be a connected reductive $F$-group. Then the affine flag variety $\Fl_\calG$ is ind-proper (and then even ind-projective) if and only if $\calG$ is parahoric in the sense of Bruhat-Tits, cf. \cite[Thm A]{Ri16a}. Recall that parahoric group schemes $\calG=\calG_\bbf$ correspond bijectively to facets $\bbf$ of the (enlarged) Bruhat-Tits building, i.e.~$\calG_\bbf$ is (by definition) the fiberwise neutral component of the unique smooth affine $\calO$-group scheme whose generic fiber is $G$, and whose $\calO$-points are the pointwise fixer of $\bbf$ in $G(F)$. In this case, we also write
\[
\Fl_\bbf=\Fl_\calG.
\]
Let $I$ be the inertia subgroup of $\Ga_F$, and let $\Sigma= \Gal({\breve{F}/F})\simeq \Gal(\bar{k}/k)$, cf. notation. Let $A\subset G$ be a maximal $F$-split torus such that $\bbf$ is contained in the corresponding apartment of the Bruhat-Tits building. Denote by $M\subset G$ the centralizer of $A$ which is a minimal Levi subgroup. By Bruhat-Tits there exists a maximal $\breve{F}$-split torus $S$ containing $A$ and defined over $F$. As $G_{\breve{F}}$ is quasi-split by Steinberg's Theorem, the centralizer $T$ of $S$ is a maximal torus. We obtain a chain of $F$-tori
\begin{equation}\label{groupchain}
A\subset S \subset T,
\end{equation}
which are all contained in the minimal Levi subgroup $M$.

\begin{dfn}\label{IwahoriWeyl} i) The \emph{Iwahori-Weyl group over $\breve{F}$} is the group
\[
\breve{W}\defined \on{Norm}_G(S)(\breve{F})/\breve{T}_1,
\]
where $\breve{T}_1$ denotes the unique parahoric subgroup of $T(\breve{F})$, cf. \cite{HR08}.\smallskip\\
ii) The \emph{Iwahori-Weyl group over $F$} is the group
\[
W\defined \on{Norm}_G(A)(F)/M_1,
\]
where $M_1$ denotes the unique parahoric subgroup of $M(F)$, cf. \cite{Ri16b}.
\end{dfn}

For each $w \in \breve{W}$, we choose a lift $\dot{w} \in LG(\bar{k})$; the choice is normalized by requiring that, if $w \in T(\breve{F})/\breve{T}_1 \subset \breve{W}$, then $\dot{w} \in T(\breve{F}) \subset LG(\bar{k})$ has $\kappa_T(\dot{w}) =  \kappa_T(w)$ for the Kottwitz homomorphism $\kappa_T: T(\breve{F})/\breve{T}_1 \rightarrow X_*(T)_{I_F}$.

By \cite[\S 1.2]{Ri16b}, there is an injective group morphism
\begin{equation}\label{IWembed}
W\hookto \breve{W},
\end{equation}
which identifies $W=(\breve{W})^\Sig$. For any $w\in \breve{W}$, the \emph{Schubert variety $\Fl^{\leq w}_\bbf$ associated with $w$} is the reduced $L^+\calG_{\bar{k}}$-orbit closure 
\begin{equation}\label{schubertflag}
\Fl^{\leq w}_\bbf\defined \overline{L^+\calG_{\bar{k}} \cdot \dot{w}\cdot e_0},
\end{equation}
where $\dot{w} \in LG(\bar{k})$ is the element associated above to $w$, and $e_0\in\Fl_\bbf$ denotes the base point. The Schubert variety $\Fl^{\leq w}_\bbf$ is a geometrically irreducible projective scheme which is defined over some finite extension $k_E/k$. The $L^+\calG_{k_E}$-orbit of $\dot{w}\cdot e_0$ is denoted $\Fl^{w}_\bbf$ and is a smooth geometrically connected open dense $k_E$-subscheme of $\Fl^{\leq w}_\bbf$.  Further, $\Fl^{w}_\bbf$ (and hence $\Fl^{\leq w}_\bbf$) is defined over $k_E=k$ if $w \in\breve{W}_\bbf \cdot W \cdot \breve{W}_\bbf$.

The Iwahori-Weyl group $W$ (resp.\,$\breve{W}$) acts on the (enlarged) apartment $\scrA=\scrA(G,A,F)$ (resp. $\breve{\scrA}=\breve{\scrA}(G,S,\breve{F})$) by affine transformations (this is normalized as follows: if the Kottwitz homomorphism \cite[$\S7$]{Ko97} takes $w \in T(\breve{F})/\breve{T}_1 \subset \breve{W}$ to $\lambda \in X_*(T)_{I_F}$, then $w$ acts on $\breve{\scrA}$ by translation by $\lambda^\flat$, where $\lambda^\flat$ is the image of $\lambda$ in $X_*(T)_{I_F} \otimes \mathbb Q$). There is a natural inclusion of simplicial complexes 
\[
\scrA\hookto \breve{\scrA},
\]
which identifies $\scrA=(\breve{\scrA})^\Sig$. Let $\bba$ be an alcove containing $\bbf$ in its closure. Then there is a unique alcove $\breve{\bba}$ (resp. facet $\breve{\bbf}$) in $\breve{\scrA}$ containing $\bba$ (resp. $\bbf$). The choice of $\bba$ (resp. $\breve{\bba}$) equips $W$ (resp. $\breve{W}$) with a quasi-Coxeter structure and thus a length function and Bruhat order $(l,\leq)$ (resp.\,$(\breve{l},\leq)$), i.e., the simple reflections are the reflections at the walls meeting the closure of $\bba$ (resp. $\breve{\bba}$). More precisely, if $\Omega_{\breve{\bf a}}$ is the subgroup of $\breve{W}$ stabilizing $\breve{\bba}$, and $\breve{W}_{\rm aff}$ is the Coxeter group generated by the simple reflections corresponding to the walls of $\breve{\bba}$, then we have the decomposition
\begin{equation}\label{stabilizer_alcove}
\breve{W} = \breve{W}_{\rm aff} \rtimes \Omega_{\breve{\bba}}
\end{equation}
and the decomposition of $W$ is obtained by taking $\Sig$-invariants. The subgroup of $W$ (resp. $\breve{W}$) associated with $\bbf$ (resp. $\breve{\bf f}$) is the subgroup 
\begin{equation} \label{W_f_defn}
W_\bbf\defined \on{Norm}_G(A)(F)\cap\calG_\bbf(\calO)/M_1 \;\;\;\; \text{(resp. $\breve{W}_{\bbf}\defined \on{Norm}_G(S)(\breve{F})\cap\calG_\bbf(\breve{\calO})/\breve{T}_1$)}.
\end{equation}
Note that $\calG_{\bbf, \breve{\calO}}=\calG_{\breve{\bbf}}$ because parahoric group schemes are compatible with unramified extensions. The group $W_\bbf$ (resp. $\breve{W}_\bbf$) is a finite group which identifies with the subgroup generated by the reflections at the walls passing through $\bbf$ (resp. $\breve{\bbf}$). Note that $W_\bbf=(\breve{W}_{\bbf})^\Sig$ under \eqref{IWembed} (\cite[Cor 1.7]{Ri16b}). 

\begin{lem} \label{Bruhatflag} The natural map 
\[
W_\bbf \bslash W/W_\bbf \overset{\simeq}{\longto} L^+\calG(k)\bslash \Fl_\bbf(k), \;\; [w]\longmapsto L^+\calG(k)\cdot \dot{w}\cdot  e_0
\]
is bijective. 
\end{lem}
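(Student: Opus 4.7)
The plan is to combine two ingredients: the Bruhat--Tits double-coset decomposition for parahoric subgroups over the local field $F$, and the identification $\Fl_\bbf(k) = G(F)/K_\bbf$, where $K_\bbf := \calG_\bbf(\calO) = L^+\calG(k)$, obtained by Lang--Steinberg descent from $\bar k$.

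Before carrying this out, I would verify well-definedness. A class $w \in W$ lifts to some $\dot{w}\in\on{Norm}_G(A)(F) \subset LG(k)$, and since $\bbf$ lies in the apartment of $A$, the unique parahoric subgroup $M_1 \subset M(F)$ is contained in $K_\bbf$. Hence $\dot{w}\cdot e_0 \in \Fl_\bbf(k)$ is independent of the lift, and its $K_\bbf$-orbit is unchanged under left or right multiplication of $w$ by elements of $W_\bbf$, since $W_\bbf$ admits representatives in $\on{Norm}_G(A)(F)\cap K_\bbf$ by its definition \eqref{W_f_defn}.

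For the main claim I would first invoke the Bruhat--Tits decomposition for parahoric subgroups of $G(F)$, which provides a bijection
\[
W_\bbf \bslash W/W_\bbf \;\overset{\sim}{\longto}\; K_\bbf\bslash G(F)/K_\bbf;
\]
this is the $F$-rational refinement of the standard $\breve{F}$-version \cite{HR08}, and is recorded in this generality in \cite{Ri16b}. It then remains to identify the target with $L^+\calG(k)\bslash\Fl_\bbf(k)$ via the orbit map $g \mapsto g\cdot e_0$, i.e.\ to establish the set-theoretic equality
\[
\Fl_\bbf(k) \;=\; LG(k)/L^+\calG(k).
\]
Over $\bar k$ this is immediate, since $LG \to \Fl_\bbf$ is an $L^+\calG$-torsor that admits sections \'etale locally and $\bar k$ is algebraically closed. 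For a finite field $k$ it follows from the vanishing $H^1(\Sigma, L^+\calG) = 0$ of non-abelian Galois cohomology: Lang's theorem supplies vanishing $H^1$ on each finite-type quotient $\calG(\calO/t^n\calO)$, which is smooth and connected over $k$ because $\calG$ has geometrically connected fibers, and the vanishing propagates to the pro-limit $L^+\calG$ by a standard successive-lifting argument.

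The main technical ingredient is the Bruhat--Tits decomposition over $F$ itself -- rather than only over $\breve F$ -- for parahoric subgroups of a possibly non-quasi-split group, which rests on the theory of the Iwahori--Weyl group over $F$ developed in \cite{Ri16b}. The remaining pieces are either bookkeeping or a routine Lang--Steinberg descent; the latter requires some care to ensure that non-abelian $H^1$-vanishing propagates to the pro-limit, but no new ideas.
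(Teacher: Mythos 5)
Your proposal is correct and follows the same route as the paper's proof: use Lang's theorem together with an approximation argument on the pro-smooth, geometrically connected quotients of $L^+\calG$ to show $H^1(\Sigma,\calG(\breve{\calO}))=0$ and hence $\Fl_\bbf(k)=LG(k)/L^+\calG(k)$, then reduce to the $F$-rational Bruhat--Tits decomposition of \cite[Thm 1.4]{Ri16b}. Your version spells out the well-definedness checks and the cohomological descent in more detail, but the two key ingredients are identical.
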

\begin{proof} The group $L^+\calG$ is an inverse limit of smooth geometrically connected $k$-groups. As $k$ is finite (or algebraically closed), an approximation argument and Lang's lemma show that $H^1(\Sig, \calG(\breve{\calO}))$ vanishes, and hence $\Fl_\calG(k)=LG(k)/L^+\calG(k)$. The lemma reduces to \cite[Thm 1.4]{Ri16b}.
\end{proof}

If $k$ is algebraically closed, then $W=\breve{W}$ and the map $\breve{W}_\bbf \bslash \breve{W}/\breve{W}_\bbf \simeq L^+\calG(\bar{k})\bslash \Fl_\bbf(\bar{k})$ is bijective by Lemma \ref{Bruhatflag}. By \cite[Rmk 9]{HR08}, the natural map 
\[
W_\bbf \bslash W/W_\bbf \overset{\simeq}{\longto} (\breve{W}_\bbf \bslash \breve{W}/\breve{W}_\bbf)^\Sig 
\]
is bijective. The Bruhat order $\leq$ induces a partial order on the double quotient $\breve{W}_\bbf \bslash \breve{W}/\breve{W}_\bbf$ compatible with the order on $W_\bbf \bslash W/W_\bbf$, and we have
\[
\Fl^{v}_\bbf\;\subset\; \Fl^{\leq w}_\bbf
\] 
if and only if $[v]\leq [w]$ in the induced Bruhat order on $\breve{W}_\bbf \bslash \breve{W}/\breve{W}_\bbf$ (resp. equivalently on $W_\bbf \bslash W/W_\bbf$ if both classes are $\Sig$-fixed). In particular, there is a presentation on reduced loci
\[
(\Fl_\bbf)_\red\;=\;\text{colim}_{v}\bigcup_{w\in \Sig\cdot v}\Fl^{\leq w}_\bbf
\]
where $v$ runs through the $\Sig$-orbits in $\breve{W}_\bbf \bslash \breve{W}/\breve{W}_\bbf$. Each such union of Schubert varieties is defined over $k$, stable under the $L^+\calG$-action and the stabilizers are geometrically connected, cf. \cite[Cor 2.3]{Ri16a}.

\subsection{Torus actions on affine flag varieties} \label{torussubsec}
Let $\calG=\calG_\bbf$, and let $\chi\co \bbG_{m,\calO}\to \calG$ be a $\calO$-cocharacter. The composition 
\[
\bbG_{m,k}\subset L^+\bbG_{m,\calO}\overset{L^+\!\chi \phantom{h}}{\longto} L^+\calG
\]
defines a $\bbG_m$-action on the affine flag variety $\Fl_\calG=\Fl_\bbf$. As in \eqref{hyperloc} we have the hyperbolic localization diagram
\begin{equation}\label{hyperflag}
(\Fl_\calG)^0\leftarrow (\Fl_\calG)^\pm\to \Fl_\calG.
\end{equation}
For a proof of the following lemma, which implies $(\Fl_\calG)^\pm$ and $(\Fl_\calG)^0$ are representable as ind-schemes by Theorem \ref{Gmthm}, we refer to Lemma \ref{loclinBD} below. 

\begin{lem}\label{loclinflag}
The $\bbG_m$-action on $\Fl_\calG$ is Zariski locally linearizable.
\end{lem}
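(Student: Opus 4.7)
The plan is to imitate Lemma \ref{loclinaffine}, reducing to a parahoric of $\GL_n$ and exploiting the lattice-chain description of the resulting affine flag variety.

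First, using Bruhat--Tits theory one can find a faithful representation $\calG \hookto \GL(V)$ for some free $\calO$-module $V$ of finite rank, arranged so that $\calG$ is realized as the neutral component of the stabilizer of a periodic $\calO$-lattice chain
\[
\La_0 \;\supset\; \La_1 \;\supset\; \cdots \;\supset\; \La_{r-1} \;\supset\; t\La_0
\]
in $V_F = V \otimes_\calO F$, with $\GL(V)/\calG$ quasi-affine (cf.\,\cite[\S 4]{PR08}). The cocharacter $\chi$ then composes to an $\calO$-cocharacter of $\GL(V)$, and the natural map $\Fl_\calG \to \Fl_{\GL(V),\La_\bullet}$ is a $\bbG_m$-equivariant locally closed immersion (into a union of connected components), by the usual argument using quasi-affineness of $\GL(V)/\calG$ (cf.\,\cite[Prop 1.2.6]{Zhu}).

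Second, I would embed the lattice-chain flag variety into a finite product of affine Grassmannians via
\[
\Fl_{\GL(V),\La_\bullet} \;\hookto\; \prod_{i=0}^{r-1} \Gr_{\GL(V)}, \quad (\calL_\bullet) \longmapsto (\calL_0, \ldots, \calL_{r-1}),
\]
a $\bbG_m$-equivariant closed immersion for the diagonal action via $\chi$. The proof of Lemma \ref{loclinaffine} further embeds each factor $\Gr_{\GL(V)}$, as a filtered $\bbG_m$-stable colimit, into a product of classical projective spaces $\bbP(V_j)$ carrying a linear $\bbG_m$-action via a weight eigenbasis of $V_j$. The standard affine charts cut out by vanishing of these eigenvector coordinates are $\bbG_m$-stable with linear action; their products cover $\prod_i \Gr_{\GL(V)}$ $\bbG_m$-stably, and pulling back through the composed locally closed immersion yields the required Zariski linearizing cover of $\Fl_\calG$.

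The main subtle point is the first step: realizing a general parahoric $\calG$ as the stabilizer of a lattice chain in some $\GL(V)$, with $\chi$ compatibly lifted. This relies on structural results of Bruhat--Tits, which are available for parahorics of connected reductive groups over Laurent series fields. Alternatively -- and this appears to be the route taken in Lemma \ref{loclinBD} below -- one can construct the linearization globally on the Beilinson--Drinfeld Grassmannian, whose special fiber is $\Fl_\calG$; this more uniform approach avoids explicit lattice chains at the cost of requiring the BD framework developed in the next section.
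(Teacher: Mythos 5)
Your overall strategy---reduce to $\GL_n$ via a faithful representation and then cover by $\bbG_m$-stable affine charts inherited from projective embeddings---is exactly the right idea, and you correctly identify that the paper's actual proof routes through Lemma \ref{loclinBD}. But your first step as written has a gap, and it is also an unnecessary detour.

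The gap: you claim that for a general parahoric $\calG$ one can arrange a faithful representation $\calG \hookto \GL(V)$ so that $\calG$ is \emph{realized as the neutral component of the stabilizer of a periodic lattice chain}. This is a much stronger statement than what is actually available (or needed). Such a lattice-chain moduli description of parahorics is established for $\GL_n$, $\GSp_{2n}$, and certain classical groups, and in the tamely ramified case one has the ``Bruhat--Tits closure'' constructions of Pappas--Zhu and Kisin--Pappas; but for an arbitrary, possibly wildly ramified $G$ over $k\rpot{t}$ there is no such result to cite, and Lemma \ref{loclinflag} is stated in that full generality. The reference you give, \cite[\S 4]{PR08}, does not furnish this. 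So the foundation of your step one is, at best, unproved in the generality required.

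The gap is also avoidable, because the lattice chain does no work in your argument. All that is needed is a closed $\calO$-embedding $\calG\hookto \Gl_{n,\calO}$ (the constant group scheme) with $\Gl_{n,\calO}/\calG$ quasi-affine, which exists by \cite[Prop 1.3 b)]{PR08} for any parahoric. This induces a quasi-compact immersion $\Fl_\calG\hookto \Fl_{\Gl_{n,\calO}} = \Gr_{\GL_n,k}$, which is a closed immersion since both sides are ind-proper, and it is $\bbG_m$-equivariant for the composed cocharacter $\bbG_{m,\calO}\to\calG\to\Gl_{n,\calO}$. Since any two maximal split tori of $\Gl_{n,\calO}$ over the local ring $\calO$ are conjugate (\cite[Prop 6.2.11]{Co14}), one may conjugate this cocharacter into the diagonal torus, after which it is defined over $k$, and Lemma \ref{loclinaffine} applied to $\Gr_{\GL_n,k}$ finishes the proof. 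Your second step, embedding a lattice-chain flag variety into a finite product of $\Gr_{\GL_n}$'s and using the $\bbP(V_i)$-charts, is then superfluous, as one lands directly in a single affine Grassmannian. The paper does the same reduction but on the spread-out group $\ucG$ over a curve and proves the Zariski local linearizability for the Beilinson--Drinfeld Grassmannian $\Gr_\calG$ over $\calO$, which yields the statement for $\Fl_\calG$ by restriction to the special fiber; that uniform approach is what the paper needs anyway for Theorem \ref{gctgeoBD}, whereas the direct argument over $k$ sketched above is a legitimate shortcut if one only wants Lemma \ref{loclinflag}.
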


\begin{lem} \label{basicgeoflag}
i\textup{)} The map $(\Fl_\calG)^\pm\to \Fl_\calG$ is schematic. \smallskip\\
ii\textup{)} The map $ (\Fl_\calG)^\pm\to (\Fl_\calG)^0$ is ind-affine with geometrically connected fibers, and induces an isomorphism on the group of connected components $\pi_0((\Fl_{\calG_{\bar{k}}})^\pm)\simeq\pi_0((\Fl_{\calG_{\bar{k}}})^0)$.
\end{lem}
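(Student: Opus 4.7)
The plan is to follow the template of Lemma \ref{basicgeo}, with the simplification that part i) here only asks for schematicity rather than the finer statement about locally closed immersions on connected components. First I would invoke Lemma \ref{loclinflag}, which says that the $\bbG_m$-action on $\Fl_\calG$ induced from $\chi$ is Zariski locally linearizable, placing us in the setting of Theorem \ref{Gmthm}. Applied to this situation, Theorem \ref{Gmthm}(ii) immediately gives part i), and also the first half of part ii): the map $(\Fl_\calG)^\pm \to (\Fl_\calG)^0$ is ind-affine with geometrically connected fibers.

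The content of the lemma is therefore concentrated in the $\pi_0$ statement. The key observation is that the structure map $\pi \co (\Fl_\calG)^\pm \to (\Fl_\calG)^0$ admits a canonical section: every fixed point $x \in (\Fl_\calG)^0(R)$ extends uniquely to the constant $\bbG_m$-equivariant morphism $(\bbA^1_R)^\pm \to \Fl_\calG$ with value $x$, and this construction assembles into a closed immersion $(\Fl_\calG)^0 \hookto (\Fl_\calG)^\pm$ splitting $\pi$. Combined with the geometrically connected fibers from Theorem \ref{Gmthm}(ii), this will yield the desired bijection on geometric $\pi_0$: surjectivity is given by the section, and injectivity follows because any two geometric points of $(\Fl_{\calG_{\bar k}})^\pm$ lying over the same component of $(\Fl_{\calG_{\bar k}})^0$ can be linked by first moving within their (connected) fibers down to the image of the section, and then moving along the section within the chosen component.

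The only mildly delicate point will be keeping the bookkeeping straight at the ind-scheme level. I would handle this by fixing a $\bbG_m$-stable presentation $\Fl_\calG = \on{colim}_i X_i$ by projective Schubert-type pieces as in \S\ref{Sch_var_sec}. Then Theorem \ref{Gmthm}(i)--(ii) supplies compatible presentations $(\Fl_\calG)^0 = \on{colim}_i X_i^0$ and $(\Fl_\calG)^\pm = \on{colim}_i X_i^\pm$, and the canonical section above is the colimit of the canonical sections $X_i^0 \hookto X_i^\pm$. For each $i$, the map $X_i^\pm \otimes_k \bar k \to X_i^0 \otimes_k \bar k$ is a surjective morphism of finite-type $\bar k$-schemes with geometrically connected fibers and a continuous section, hence induces a bijection on $\pi_0$ by the standard topological fact. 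Passing to the colimit preserves the bijection and yields $\pi_0((\Fl_{\calG_{\bar k}})^\pm) \simeq \pi_0((\Fl_{\calG_{\bar k}})^0)$, completing the proof.
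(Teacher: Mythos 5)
Your proposal is correct and follows the paper's approach exactly: the paper's proof of Lemma \ref{basicgeoflag} is the one-line remark that both parts follow from Theorem \ref{Gmthm}~ii) using Lemma \ref{loclinflag}, and you have correctly unwound that reference, including the standard section-plus-connected-fibers argument for the $\pi_0$ isomorphism (which the paper leaves implicit, as it does in the analogous Lemma \ref{basicgeo}).
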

\begin{proof}
 Part i) and ii) follow from Theorem \ref{Gmthm} ii) using Lemma \ref{loclinflag}.

\end{proof}

\subsubsection{Fixed points, attractors and repellers}
Our aim is to express \eqref{hyperflag} in terms of group theoretical data, cf. Proposition \ref{gctgeoflag} below. 

The cocharacter $\chi$ acts on $\calG$ via $(\la,g)\mapsto \chi(\la)\cdot g\cdot \chi(\la)^{-1}$. Let $\calM=\calG^0$ be the centralizer, and let $\calP^+=\calG^+$ (resp. $\calP^-=\calG^-$) be the attractor (resp. the repeller). Note that the definition of the fixed point (resp. attractor, resp. repeller) functors \eqref{flow} makes sense over any base ring (or base scheme), cf. \cite{Ri19}. As $\calG$ is affine, the $\calO_F$-group functors are representable by closed subgroup schemes of $\calG$ (cf.\,\cite[Lem 1.9]{Ri19}), and there are natural maps of $\calO_F$-groups
\begin{equation}\label{hyperlocflag}
\calM\leftarrow \calP^\pm\to \calG.
\end{equation}
The generic fiber $M=\calM_F$ is an $F$-Levi subgroup of $G$ and $P^\pm=\calP^\pm_F$ are parabolic subgroups with $P^+\cap P^-=M$. The following result is similar to \cite[Lem 3.4]{He}.

\begin{lem}\label{groups} i\textup{)} The group schemes $\calM$ and $\calP^\pm$ are smooth closed subgroup schemes of $\calG$ with geometrically connected fibers.\smallskip\\
ii\textup{)} The centralizer $\calM$ is a parahoric group scheme for $M$.\smallskip\\
iii\textup{)} There is a semidirect product decomposition $\calP^\pm=\calM\ltimes \calN^\pm$ where $\calN^\pm$ is a smooth affine group scheme with geometrically connected fibers.
\end{lem}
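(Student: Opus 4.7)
The plan is to derive parts i) and iii) from the formal theory of attractors and fixed points recorded in \S\ref{CollectGm}, and to identify the Levi parahoric in part ii) by Bruhat--Tits theoretic means.

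For part i), since $\calG$ is affine, the functors $\calG^0$ and $\calG^\pm$ for the conjugation $\bbG_m$-action are representable by closed subgroup schemes of $\calG$ by \cite[Lem 1.9]{Ri}. To get smoothness over $\calO_F$, I apply Lemma \ref{0-perm} to the structural morphism $\calG\to\Spec(\calO_F)$, endowing the base with the trivial $\bbG_m$-action so that $\Spec(\calO_F)^0=\Spec(\calO_F)^\pm=\Spec(\calO_F)$; this shows $\calM\to\Spec(\calO_F)$ and $\calP^\pm\to\Spec(\calO_F)$ are smooth. Geometric connectedness of fibers is checked fiberwise: on the generic fiber, $M=Z_G(\chi_F)$ is the centralizer of a torus in the connected reductive group $G$, hence connected, and $P^\pm$ are the associated parabolics; on the special fiber, the smooth connected affine group $\calG_{k_F}$ has a composition series through its unipotent radical and reductive quotient in which fixed points and attractors of $\bbG_m$-actions remain smooth and connected, and one recombines these via a standard extension argument.

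For part iii), I use the monoid action $m_\chi\co \bbA^1_{\calO_F}\times \calP^+\to \calP^+$ of \eqref{monoidaction} in the form adapted to $\calO_F$-schemes to define a retraction $\calP^+\to \calM$, $p\mapsto \lim_{\la\to 0}\chi(\la) p \chi(\la)^{-1}$. The closed immersion $\calM\hookto\calP^+$ of fixed points into the attractor is a section, so putting $\calN^+:=\ker(\calP^+\to\calM)$ we get the semidirect product $\calP^+=\calM\ltimes\calN^+$ formally. Smoothness of $\calP^+\to\calM$ (and hence of $\calN^+$) is again Lemma \ref{0-perm}, applied this time to the smooth morphism $\calG\to\calG^0=\calM$ obtained by composing $\calG^+\to\calG^0$ with itself (alternatively verified on Lie algebras, where $\on{Lie}\calP^+$ is the sum of nonnegative weight spaces and $\on{Lie}\calM$ the zero weight space, so $\on{Lie}\calN^+$ is the sum of strictly positive weight spaces). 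Connected fibers of $\calN^\pm$ follow from those of $\calP^\pm$ and $\calM$, and affineness from the closed immersion $\calN^\pm\hookto \calP^\pm\subset \calG$. The $-$ case is identical after inverting the $\bbG_m$-action.

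For part ii), by i) the group scheme $\calM$ is smooth affine over $\calO_F$ with geometrically connected fibers and generic fiber the Levi $M=Z_G(\chi_F)$; by uniqueness of parahoric group schemes in Bruhat--Tits theory it suffices to show $\calM(\breve{\calO}_F)$ is the neutral component of the pointwise stabilizer in $M(\breve F)$ of a facet $\bbf_M$ of the building $\scrB(M,\breve F)\subset \scrB(G,\breve F)$, which must then equal $\bbf$. The cocharacter $\chi_F$ has image in an $F$-split central torus of $M$ whose apartment in $\scrB(M,\breve F)$ lies inside the apartment of $\scrB(G,\breve F)$ for any maximal $\breve F$-split torus $S\supset \chi(\bbG_m)$; since $\chi$ extends to $\calG$ over $\calO_F$, the facet $\bbf$ of $\calG$ is fixed by $\chi$ and therefore lies in the apartment of $\scrB(M,\breve F)$. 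On points, $\calM(\breve{\calO}_F)$ is by construction the set of $g\in\calG(\breve{\calO}_F)$ commuting with $\chi$, which equals $\calG(\breve{\calO}_F)\cap M(\breve F)$; this is precisely the parahoric of $M$ associated with $\bbf$, viewed as a facet in $\scrB(M,\breve F)$, since passage to the neutral component is automatic from the connectedness of fibers proved in i). I expect the main obstacle to be this last part, because it requires the correct compatibility between the integral cocharacter $\chi$, the Bruhat--Tits building of the Levi $M$, and the identification of $\calM$ with the parahoric model at $\bbf_M$; everything else follows mechanically from \cite{Ri} and the dynamic method of \cite{CGP10}.
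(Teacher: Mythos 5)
Your overall strategy is a genuinely different route from the paper's, and it is mostly sound, but one step as written does not work. The paper cites Margaux \cite[Rmk 1.2, Thm 1.1, Rmk 3.3]{Mar15} to get smoothness of $\calM$, $\calP^\pm$ \emph{and} of the map $\calP^\pm\to\calM$ over $\calO$; from smoothness it deduces flatness, hence that $\calM$ is the flat closure of $M$ in $\calG$, and then cites \cite[Lem A.1]{Ri16a} to identify that closure as the parahoric for $M$. Geometric connectedness of the fibers of $\calM$ is then a consequence of being parahoric, and is inherited by $\calP^\pm$ via \cite[Cor 1.12]{Ri}. You instead prove geometric connectedness of fibers directly (generic fiber as a torus centralizer, special fiber via the dynamic method over the residue field) and then identify $\calM$ as a parahoric by computing $\calM(\breve\calO)=\calG(\breve\calO)\cap M(\breve F)$ and appealing to Bruhat--Tits uniqueness. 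That is legitimate; for the special-fiber connectedness you should simply cite \cite[Prop 2.1.8(iii)]{CGP10} — the ``composition series through the unipotent radical'' digression is both unnecessary and vaguer than the direct reference. For the uniqueness used in ii), you should be explicit that you invoke the \'etoff\'e characterization of Bruhat--Tits (as encapsulated in \cite[Lem A.1]{Ri16a}, which the paper cites directly): two smooth affine $\breve\calO$-group schemes with the same generic fiber and the same parahoric $\breve\calO$-points are not trivially identified without it.

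The genuine gap is in your argument that the map $\calP^+\to\calM$ is smooth. You invoke Lemma \ref{0-perm} ``applied to the smooth morphism $\calG\to\calG^0=\calM$ obtained by composing $\calG^+\to\calG^0$ with itself,'' but there is no morphism $\calG\to\calG^0$, and Lemma \ref{0-perm} only transports a property from a given $\bbG_m$-equivariant morphism $X\to Y$ to the induced maps $X^\pm\to Y^\pm$ and $X^0\to Y^0$; it cannot be used to establish smoothness of the structural map $X^+\to X^0$ from the fixed-points/attractor functor itself. Your parenthetical Lie-algebra alternative is also incomplete: the weight-space decomposition shows the differential of $\calP^+\to\calM$ is surjective at the identity section, but surjectivity of the tangent map for a homomorphism of group schemes over $\calO$ does not by itself give flatness (which is exactly the content one would need here). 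The cleanest fix is to cite \cite[Rmk 3.3]{Mar15}, which asserts precisely that the attractor-to-fixed-points map of a smooth affine group scheme with $\bbG_m$-action is smooth; alternatively, one can combine fiberwise smoothness of $\calP^\pm_s\to\calM_s$ (from \cite[Prop 2.1.8]{CGP10} in the special fiber, and from the Levi decomposition in the generic fiber) with the crit\`ere de platitude par fibres, using that $\calP^\pm$ and $\calM$ are flat over $\calO$. Once this smoothness is repaired, your construction of the retraction via the monoid action and the definition $\calN^\pm:=\ker(\calP^\pm\to\calM)$ goes through.
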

\begin{proof}
The groups $\calM$, $\calP^\pm$ and the map $\calP^\pm\to \calM$ are smooth by \cite[Rmk 1.2, Thm 1.1 \& Rmk 3.3]{Mar15}. In particular, $\calM$ (resp. $\calP^\pm$) agrees with the flat closure of $M$ (resp. $P^\pm$) inside $\calG$, and the group $\calM$ is parahoric by \cite[Lem A.1]{Ri16a}. In particular, $\calM$ has geometrically connected fibers which implies $\calP^\pm$ having geometrically connected fibers by \cite[Cor 1.12]{Ri19}. Part i) and ii) follow. The scheme $\calN^\pm$ is the kernel of $\calP^\pm\to \calM$, and hence smooth with geometrically connected fibers. The lemma follows.
\end{proof}

The maps \eqref{hyperlocflag} induce maps of $k$-ind-schemes
\begin{equation}\label{hyperlocflagmap}
\Fl_\calM\leftarrow \Fl_{\calP^\pm}\to \Fl_\calG.
\end{equation}

 \begin{lem}\label{repmap}
i\textup{)} The map $\Fl_\calM\to \Fl_\calG$ is representable by a closed immersion.\smallskip\\
ii\textup{)} The map $\Fl_{\calP^\pm}\to \Fl_\calG$ is schematic, and factors as a quasi-compact immersion $\Fl_{\calP^\pm}\to (\Fl_\calG)^\pm$.\smallskip\\
iii\textup{)} The map $\Fl_{\calP^\pm}\to \Fl_\calM$ has geometrically connected fibers, and induces an isomorphism on the group of connected components $\pi_0(\Fl_{\calP_{\bar{k}}^\pm})\simeq\pi_0(\Fl_{\calM_{\bar{k}}})$.
\end{lem}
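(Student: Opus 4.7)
For part i), I would first observe that the cocharacter $\chi$ factors through $\calM = \calG^0$ by definition of $\calM$ as the fixed-point scheme of the conjugation action. Consequently, conjugation by $\chi(\la)$ acts trivially on $\calM$, the induced $\bbG_m$-action on $\Fl_\calM$ is trivial, and the natural map $\Fl_\calM \to \Fl_\calG$ factors through $(\Fl_\calG)^0$. To show this map is a closed immersion I would first establish that it is a monomorphism. Lemma \ref{groups} identifies $\calM$ with the flat closure of $M$ in $\calG$, and since the natural map $R\pot{t} \to R\rpot{t}$ is injective for every $k$-algebra $R$ ($t$ being a non-zero-divisor), a routine check on pulled-back ideals shows $M(R\rpot{t}) \cap \calG(R\pot{t}) = \calM(R\pot{t})$, hence $LM \cap L^+\calG = L^+\calM$ inside $LG$. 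This yields injectivity of $\Fl_\calM(R) \to \Fl_\calG(R)$ modulo fpqc localization, i.e.,\,a monomorphism. Since $\calM$ is parahoric by Lemma \ref{groups}.ii, the ind-scheme $\Fl_\calM$ is ind-projective, so each Schubert variety $\Fl_\calM^{\leq w}$ maps properly into the separated ind-scheme $\Fl_\calG$. A proper monomorphism is a closed immersion, completing part i).

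For part ii), conjugation of $\bbG_m$ on $\calP^\pm$ via $\chi$ extends uniquely to a monoid action $m_\chi\colon \calP^\pm \times_\calO \bbA^1_\calO \to \calP^\pm$ with $m_\chi(\cdot,0)$ landing in $\calM$, precisely analogous to \eqref{monoidaction}. The associated $\bbA^1_\calO$-group homomorphism $\on{gr}_\chi$ pushes forward any $\calP^\pm$-torsor $(\calF^\pm,\al^\pm) \in \Fl_{\calP^\pm}(R)$ to a $\bbG_m$-equivariant $\bbA^1_R$-family of $\calP^\pm$-torsors, as in \eqref{Reesbundle}. Composing this Rees construction with $\Fl_{\calP^\pm} \to \Fl_\calG$ yields the desired factorization $\iota^\pm\colon \Fl_{\calP^\pm} \to (\Fl_\calG)^\pm$. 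Monomorphism of $\iota^\pm$ follows by the same argument as in part i), using that $\calP^\pm$ is the flat closure of $P^\pm$ in $\calG$, so $LP^\pm \cap L^+\calG = L^+\calP^\pm$. To conclude that $\iota^\pm$ is a quasi-compact immersion I would analyze $\iota^\pm$ stratum by stratum: the preimage of any quasi-compact subscheme of $(\Fl_\calG)^\pm$ under $\iota^\pm$ is then a quasi-compact monomorphism of schemes of finite type, which factors through its scheme-theoretic image as an open immersion. Schematicity of $\Fl_{\calP^\pm} \to \Fl_\calG$ then follows by composing this quasi-compact immersion with the schematic map $(\Fl_\calG)^\pm \to \Fl_\calG$ from Theorem \ref{Gmthm}.ii (applied using Lemma \ref{loclinflag}).

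For part iii), use the Levi decomposition $\calP^\pm = \calM \ltimes \calN^\pm$ of Lemma \ref{groups}.iii. The cocharacter $\chi$ acts on $\calN^\pm$ through only strictly positive (resp. strictly negative) weights, so iterating the weight filtration realizes $\calN^\pm$ scheme-theoretically as an $\calO$-affine space $\bbA^n_\calO$ — though not as a group. Hence $L\calN^\pm$ and $L^+\calN^\pm$ are geometrically connected ind-schemes, and the geometric fiber of $\Fl_{\calP^\pm} \to \Fl_\calM$ is the fpqc quotient $L\calN^\pm/L^+\calN^\pm$, which is connected as a quotient of a connected ind-scheme by a closed sub-ind-group. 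This gives both the geometric connectedness of the fibers and the induced isomorphism $\pi_0(\Fl_{\calP^\pm_{\bar k}}) \overset{\simeq}{\to} \pi_0(\Fl_{\calM_{\bar k}})$.

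The principal obstacle lies in part ii): unlike the generic-fiber isomorphism of Proposition \ref{gctgeo}, the integral factorization $\iota^\pm$ is in general \emph{not} an isomorphism onto $(\Fl_\calG)^\pm$ — additional special-fiber components of $(\Fl_\calG)^\pm$ beyond $\Fl_{\calP^\pm}$ arise and are in fact central to Theorem B — so the argument must settle for a quasi-compact immersion, and carefully rule out, via the stratum analysis, that the image fails to be locally closed or that the Rees construction collapses distinct strata.
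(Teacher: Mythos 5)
Your argument for part ii) has a genuine gap. You assert that the preimage of a quasi-compact subscheme of $(\Fl_\calG)^\pm$, being a quasi-compact monomorphism of finite type schemes, "factors through its scheme-theoretic image as an open immersion." This is false in general: the natural map $\{0\}\amalg(\bbA^1_k\setminus\{0\})\to\bbA^1_k$ is a surjective quasi-compact monomorphism of finite type $k$-schemes whose scheme-theoretic image is all of $\bbA^1_k$, yet it is not an immersion. Upgrading a monomorphism of ind-schemes to a quasi-compact immersion is exactly the hard step here, and your proposal supplies no mechanism for it. The paper's argument obtains this via a quasi-affineness input that you do not replicate: one chooses a closed embedding $\calG\hookto\Gl_{n,\calO}$ with $\Gl_{n,\calO}/\calG$ quasi-affine, forms the attractor $P_\calO\subset\Gl_{n,\calO}$, uses Anantharaman's representability theorem together with Zariski's main theorem to show $P_\calO/\calP^\pm$ is quasi-affine (so $\Fl_{\calP^\pm}\to\Fl_{P_\calO}$ is a quasi-compact immersion), conjugates so that $\bbG_{m,\calO}\to\Gl_{n,\calO}$ is defined over $k$ (Conrad, Prop.\,6.2.11) and applies Lemma \ref{basicgeo} to get $\Fl_{P_\calO}\simeq(\Fl_{\Gl_{n,\calO}})^\pm$, and then reads off the quasi-compact immersion property and schematicity from the Cartesian square $(\Fl_\calG)^\pm=\Fl_{P_\calO}\times_{\Fl_{\Gl_{n,\calO}}}\Fl_\calG$. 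Something substituting for the quasi-affineness of $P_\calO/\calP^\pm$ is indispensable.

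For part i) your route differs from the paper's but can be made to work: you establish the monomorphism directly via the intersection $LM\cap L^+\calG=L^+\calM$ (which does hold, using that $\calM$ is the flat closure of $M$ and that $R\pot{t}$ is $\calO$-flat, so the defining ideal of $\calM$ restricts to zero on a $\calG$-point iff it does so generically), and then conclude via the fact that an ind-proper monomorphism into a separated ind-scheme is a closed immersion. The paper instead quotes the quasi-affineness of $\calG/\calM$ and Zhu's criterion; both are acceptable. Note however that the monomorphism-by-intersection argument you appeal to again in part ii) does not by itself give more than a monomorphism. Part iii) is essentially the paper's argument, phrased via the connectedness of $L\calN^\pm$ rather than reducing $N$ to successive $\bbG_a$-extensions as the paper does; both are correct.
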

\begin{proof}
By \cite[Thm 2.4.1]{Co14}, the quotient $\calG/\calM$ is quasi-affine, and hence the map $\Fl_\calM\to \Fl_\calG$ is representable by a quasi-compact immersion (cf. \cite[Prop 1.2.6]{Zhu}) which implies closed immersion because $\Fl_\calM$ is ind-proper. For ii) let either $\calP=\calP^+$ or $\calP=\calP^-$. Choose $\calG\hookto \Gl_{n,\calO}$ such that $\Gl_{n,\calO}/\calG$ is quasi-affine, cf. \cite[Prop 1.3 b)]{PR08}. Let $P_\calO\subset \Gl_{n,\calO}$ be defined by the cocharacter $\bbG_{m,\calO}\to \calG\to \Gl_{n,\calO}$. Then we have $\calP=P_\calO\times_{\Gl_{n,\calO}}\calG$. By Lemma \ref{groups} iii), the group $\calP$ has geometrically connected fibers, and the main result of \cite{Ana73} implies that the fppf-quotient $P_\calO/\calP$ is representable by a quasi-projective scheme. The map $P_\calO/\calP\hookto \Gl_{n,\calO}/\calG$ is a monomorphism of finite type, and hence separated and quasi-finite \cite[Tag 0463]{StaPro} (use that ``quasi-finite''=``locally quasi-finite''+``quasi-compact''). By Zariski's main theorem \cite[02LR]{StaPro} the map is hence quasi-affine, and as the composition of quasi-affine maps is quasi-affine \cite[Tag 01SN]{StaPro}, the quotient $P_\calO/\calP$ is quasi-affine as well. Now there is a commutative diagram of $k$-ind-schemes 
 \begin{equation}\label{commsquare}
\begin{tikzpicture}[baseline=(current  bounding  box.center)]
\matrix(a)[matrix of math nodes, 
row sep=1.5em, column sep=2em, 
text height=1.5ex, text depth=0.45ex] 
{\Fl_\calP& \Fl_\calG  \\ 
\Fl_{P_\calO}& \Fl_{\Gl_{n,\calO}}, \\}; 
\path[->](a-1-1) edge node[above] {}  (a-1-2);
\path[->](a-1-1) edge node[left] {}  (a-2-1);
\path[->](a-2-1) edge node[below] {}  (a-2-2);
\path[->](a-1-2) edge node[right] {}  (a-2-2);
\end{tikzpicture}
\end{equation}
where all maps are monomorphisms. As $\Gl_{n,\calO}/\calG$ (resp. $P_\calO/\calP$) is quasi-affine, the vertical maps are representable by quasi-compact immersions. By \cite[Prop 6.2.11]{Co14} any two maximal split tori in $\Gl_{n,\calO}$ are $\calO$-conjugate which implies that $\bbG_{m,\calO}\to \Gl_{n,\calO}$ is defined over $k$ after conjugation. We may apply Lemma  \ref{basicgeo} i) to see that the map $\Fl_{P_\calO}\simeq (\Fl_{\Gl_{n,\calO}})^\pm \to \Fl_{\Gl_{n,\calO}}$ is schematic. Hence, the composition $\Fl_\calP\to \Fl_\calG\to \Fl_{\Gl_n,\calO}$ is schematic which by (the proof of) \cite[Cor 1.6.2 (b) (iii)]{LMB00} implies that $\Fl_\calP\to \Fl_\calG$ is schematic. Further, observe that in \eqref{commsquare} we have $(\Fl_\calG)^\pm=\Fl_{P_\calO}\times_{\Fl_{\Gl_{n,\calO}}}\Fl_\calG$ because $\Fl_\calG\to \Fl_{\Gl_n}$ is a closed immersion. As the composition $\Fl_{\calP}\to (\Fl_\calG)^\pm\to \Fl_{P_\calO}$ is a quasi-compact immersion, it follows that $\Fl_{\calP}\to (\Fl_\calG)^\pm$ is a quasi-compact immersion as well. For iii), let $k=\bar{k}$ and write $\calP=\calM\ltimes\calN$ with generic fiber $P=M\ltimes N$ as in Lemma \ref{groups} iii). The fiber above the base point of $\Fl_{\calP}\to \Fl_\calM$ is $\Fl_\calN=LN/L^+\calN$ (which is enough to consider by transitivity of the $LM$-action on $\Fl_\calM$). As the map $LN\to \Fl_\calN$ is surjective, it suffices to show that $L N$ is connected. But $N$ is a successive $\bbG_a$-extension, and we reduce to the case that $N=\bbG_a$ which is obvious.  
\end{proof}

\begin{prop} \label{gctgeoflag}
The maps \eqref{hyperflag} and \eqref{hyperlocflagmap} fit into a commutative diagram of $k$-ind-schemes
\[
\begin{tikzpicture}[baseline=(current  bounding  box.center)]
\matrix(a)[matrix of math nodes, 
row sep=1.5em, column sep=2em, 
text height=1.5ex, text depth=0.45ex] 
{\Fl_\calM & \Fl_{\calP^\pm} & \Fl_\calG \\ 
(\Fl_{\calG})^0& (\Fl_{\calG})^\pm& \Fl_{\calG}, \\}; 
\path[->](a-1-2) edge node[above] {}  (a-1-1);
\path[->](a-1-2) edge node[above] {}  (a-1-3);
\path[->](a-2-2) edge node[below] {}  (a-2-1);
\path[->](a-2-2) edge node[below] {} (a-2-3);
\path[->](a-1-1) edge node[left] {$\iota^0$} (a-2-1);
\path[->](a-1-2) edge node[left] {$\iota^\pm$} (a-2-2);
\path[->](a-1-3) edge node[left] {$\id$} (a-2-3);
\end{tikzpicture}
\]
where $\iota^0$ and $\iota^\pm$ are monomorphisms with the following properties:\smallskip\\
i\textup{)} The maps $\iota^0$ and $\iota^\pm$ are closed immersions which are open on reduced loci.\smallskip\\
ii\textup{)} If $G=G_0\otimes_k F$ is constant \textup{(}hence unramified\ over $F$\textup{)} then the maps $\iota^0$ and $\iota^\pm$ are open and closed immersions. \smallskip\\
iii\textup{)} If $\calG_{\breve{\calO}}$ is a special parahoric \textup{(}i.e. $\calG$ is very special\textup{)}, then the maps $\iota^0$ and $\iota^\pm$ are surjective on topological spaces.
\end{prop}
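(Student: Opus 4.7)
The plan is to transport the Grassmannian argument of Proposition \ref{gctgeo} to the affine flag setting, using Lemma \ref{groups} and Lemma \ref{repmap} as the primary inputs. The commutative diagram is constructed by noting that $\Fl_\calM\to \Fl_\calG$ factors through $(\Fl_\calG)^0$ because the $\bbG_m$-action on $\calM$ is trivial, while the factorization $\iota^\pm\co \Fl_{\calP^\pm}\to (\Fl_\calG)^\pm$ is obtained by adapting the Rees bundle construction of \eqref{Reesmap}: the monoid action $m_\chi\co \calP^\pm\times\bbA^1_\calO\to\calP^\pm$ from Lemma \ref{groups} still exists over $\calO$, so pushing out $\calP^\pm$-torsors on $\Spec(R\pot{t})$ along $\mathrm{gr}_\chi$ yields a canonical $\bbG_m$-equivariant lift inverting the evaluation map $(\Fl_{\calP^\pm})^\pm\to \Fl_{\calP^\pm}$; composition with $\Fl_{\calP^\pm}\to \Fl_\calG$ then defines $\iota^\pm$.

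For part i), the monomorphism property for both maps is immediate from Lemma \ref{repmap}. The closedness of $\iota^0$ is formal: $\Fl_\calM\to \Fl_\calG$ is a closed immersion by Lemma \ref{repmap} i), $(\Fl_\calG)^0\hookto\Fl_\calG$ is closed by Theorem \ref{Gmthm} i), so their common factor $\iota^0$ is closed. The main technical obstacle is upgrading the quasi-compact immersion $\iota^\pm$ of Lemma \ref{repmap} ii) to a closed immersion. I would exploit the semidirect decomposition $\calP^\pm=\calM\ltimes\calN^\pm$ from Lemma \ref{groups} iii) to realize $\Fl_{\calP^\pm}\to \Fl_\calM$ as an ind-affine fibration with fiber $L\calN^\pm/L^+\calN^\pm$, mirrored by the ind-affine structure on $(\Fl_\calG)^\pm\to(\Fl_\calG)^0$ provided by Lemma \ref{basicgeoflag} ii). Closedness of $\iota^0$ then propagates fiberwise to $\iota^\pm$, where the remaining check reduces to closedness of an $L\calN^\pm$-orbit inside an attractor fiber; this is tractable because $\calN^\pm$ is a successive $\bbG_a$-extension.

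For the \emph{open on reduced loci} claim, I would compare Schubert stratifications. Every $L^+\calM_{\bar k}$-orbit in $\Fl_{\calM,\bar k}$ is a smooth locally closed subscheme of $\Fl_{\calG,\bar k}$ that is $\bbG_m$-stable, so it lies inside a single connected component of $(\Fl_\calG)^0_\red$; a dimension count together with the Bruhat-type decomposition of Lemma \ref{Bruhatflag} shows these orbits exhaust the components of $(\Fl_\calG)^0_\red$ meeting the image, giving openness. The analogous analysis using attractor orbits for $\calN^\pm$ handles $\iota^\pm$.

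For part ii), in the constant case $G=G_0\otimes_k F$ the relevant subgroups $\calM,\calP^\pm$ are also constant over $\calO$, and the formation of the loop groups respects this base change, so one can mimic the argument of Lemma \ref{gctgeo_over_Z} in the equal-characteristic setting: the fixed-point and attractor functors are then computed entirely from the constant data, with no extra components appearing, which forces $\iota^0$ and $\iota^\pm$ to be genuinely open and closed immersions. For part iii), when $\calG$ is very special the Iwasawa decomposition $G(\breve F)=P^\pm(\breve F)\cdot \calG(\breve\calO)$ gives surjectivity of $\Fl_{\calP^\pm}(\bar k)\to \Fl_\calG(\bar k)$; combined with the description of $(\Fl_\calG)^\pm(\bar k)$ as the set of points $x$ for which $\lim_{\la\to 0}\chi(\la)\cdot x$ exists, this yields surjectivity of $\iota^\pm$ on topological spaces, and surjectivity of $\iota^0$ then follows by restricting to $\bbG_m$-fixed points.
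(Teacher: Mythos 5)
Your construction of the commutative diagram via the Rees bundle and your identification of the key inputs (Lemmas \ref{groups}, \ref{repmap}, \ref{basicgeoflag}) match the paper's setup, and your argument for iii) via the Iwasawa decomposition is essentially the paper's Step (1). However, there is a genuine gap in the argument for i), and the proof of ii) is on the wrong track.

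The paper's proof of i) is a three-step induction: first the special vertex case, then the Iwahori case, then the general facet. The crucial mechanism is the $\bbG_m$-equivariant proper smooth projection $\Fl_\bba \to \Fl_{\bbf_0}$ of Lemma~\ref{0-fs}; the \emph{crit\`{e}re de platitude par fibres} reduces the openness of $\iota^0$ (on reduced loci) to checking openness fiber by fiber, and that fiber-level check boils down to the explicit open immersion $M_0/(M_0\cap B_0) \hookrightarrow (G_0/B_0)^0$ in the finite-dimensional flag variety, obtained from the big open cell of the split group $G_0$. Your replacement --- comparing Schubert stratifications with ``a dimension count together with the Bruhat-type decomposition'' --- does not give openness: a connected component of $(\Fl_\calG)^0_{\rm red}$ can contain more than one $L^+\calM$-orbit, and nothing in the dimension count excludes points of such a component lying outside the image. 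The openness is precisely the non-formal part of the proposition, and it is secured only by the concrete computation in the finite-dimensional quotient, transferred upward along the projections to special vertices. Similarly, your proposed ``fiberwise propagation'' of closedness for $\iota^\pm$ is not an argument: the paper upgrades the quasi-compact immersion of Lemma~\ref{repmap}~ii) to a closed immersion only by combining the openness-on-reduced-loci with the explicit point count of Lemma~\ref{gctflagpoints}, which shows $\iota^\pm$ is bijective onto a union of connected components.

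For ii), invoking Lemma~\ref{gctgeo_over_Z} is a misdirection --- that lemma is a mixed-characteristic statement over $\bbZ_p$ for reductive (not parahoric) group schemes. In the paper, ii) falls out of the same inductive scaffolding: for constant $G$ the special vertex is hyperspecial, so Step (1) literally reduces to Proposition~\ref{gctgeo}; once the maps are genuine isomorphisms (no reduced-loci caveat) at the base of the induction, the flatness criterion carries this through Steps (2) and (3) unchanged. Without the three-step structure, you have no way to start, so the appeal to constancy alone does not suffice.
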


\begin{rmk} We conjecture that the maps $\iota^0$ and $\iota^\pm$ are always open and closed immersions. The method of proof below shows that this already follows from the case of a very special vertex. For tamely ramified groups, this can be done by taking inertia invariants. For general groups, we lack a sufficiently good theory of the open cell in twisted affine flag varieties in order to prove this.
\end{rmk}

The proof of Proposition \ref{gctgeoflag} is finished in \ref{endgctgeoflag} below. We first explain how to construct the diagram. As the $\bbG_m$-action on $\Fl_\calM$ is trivial, Lemma \ref{repmap} i) implies that we obtain a closed immersion $\iota^0\co\Fl_\calM\to (\Fl_\calG)^0$. The map $\iota^\pm\co \Fl_{\calP^\pm}\to (\Fl_\calG)^\pm$ is constructed in Lemma \ref{repmap} ii) and is a quasi-compact immersion. In terms of the moduli interpretation \eqref{flagvar} the map $\iota^\pm$ is given by a Rees construction as in \S 2 above: For a $k$-algebra $R$, and a point $(\calF^\pm,\al^\pm)\in \Fl_{\calP^\pm}(R)$, the pullback $(\calF^\pm_{ \bbA^1_R},\al^\pm_{ \bbA^1_R})\in \Fl_{\calP^\pm}(\bbA^1_R)$ is by definition a bundle 
\[
\calF^\pm\to \Spec(R[z]\pot{t}),
\]
where we identify $\bbA^1_R=\Spec(R[z])$. As in \eqref{monoidaction} there is a $\bbA^1_\calO$-group morphism
\begin{equation}\label{realmap1}
\on{gr}^\pm_\chi\co \calP^\pm\times \bbA^1_\calO\longto \calP^\pm\times\bbA^1_\calO.
\end{equation}
such that $\on{gr}^\pm_\chi|_{\{1\}}=\id$ and $\on{gr}^\pm_\chi|_{\{0\}}$ factors as $\calP^\pm\to \calM\to \calP^\pm$. The base change of \eqref{realmap1} along $\Spec(R[z]\pot{t})\to \Spec(\calO[z])=\bbA^1_\calO$ gives a morphism of $\Spec(R[z]\pot{t})$-groups, and we define
\[
\on{Rees}_\chi(\calF^\pm,\al^\pm)\defined \on{gr}^\pm_{\chi , *}(\calF^\pm_{ \bbA^1_R},\al^\pm_{ \bbA^1_R}) \in \Fl_{\calP^\pm}(\bbA^1_R).
\]
As in \eqref{Reesmap} the $\bbG_m$-equivariance follows from the construction, and one shows that this gives an isomorphism of $k$-schemes $\on{Rees}_\chi\co \Fl_{\calP^\pm}\to (\Fl_{\calP^\pm})^\pm$ which is inverse to the map given by evaluating at the unit section. This constructs the diagram in Proposition \ref{gctgeoflag}, and we need some preparation for its proof.

\subsubsection{Changing the facet} If ${\bf c}$ is any facet in the closure of ${\bf f}$, then we obtain a morphism $\calG_{\bf f} \rightarrow \calG_{\bf c}$ of $\calO$-groups which is the identity in the generic fiber, and gives a closed immersion $L^+\calG_{\bf f} \rightarrow L^+\calG_{\bf c}$. Hence, we obtain a $\mathbb G_m$-equivariant surjective map of $k$-spaces $\Fl_{\calG_{\bf f}} \rightarrow \Fl_{\calG_{\bf c}}$. If $G_0$ denotes the maximal reductive quotient of the special fiber $\calG_{{\bf c}, k}$, and if $P_0$ denotes the image of the map of $k$-groups $\calG_{{\bf f},k} \rightarrow \calG_{{\bf c},k}\to G_0$, then $P_0\subset G_0$ is a parabolic subgroup, and there is an isomorphism of $k$-schemes
\[
 L^+\calG_{\bf c}/ L^+\calG_{\bf f}\simeq G_0/P_0.
\]

\begin{lem}\label{0-fs}
i\textup{)}The map $\Fl_{\calG_{\bf f}} \rightarrow  \Fl_{\calG_{\bf c}}$ is \'etale locally trivial on the base with general fiber $G_0/P_0$. In particular, the map $\Fl_{\calG_{\bf f}} \rightarrow  \Fl_{\calG_{\bf c}}$ is schematic smooth proper and surjective. \smallskip\\
ii\textup{)} The induced morphism $(\Fl_{\calG_{\bf f}})^0 \rightarrow (\Fl_{\calG_{\bf c}})^0$ is smooth proper and surjective. \smallskip\\
iii\textup{)} The induced morphism $(\Fl_{\calG_{\bf f}})^\pm \rightarrow (\Fl_{\calG_{\bf c}})^\pm$ is smooth and surjective.
\end{lem}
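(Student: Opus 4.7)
The plan is to derive all three parts from a single geometric fact: the map $\Fl_{\calG_{\bf f}} \to \Fl_{\calG_{\bf c}}$ is a Zariski- (or at least étale-) locally trivial fiber bundle with fiber the partial flag variety $G_0/P_0$. First I would invoke \cite[Thm 1.4]{PR08}, which says that the $L^+\calG_{\bf c}$-torsor $LG \to \Fl_{\calG_{\bf c}}$ admits sections étale locally on the base. Combined with the presentation $\Fl_{\calG_{\bf f}} = LG \times^{L^+\calG_{\bf c}}(L^+\calG_{\bf c}/L^+\calG_{\bf f})$ and the given identification $L^+\calG_{\bf c}/L^+\calG_{\bf f} \simeq G_0/P_0$, this gives the fiber-bundle structure in part i). Since $G_0/P_0$ is a smooth projective $k$-variety, schematicness, smoothness, properness and surjectivity of $\Fl_{\calG_{\bf f}} \to \Fl_{\calG_{\bf c}}$ all drop out formally.

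For parts ii) and iii), the preservation of smoothness (for both $(-)^0$ and $(-)^\pm$) and of properness (for $(-)^0$, since closed immersions are preserved) under taking fixed points and attractors is already contained in Lemma \ref{0-perm}, so the only remaining task is surjectivity of the induced maps. This is where the subtlety lies: the étale trivialization in i) is not a priori $\bbG_m$-equivariant, so one cannot just invoke it. Instead I would argue pointwise over $\bar k$. If $\bar y_0 = \bar g_0 \cdot L^+\calG_{\bf c}$ is a geometric $\bbG_m$-fixed point of $\Fl_{\calG_{\bf c}}$, then $\bar g_0^{-1}\chi(\lambda)\bar g_0 \in L^+\calG_{{\bf c},\bar k}$ for all $\lambda$, so $\lambda \mapsto \bar g_0^{-1}\chi(\lambda)\bar g_0$ is a $\bbG_m$-cocharacter which, via the quotient map $L^+\calG_{{\bf c},\bar k} \twoheadrightarrow G_{0,\bar k}$, descends to a cocharacter of $G_{0,\bar k}$. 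Under the identification of the geometric fiber above $\bar y_0$ with $(G_0/P_0)_{\bar k}$, the $\bbG_m$-action on the fiber is this cocharacter acting by left multiplication.

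With this setup, Borel's fixed-point theorem applied to the smooth projective $\bbG_m$-variety $(G_0/P_0)_{\bar k}$ produces a non-empty fixed locus in every geometric fiber, yielding surjectivity of $(\Fl_{\calG_{\bf f}})^0 \to (\Fl_{\calG_{\bf c}})^0$ on $\bar k$-points, hence surjectivity since both sides are of finite type. For iii), the Bia{\l}ynicki--Birula decomposition of $(G_0/P_0)_{\bar k}$ shows that every point of the attractor (resp.\ repeller) lies in the Bruhat-type cell of some fixed point; combined with the fact that attractor points in $(\Fl_{\calG_{\bf c}})^\pm$ flow to $\bbG_m$-fixed points already handled in ii), every geometric attractor point in the base lifts to an attractor point upstairs, giving surjectivity of $(\Fl_{\calG_{\bf f}})^\pm \to (\Fl_{\calG_{\bf c}})^\pm$. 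The main obstacle is really just the identification of the $\bbG_m$-action on geometric fibers of the bundle in i) as coming from a bona fide cocharacter of the finite-dimensional group $G_0$; once that is in hand, everything else is a formal consequence of Lemma \ref{0-perm} and classical results on torus actions on flag varieties.
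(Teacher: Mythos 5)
Your part i) is the same argument as the paper's: invoke \cite[Thm 1.4]{PR08} to get an \'etale section of $LG \to \Fl_{\calG_{\bf c}}$, identify the fiber with $G_0/P_0$, and deduce schematic/smooth/proper/surjective by descent. That part is fine.

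For ii) and iii) the paper's own proof is actually terser than yours — it dispatches both with the single sentence ``Lemma \ref{0-perm} now implies ii) and iii)'', but Lemma \ref{0-perm} only yields preservation of smoothness and (for $(-)^0$) properness, not surjectivity. You are right to flag that the \'etale trivialization is not $\bbG_m$-equivariant and that surjectivity needs a separate argument. Your argument for ii) is correct: a $\bbG_m$-fixed geometric point $\bar y_0$ of the base has a $\bbG_m$-stable, non-empty, projective geometric fiber $f^{-1}(\bar y_0)$, and Borel's fixed point theorem produces a fixed point upstairs. (Your explicit identification of the $\bbG_m$-action on the fiber with a cocharacter of $G_{0,\bar k}$ acting by left translation is more than is needed for this — any $\bbG_m$-action on a projective variety will do — but it is not wrong.)

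The argument for iii), however, has a real gap. You propose to take $\bar y \in (\Fl_{\calG_{\bf c}})^+(\bar k)$ with flow limit $\bar y_0$, lift $\bar y_0$ to a fixed point $\bar x_0$ by ii), and then invoke the Bia{\l}ynicki--Birula decomposition of the fiber over $\bar y_0$ to lift $\bar y$ itself. But $\bar y$ does not lie in that fiber (it is not a fixed point and its fiber $f^{-1}(\bar y)$ is not $\bbG_m$-stable), so the BB decomposition of $f^{-1}(\bar y_0)\cong (G_0/P_0)_{\bar k}$ says nothing directly about preimages of $\bar y$. The correct mechanism is the valuative criterion of properness applied to the orbit curve: let $\bar x$ be \emph{any} $\bar k$-point above $\bar y$ (surjectivity of $f$), and consider the $\bbG_m$-orbit map $\bbG_m \to \Fl_{\calG_{\bf f}}$, $\lambda \mapsto \chi(\lambda)\cdot \bar x$. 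Composing with $f$ gives the orbit of $\bar y$, which extends across $0 \in \bbA^1$ since $\bar y \in (\Fl_{\calG_{\bf c}})^+$. As $f$ is proper and $\bbG_m$-equivariant, this extension lifts uniquely to $\bbA^1 \to \Fl_{\calG_{\bf f}}$, and by uniqueness the lift is again $\bbG_m$-equivariant; hence $\bar x \in (\Fl_{\calG_{\bf f}})^+(\bar k)$ already. (In fact this argument simultaneously handles ii), since a fixed $\bar y_0$ is in particular in $(\Fl_{\calG_{\bf c}})^+$ and the flow limit of $\bar x$ then maps to $\bar y_0$.) I would recommend replacing the BB step by this properness argument.
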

\begin{proof}
The \'{e}tale local triviality of $LG \rightarrow LG/ L^+\calG_{\bf c}$ is proved in \cite[Thm 1.4]{PR08}. This also follows from \eqref{flagvar}: let $\calF\to \Spec(R\pot{t})$ be a $\calG_{{\bf c},R\pot{t}}$-torsor. If we denote $\calF_0=\calF\otimes_{R\pot{t}}R$ for $R\pot{t}\to R$, $t\mapsto 0$, then $\calF_0$ is an $\calG_{{\bf c}, R}$-torsor which has a section over some \'etale cover $R\to R'$ (because $\calG_{{\bf c}, R}$ is smooth). By the formal lifting criterion for smooth morphism, we obtain a section $\Spf(R'\pot{t})\to \calF$ which gives a section $\Spec(R'\pot{t})\to \calF$ (because $\calF$ is affine). 

Now let $Y\to \Fl_{\calG_{\bf c}}$ be a map from a scheme, and denote by $X= \Fl_{\calG_{\bf f}}\times_{ \Fl_{\calG_{\bf c}}}Y$ the base change. We want to prove that $X$ is a scheme, and that the map $X\to Y$ is smooth proper. The desired properties are Zariski local on the base, and hence we may assume that $Y=\Spec(R)$ is affine. By the discussion above, there exists an \'etale affine cover $U\to Y$, and a section
\[
\begin{tikzpicture}[baseline=(current  bounding  box.center)]
\matrix(a)[matrix of math nodes, 
row sep=1em, column sep=2em, 
text height=1.5ex, text depth=0.45ex] 
{&LG \\ 
U&\Fl_{\calG_{\bf c}}. \\}; 
\path[->](a-2-1) edge[densely dotted] node[above] {$g$}  (a-1-2);
\path[->](a-1-2) edge node[right] {} (a-2-2);
\path[->](a-2-1) edge node[below] {} (a-2-2);
\end{tikzpicture}
\] 
Consider the map $\pi\co (G_0/P_0)\times_k U\subset \Fl_{\calG_{\bf f},U}\overset{m_g}{\longto} \Fl_{\calG_{\bf f},U}\to \Fl_{\calG_{\bf f}}$ where $m_g$ denotes the operator which is induced from multiplication with $g$. By definition, we have a cartesian diagram of $k$-ind-schemes
\[
\begin{tikzpicture}[baseline=(current  bounding  box.center)]
\matrix(a)[matrix of math nodes, 
row sep=1.5em, column sep=2em, 
text height=1.5ex, text depth=0.45ex] 
{X\times_Y U& U  \\ 
\Fl_{\calG_{\bf f}}& \Fl_{\calG_{\bf c}}, \\}; 
\path[->](a-1-1) edge node[above] {}  (a-1-2);
\path[->](a-1-1) edge node[left] {}  (a-2-1);
\path[->](a-2-1) edge node[below] {}  (a-2-2);
\path[->](a-1-2) edge node[right] {}  (a-2-2);
\end{tikzpicture}
\]
which induces a $U$-map $(\pi,\id)\co (G_0/P_0)\times_k U\to X\times_Y U =: X_U$. The map is an isomorphism with inverse constructed similarly using $m_{g^{-1}}$.  Now $X_U \rightarrow X$ is a surjective morphism of ind-schemes, since surjectivity is stable under base change in the category of ind-schemes. As $X$ is an ind-scheme and $X_U$ is a quasi-compact scheme, the surjective morphism $X_U \rightarrow X$ factors through a closed subscheme $X_i \hookto X$, hence $X_i \cong X$, i.e.\,$X$ is a scheme. Further, as $(G_0/P_0)_U\simeq X_U\to U$ is the projection, the scheme $X$ is proper and smooth over $Y$ as these properties can be checked \'etale locally on the base. These remarks imply i). Lemma \ref{0-perm} now implies ii) and iii).
\end{proof}

\begin{rmk} Using the formal smoothness of $LG$ in a similar way, one can prove that $\Fl_{\calG_{\bf f}} \to {\rm Spec}(k)$ is formally smooth.
\end{rmk}

\subsubsection{End of proof} \label{endgctgeoflag}
Write $\mathcal P^{\pm}_{\bf f}$ (resp. $\calM_\bbf$) for $\mathcal P^{\pm}$ (resp. $\calM$) when $\mathcal G = \mathcal G_{\bf f}$. 

\begin{proof}[Proof of Proposition \ref{gctgeoflag}]
We may assume $k=\bar{k}$ is algebraically closed. Let $\calG=\calG_\bbf$ for a facet $\bbf$ of the Bruhat-Tits building. The proof proceeds in three steps (1) $\bbf=\bbf_0$ is a special vertex, (2) $\bbf=\bba$ is an alcove and (3) $\bbf$ is a general facet.\smallskip\\
\emph{Step (1).} Let $\bbf=\bbf_0$ be a special vertex, i.e.\,$\calG=\calG_{\bbf_0}$ is a special parahoric. The Iwasawa decomposition
\[
LG(k)=LP^\pm(k)\cdot L^+\calG(k)
\] 
implies that the maps $\iota^0$ and $\iota^\pm$ are bijections on $k$-points which shows part iii). As the map $\iota^0$ (resp. $\iota^\pm$) is a closed immersion (resp. locally closed immersion), it is an isomorphism on the underlying reduced subschemes which shows i). If $G=G_0\otimes_kF$, then any special parahoric is hyperspecial and part ii) reduces to Proposition \ref{gctgeo}. Step (1) follows.\smallskip\\
\emph{Step (2).} Let $\bbf=\bba$ be an alcove, i.e. $\calG=\calG_\bba$ is an Iwahori group scheme. Choose a special facet $\bbf_0$ contained in the closure of $\bba$. Then the morphism $\calG_\bba\to \calG_{\bbf_0}$ induces a $\bbG_m$-equivariant proper smooth map on affine flag varieties $\Fl_{\bba}\longto \Fl_{\bbf_0}$ by Lemma \ref{0-fs}. Hence, we obtain a commutative diagram of $k$-ind-schemes
\begin{equation}\label{gctgeoflag:eq1}
\begin{tikzpicture}[baseline=(current  bounding  box.center)]
\matrix(a)[matrix of math nodes, 
row sep=1.5em, column sep=2em, 
text height=1.5ex, text depth=0.45ex] 
{\Fl_{\calM_\bba}& \Fl_{\calM_{\bbf_0}}  \\ 
(\Fl_\bba)^{0}& (\Fl_{\bbf_0})^{0}, \\}; 
\path[->](a-1-1) edge node[above] {}  (a-1-2);
\path[->](a-1-1) edge node[left] {$(\ast)$}  (a-2-1);
\path[->](a-2-1) edge node[below] {}  (a-2-2);
\path[->](a-1-2) edge node[right] {}  (a-2-2);
\end{tikzpicture}
\end{equation}
where $\Fl_{\calM_{\bbf_0}}\to (\Fl_{\bbf_0})^{0}$ is an isomorphism on reduced loci. The morphism on fixed points $(\Fl_\bba)^{0}\to (\Fl_{\bbf_0})^{0}$ is proper surjective and smooth by Lemma \ref{0-fs}. After passing to reduced loci, we want to show $(\ast)$ is an immersion which is both open and closed. It is enough to check that the map $(\ast)$ is an open immersion on fibers over points in $(\Fl_{\calM_{\bf f_0}})_{\rm red}$. Let us check why this is enough. We invoke the {\em crit\`{e}re de platitude par fibres} of EGAIV, 11.3.10, which implies that a morphism of finitely presented flat $S$-schemes $f : X \rightarrow Y$ is flat  (resp.\,an open immersion) if and only if $f_{\bar s}: X_{\bar s} \rightarrow Y_{\bar s}$ is flat (resp.\,an open immersion) for all geometric points $\bar{s}$ of $S$ (cf.\,\cite[7.4]{DR73}). We may apply this to the diagram of ind-schemes above, since the horizontal arrows are smooth (hence flat) by Lemma \ref{0-fs}.  For ``closed immersion'', we use Lemma \ref{repmap} i).

So we need to prove that the map $(\ast)$ is an open immersion on fibers. By the transitivity of the $L\calM_{\bbf_0}$-action, it is enough to consider the fiber above the base point. Let $\calG_{\bbf_0,k}\to G_0$ be the maximal reductive quotient. The image of $\calM_{\bbf_0,k}$ in $G_0$ is the Levi subgroup $M_0\subset G_0$ given by the centralizer of the cocharacter
\begin{equation}\label{gctgeoflag:eq2}
\bbG_{m,k}\overset{\chi_k}{\longto} \calG_{\bbf_0,k}\longto G_0. 
\end{equation}
The image $B_0$ of the composition $\calG_{\bba,k}\to \calG_{\bbf_0,k}\to G_0$ is a Borel subgroup in $G_0$. The cocharacter \eqref{gctgeoflag:eq2} factors by definition through $B_0\subset G_0$, and the image of $\calM_{\bba,k}$ in $B_0$ is its centralizer which is $M_0\cap B_0$.
Thus, the map $(\ast)$ in \eqref{gctgeoflag:eq1} becomes on the fiber above the base point
\begin{equation}\label{gctgeoflag:eq3}
M_0/(M_0\cap B_0)\longto (G_0/B_0)^{0}.
\end{equation}
This map is easily seen to be an open immersion by using the big open cell in the split connected reductive group $G_0$. This implies part i) for $\iota^0$ in the case of an alcove. For $\iota^\pm$ we use Lemma \ref{0-fs} iii) to deduce, analogously to the case of $\iota^0$, that $\iota^\pm$ is an open immersion on reduced loci. 
Note that we already know that $\iota^\pm$ is a quasi-compact immersion (cf. Lemma \ref{repmap} ii)). 
Hence, to deduce that $\iota^\pm$ is a closed immersion it remains to show that $\iota^\pm$ maps $\Fl_{\calP_\bba^\pm}$ bijectively onto a union of connected components of $(\Fl_\bba)^\pm$.
Using the result for $\iota^0$, this is shown in Lemma \ref{gctflagpoints} below. 
This implies i) for $\iota^\pm$. If $G=G_0\otimes_kF$, then by step (1) we do not need to pass to the reduced loci in \eqref{gctgeoflag:eq1} which implies ii) and finishes the proof of step (2). \smallskip\\
\emph{Step (3).} Next let $\bbf$ be a general facet, and choose an alcove $\bba$ containing $\bbf$ in its closure. As in the previous case, we obtain now a commutative diagram of $k$-ind-schemes
\begin{equation}\label{fixedpointthm:eq4}
\begin{tikzpicture}[baseline=(current  bounding  box.center)]
\matrix(a)[matrix of math nodes, 
row sep=1.5em, column sep=2em, 
text height=1.5ex, text depth=0.45ex] 
{\Fl_{\calM_\bba}& \Fl_{\calM_{\bbf}}  \\ 
(\Fl_\bba)^{0}& (\Fl_{\bbf})^{0}, \\}; 
\path[->](a-1-1) edge node[above] {}  (a-1-2);
\path[->](a-1-1) edge node[left] {}  (a-2-1);
\path[dashed,->](a-1-1) edge node[left] {}  (a-2-2);
\path[->](a-2-1) edge node[below] {}  (a-2-2);
\path[->](a-1-2) edge node[right] {}  (a-2-2);
\end{tikzpicture}
\end{equation}
where the dashed arrow is smooth on reduced loci (as the composition of an open immersion with a smooth morphism, cf.\,Lemma \ref{0-fs}). The map $\Fl_{\calM_\bba}\to \Fl_{\calM_{\bbf}}$ is smooth surjective, and hence the closed immersion $\Fl_{\calM_{\bbf}}\to (\Fl_{\bbf})^{0}$ is smooth on reduced loci by \cite[Tag 02K5]{StaPro}. In particular, it is an open immersion as well. This finishes i) for $\iota^0$ and the argument for $\iota^\pm$ is analogous. Again for ii), we do not need to pass to reduced loci by virtue of step (2). This finishes step (3) and the proposition follows.
\end{proof}

The following lemma is used in the proof above, and implies that the fibers of the map $(\Fl_\calG)^\pm\to (\Fl_\calG)^0$ agree on $\bar{k}$-valued points with the fibers of the map $\Fl_{\calP^\pm}\to \Fl_\calM$ for all points in $ \Fl_\calM(\bar{k})$: Let
\[
C^0=(\Fl_{\calG})^0(k)\bslash \iota^0(\Fl_\calM(k)) \;\;\; \text{(resp. $C^\pm=(\Fl_{\calG})^\pm(k)\bslash \iota^\pm(\Fl_{\calP^\pm}(k))$)}.
\]

\begin{lem}\label{gctflagpoints}
Under $(\Fl_\calG)^0\simeq \Fl_\calM(k)\amalg C^0$ and $(\Fl_\calG)^\pm(k)\simeq \Fl_{\calP^\pm}(k)\amalg C^\pm$, the diagram in Proposition \ref{gctgeoflag} gives on $k$-points the commutative diagram of sets
\[
\begin{tikzpicture}[baseline=(current  bounding  box.center)]
\matrix(a)[matrix of math nodes, 
row sep=1.5em, column sep=2em, 
text height=1.5ex, text depth=0.45ex] 
{\Fl_\calM(k) & \Fl_{\calP^\pm}(k) & \Fl_\calG(k) \\ 
\Fl_\calM(k)\amalg C^0 & \Fl_{\calP^\pm}(k)\amalg C^\pm & \Fl_{\calG}(k). \\}; 
\path[->](a-1-2) edge node[above] {}  (a-1-1);
\path[->](a-1-2) edge node[above] {}  (a-1-3);
\path[->](a-2-2) edge node[below] {}  (a-2-1);
\path[->](a-2-2) edge node[below] {} (a-2-3);
\path[->](a-1-1) edge node[left] {$\iota^0$} (a-2-1);
\path[->](a-1-2) edge node[left] {$\iota^\pm$} (a-2-2);
\path[->](a-1-3) edge node[left] {$\id$} (a-2-3);
\end{tikzpicture}
\]
\end{lem}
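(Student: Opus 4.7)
The plan is to deduce the lemma as a formal consequence of the commutativity of the diagram in Proposition \ref{gctgeoflag}, combined with the $\pi_0$-bijections established in Lemmas \ref{basicgeoflag}(ii) and \ref{repmap}(iii). Commutativity of the top square (i.e., that the top horizontal arrows agree with the restrictions of the bottom horizontal arrows to $\iota^\pm(\Fl_{\calP^\pm}(k))$ resp.\,to $\iota^0(\Fl_\calM(k))$) is immediate from Proposition \ref{gctgeoflag}. The only substantive content is to verify that the bottom horizontal map $(\Fl_\calG)^\pm(k) \to (\Fl_\calG)^0(k)$ sends $C^\pm$ into $C^0$; equivalently, the preimage of $\iota^0(\Fl_\calM(k))$ is exactly $\iota^\pm(\Fl_{\calP^\pm}(k))$.

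First I would invoke Proposition \ref{gctgeoflag}(i) to identify $\iota^0$ and $\iota^\pm$ on reduced loci as open and closed immersions, so that $\Fl_\calM$ and $\Fl_{\calP^\pm}$ are identified with unions of connected components of $(\Fl_\calG)^0_{\on{red}}$ and $(\Fl_\calG)^\pm_{\on{red}}$ respectively. On $\bar{k}$-points this is precisely the disjoint union decompositions that define $C^0$ and $C^\pm$ in the lemma.

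Next, by Lemma \ref{basicgeoflag}(ii) the map $(\Fl_\calG)^\pm \to (\Fl_\calG)^0$ induces a bijection on $\pi_0$ over $\bar{k}$, and by Lemma \ref{repmap}(iii) the same holds for $\Fl_{\calP^\pm} \to \Fl_\calM$. Applying $\pi_0(-_{\bar{k}})$ to the commutative square
\[
\begin{tikzpicture}[baseline=(current  bounding  box.center)]
\matrix(a)[matrix of math nodes, row sep=1.5em, column sep=2em, text height=1.5ex, text depth=0.45ex]
{\Fl_{\calP^\pm} & \Fl_\calM \\
(\Fl_\calG)^\pm & (\Fl_\calG)^0 \\};
\path[->](a-1-1) edge node[above] {$q^\pm$} (a-1-2);
\path[->](a-2-1) edge node[below] {} (a-2-2);
\path[->](a-1-1) edge node[left] {$\iota^\pm$} (a-2-1);
\path[->](a-1-2) edge node[right] {$\iota^0$} (a-2-2);
\end{tikzpicture}
\]
of Proposition \ref{gctgeoflag} and using injectivity of both vertical arrows on $\pi_0$ (since $\iota^0$, $\iota^\pm$ are open-closed immersions on reduced loci), the set of connected components of $(\Fl_\calG)^\pm_{\bar{k}}$ making up $\iota^\pm(\Fl_{\calP^\pm})$ corresponds, under the $\pi_0$-bijection, exactly to the set of connected components of $(\Fl_\calG)^0_{\bar{k}}$ making up $\iota^0(\Fl_\calM)$. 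Hence the preimage of $\iota^0(\Fl_\calM)$ in $(\Fl_\calG)^\pm_{\bar{k}}$ coincides with $\iota^\pm(\Fl_{\calP^\pm})$ as subschemes of the reduced loci, and taking $\bar{k}$-points gives the desired statement that $C^\pm$ maps into $C^0$.

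The main thing to keep straight is the interplay between the open-closed immersion statements on reduced loci and the $\pi_0$-bijections; once these are lined up, the argument is essentially bookkeeping and requires no new input beyond what has been established in Proposition \ref{gctgeoflag} and Lemmas \ref{basicgeoflag}, \ref{repmap}.
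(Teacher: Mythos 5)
Your argument is logically valid \emph{as a consequence of} Proposition \ref{gctgeoflag}(i), and it is a genuinely different route from the paper's. The paper reduces, via $LM(k)$-equivariance, to showing that the map of fibers over the base point $\iota^\pm(k)|_X\co X\to Y$ (with $X=\Fl_{\calN^\pm}(k)$) is a bijection, and handles this case by case: the special parahoric case is trivial from Proposition \ref{gctgeoflag}(iii), the Iwahori case reduces to the special case via Lemma \ref{0-fs} and is finished by the classical Bruhat decomposition for the maximal reductive quotient $G_0$ of $\calG_{\bbf_0,k}$, and the general case reduces to the Iwahori case. Your argument replaces this with an abstract $\pi_0$ bookkeeping argument using the open-closed immersion properties of $\iota^0,\iota^\pm$ and the $\pi_0$-bijections of Lemmas \ref{basicgeoflag}(ii) and \ref{repmap}(iii).

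There is, however, a circularity hidden in your proposal that keeps it from serving as a drop-in replacement. You invoke Proposition \ref{gctgeoflag}(i) for $\iota^\pm$ in full generality, in order to know that $\iota^\pm(\Fl_{\calP^\pm})$ is a \emph{union of connected components} of $(\Fl_\calG)^\pm_\red$; this closed-ness is essential to your argument (openness alone does not make a subscheme a union of components). But in the paper, Lemma \ref{gctflagpoints} is itself invoked \emph{during} the proof of Proposition \ref{gctgeoflag}(i) for $\iota^\pm$ in Steps 2 and 3 (Iwahori and general parahoric): at that point $\iota^\pm$ is only known to be an open immersion on reduced loci and a quasi-compact immersion, and the point counting in Lemma \ref{gctflagpoints} is precisely what upgrades this to an open-closed immersion. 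The paper's proof of the lemma breaks this potential circle by using only the already-established special-parahoric case of the proposition, plus the concrete Bruhat decomposition on residue fields, rather than the full statement of Proposition \ref{gctgeoflag}(i). As a post-hoc proof of the lemma once Proposition \ref{gctgeoflag} is fully in hand, your argument is fine and perhaps cleaner; but it cannot be substituted into the paper's inductive structure, and it omits the genuine content (the Bruhat decomposition) that the paper's proof supplies.
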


\begin{proof} We may assume $\bar{k}=k$ (the assertion follows by taking Galois invariants). If $\calP^\pm=\calM\ltimes\calN^\pm$ as in Lemma \ref{groups} iii), then the fiber over the base point of $\Fl_{\calP^\pm}(k)\to\Fl_\calM(k)$ is $X:=\Fl_{\calN^\pm}(k)$. Let $Y$ denote the fiber over the base point of $(\Fl_{\calG})^\pm(k)\to(\Fl_\calG)^0(k)$. As the above diagram is $LM(k)$-equivariant, it is enough to show that the map of sets 
\begin{equation}\label{fibermap}
\iota^\pm(k)|_X\co X\to Y
\end{equation}
is a bijection. Since $\iota^\pm$ is a monomorphism, the map \eqref{fibermap} is clearly injective. Now if $\calG=\calG_{\bbf_0}$ is a special parahoric, then $\iota^\pm$ is bijective by Proposition \ref{gctgeoflag} iii), and hence \eqref{fibermap} is surjective as well in this case. If $\calG=\calG_\bba$ is an Iwahori, then we choose a special facet $\bbf_0$ contained in the closure of $\bba$. The diagram in the formulation of the lemma is functorial with respect to the map of $\calO$-groups $\calG_\bba\to \calG_{\bbf_0}$, and we consider the left square. For the respective fibers above the base points, we obtain a commutative diagram of sets
\begin{equation}\label{alcind}
\begin{tikzpicture}[baseline=(current  bounding  box.center)]
\matrix(a)[matrix of math nodes, 
row sep=1.5em, column sep=2em, 
text height=1.5ex, text depth=0.45ex] 
{X_{\bbf_0}&  X_\bba \\ 
Y_{\bbf_0} & Y_\bba , \\}; 
\path[->](a-1-2) edge node[above] {}  (a-1-1);
\path[->](a-1-1) edge node[left] {}  (a-2-1);
\path[->](a-2-2) edge node[below] {}  (a-2-1);
\path[->](a-1-2) edge node[right] {}  (a-2-2);
\end{tikzpicture}
\end{equation}
and one checks that the horizontal maps are surjective. A diagram chase together with consideration of the $LN$-action shows that it is enough to see that the fibers above the base points of $X_{\bbf_0}$ resp. $Y_{\bbf_0}$ map bijectively onto each other. These fibers are identified with the $k$-points of the horizontal fibers over the base points in the commutative diagram of $k$-schemes 
\[
\begin{tikzpicture}[baseline=(current  bounding  box.center)]
\matrix(a)[matrix of math nodes, 
row sep=1.5em, column sep=2em, 
text height=1.5ex, text depth=0.45ex] 
{M_0/M_0\cap B_0&  P_0^\pm/P_0^\pm\cap B_0  \\ 
(G_0/B_0)^0 & (G_0/B_0)^\pm, \\}; 
\path[->](a-1-2) edge node[above] {}  (a-1-1);
\path[->](a-1-1) edge node[left] {}  (a-2-1);
\path[->](a-2-2) edge node[below] {}  (a-2-1);
\path[->](a-1-2) edge node[right] {}  (a-2-2);
\end{tikzpicture}
\]
where $G_0$ is the maximal reductive quotient of $\calG_{\bbf_0,k}$ and $P^\pm_0$ are the parabolic subgroups given by the image of $\calP^\pm_{\bbf_0, k}\subset \calG_{\bbf_0, k}$ in $G_0$, cf. \eqref{gctgeoflag:eq3}. The classical Bruhat decomposition implies that these horizontal fibers agree. This implies the surjectivity of \eqref{fibermap} for an Iwahori. If $\calG=\calG_\bbf$ is a general parahoric, we choose an alcove $\bba$ containing $\bbf$ in its closure. As in \eqref{alcind} one checks that the map $Y_\bba\to Y_\bbf$ is surjective which implies the surjectivity of $X_\bbf\to Y_\bbf$ using the diagram analogous to \eqref{alcind}. This proves the lemma.
\end{proof}
 
\subsubsection{Connected components} \label{conncompflagsec}
We fix a chain of $F$-tori $A\subset S\subset T$ as in \eqref{groupchain} such that the facet $\bbf$ is contained in the apartment $\scrA=\scrA(G,A,F)$. We assume that the cocharacter $\chi_F$ factors through $A\subset G$ (hence $A\subset M$), and that $T\subset M$ which can always be arranged. We use the maximal torus $T$ to form the algebraic fundamental group $\pi_1(M)=X_*(T)/X_*(T_{M_\scon})$, cf. \eqref{fundamentalgroup}. Let $I\subset \Ga_F$ be the inertia group, and let $\Sig=\Ga_F/I$ the Galois group of $k$. By \cite[\S 2.a.2]{PR08}, the Kottwitz morphism (defined in \cite[\S 7]{Ko97}) is a locally constant morphism of ind-group schemes
\begin{equation}\label{Kottwitzmap}
\kappa_M\co  LM_{\bar{k}}\longto \underline{\pi_1(M)}_I,
\end{equation}
where $\pi_1(M)_I$ denotes the coinvariants under the inertia group $I$. In particular, as $L^+\calM$ is geometrically connected, the map \eqref{Kottwitzmap} gives an isomorphism on the group of connected components 
\begin{equation}\label{conncompflag}
\pi_0(\Fl_{\calM_{\bar{k}}})\overset{\simeq}{\longto} \pi_1(M)_I.
\end{equation}
By Lemma \ref{basicgeoflag}, we have an inclusion on connected components 
\[
\pi_0(\Fl_{\calP^\pm_{\bar{k}}})=\pi_0(\Fl_{\calM_{\bar{k}}})\subset \pi_0((\Fl_{\calG_{\bar{k}}})^0)=\pi_0((\Fl_{\calG_{\bar{k}}})^\pm).
\]
For $\bnu\in \pi_1(M)_I$, denote by $(\Fl_{\calG_{\bar{k}}})^0_\bnu$ (resp. $(\Fl_{\calG_{\bar{k}}})^\pm_\bnu$) the corresponding connected component. Note that all maps in Proposition \ref{gctgeoflag} are compatible with the decomposition into connected components. The disjoint sum of connected components
\begin{equation}
(\Fl_{\calG_{\bar{k}}})^{0,c}=\coprod_{\bnu\in\pi_1(M)_I}(\Fl_{\calG_{\bar{k}}})_{\bnu}^0 \;\;\; \text{(resp. $(\Fl_{\calG_{\bar{k}}})^{\pm,c}=\coprod_{\bnu\in\pi_1(M)_I}(\Fl_{\calG_{\bar{k}}})_{\bnu}^\pm$)}.
\end{equation}
is $\Sig$-stable, and hence defined over $k$. The ind-scheme $(\Fl_{\calG_{\bar{k}}})^{0,c}$ agrees on reduced loci with $\Fl_\calM$ by Proposition \ref{gctgeoflag} i). Further, we have a monomorphism 
\[
\Fl_{\calP_{\bar{k}}^\pm}\hookto(\Fl_{\calG_{\bar{k}}})^{\pm,c}, 
\]
which is a bijection on $\bar{k}$-points by Lemma \ref{gctflagpoints}. 

Let $\calP^\pm=\calM\ltimes \calN^\pm$ with generic fiber $P^\pm=M\ltimes N^\pm$. Let $N$ be either $N^+$ or $N^-$. Let $\rho_N$ denote the half-sum of the roots in $N_\sF$ with respect to $T_\sF$. To every $\bnu\in \pi_1(M)_I$, we attach the number 
\[
n_{\bar{\nu}}\defined\lan2\rho_N,\dot{\nu}\ran,
\] 
where $\dot{\nu}\in X_*(T)$ denotes a representative of $\bnu$. Since the pairing $\lan\str,\str\ran$ is $I$-invariant, and $\lan\rho_N,\al^\vee\ran=0$ for all $\al^\vee\in X_*(T_{ M_\scon})$, the number $n_\bnu$ is well-defined. As in \eqref{decomposition} above the function $\pi_1(M)_I\to \bbZ, \bnu\mapsto n_\bnu$ is constant of $\Sig$-orbits, and we obtain a decomposition
\begin{equation}\label{decompositionflag}
(\Fl_{\calG})^{0,c}=\coprod_{m\in\bbZ}(\Fl_{\calG})_{m}^0 \;\;\; \text{(resp. $(\Fl_{\calG})^{\pm,c}=\coprod_{m\in\bbZ}(\Fl_{\calG})_{m}^\pm$)},
\end{equation}
where $(\Fl_{\calG})_{m}^0$ (resp. $(\Fl_{\calG})_{m}^\pm$)) denotes the disjoint sum over all $(\Fl_{\calG_{\bar{k}}})_{\bnu}^0$ (resp. $(\Fl_{\calG_{\bar{k}}})_{\bnu}^\pm$) with $n_\bnu=m$. The diagram in Proposition \ref{gctgeoflag} restricts to a commutative diagram of $k$-ind-schemes
\begin{equation}\label{restrictflag}
\begin{tikzpicture}[baseline=(current  bounding  box.center)]
\matrix(a)[matrix of math nodes, 
row sep=1.5em, column sep=2em, 
text height=1.5ex, text depth=0.45ex] 
{\Fl_\calM & \Fl_{\calP^\pm} & \Fl_\calG \\ 
(\Fl_{\calG})^{0,c}& (\Fl_{\calG})^{\pm,c}& \Fl_{\calG}, \\}; 
\path[->](a-1-2) edge node[above] {$q^\pm$}  (a-1-1);
\path[->](a-1-2) edge node[above] {$p^\pm$}  (a-1-3);
\path[->](a-2-2) edge node[below] {}  (a-2-1);
\path[->](a-2-2) edge node[below] {} (a-2-3);
\path[->](a-1-1) edge node[left] {$\iota^{0,c}$} (a-2-1);
\path[->](a-1-2) edge node[left] {$\iota^{\pm,c}$} (a-2-2);
\path[->](a-1-3) edge node[left] {$\id$} (a-2-3);
\end{tikzpicture}
\end{equation}
where $\iota^{0,c}$ and $\iota^{\pm,c}$ are nilpotent thickenings, i.e., isomorphisms on reduced loci. The maps $q^\pm=\coprod_{m\in\bbZ}q_m^\pm$ and $p^\pm=\coprod_{m\in\bbZ}p_m^\pm$ are compatible with the disjoint union decomposition \eqref{decompositionflag}. If $G=G_0\otimes_kF$ is constant, then $\iota^{0,c}$ and $\iota^{\pm, c}$ are isomorphisms.

\section{Beilinson-Drinfeld Grassmannians}\label{gctBDgrass}
The Beilinson-Drinfeld Grassmannians $\Gr_\calG$ are $\calO_F$-ind-schemes which degenerate the affine Grassmannian into the twisted affine flag variety. If $F\simeq \bbF_q\rpot{t}$, then $\Gr_\calG$ is constructed in \cite{Zhu14} for tamely ramified groups and in \cite{Ri16a} in general. If $F/\bbQ_p$ is a finite extension, then $\Gr_\calG$ is constructed in \cite{PZ13} for tamely ramified groups. We are interested in the study of fiberwise $\bbG_m$-actions on $\Gr_\calG$.

\subsection{Torus actions in equal characteristic} Let $F=k\rpot{t}$ be a Laurent series field with ring of integers $\calO=k\pot{t}$. The field $k$ is either finite or algebraically closed. Let $G$ be a connected reductive $F$-group, and choose $(A,S,T)$ as in \eqref{groupchain} above. Let $\calG=\calG_{\bbf}$ be a parahoric group whose facet $\bbf$ is contained in the apartment of $A$. Hence, the lft N\'eron model $\calA$ (resp. $\calS$, $\calT$) of $A$ (resp. $S$, $T$) is a closed subgroup scheme of $\calG$. Note that as $A$ (resp. $S_\bF$) is split, the smooth group scheme $\calA$ (resp. $\calS_{\breve{\calO}}$) is a $\calO$-split (resp. $\breve{\calO}$-split) torus.

\subsubsection{Beilinson-Drinfeld Grassmannians}
A technical but necessary step in the construction of BD-Grassmannians from local data is the spreading of the $\calO$-group schemes $(\calG,\calA,\calS,\calT)$ over a curve $X$. 

\begin{prop}\label{spreadprop}
 There exists a smooth affine connected $k$-curve $X$ of finite type with a point $x_0\in X(k)$, an identification $\hat{\calO}_{X,x_0}= \calO$ on completed local rings, and a tuple of smooth affine $X$-group schemes $(\ucG, \ucA, \ucS, \ucT)$ of finite type together with an isomorphism of $\calO$-group schemes
\[
(\ucG, \ucA, \ucS, \ucT)\otimes_{X}\calO\simeq (\calG,\calA,\calS,\calT),
\]
with the following properties: \smallskip\\
i\textup{)} The group scheme $\ucG|_{X\bslash x_0}$ is connected reductive with maximal torus $\ucT|_{X\bslash\{x_0\}}$, and the group $\ucG|_{(X\bslash x_0)_{\bar{k}}}$ is quasi-split.\smallskip\\
ii\textup{)} The group $\ucA$ is a maximal $X$-split torus, $\ucS$ is a maximal $X_{\bar{k}}$-split torus, and $\ucT$ is the centralizer of $\ucS$ in $\ucG$.\smallskip\\
iii\textup{)} The group scheme $\ucG\otimes \calO_{X,x_0}^h$ over the Henselization of the algebraic local ring is uniquely determined \textup{(}up to non-unique isomorphism\textup{)} by the property $\ucG\otimes\calO \simeq \calG$.
\end{prop}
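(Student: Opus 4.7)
The plan is to take $X=\Spec(k[t])$ (or a suitable open subscheme of $\bbP^1_k$) with $x_0$ the origin, so that the completion at $x_0$ gives $\hat{\calO}_{X,x_0}=k\pot{t}=\calO$. The construction then has two main steps: (a) spread out the reductive group $G$ together with its tori to a reductive group scheme over $X\bslash\{x_0\}$, and (b) glue in the parahoric $\calG$ at $x_0$.

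For (a), let $\tF/F$ be a finite separable splitting extension for $G$, so that $G_\tF$ is split. First I would construct a finite map of smooth affine connected $k$-curves $\pi\co \tilde{X}\to X$ which is \'etale away from $x_0$, with a unique preimage $\tilde{x}_0$ of $x_0$, and with completed stalk $\hat{\calO}_{\tilde{X},\tilde{x}_0}=\calO_\tF$. In equal characteristic this is always possible by writing $\tF/F$ as a successive tower of Artin--Schreier and Kummer extensions and realizing each step globally (on $X$ or on a finite cover already constructed). Next, spread out the split pinned group $(G_\tF,T_\tF)$ to a Chevalley group scheme $(\tilde{\ucG},\tilde{\ucT})$ over $\tilde{X}$. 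The cocycle defining $G$ and its subtori $A\subset S\subset T$ from the pinned form extends, after shrinking $X$ if necessary, to a cocycle on the \'etale cover $\pi|_{\tilde{X}\bslash\tilde{x}_0}\co \tilde{X}\bslash \tilde{x}_0\to X\bslash \{x_0\}$, and Galois descent produces $\ucG|_{X\bslash\{x_0\}}$ together with its subtori $\ucA,\ucS,\ucT$ on $X\bslash \{x_0\}$. Properties i) and ii) over $X\bslash\{x_0\}$ then follow from the Chevalley construction and the $\Gal(\tF/F)$-equivariance of the torus chain.

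For (b) I would extend this picture to a smooth affine $X$-group scheme whose completion at $x_0$ recovers $\calG$. This is the technical heart and proceeds in analogy with \cite{PZ13} and the author's earlier work \cite{Ri16a}: the Bruhat--Tits concave function $f_\bbf$ attached to $\bbf$ is realized by filtrations of the root group schemes of $\tilde{\ucG}$ in a Zariski neighborhood of $\tilde{x}_0$, and one glues these affine root data with the reductive datum already spread out in (a) to produce a smooth affine $\tilde{X}$-group scheme; descending along $\pi$ yields $\ucG$ on (a neighborhood of $x_0$ in) $X$. The identification $\ucG\otimes_X\calO\simeq \calG$ is then forced by construction combined with the uniqueness of the Bruhat--Tits parahoric attached to a given set of root group filtrations \cite{BT84}. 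The subtori $\ucA,\ucS,\ucT$ extend across $x_0$ as the lft N\'eron models of their generic fibers and sit as closed subgroup schemes of $\ucG$ by smoothness.

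The main obstacle is the Bruhat--Tits gluing in step (b) when $\tF/F$ is wildly ramified: for tamely ramified groups $\tilde{X}\to X$ can be chosen Kummer and $\ucG$ arises from equivariant descent of a single Chevalley form, but in general one has to globalize the more delicate descent of root datum filtrations of \cite{BT84}. Finally, the uniqueness in iii) is a standard Henselian approximation: since $\calO_{X,x_0}^h\to\calO$ is faithfully flat and a smooth affine group of finite type over $\calO_{X,x_0}^h$ is determined by its completion (together with an \'etale descent datum which is itself pinned down by $\calG$), any two spreadings with the same completion are related by a unique isomorphism, proving the asserted uniqueness up to non-unique isomorphism.
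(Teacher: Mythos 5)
Your proposal takes a genuinely different route from the paper's, and the difference matters: the paper's argument is specifically engineered to avoid the step you identify as ``the main obstacle,'' namely globalizing the Bruhat--Tits root group filtrations when $\tF/F$ is wildly ramified. Since Proposition \ref{spreadprop} is asserted for arbitrary (possibly wildly ramified) connected reductive $G$ over $F\simeq\bbF_q\rpot{t}$ --- and this generality is essential for the applications in \S\ref{AppLocMod} and \S\ref{testfunctions} --- leaving that step unresolved is a genuine gap, not a technical loose end.

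The paper sidesteps the issue entirely by reversing the order of operations. Instead of spreading out the reductive generic fiber over $X\bslash\{x_0\}$ and then trying to graft the parahoric structure onto it via affine root data, it first descends the field-theoretic data $(G,A,S,T)$ from the completion $F=k\rpot{t}$ to the Henselization $F'$ of $k(t)$ at the corresponding valuation. This is Proposition \ref{extensionprop} in the appendix, whose proof is a pure Galois-cohomology argument resting on the bijectivity of $H^1(\tF'/F',\calN')\to H^1(\tF/F,N')$ for suitable smooth groups (a result of \cite{GGM14}), and which works with no tameness hypothesis. Once one has $\uG/F'$, the parahoric $\calG/\calO$ is glued onto it along the canonical identification $\uG_F=\calG_F$ via the Beauville--Laszlo lemma, producing a smooth affine group scheme over $\calO_{F'}$ in one stroke --- the parahoric is never re-built, so no root group filtrations ever need to be spread out. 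The final passage to a genuine curve $(X,x_0)$ is then a formal finiteness argument: $\calO_{F'}$ is a filtered colimit of local rings of pointed curves, so a finite-type group scheme over $\calO_{F'}$ descends to one of them.

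In other words, your steps (a) and (b) are replaced by ``descend to the Henselization, then Beauville--Laszlo glue,'' and the reason for this substitution is precisely that step (b) of your proposal has no known implementation in the wild case. (Your approach, with the Kummer cover, is essentially the construction of \cite{Zhu14}; it works and is quite explicit, but only under tameness.) Part iii) of the proposition also comes for free from the paper's route, since the construction over $\calO_{F'}$ is canonical given $\calG$, whereas in your approach the uniqueness over the Henselization would require a separate argument that the globalized filtrations are independent of the auxiliary choices. Finally, a smaller point: in your step (a) you produce the cover $\tilde{X}\to X$ realizing $\tF/F$ and then descend a cocycle, but the descent datum for $G$ is valued in $\Aut(G_0)$, not just in inner automorphisms; you need both the outer and the inner piece of the cocycle to extend over $\tilde{X}\bslash\tilde{x}_0$, and the inner piece is only pinned down after one knows the quasi-split form extends. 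The appendix handles this carefully (\eqref{exactsequence}--\eqref{semidirect} and the two-step ``outer twist then inner twist'' argument), and the same care would be needed in your global version.
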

\begin{proof} We follow the argument given in \cite[Lem 3.1]{Ri16a} using Proposition \ref{extensionprop} below. Let us recall the major steps: Let $v$ denote the restriction of the valuation of $k\rpot{t}$ to $E:=k(t)$. Then $E_v=F$ on completions, and we let $F'$ denote the Henselization of $(E,v)$. The subfield $F'\subset F$ is a Henselian valued field with completion $F$, and the same residue field $k$. By Proposition \eqref{extensionprop}, there exists a tuple of $F'$-groups $(\uG,\uA,\uS,\uT)$ with the properties as in i) extending the tuple $(G,A,S,T)$. For clarity, let us replace the tuple $(G,A,S,T)$ by $(\uG_F,\uA_F,\uS_F,\uT_F)$. Using the Beauville-Laszlo gluing lemma \cite{BL95} (cf. also \cite[Lem 5]{He10} for another method) we can glue $\uG$ with $\calG$ using the identification $\uG_{F}=G_F=\calG_F$. As in \cite[Lem 3.1, Cor A.3]{Ri16a} we obtain a smooth affine group scheme $\ucG'$ of finite type over $\calO_{F'}$ which extends $\calG$. Since the Beauville-Laszlo construction is functorial, we obtain also a tuple of smooth closed $\calO_{F'}$-subgroup schemes $(\ucA',\ucS',\ucT')$ extending the tuple $(\calA,\calS,\calT)$. As we glued along the identity morphism, it follows that the group $\ucA'$ (resp. $\ucS'$) is a $\calO_{F'}$-split (resp. $\breve{\calO}_{F'}$-split) torus. Further, the centralizer $Z_{\ucG'}(\ucS')$ is a smooth affine group scheme of finite type by \cite[Lem 2.2.4]{Co14}, and contains the commutative closed subscheme $\ucT'$. Thus, we must have $\ucT'=Z_{\ucG'}(\ucS')$ as both agree on an fpqc cover. Recall that $\calO_{F'}$ is the colimit over all finite \'etale local $\calO_{E,v}$-algebras $(B,\frakm)$ with $B/\frakm=k$. As the group scheme $\ucG'$ is of finite type, it is defined over some $(B,\frakm)$. Hence, the tuple $(\ucG',\ucA',\ucS',\ucT')$ extends to a tuple $(\ucG,\ucA,\ucS,\ucT)$ with the desired properties i) and ii) defined over some pointed curve $(X,x_0)$ with algebraic local ring $\calO_{X,x_0}=B$ (again because of the finite type hypothesis). In light of Proposition \ref{extensionprop}, part iii) is immediate from the construction.   
\end{proof}

Now as in \cite[Def 3.3]{Ri16a}, we use the spreading $\ucG$ to define the BD-Grassmannian $\Gr_\calG$ which is a separated $\calO$-ind-scheme of ind-finite type together with a transitive action of the global loop group
\begin{equation}\label{globalact}
\calL\calG\times \Gr_\calG\longto \Gr_\calG,
\end{equation}
such that the generic fiber of \eqref{globalact} is identified with the usual affine Grassmannian \eqref{affineact} (formed using an additional formal parameter), and the special fiber is identified with the twisted affine flag variety \eqref{flagact}. The BD-Grassmannian $\Gr_\calG$ is ind-proper (and then even ind-projective) over $\calO$ if and only if $\calG$ is parahoric in the sense of Bruhat-Tits, cf. \cite[Thm A]{Ri16a}. The construction is as follows: Denote by $\Gr_{\ucG,X}$ the functor on the category of $k$-algebras $R$ given by the isomorphism classes of triples $(x, \calF,\al)$ with
\begin{equation}\label{BDcurve}
\begin{cases}
\text{$x\in X(R)$ is a point};\\
\text{$\calF$ a $\ucG_{X_R}$-torsor on $X_R$};\\
\text{$\al\co \calF|_{X_R\bslash\Ga_x}\simeq \calF^0|_{X_R\bslash\Ga_x}$ a trivialization},
\end{cases}
\end{equation}
where $\calF^0$ denotes the trivial torsor, and $\Ga_x\subset X_R$ is the graph of $x$. Denote by $R\pot{\Ga_x}$ the ring of regular functions on the formal affine $R$-scheme given by the completion of $X_R$ along $\Ga_x$. Then $\Ga_x\subset \Spec(R\pot{\Ga_x})$ defines a Cartier divisor (in particular locally principal), and hence its complement is an affine scheme with ring of regular functions denoted by $R\rpot{\Ga_x}$. The \emph{global loop group} is the functor on the category of $k$-algebras given by
\begin{equation}\label{globalloop}
\calL_X\ucG\co R\mapsto \{(x,g)\;|\; \text{$x\in X(R)$ and $g\in \calG(R\rpot{\Ga_x})$}\},
\end{equation}
which is representable by an ind-affine ind-group scheme over $X$. By replacing $R\rpot{\Ga_x}$ with $R\pot{\Ga_x}$ in \eqref{globalloop}, one defines the \emph{global positive loop group} $\calL_X^+\ucG$ which is a flat affine $X$-group scheme, cf. \cite{Ri16a}. Again by the Beauville-Laszlo gluing lemma \cite{BL95} there is a natural isomorphism $\Gr_{\ucG,X}\simeq \calL_X\ucG/\calL^+_X\ucG$, and we obtain a transitive action morphism
\begin{equation}\label{globalacthelp}
\calL_X\ucG\times \Gr_{\ucG,X}\longto \Gr_{\ucG,X}.
\end{equation}
The map \eqref{globalact} is the base change of \eqref{globalacthelp} along the map $\Spec(\calO)=\Spec(\hat{\calO}_{X,x_0})\to X$.\medskip

\begin{rmk}\label{BDrmk}
 i) Since the formation of $ \Gr_{\ucG,X}$ is compatible with \'etale localizations on $X$ (cf. \cite[Lem 3.2]{Zhu14}), Proposition \ref{spreadprop} iii) implies that the ind-scheme $\Gr_\calG$ together with the map \eqref{globalact} is uniquely determined up to unique isomorphism by the data $(\uG,\calG,t)$. Indeed, for different choices of $(X,x_0)$ the ind-schemes $\Gr_\calG$ are canonically isomorphic by the above mentioned lemma. \smallskip\\
 ii) Spreading out is necessary for the following reason. If one copies and pastes \eqref{BDcurve} by replacing $X$ with $\Spec(\calO)$, and one tries to compute the generic fiber of the resulting functor, then one runs into the problem of computing the completion of $k\rpot{t}\otimes_kk\rpot{t}$ along the diagonal - a huge power series ring in an infinite number of variables. If one instead computes the completion of $k\rpot{t}\otimes_kk(t)$ along the diagonal, then one obtains $k\rpot{t}\rpot{z-t}$ where $z$ is identified with $1\otimes t$. 
See however \cite[\S0.3]{Ri19b} for a partial remedy. 
\end{rmk}

\subsubsection{Torus actions}
Let $\chi\co \bbG_{m,\calO}\to \calG$ be any cocharacter whose generic fiber $\chi_F$ factors through $A$. Then $\chi$ factors through $\calA$ because it is a maximal split torus in $\calG$. As the curve $X$ is connected, the cocharacter $\chi$ spreads uniquely to a cocharacter $\underline{\chi}\co \bbG_{m,X}\to \ucA$. Hence, by functoriality of the loop group construction, we obtain via the composition
\begin{equation}\label{flowBD}
\bbG_{m,\calO}\subset\calL^+\bbG_{m,\calO}\overset{\calL^+\!\chi\phantom{h}}{\longto} \calL^+\calG \subset \calL\calG
\end{equation}
a fiberwise $\bbG_{m}$-action on $\Gr_\calG\to \Spec(\calO)$.

\begin{lem}\label{loclinBD}
The $\bbG_m$-action on $\Gr_\calG$ is Zariski locally linearizable.
\end{lem}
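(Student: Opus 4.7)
The plan is to adapt the proof of Lemma \ref{loclinaffine} to the relative setting over the curve $X$. Since $\Gr_\calG = \Gr_{\ucG,X}\times_X\Spec(\calO)$ and the $\bbG_m$-action on $\Gr_{\ucG,X}$ is trivial on the base $X$, it is enough to produce a Zariski local linearization of the $\bbG_m$-action on $\Gr_{\ucG,X}$, possibly after shrinking $X$ around $x_0$.

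First I would reduce to the case $\ucG = \Gl_{n,X}$. By \cite[Prop 1.3 b)]{PR08} there is a closed immersion $\calG \hookrightarrow \Gl_{n,\calO}$ with $\Gl_{n,\calO}/\calG$ quasi-affine. After possibly shrinking $X$ and invoking Proposition \ref{spreadprop} iii), this spreads to a closed immersion $\ucG \hookrightarrow \Gl_{n,X}$ with quasi-affine quotient, and hence, by \cite[Prop 1.2.6]{Zhu}, induces a $\bbG_m$-equivariant closed immersion $\Gr_{\ucG,X}\hookrightarrow \Gr_{\Gl_{n,X}}$. After conjugating by a suitable element of $\Gl_n(X)$ (shrinking $X$ again if necessary), the cocharacter $\underline{\chi}$ may be arranged to factor through the standard diagonal torus.

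Next I would write $\Gr_{\Gl_{n,X}}=\on{colim}_i\Gr_{\Gl_{n,X},i}$, where, on an $R$-algebra, $\Gr_{\Gl_{n,X},i}(R)$ parametrizes pairs $(x,\La)$ consisting of a point $x\in X(R)$ and an $R\pot{\Gamma_x}$-lattice $\La\subset R\rpot{\Gamma_x}^n$ satisfying $z_x^i\La_{0,R}\subset \La\subset z_x^{-i}\La_{0,R}$ for a local parameter $z_x$ along $\Gamma_x$ and the standard lattice $\La_{0,R}=R\pot{\Gamma_x}^n$. Sending such a pair to the quotient $(z_x^{-i}\La_{0,R})/\La$ yields a $\bbG_m$-equivariant closed $X$-immersion
\[
p_i\co \Gr_{\Gl_{n,X},i}\hookto \on{Quot}(\calV_i/X),
\]
where $\calV_i$ is a rank-$(2in)$ vector bundle on $X$ equipped with a canonical basis. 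Because $\underline{\chi}$ factors through the diagonal torus, the canonical basis of $\calV_i$ consists of $\bbG_m$-eigenvectors, so the induced $\bbG_m$-action on $\on{Quot}(\calV_i/X)$ is linear. The relative Quot scheme is a finite disjoint union of relative Grassmannians, and each admits a canonical Zariski cover by $X$-affine opens indexed by subsets of the canonical basis; each such open is $\bbG_m$-stable with linear action. Arranging these covers compatibly in $i$ (exactly as in Lemma \ref{loclinaffine}) provides the Zariski local linearization of $\Gr_{\Gl_{n,X}}$, and pulling back along $\Spec(\calO)\to X$ yields the linearization of $\Gr_\calG$.

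The main technical obstacle is verifying the relative lattice/Quot description over $X$, rather than over a field as in Lemma \ref{loclinaffine}: one must handle the moving basepoint $x$ and the associated formal disk $\Spf(R\pot{\Gamma_x})$ coherently, and ensure that the canonical basis of $\calV_i$ (built from the standard basis of $\La_0$) is indeed globally defined on $X$. Once this is set up, the Plücker cover argument and its $\bbG_m$-equivariance are essentially formal consequences of the diagonal action, so no further substantive difficulty arises.
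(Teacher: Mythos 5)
Your proposal begins exactly as the paper's proof does: reduce to $\Gl_n$ via a $\bbG_m$-equivariant closed immersion of BD-Grassmannians, then conjugate so that $\underline{\chi}$ factors through the diagonal torus. Where you diverge is the final step. Rather than redoing the lattice/Quot-scheme/Pl\"ucker argument of Lemma \ref{loclinaffine} in the relative setting over $X$ (which, as you note, requires some care about moving formal disks and globalizing the canonical basis), the paper observes that once $\underline{\chi}$ lands in the diagonal torus it is defined over the ground field $k$, and for the constant group $\Gl_n$ one has
\[
\Gr_{\Gl_{n,\calO}}\;=\;\Gr_{\Gl_{n,k}}\times_{\Spec(k)}\Spec(\calO),
\]
so the $\bbG_{m,\calO}$-action is pulled back from the $\bbG_{m,k}$-action. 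Lemma \ref{loclinaffine} then applies verbatim over $k$, and the Zariski local linearization extends by base change. Your relative-Quot approach is correct and would work, but it re-derives over $X$ what the constancy statement already gives for free; the technical points you flag (coherently handling $\Spf(R\pot{\Gamma_x})$ and globalizing the basis of $\calV_i$) are precisely what the paper's shortcut avoids. A small additional remark: you obtain the quasi-affine embedding $\ucG\hookrightarrow\Gl_{n,X}$ by spreading out the embedding of $\calG$ over $\calO$; this is fine but needs a word of justification that the quasi-affine-quotient property spreads out, whereas the paper cites the existence of such a representation directly over $X$ (\cite[\S 2 Ex (1)]{He10}).
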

\begin{proof} We may replace $X$ by the spectrum of the algebraic local ring at $x$, and choose a faithful $X$-representation $\ucG\hookto  \Gl_{n,X}$ such that $\Gl_{n,X}/\ucG$ is quasi-affine (cf. e.g. \cite[\S 2 Ex (1)]{He10} for the existence of the representation). Then the induced map on BD-Grassmannians $i\co \Gr_\calG\to\Gr_{\Gl_n}$ is representable by a quasi-compact immersion (cf. \cite[Prop 1.2.6]{Zhu}), and $\bbG_m$-equivariant if we equip $\Gl_{n}$ with an action via $\bbG_{m,X}\overset{\chi}{\to} \ucG\to \Gl_{n,X}$. As both $\Gr_\calG$ and $\Gr_{\Gl_{n}}$ are ind-proper, the map $i$ is a closed immersion, and in particular affine. We are reduced to showing that the $\bbG_{m,\calO}$-action on $\Gr_{\Gl_n}$ is Zariski locally linearizable. As $X$ is local affine, any two maximal split tori of $\Gl_{n,X}$ are conjugate over $X$, cf. \cite[Prop 6.2.11]{Co14}. Hence, we may assume that the image of $\bbG_{m,X}$ in $\Gl_{n,X}$ lies in the diagonal matrices. Then $\chi$ is defined over $k$, and the $\bbG_{m,\calO}$-action on the ind-scheme 
\[
\Gr_{\Gl_{n,\calO}}=\Gr_{\Gl_{n,k}}\times_{\Spec(k)}\Spec(\calO)
\] 
comes from the ground field $k$. The lemma follows from Lemma \ref{loclinaffine}.  
\end{proof}

\begin{rmk}\label{locally_closed_stratum_rem}
The proof of Lemma \ref{loclinBD} shows that there is a $\bbG_m$-equivariant closed immersion $\Gr_\calG\to \Gr_{\Gl_n}$ where (up to conjugation) the $\bbG_m$-action on the target is induced from a cocharacter of the diagonal matrices. 
The determinant line bundle $\calL_{\on{det}}$ on $\Gr_{\Gl_n}$ (cf.~\cite[\S1.5]{Zhu}) is an ample $\bbG_m$-equivariant line bundle.
Hence, it induces an $\bbG_m$-equivariant closed immersion 
\[
\Gr_{\Gl_n}\longto \bbP H^0(\Gr_{\Gl_n},\calL_{\on{det}}), 
\]
--also called Pl\"ucker embedding--, into an infinite-dimensional projective space with a linear $\bbG_m$-action.
As in \cite[Thm~B.0.3 (iii)]{Dr} it is now easy to see that the restriction of the map $(\Gr_\calG)^+\to \Gr_\calG$ to each connected component on the source is a locally closed immersion. 
Indeed, \cite{Dr} explains how we are reduced to the projective space above, and then we further reduce to affine spaces over connected schemes, which are handled by \cite[$\S$1.3]{Ri19}.
\end{rmk}

In light of Theorem \ref{Gmthm}, Lemma \ref{loclinBD} implies that there are maps of separated $\calO$-ind-schemes of ind-finite type
\begin{equation} \label{hyperBD}
(\Gr_\calG)^0\leftarrow (\Gr_\calG)^\pm\to \Gr_\calG,
\end{equation}
where $(\Gr_\calG)^0$ are the fixed points and $(\Gr_\calG)^+$ (resp. $(\Gr_\calG)^-)$ is the attractor (resp. repeller), cf. \eqref{hyperloc}. As the cocharacter $\chi$ spreads, the $\calO$-groups 
\begin{equation}\label{hyperlocBD}
\calM\leftarrow \calP^\pm\to \calG
\end{equation}
defined in \eqref{hyperlocflag} together with the maps spread as well. The following lemma is proven analogously to Lemmas \ref{basicgeoflag} and \ref{repmap}.

\begin{lem} \label{basicgeoBD_1}
i\textup{)} The map $ (\Gr_\calG)^\pm\to \Gr_\calG$ is schematic.
\smallskip\\
ii\textup{)} The map $(\Gr_\calG)^\pm\to (\Gr_\calG)^0$ is ind-affine with geometrically connected fibers, and induces an isomorphism on the group of connected components $\pi_0((\Gr_{\calG})^\pm)\simeq\pi_0((\Gr_{\calG})^0)$. \smallskip\\
iii\textup{)} The map $\Gr_{\calP^\pm}\to \Gr_\calM$ has geometrically connected fibers. 
\end{lem}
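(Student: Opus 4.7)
My plan is to derive parts (i) and (ii) directly from the general results on $\mathbb G_m$-actions (Theorem \ref{Gmthm}) applied via the Zariski local linearizability proved in Lemma \ref{loclinBD}, and then to reduce part (iii) to the analogous statements already proved in the two fiberwise situations, namely Lemma \ref{basicgeo} (via Proposition \ref{gctgeo}) for the generic fiber and Lemma \ref{repmap} (iii) for the special fiber.

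For part (i), the $\mathcal O$-ind-scheme $\Gr_\calG$ admits a $\mathbb G_m$-action which is Zariski locally linearizable by Lemma \ref{loclinBD}. Theorem \ref{Gmthm} (ii) then shows that the functor $(\Gr_\calG)^\pm$ is representable by an ind-scheme equipped with a schematic map to $\Gr_\calG$; this is precisely the statement of (i). For part (ii), the same invocation of Theorem \ref{Gmthm} (ii) yields ind-affineness of $(\Gr_\calG)^\pm \to (\Gr_\calG)^0$ and the fact that its geometric fibers are connected. For the statement on $\pi_0$, note that the zero section of the monoid action $\mathbb A^1 \times (\Gr_\calG)^\pm \to (\Gr_\calG)^\pm$ produces a retraction $(\Gr_\calG)^\pm \to (\Gr_\calG)^0$ of the closed inclusion $(\Gr_\calG)^0 \hookrightarrow (\Gr_\calG)^\pm$, hence the map is surjective on connected components; combined with the geometric connectedness of the fibers this gives the desired bijection $\pi_0((\Gr_\calG)^\pm) \simeq \pi_0((\Gr_\calG)^0)$.

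For part (iii), we argue fiberwise over $\Spec(\calO)$. By $\calL_X\calM$-equivariance of the map $\Gr_{\calP^\pm} \to \Gr_\calM$, it suffices to verify that every geometric fiber above the base point is connected. Since $\calO$ is a discrete valuation ring, every geometric point of $\Gr_\calM$ lies over either the generic or the special point of $\Spec(\calO)$, and the formation of $\Gr_\calM$, $\Gr_{\calP^\pm}$ commutes with the base changes to $F$ and $k$. The generic fiber of $\Gr_{\calP^\pm} \to \Gr_\calM$ is the map $\Gr_{P^\pm} \to \Gr_M$ of affine Grassmannians for the connected reductive generic fiber groups, whose fibers are geometrically connected by Lemma \ref{basicgeo} (ii) (combined with Proposition \ref{gctgeo} identifying this map with $(\Gr_G)^\pm \to (\Gr_G)^0$ and applying Theorem \ref{Gmthm} (ii)). The special fiber of $\Gr_{\calP^\pm} \to \Gr_\calM$ is the map $\Fl_{\calP^\pm_k} \to \Fl_{\calM_k}$ of twisted affine flag varieties, whose fibers are geometrically connected by Lemma \ref{repmap} (iii), where the argument reduces via $\calP^\pm = \calM \ltimes \calN^\pm$ to showing $L\calN^\pm$ is connected using that $\calN^\pm$ is a successive $\mathbb G_a$-extension.

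The only point requiring care is the identification of the fiber over the base point in part (iii), which in each of the two cases has already been handled by the cited lemmas; there is no new geometric input needed here beyond what has been developed in $\S \ref{gctaffgrass}$ and $\S \ref{gctaffflag}$. Consequently I do not foresee an essential obstacle: the lemma is a formal consequence of the preceding structural results for affine Grassmannians and (twisted) affine flag varieties, together with Theorem \ref{Gmthm} applied in the global setting via Lemma \ref{loclinBD}.
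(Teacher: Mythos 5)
Your proposal is correct and takes essentially the same approach the paper intends: the paper's proof of this lemma is a one-line remark that it is "proven analogously to Lemmas \ref{basicgeoflag} and \ref{repmap}", and your argument is precisely that analogy made explicit — parts (i) and (ii) follow from Theorem \ref{Gmthm} (ii) once Lemma \ref{loclinBD} supplies the Zariski local linearizability, and part (iii) is checked on the two geometric fibers of $\Spec(\calO)$ by quoting Lemma \ref{basicgeo} (ii) for the generic fiber and Lemma \ref{repmap} (iii) for the special fiber. Your $\pi_0$-argument via the retraction from the monoid action combined with connectedness of fibers is also the standard argument used implicitly in the proofs of the cited lemmas, so there is no gap.
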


\subsubsection{Fixed points, attractors and repellers}

\begin{thm}\label{gctgeoBD}
The maps \eqref{hyperlocBD} induce a commutative diagram of $\calO$-ind-schemes
\[
\begin{tikzpicture}[baseline=(current  bounding  box.center)]
\matrix(a)[matrix of math nodes, 
row sep=1.5em, column sep=2em, 
text height=1.5ex, text depth=0.45ex] 
{\Gr_\calM & \Gr_{\calP^\pm} & \Gr_\calG \\ 
(\Gr_{\calG})^0& (\Gr_{\calG})^\pm& \Gr_{\calG}, \\}; 
\path[->](a-1-2) edge node[above] {}  (a-1-1);
\path[->](a-1-2) edge node[above] {}  (a-1-3);
\path[->](a-2-2) edge node[below] {}  (a-2-1);
\path[->](a-2-2) edge node[below] {} (a-2-3);
\path[->](a-1-1) edge node[left] {$\iota^0$} (a-2-1);
\path[->](a-1-2) edge node[left] {$\iota^\pm$} (a-2-2);
\path[->](a-1-3) edge node[left] {$\id$} (a-2-3);
\end{tikzpicture}
\]
with the following properties.\smallskip\\
i\textup{)} The generic fiber \textup{(}resp. special fiber\textup{)} is the diagram constructed in Proposition \ref{gctgeo} \textup{(}resp.\,Proposition \ref{gctgeoflag}\textup{)}. \smallskip\\
ii\textup{)} The maps $\iota^0$ and $\iota^\pm$ are closed immersions which are open immersions on the underlying reduced loci.\smallskip\\
iii\textup{) }If $G=G_0\otimes_k F$ is constant, then $\iota^0$ and $\iota^\pm$ are open and closed immersions.
\end{thm}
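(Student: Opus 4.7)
The plan is to construct the diagram by spreading everything to the curve $X$ provided by Proposition \ref{spreadprop}, and then to verify the stated properties by combining the affine Grassmannian case (Proposition \ref{gctgeo}) and the affine flag variety case (Proposition \ref{gctgeoflag}) via a fiberwise argument.

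First I would construct the diagram. Since the cocharacter $\chi\co \bbG_{m,\calO}\to \calG$ factors through $\calA\subset \calM$, it spreads uniquely to $\underline{\chi}\co \bbG_{m,X}\to \ucA\subset \ucG$; then one defines the $X$-group schemes $\ucM=(\ucG)^0$, $\ucP^{\pm}=(\ucG)^{\pm}$ via the dynamical method of \cite{CGP10}, recovering $\calM, \calP^\pm$ after base change to $\calO$ (using Lemma \ref{groups} applied to the global situation). The natural maps $\ucM\leftarrow \ucP^\pm\to\ucG$ induce maps of $\calO$-ind-schemes $\Gr_\calM\leftarrow \Gr_{\calP^\pm}\to\Gr_\calG$. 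Since the $\bbG_m$-action on $\Gr_\calM$ via $\chi$ is trivial, the composition $\Gr_\calM\to \Gr_\calG$ factors through $(\Gr_\calG)^0$, which defines $\iota^0$. The map $\iota^\pm$ is defined by a global Rees construction: the monoid action $m_{\underline{\chi}}\co \ucP^\pm\times\bbA^1_X\to\ucP^\pm$ (cf.\,\eqref{monoidaction}) induces, via push-forward of torsors as in \eqref{Reesbundle}, an isomorphism $\on{Rees}_{\underline{\chi}}\co \Gr_{\calP^\pm}\overset{\simeq}{\longto}(\Gr_{\calP^\pm})^\pm$ inverse to evaluation at $1$, and we let $\iota^\pm$ be the resulting composition with $(\Gr_{\calP^\pm})^\pm\to(\Gr_\calG)^\pm$. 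Commutativity is immediate.

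For property i), the generic fiber of the BD-Grassmannian $\Gr_\calG$ is canonically identified (via the product structure \eqref{ProdStr} in the setup of \S\ref{gctBDgrass}) with the affine Grassmannian $\Gr_{G_F}$, and under this identification the spread construction of $\iota^0, \iota^\pm$ restricts to the constructions of Proposition \ref{gctgeo}; similarly, the special fiber identification $\Gr_{\calG,k}=\Fl_\calG$ makes the constructions of $\iota^0,\iota^\pm$ restrict to those of Proposition \ref{gctgeoflag} (both Rees constructions descend to their fiberwise versions by functoriality).

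For property ii), the closed-immersion statement is handled as in Lemma \ref{repmap}: since $\ucG/\ucM$ and $\ucG/\ucP^\pm$ are quasi-affine (Lemma \ref{groups} combined with \cite[Thm 2.4.1]{Co14} and the argument using a closed embedding into some $\on{Gl}_n$), the maps $\Gr_\calM\to\Gr_\calG$ and $\Gr_{\calP^\pm}\to\Gr_\calG$ are representable by quasi-compact immersions. Combined with the fact that $(\Gr_\calG)^0\to\Gr_\calG$ is a closed immersion (Theorem \ref{Gmthm} i)) and the factorization of $\iota^\pm$ through $(\Gr_\calG)^\pm$, and the ind-properness of $\Gr_\calM$ and $\Gr_{\calP^\pm}$, one deduces $\iota^0$ and $\iota^\pm$ are closed immersions. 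To prove that $\iota^0_{\on{red}}$ and $\iota^\pm_{\on{red}}$ are open immersions, I would use a fiberwise criterion: $\Gr_\calM$ and $\Gr_{\calP^\pm}$ are $\calO$-flat (as $\ucM$ and $\ucP^\pm$ are smooth with geometrically connected fibers, so the associated BD-Grassmannians are ind-flat by the argument of \cite[Prop 3.4]{Zhu14}, \cite[Cor 3.7]{Ri16a}), hence agree with their own flat closures in $(\Gr_\calG)^0$ and $(\Gr_\calG)^\pm$. By Proposition \ref{gctgeo} the maps $\iota^0_F,\iota^\pm_F$ are isomorphisms, and by Proposition \ref{gctgeoflag} i) the maps $\iota^0_k,\iota^\pm_k$ are open immersions on reduced loci. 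Applying Corollary \ref{immersions} (coupled with the usual \textit{crit\`ere de platitude par fibres} of [EGA IV, 11.3.10]) to each member of an exhaustive $\bbG_m$-stable presentation, one concludes that $\iota^0_{\on{red}}$ and $\iota^\pm_{\on{red}}$ are open immersions.

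Finally, for property iii), if $G=G_0\otimes_k F$ is constant, then the group schemes $\ucG,\ucM,\ucP^\pm$ all descend (after choosing $X$ appropriately) to constant $k$-groups; in particular the entire BD-Grassmannian situation is the pullback of the equal characteristic situation over $\Spec(k)$. Under this constancy hypothesis Proposition \ref{gctgeoflag} ii) shows that $\iota^0_k,\iota^\pm_k$ are already open and closed immersions, without passing to reduced loci; combined with Proposition \ref{gctgeo} and the flatness arguments above (now not requiring reduction), one concludes that $\iota^0,\iota^\pm$ are open and closed immersions over all of $\calO$. The main obstacle is the subtlety of part ii), namely controlling the non-reduced structure on the attractor and fixed-point ind-schemes $(\Gr_\calG)^{0}$ and $(\Gr_\calG)^{\pm}$, which in general are not known to be $\calO$-flat when $G$ is wildly ramified; passing to reduced loci is the key device that allows the fiberwise argument to go through.
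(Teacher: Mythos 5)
Your construction of the diagram (spreading $\chi$ to $\underline{\chi}$ over $X$, taking dynamical subgroups of $\ucG$, and defining $\iota^\pm$ via a global Rees construction) is a valid alternative to the paper's, which instead builds $\iota^\pm$ by passing through a faithful representation $\ucG\hookto \Gl_{n,X}$ and the base-change square \eqref{commsquareBD}. The paper explicitly notes that the Rees construction could be used, so this is fine, and part i) follows immediately from either construction.

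The gap is in part ii). You reduce to showing $\iota^0_{\rm red}$, $\iota^\pm_{\rm red}$ are open by invoking $\calO$-flatness of $\Gr_\calM$ and $\Gr_{\calP^\pm}$ together with the \emph{crit\`ere de platitude par fibres}. This fails on two counts. First, ind-flatness of $\Gr_\calM$ for a general parahoric $\calM$ attached to a possibly wildly ramified Levi is not available: the cited references do not establish it, and the paper proves ind-flatness only in the special case of constant generic fiber, precisely in the proof of part iii). Second, even granting flatness of the source, the crit\`ere requires the \emph{target} to be a flat $S$-scheme as well; $(\Gr_\calG)^0$ and $(\Gr_\calG)^\pm$ are not known to be flat, and passing to reduced loci does not repair this (a reduced $\calO$-scheme supported entirely in the special fiber is not flat). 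The device the paper actually uses — and which is missing from your proposal — is Proposition \ref{conn_comp_BD}: one first constructs the open-and-closed ind-subscheme $(\Gr_\calG)^{0,c}$ (resp.\ $(\Gr_\calG)^{\pm,c}$) as the union of the connected components indexed by $\pi_1(M)_I$, then shows via the fiberwise statements (including the point-counting Lemma \ref{gctflagpoints}) that $\iota^{0,c}$ (resp.\ $\iota^{\pm,c}$) is a quasi-compact immersion that is \emph{bijective} on underlying topological spaces, and finally invokes Lemma \ref{qcimm} (no flatness needed) to conclude it is a closed immersion inducing an isomorphism on reduced loci. Your invocation of Corollary \ref{immersions} is also misplaced here: that result converts properties of $X\to Y$ into properties of $X^0\to Y^0$, which is the opposite of the direction you need.

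Part iii) also has an error: if $G=G_0\otimes_k F$ is constant, it is \emph{not} true that the BD-Grassmannian is a pullback from $\Spec(k)$. The parahoric group scheme $\calG$, hence $\ucG$, is never constant (its special fiber is non-reductive), and $\Gr_\calG$ genuinely degenerates the affine Grassmannian to the affine flag variety. What the paper proves is the weaker — and sufficient — statement that $\calL_X\ucG$ is fpqc-locally trivial over $X$ (via the uniformizer torsor $\calX$), giving ind-flatness of $\Gr_\calM$ and $\Gr_{\calP^\pm}$ in the constant case; this is then combined with the fiberwise isomorphisms of Propositions \ref{gctgeo} and \ref{gctgeoflag} ii) via Lemma \ref{safetylem}.
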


As in Proposition \ref{gctgeoflag} above, the map $\Gr_\calM\to \Gr_\calG$ is representable by a closed immersion (because $\calG/\calM$ is quasi-affine and $\Gr_\calM$ is ind-proper). The $\bbG_m$-action on $\Gr_\calM$ is trivial, and hence we obtain the closed immersion $\iota^0\co \Gr_\calM\to (\Gr_\calG)^0$. The map $\iota^\pm$ can be constructed in terms of the moduli description using a Rees construction, cf. Proposition \ref{gctgeo},\,\ref{gctgeoflag}. Here we use the argument given in Lemma \ref{repmap} ii) to construct a quasi-compact immersion $\iota^\pm$ on the spreadings $\Gr_{\ucP^{\pm},X}\to (\Gr_{\ucG,X})^\pm$ as follows. As in the proof of Lemma \ref{loclinBD}, we choose $\ucG\hookto \Gl_{n,X}$ such that $\Gl_{n,X}/\ucG$ is quasi-affine. Let $P_X^+\subset \Gl_{n,X}$ (resp. $P_X^-\subset \Gl_{n,X}$) be the attractor (resp. repeller) subgroup defined by the cocharacter $\bbG_{m,X}\overset{\chi}{\to} \ucG\to \Gl_{n,X}$. Then we have $\ucP^\pm=P_X^\pm\times_{\Gl_{n,X}}\ucG$. By Lemma \ref{groups} iii), the group $\ucP^\pm$ has geometrically connected fibers, and the main result of \cite{Ana73} implies that the fppf-quotient $P_X^\pm/\ucP^\pm$ is representable by a quasi-projective scheme. The map $P_X^\pm/\ucP^\pm\hookto \Gl_{n,X}/\ucG$ is a monomorphism of finite type, and by Zariski's main theorem it is quasi-affine, cf. proof of Lemma \ref{repmap} ii). Hence, $P_X^\pm/\ucP^\pm$ is quasi-affine as well. Now there is a commutative diagram of $X$-ind-schemes 
 \begin{equation}\label{commsquareBD}
\begin{tikzpicture}[baseline=(current  bounding  box.center)]
\matrix(a)[matrix of math nodes, 
row sep=1.5em, column sep=2em, 
text height=1.5ex, text depth=0.45ex] 
{\Gr_{\ucP^\pm} & & \\
&(\Gr_{\ucG})^\pm& \Gr_\ucG  \\ 
\Gr_{P^\pm_X}&(\Gr_{\Gl_{n,X}})^\pm& \Gr_{\Gl_{n,X}}, \\}; 
\path[->](a-1-1) edge[dotted] node[above] {}  (a-2-2);
\path[->](a-1-1) edge[bend left=15] node[above] {}  (a-2-3);
\path[->](a-1-1) edge node[above] {}  (a-3-1);
\path[->](a-3-1) edge node[above] {$\simeq$}  (a-3-2);
\path[->](a-2-2) edge node[above] {}  (a-2-3);
\path[->](a-2-2) edge node[left] {}  (a-3-2);
\path[->](a-3-2) edge node[below] {}  (a-3-3);
\path[->](a-2-3) edge node[right] {}  (a-3-3);
\end{tikzpicture}
\end{equation}
constructed as follows. The map $\Gr_\ucG\to \Gr_{\Gl_{n,X}}$ is a closed immersion (cf. proof of Lemma \ref{loclinBD}), and hence the square is Cartesian by general properties of attractor resp. repeller ind-schemes. This also constructs the dotted arrow in \eqref{commsquareBD} whose base change along $\Spec(\calO)\simeq\Spec(\hat{\calO}_{X,x_0})\to X$ is the map $\iota^\pm$. We claim that the dotted arrow is representable by a quasi-compact immersion. The map $\Gr_{P^\pm_X}\to(\Gr_{\Gl_{n,X}})^\pm$ is an isomorphism by Proposition \ref{gctgeo} because the cocharacter $\bbG_{m,X}\to \Gl_{n,X}$ is defined over $k$ after conjugation, cf. proof of Lemma \ref{loclinBD}. The map $\Gr_{\ucP^\pm}\to\Gr_{P^\pm_X}$ is a quasi-compact immersion because $\ucP^\pm/P^\pm_X$ is quasi-affine, and since $(\Gr_{\ucG})^\pm\to (\Gr_{\Gl_{n,X}})^\pm$ is a closed immersion by Corollary \ref{immersions}, the claim follows. This constructs the diagram in Theorem \ref{gctgeoBD}, and shows that $\iota^0$ is a closed immersion and $\iota^\pm$ is a quasi-compact immersion.


\begin{proof}[Proof of Theorem \ref{gctgeoBD} i\textup{)}] It is immediate from the construction that the generic fiber (resp. special fiber) of the diagram in Theorem \ref{gctgeoBD} gives the diagram in Proposition \ref{gctgeo} (resp. Proposition \ref{gctgeoflag}). 
\end{proof}

The following proposition decomposes the image of the maps $\iota^0$ and $\iota^\pm$ into connected components, and part i) below implies Theorem \ref{gctgeoBD} ii).

\begin{prop}\label{conn_comp_BD}
Let either $N=\calN^+\otimes F$ or $N=\calN^-\otimes F$ with $\calN^\pm$ as in Lemma \ref{groups} iii\textup{)}. There exists an open and closed $\calO$-ind-subscheme $(\Gr_\calG)^{0,c}$ \textup{(}resp. $(\Gr_\calG)^{\pm,c}$\textup{)} of $(\Gr_\calG)^0$ \textup{(}resp. $(\Gr_\calG)^{\pm}$\textup{)} together with a disjoint decomposition, depending up to sign on the choice of $N$, as $\calO$-ind-schemes
\[
(\Gr_\calG)^{0,c}=\coprod_{m\in \bbZ}(\Gr_\calG)^{0}_m  \;\;\;\;\text{\textup{(}resp. $(\Gr_\calG)^{\pm,c}=\coprod_{m\in \bbZ}(\Gr_\calG)^{\pm}_m$\textup{)}},
\]
which has the following properties.\smallskip\\
i\textup{)} The map $\iota^0\co \Gr_\calM\to (\Gr_\calG)^{0}$ \textup{(}resp. $\iota^\pm\co \Gr_{\calP^\pm}\to (\Gr_\calG)^{\pm}$\textup{)} factors through $(\Gr_\calG)^{0,c}$ \textup{(}resp. $(\Gr_\calG)^{\pm,c}$\textup{)} inducing a closed immersion $\iota^{0,c}\co \Gr_\calM\to (\Gr_\calG)^{0,c}$ \textup{(}resp. $\iota^{\pm,c}\co \Gr_{\calP^\pm}\to (\Gr_\calG)^{\pm,c}$\textup{)} which is an isomorphism on reduced loci. \smallskip\\
ii\textup{)} The decomposition gives in the generic fiber decomposition \eqref{decomposition} and in the special fiber decomposition \eqref{decompositionflag}.\smallskip\\
iii\textup{)} The complement $(\Gr_\calG)^{0}\bslash (\Gr_\calG)^{0,c}$ \textup{(}resp. $(\Gr_\calG)^{\pm}\bslash (\Gr_\calG)^{\pm,c}$ \textup{)} has empty generic fiber, i.e., is concentrated in the special fiber.
\end{prop}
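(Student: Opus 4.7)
The plan is to package the component analysis already carried out fiberwise in Propositions \ref{gctgeo} and \ref{gctgeoflag} (together with the discussion of \S\ref{conncompflagsec}) into a global statement over $\calO$. The key input is the Kottwitz identification $\pi_0(\Gr_{\calM_{\bar\calO}}) \simeq \pi_1(M)_I$, compatible both with the generic-fiber identification $\pi_0(\Gr_{M_{\bar F}}) \simeq \pi_1(M)$ via the quotient $\pi_1(M) \twoheadrightarrow \pi_1(M)_I$, and with the special-fiber decomposition recalled in \eqref{conncompflag}. Via this identification and Lemma \ref{basicgeoBD_1} ii), iii), the closed immersion $\iota^0$ (resp.\ the quasi-compact immersion $\iota^{\pm}$ from the construction before the proof of Theorem \ref{gctgeoBD}) picks out a canonical $\Sig$-stable subset of $\pi_0((\Gr_\calG)^0_{\bar\calO}) \simeq \pi_0((\Gr_\calG)^\pm_{\bar\calO})$.

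Concretely, I would define $(\Gr_\calG)^{0,c}$ (resp.\ $(\Gr_\calG)^{\pm,c}$) as the union of those connected components of $(\Gr_\calG)^0$ (resp.\ $(\Gr_\calG)^\pm$) which meet the image of $\iota^0$ (resp.\ $\iota^\pm$). This union is $\Sig$-stable, hence open and closed over $\calO$, and the respective immersions factor through it. Property iii) is immediate: by Theorem \ref{gctgeoBD} i) and Proposition \ref{gctgeo}, $\iota^0_F$ and $\iota^{\pm}_F$ are isomorphisms, so any component with non-empty generic fiber lies in $(\Gr_\calG)^{0,c}$ (resp.\ $(\Gr_\calG)^{\pm,c}$); the complementary components are concentrated in the special fiber. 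For property i), $\iota^{0,c}$ inherits the closed-immersion property from $\iota^0$; that $\iota^{\pm,c}$ is a closed immersion (not just a quasi-compact immersion) follows by checking fiberwise using Propositions \ref{gctgeo} (generic fiber: isomorphism) and \ref{gctgeoflag} together with \S\ref{conncompflagsec} (special fiber: nilpotent thickening of $(\Fl_\calG)^{\pm,c}$). That $\iota^{0,c}$ and $\iota^{\pm,c}$ are isomorphisms on reduced loci is then an immediate fiberwise consequence of the same two results.

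For the $\bbZ$-decomposition in ii), I would use the function $n_\bnu = \lan 2\rho_N, \dot\nu\ran$ for $\bnu\in\pi_1(M)_I$ with lift $\dot\nu\in X_*(T)$, as in \S\ref{conncompflagsec}. It is well-defined since $\lan 2\rho_N,\str\ran$ vanishes on $X_*(T_{M_\scon})$ and is $I$-invariant (so factors through $X_*(T)_I$), and $\Sig$-invariant because $N$ is $F$-rational. Setting $(\Gr_\calG)^0_m$ (resp.\ $(\Gr_\calG)^\pm_m$) to be the union of components with $n_\bnu = m$ yields the required decomposition over $\calO$. On the special fiber this is \eqref{decompositionflag} by construction, and on the generic fiber it recovers \eqref{decomposition} since the same function $n$ governs both via the quotient $\pi_1(M)\twoheadrightarrow \pi_1(M)_I$. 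The main obstacle is verifying that the specialization map $\pi_0((\Gr_\calG)^0_{\bar F}) \to \pi_0((\Gr_\calG)^0_{\bar k})$ is compatible with the Kottwitz identifications in the required way; this is bookkeeping that follows once one tracks how the maps $\iota^0$, $\iota^\pm$ interpolate between the generic and special fiber analyses established earlier.
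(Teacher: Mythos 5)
Your approach is essentially the paper's: identify connected components of the Beilinson--Drinfeld objects over $\breve{\calO}$ with $\pi_1(M)_I$, decompose via the grading $n_{\bnu} = \lan 2\rho_N, \dot\nu\ran$, descend by $\Sig$-invariance, and transport the fiberwise analyses of Propositions~\ref{gctgeo} and~\ref{gctgeoflag}. That said, there are two genuine gaps.

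First, the step you flag as ``the main obstacle'' --- that the specialization map
$\pi_0(\Gr_{\calM,\sF}) = \pi_1(M) \to \pi_1(M)_I = \pi_0(\Fl_{\calM,\bar k})$
is the canonical projection induced by the Kottwitz homomorphisms --- is not bookkeeping: it requires an actual argument and the paper supplies one. The mechanism is to use the ind-properness of $\Gr_\calM \to \Spec(\calO)$ to extend the $\sF$-point $z^{\dot\nu}$ for a lift $\dot\nu \in X_*(T)$ to a $\bar\calO$-point, and then invoke \cite[Lem 2.21]{Ri16a} to identify its specialization with the image of $\dot\nu$ in $X_*(T)_I$; the identification of $\pi_0$ across the family is then established via \cite[Arcata; IV-2; Prop 2.1]{SGA4} applied to the ind-proper family over the Henselian base $\breve\calO$. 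Waving this away leaves the proof incomplete, since it is exactly what makes the $\bbZ$-grading in~ii) specialize correctly from \eqref{decomposition} to \eqref{decompositionflag}.

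Second, the deduction in~i) that $\iota^{\pm,c}$ is a closed immersion ``by checking fiberwise'' is not sound as stated: the property of being a closed immersion is not detectable on fibers (an open complement of a closed point in $\Spec\bbZ_p$ is a fiberwise closed immersion but not one globally). What you actually have is that $\iota^{\pm,c}$ is a quasi-compact immersion which, by the fiberwise isomorphism/nilpotent-thickening description, is \emph{bijective on underlying topological spaces}. The needed bridge is precisely Lemma~\ref{qcimm}: a quasi-compact immersion bijective on topological spaces is a closed immersion inducing an isomorphism on reduced loci. You have the right input data, but that lemma (or a reproof of it) has to be invoked explicitly, rather than appealing to a fiberwise criterion that does not exist.
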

\begin{proof}
Let us construct the decomposition. Let $\pi_1(M)=X_*(T)/X_*(T_{M_\scon})$ be the algebraic fundamental group of $M$, cf. \eqref{fundamentalgroup}. For $\nu\in \pi_1(M)$, denote by $\dot{\nu}\in X_*(T)$ a representative which gives rise to a map
\[
\dot{\nu}\co \Spec(\sF)\,\to\, \Gr_{M}=\Gr_{\calM,F}\,\hookto\, \Gr_\calM.
\]
By the ind-properness of $\Gr_\calM\to \Spec(\calO)$, the map $\dot{\nu}$ extends uniquely to a map (still denoted)
\[
\dot{\nu}\co \Spec(\bar{\calO})\to \Gr_\calM,
\]
where $\bar{\calO}\subset \sF$ is the valuation subring of integral elements. By \cite[Lem 2.21]{Ri16a}, the special fiber $\bnu$ of $\dot{\nu}$ is the image under the canonical projection $X_*(T)\to X_*(T)_I$. Furthermore, since $\Gr_{\calM,\breve{\calO}}\to \Spec(\breve{\calO})$ is ind-proper and $\breve{\calO}$ is Henselian, the natural map
\[
\pi_0(\Gr_{\calM,\breve{\calO}})\,\overset{\simeq}{\longto}\, \pi_0(\Fl_{\calM,\bar{k}})\,\overset{\eqref{conncompflag}}{=}\, \pi_1(M)_I
\]
is an isomorphism by \cite[Arcata; IV-2; Prop 2.1]{SGA4}. This shows that there is a decomposition into connected components
\[
\Gr_{\calM,{\breve{\calO}}} = \coprod_{\bnu\in \pi_1(M)_I} (\Gr_{\calM,{\breve{\calO}}})_\bnu 
\]
such that $(\Gr_{\calM,{\breve{\calO}}})_\bnu\otimes \bar{k}\simeq (\Fl_{\calM,{\bar{k}}})_\bnu$ and $(\Gr_{\calM,{\breve{\calO}}})_\bnu\otimes \sF\simeq\coprod_{\nu\mapsto\bnu}(\Gr_{M,\sF})_\nu$. Likewise, we have $\pi_0((\Gr_{\calG,\breve{\calO}})^0)\simeq \pi_{0}((\Fl_{\calG,\bar{k}})^0)$ on connected components. Using Lemma \ref{basicgeoBD_1} ii), we get an inclusion
\[
\pi_1(M)_I\,=\,\pi_0(\Gr_{\calM,\breve{\calO}})\,\subset\,\pi_0 \left((\Gr_{\calG,\breve{\calO}})^0\right)\,=\, \pi_0 \left((\Gr_{\calG,\breve{\calO}})^\pm\right).
\]
For $\bar{\nu}\in \pi_1(M_I)$, we denote the corresponding connected component of $(\Gr_{\calG,\breve{\calO}})^0$ (resp. $(\Gr_{\calG,\breve{\calO}})^\pm$) by $(\Gr_{\calG})_{\bar{\nu}}^0$ (resp. $(\Gr_{\calG})_{\bar{\nu}}^\pm$).

For our choice of $N$, let $\rho_N$ denote the half-sum of the roots in $N_\sF$ with respect to $T_\sF$ which was used to define the integer $n_\nu=\lan2\rho_N,\dot{\nu}\ran$ (resp. $n_\bnu=\lan2\rho_N,\dot{\nu}\ran$) in \eqref{decomposition} (resp. \eqref{decompositionflag}). Note that we have $n_\nu=n_{\bnu}$ for all $\nu\mapsto \bnu$ by definition of $n_{\bar{\nu}}$. As in \eqref{decompositionflag}, we consider
\[
(\Gr_\calG)^{0}_m\defined\coprod_{\bar{\nu}}(\Gr_\calG)^{0}_{\bar{\nu}}  \;\;\;\;\text{(resp. $(\Gr_\calG)^{\pm}_m\defined\coprod_{\bar{\nu}}(\Gr_\calG)^{\pm}_{\bar{\nu}}$)},
\]
where the disjoint sum is indexed by all $\bar{\nu}\in \pi_1(M)_I$ such that $n_{\bar{\nu}}=m$. As the Galois action preserves the integers $n_{\bar{\nu}}$, the ind-scheme $(\Gr_\calG)^{0}_m$ (resp. $(\Gr_\calG)^{\pm}_m$) is defined over $\calO$. Note that $(\Gr_\calG)^{\pm}_m$ is the preimage of $(\Gr_\calG)^{0}_m$ along $(\Gr_\calG)^{\pm}\to (\Gr_\calG)^{0}$. We obtain a decomposition as $\calO$-ind-schemes
\[
(\Gr_\calG)^{0,c}\defined\coprod_{m\in \bbZ}(\Gr_\calG)^{0}_m  \;\;\;\;\text{(resp. $(\Gr_\calG)^{\pm,c}\defined\coprod_{m\in \bbZ}(\Gr_\calG)^{\pm}_m$)}.
\]
For part i), note that we have the factorization $\iota^{0,c}\co\Gr_\calM\to (\Gr_\calG)^{0,c}$ (resp. $\iota^{\pm,c}\co\Gr_{\calP^\pm}\to (\Gr_\calG)^{\pm,c}$) by construction which is a closed immersion (resp. quasi-compact immersion) because $\iota^0$ (resp. $\iota^\pm$) is. Theorem \ref{gctgeoBD} i) implies that the maps are bijective on the underlying topological spaces, and i) follows from Lemma \ref{qcimm} below applied to $\iota^{0,c}$ and $\iota^{\pm,c}$. Part ii) and iii) are immediate from the construction.
\end{proof}

\begin{lem}\label{qcimm}
Let $\iota\co Y\hookto Z$ be a quasi-compact immersion of ind-schemes  which is bijective on the underlying topological spaces. Then $\iota$ is a closed immersion which is an isomorphism on reduced loci.  
\end{lem}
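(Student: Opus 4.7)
The plan is to reduce to the scheme-theoretic assertion via a presentation of $Z$. Choose a presentation $Z=\on{colim}_i Z_i$ by quasi-compact schemes with closed immersion transition maps. Because quasi-compact immersions are schematic and preserved under base change, $Y_i:=Y\times_Z Z_i$ is a scheme and $\iota_i\co Y_i\to Z_i$ is a quasi-compact immersion. As any map from a quasi-compact scheme into $Z$ factors through some $Z_i$, the system $Y=\on{colim}_i Y_i$ is a presentation of $Y$, and it suffices to show that each $\iota_i$ is a closed immersion of schemes which is an isomorphism on reduced loci: the first property then gives that $\iota$ is a closed immersion of ind-schemes (by checking after pullback to each $Z_i$), and the second gives that $\iota_\red$ is an isomorphism.

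Bijectivity of $\iota$ on topological spaces is inherited by each $\iota_i$: the closed immersion $Z_i\hookto Z$ identifies $Z_i$ topologically with a closed subset of $Z$, and $Y_i$ is topologically $\iota^{-1}(Z_i)$, so $\iota_i$ is again bijective. The key scheme-theoretic fact is then the following: a quasi-compact immersion $f\co U\to W$ that is bijective on underlying topological spaces is a closed immersion, and the induced map $U_\red\to W_\red$ is an isomorphism. This one proves by applying \cite[Tag 01RG]{StaPro} to factor $f=i\circ j$ with $j\co U\hookto\bar{U}$ a quasi-compact open immersion into the scheme-theoretic image and $i\co \bar{U}\hookto W$ a closed immersion. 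The underlying space of $\bar{U}$ is $\overline{f(U)}=W$, so $i$ is bijective on topological spaces; this forces $j$ to be bijective as well, hence an isomorphism (a bijective open immersion of schemes is an isomorphism). Thus $f=i$ is a closed immersion. A surjective closed immersion of schemes corresponds to a quasi-coherent ideal sheaf contained in the nilradical of $\calO_W$, which implies $f_\red\co U_\red\to W_\red$ is an isomorphism.

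Applying this scheme-theoretic statement to every $\iota_i$ and passing to the colimit yields the lemma. The only step requiring any care is verifying that the topological bijectivity of $\iota$ really does descend to each $\iota_i$, which amounts to the elementary observation above about the presentation. No further obstacle arises.
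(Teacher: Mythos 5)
Your proof is correct and takes essentially the same route as the paper's: reduce to schemes via a presentation of $Z$, factor the quasi-compact immersion as an open immersion into the scheme-theoretic image followed by a closed immersion (same Stacks Project citation), conclude the open piece is an isomorphism by bijectivity, then observe that a surjective closed immersion corresponds to an ideal contained in the nilradical. The only cosmetic differences are that the paper passes to the affine case to phrase the final nilradical step ring-theoretically, whereas you state it sheaf-theoretically, and you spell out the reduction step in more detail than the paper does.
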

\begin{proof} Writing $Z=\on{colim}_iZ_i$ we reduce to the case where $Z$ and hence $Y$ are schemes. By \cite[Tag 01QV]{StaPro}, we can factor $\iota=i\circ j$ where $j\co Y\to \bar{Y}$ is an open immersion, and $i\co \bar{Y}\to Z$ is a closed immersion. Since $\iota$ is bijective, $j$ is bijective as well, and hence $j\co Y\simeq \bar{Y}$ is an isomorphism. Thus, $\iota$ is a bijective closed immersion. To see that $\iota$ is an isomorphism on reduced loci, we may assume that $Z=\Spec(A)$ and hence $Y=\Spec(B)$ is affine. Since the induced surjective map $\iota^\#\co A\to B$ is bijective on spectra, its kernel is contained in the nilradical of $A$ and hence is generated by nilpotent elements. This implies that $\iota^\#$ is an isomorphism on reduced loci, and the lemma follows.
\end{proof}

\begin{proof}[Proof of Theorem \ref{gctgeoBD} iii\textup{)}] 
If $G=G_0\otimes_kF$ is constant, then we claim that the maps $\iota^{0,c}$ and $\iota^{\pm,c}$ constructed in Proposition \ref{conn_comp_BD} are isomorphisms. Using Proposition \ref{gctgeo} and \ref{gctgeoflag} ii), we already know that they are fiberwise isomorphisms. By applying Lemma \ref{safetylem}, it is enough to prove that $\Gr_\calM$ (resp. $\Gr_{\calP^\pm}$) is ind-flat over $\calO$. We claim that this holds for any smooth affine group scheme $\ucG$ of finite type over a smooth curve $X$ with constant generic fiber.\footnote{Note that the generic fiber of $\ucM$ (resp. $\ucP^\pm$) is constant as well if the generic fiber of $\ucG$ is constant.} The map $\calL_X\ucG\to \Gr_{\ucG,X}$ is a torsor under the flat affine $X$-group scheme $\calL^+_X\ucG$, and it is enough to show that $\calL_X\ucG$ is ind-flat over $X$. Working locally at $x\in |X|$, we may assume that $X$ admits a global coordinate. Let us consider the functor $\calX$ on the category of $k$-algebras $R$ with
\[
\calX\co R\mapsto \{( x,\varphi)\;|\; x\in X(R);\text{ $\varphi\co R\pot{\Ga_x}\simeq R\pot{z}$ continuous} \},
\]
where $\varphi$ is a continuous isomorphism of $R$-algebras. The forgetful map $\calX\to X$ is a left $L^+\bbG_m$-torsor, and we have $LG_0\times^{L^+\bbG_m} \calX\simeq \calL_X\ucG$. In particular, $\calL_X\ucG$ is fpqc locally isomorphic to $LG_0\times X$. This shows the ind-flatness of $\calL_X\ucG\to X$ and Theorem \ref{gctgeoBD} follows. 
\end{proof}

\begin{lem} \label{safetylem}
Let $f\co Y\to Z$ be a map of $\calO$-schemes of finite type where $Y$ is flat. If $f_\eta$ and $f_s$ are isomorphisms, then $f$ is an isomorphism.
\end{lem}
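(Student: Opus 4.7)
The plan is to exhibit $f$ as a flat, surjective, universally injective and unramified morphism, and then invoke the standard fact that such a map is an isomorphism (equivalently: an étale monomorphism is an open immersion, and a surjective open immersion is an isomorphism).

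First I would apply the \emph{crit\`ere de platitude par fibres} (EGA IV, 11.3.10) to the composite $Y \to Z \to \Spec(\calO)$. The hypotheses are satisfied: $Y$ and $Z$ are locally of finite presentation over $\calO$ (since $\calO$ is Noetherian and both are of finite type), $Y$ is flat over $\calO$ by assumption, and the fibers $f_\eta$ and $f_s$ of $f$ are isomorphisms, hence flat over $Z_\eta$ and $Z_s$ respectively. The criterion then yields that $f$ itself is flat.

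Next, since both fibers of $f$ are isomorphisms, $f$ is fiberwise bijective on points (with trivial residue field extensions), fiberwise unramified, and fiberwise surjective. Surjectivity and universal injectivity (radicial) can be checked on the set-theoretic fibers, so $f$ is a surjective, radicial morphism; unramifiedness can likewise be checked fiberwise (Stacks Tag 02G8). Combined with flatness from the previous step, $f$ is étale.

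Finally, an étale radicial morphism is an open immersion (Stacks Tag 025G, using that an étale monomorphism is an open immersion and radicial plus flat plus locally of finite presentation gives a monomorphism after noting universal injectivity of $f$ together with unramifiedness forces the diagonal $Y \to Y \times_Z Y$ to be an isomorphism). A surjective open immersion is an isomorphism, so $f$ is an isomorphism. The only nontrivial input is the crit\`ere de platitude par fibres; everything else is a standard package about étale, radicial, and surjective maps, so there is no real obstacle once flatness of $f$ is secured.
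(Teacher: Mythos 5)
Your proof is correct, but it takes a genuinely different route from the paper's. The paper first reduces to showing that $Z$ is flat over $\calO$, citing [SGA~I, Exp~I, Prop~5.7] (which says that if $Y$ and $Z$ are both $\calO$-flat of finite type and $f$ is fiberwise an isomorphism, then $f$ is an isomorphism), and then establishes the flatness of $Z$ by hand: it factors $f$ through the flat closure $Z^{\rm fl}\subset Z$, lets $\calI$ be the ideal sheaf of $Z^{\rm fl}$, notes $\calI$ is killed by a power of the uniformizer, and runs a Nakayama-type argument ($\calI\otimes k=0\Rightarrow\calI=t\calI\Rightarrow\calI=0$) using that the map $\calO_Z\otimes k\to\calO_Y\otimes k$ is an isomorphism. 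You instead apply the \emph{crit\`ere de platitude par fibres} (EGA IV, 11.3.10) to conclude that $f$ itself is flat, then observe that $f$ is surjective, radicial and unramified because these are all fiber-local conditions and the two fibers of $f$ are isomorphisms, so $f$ is \'etale and universally injective, hence an open immersion, hence an isomorphism. Both arguments are valid. The paper's route is more self-contained (it avoids the fibral criterion, at the cost of a hands-on ideal computation), whereas yours leans on a powerful general theorem and the standard package of facts about \'etale monomorphisms. Note also that the same fibral criterion already delivers flatness of $Z$ at every point in the image of $f$ (which is all of $Z$ by surjectivity), so you could have finished exactly as the paper does by citing [SGA~I, Exp~I, Prop~5.7] instead of the \'etale/radicial argument; this would shorten your proof without changing its essential content.
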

\begin{proof}
By [SGA I, Exp I, Prop 5.7] it is enough to show that $Z$ is flat. As $Y$ is flat, the map $f$ factors as $Y\to Z^{\on{fl}}\subset Z$ where $Z^{\on{fl}}$ is the scheme theoretic closure of the generic fiber. As the map $f$ is fiberwise an isomorphism, this implies $Z^{\on{fl}}= Z$ as follows. Let $\calI$ be the ideal of definition of $Z^{\on{fl}}\subset Z$. Then $\calI_\eta=0$, and as $Z$ is of finite type this implies $\calI=\calI\otimes \calO/(t^N)$ for $N>\!\!>0$. Now the short exact sequence of $\calO_Z$-modules
\[
0\to \calI \to \calO_Z\to \calO_{Z^{\on{fl}}}\to 0.
\]
stays exact after applying $\str\otimes_\calO k$ because $\calO_{Z^{\on{fl}}}$ is $\calO$-flat. As the composition $\calO_{Z}\otimes k\to \calO_{Z^{\on{fl}}}\otimes k\to\calO_{Y}\otimes k$ is an isomorphism, it follows that $\calI\otimes k=0$, i.e., $\calI = t\calI$, and hence $\calI = t\calI = \ldots = t^N\calI = 0$.

\end{proof}

\subsection{Torus actions in unequal characteristic} 

We translate the arguments of the previous paragraph to the BD-Grassmannians of Pappas-Zhu \cite{PZ13}. Here we restrict our attention to the case of tamely ramified groups. To handle restriction of scalars along wildly ramified extensions as in \cite{Lev16} more effort is needed.

Let $F$ be a finite extension of $\bbQ_p$ with uniformizer denoted $\varpi$. Let $G$ be a connected reductive $F$-group, and choose $(A,S,T)$ as in \eqref{groupchain} above. As in \cite{PZ13}, we assume that $G$ splits over a \emph{tamely ramified} extension of $F$. 

\subsubsection{Pappas-Zhu-Beilinson-Drinfeld Grassmannians} \label{PZBDGrass_Sec}
In \cite[\S 3]{PZ13} a spreading $(\uG,\uA,\uS,\uT)$ over $\calO[t,t^{-1}]=\calO[t^{\pm1}]$ is constructed. The group $\uG$ is a connected reductive $\calO[t^{\pm1}]$-group, the groups $\uA\subset \uS\subset \uT$ are $\calO[t^{\pm1}]$-subtori of $\uG$ with the following properties. The torus $\uA$ is a maximal split $\calO[t^{\pm1}]$-torus, the torus $\uS$ is a maximal $\breve{\calO}[t^{\pm1}]$-split torus defined over $\calO[t^{\pm1}]$, and the torus $\uT$ is a maximal torus of $\uG$. If we take the base change along the specialization $\calO[t^{\pm1}]\to F, t\mapsto \varpi$, then as $F$-groups
\begin{equation}\label{unequalspread}
(\uG,\uA,\uS,\uT)\otimes_{\calO[t^{\pm1}]}F\;\simeq\; (G,A,S,T),
\end{equation}
cf. \cite[4.3]{PZ13}. Interestingly, we may also consider the specialization along $\calO[t^{\pm1}]\to k\rpot{t}, \varpi\mapsto 0$. Let us denote
\[
(G',A',S',T')\defined (\uG,\uA,\uS,\uT)\otimes_{\calO[t^{\pm1}]}k\rpot{t}.
\]
Then $G'$ is a connected reductive $F' := k\rpot{t}$-group, and $(A',S',T')$ is as in \eqref{groupchain} above, cf.\,the discussion in \cite[4.1.2; 4.1.3]{PZ13}. Further, we obtain a canonical identification of the apartments $\scrA(\uG_F,\uA_F)=\scrA(G',A')$ (cf. \cite[4.1.3]{PZ13}), and hence under \eqref{unequalspread} an identification
\begin{equation}\label{apartments}
\scrA(G,A)= \scrA(G',A').
\end{equation}
In fact by \cite[4.1.2; 4.1.3]{PZ13}, we have a canonical identification of apartments
\begin{equation} \label{apartments_kappa}
\scrA(G, A) = \scrA(\uG_{\kappa\rpot{t}}, \uA_{\kappa\rpot{t}})
\end{equation}
for both $\kappa = k, F$.
We shall use the following two results in \S \ref{testfunctions} below.

\begin{lem} \label{Iwahorilem}
There is an identification of Iwahori-Weyl groups $W(G,A)= W(G',A')$ which is compatible with the action on the apartments under the identification \eqref{apartments}.
\end{lem}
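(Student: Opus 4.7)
The plan is to realize both Iwahori-Weyl groups as specializations of a single Iwahori-Weyl-type group attached to the spreading $(\uG,\uA,\uS,\uT)$ over $\calO[t^{\pm1}]$, and then to transport this identification through the natural exact sequence
\[
1\to X_*(T)_I\to \breve{W}\to W_0\to 1
\]
and through Galois descent. Concretely, I would first identify the finite relative Weyl group $W_0$. Since $\on{Norm}_\uG(\uA)$ and $Z_\uG(\uA)=\uM$ are smooth affine $\calO[t^{\pm1}]$-group schemes whose quotient $\on{Norm}_\uG(\uA)/\uM$ is finite \'etale over the connected base $\calO[t^{\pm1}]$, specialization along $t\mapsto \varpi$ and $\varpi\mapsto 0$ yields canonical identifications $\on{Norm}_G(A)/M\simeq \on{Norm}_\uG(\uA)(\bar{\bbQ})/\uM(\bar{\bbQ})\simeq \on{Norm}_{G'}(A')/M'$ compatible with the action on $X_*(A)=X_*(\uA)=X_*(A')$.

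Next, I would identify the translation lattice $X_*(T)_I$ with $X_*(T')_{I'}$ via the spreading. Here the tameness hypothesis enters essentially: $\uT$ is a torus over $\calO[t^{\pm1}]$ split by a tame cover, and under both specializations the inertia groups $I$ resp.\,$I'$ act on $X_*(\uT_{\bar F})=X_*(\uT_{\bar F'})$ through the common cyclic quotient of order prime to $p$ determined by the splitting cover of $\uT$. Consequently, the Kottwitz descriptions $M(\breve F)/M_1\simeq X_*(T)_I$ and $M'(\breve F')/M'_1\simeq X_*(T')_{I'}$ match canonically. Combining this with Step~1 via the exact sequences for $\breve{W}(G)$ and $\breve{W}(G')$ (cf.\,\cite{HR08}) gives an isomorphism $\breve W(G)\simeq \breve W(G')$ which, by construction, is compatible with the action on $\breve{\scrA}(G,S)=\breve{\scrA}(\uG_{\breve F},\uS_{\breve F})=\breve{\scrA}(G',S')$ --- this last identification is really what underlies \eqref{apartments}.

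Finally, I would descend to $F$. The Galois groups $\Sigma=\Gal(\breve F/F)$ and $\Sigma'=\Gal(\breve{F'}/F')$ are both canonically $\Gal(\bar k/k)$ acting through its natural action on the spreading; in particular the isomorphism constructed above is $\Sigma=\Sigma'$-equivariant. Taking invariants via \eqref{IWembed} yields the required identification $W(G,A)=\breve{W}(G)^\Sigma\simeq \breve{W}(G')^{\Sigma'}=W(G',A')$, and its compatibility with the action on $\scrA(G,A)=\scrA(G',A')$ is automatic from the $\breve F$-level statement.

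The main obstacle is the translation-lattice step: one has to verify that the Kottwitz homomorphism is truly \emph{constant in the family} $(\uG,\uA,\uS,\uT)/\calO[t^{\pm1}]$, i.e.\,that its formation commutes with the two specializations. This is where the tameness of $G$ is indispensable, since the inertia action on $X_*(T)$ must be tracked across the degeneration; absent tameness one would need the more elaborate arguments of \cite{Lev16} to control the torus $\uT$ and hence $X_*(T)_I$. All other steps --- the identification of $W_0$, the compatibility with the apartment, and the Galois descent --- are formal once the spreading is in place.
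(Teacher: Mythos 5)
Your proposal is essentially a detailed unpacking of the references the paper simply cites: the paper invokes \cite[4.1.2]{PZ13} for a $\sigma$-equivariant identification of Iwahori-Weyl groups over $\breve F$ and then \cite[\S 1.2]{Ri16b} for taking $\Sigma$-fixed points, whereas you reconstruct the $\breve F$-level isomorphism by splitting $\breve W$ into its finite Weyl part $W_0$ and translation part $X_*(T)_I$ and tracking each through the spreading $(\uG,\uA,\uS,\uT)/\calO[t^{\pm1}]$. Your decomposition is sound: the finite \'etale quotient $\on{Norm}_\uG(\uA)/\uM$ over the connected base handles $W_0$, tameness controls the inertia action on $X_*(\uT)$ and hence identifies the Kottwitz targets, and the descent to $F$ proceeds exactly as in the paper. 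The trade-off is transparency versus length: your version makes visible where tameness actually enters (exactly as you flag in your last paragraph), while the paper's two-line proof outsources this to Pappas--Zhu. One small defect: the expression ``$\on{Norm}_\uG(\uA)(\bar{\bbQ})/\uM(\bar{\bbQ})$'' is a typo or slip — the middle comparison term should be the fiber of the finite \'etale group scheme $\on{Norm}_\uG(\uA)/\uM$ at a geometric point of $\Spec(\calO[t^{\pm1}])$, not points over $\bar\bbQ$, which has no natural map to either $F$ or $F'$. With that fixed the argument is a valid, more self-contained alternative to the paper's citation-based proof.
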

\begin{proof} Over $\bF$ we obtain a $\sig$-equivariant isomorphism according to \cite[4.1.2]{PZ13}. The general case follows by taking $\sig$-fixed points from \cite[\S 1.2]{Ri16b} (cf.\,also \eqref{IWembed} above and \cite[4.1.3]{PZ13}).
\end{proof}

Now let $\calG=\calG_{\bbf}$ be a parahoric $\calO$-group scheme of $G$ whose facet $\bbf$ is contained in $\scrA(G,A)$. Then under \eqref{apartments} we obtain a unique facet $\bbf'\in \scrA(G',A')$, and hence a parahoric $k\pot{t}$-group scheme $\calG'=\calG_{\bbf'}$ of $G'$.

\begin{lem} \label{hecke_identification}
There is a canonical identification $\calZ(G(F),\calG(\calO_F))=\calZ(G'(F'),\calG'(\calO_{F'}))$ of centers of parahoric Hecke algebras, where the Haar measures are normalized to give $\calG(\calO_F)$ \textup{(}resp. $\calG'(\calO_{F'})$\textup{)} volume $1$.
\end{lem}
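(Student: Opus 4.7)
The plan is to identify the two Hecke algebras as $\bar{\mathbb Q}_\ell$-algebras with matching bases indexed by a common double coset set, and then check that convolution is preserved. First I would invoke the Bruhat-Tits decomposition for parahoric subgroups (the $p$-adic counterpart of Lemma \ref{Bruhatflag}, following \cite{HR08,Ri16b}) to obtain canonical bijections
\[
\calG(\calO_F)\backslash G(F)/\calG(\calO_F)\;\longleftrightarrow\; W_\bbf\backslash W(G,A)/W_\bbf,
\]
and analogously for the equal-characteristic side. By Lemma \ref{Iwahorilem}, together with the identification of apartments \eqref{apartments}, the Iwahori-Weyl groups $W(G,A)$ and $W(G',A')$ agree as groups acting on the common apartment, and the facet $\bbf$ corresponds to $\bbf'$. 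Since the finite subgroup $W_\bbf$ of \eqref{W_f_defn} is generated by the reflections at the walls through $\bbf$ in this apartment, we have $W_\bbf = W_{\bbf'}$, so the two double coset sets are canonically in bijection. Sending characteristic functions of double cosets to characteristic functions of the corresponding double cosets then gives a $\bar{\mathbb Q}_\ell$-linear isomorphism $\mathcal H(G(F),\calG(\calO_F)) \cong \mathcal H(G'(F'),\calG'(\calO_{F'}))$.

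Next I would verify this is an algebra isomorphism. The structure constants of a parahoric Hecke algebra of a connected reductive group over a complete discretely valued field with finite residue field are determined by the based Iwahori-Weyl group, the pair $W_\bbf \subset W$, and the parameters $q_s = [K s K : K]$ for simple affine reflections $s$ (see \cite{Hai14} and the references therein). The parameters $q_s$ are, in turn, cardinalities of certain $k_F$-points of the affine root group quotients attached to walls of $\bba$, and these quotients are computed purely from the affine root data in the apartment (again via Lemma \ref{Iwahorilem}) and the common residue field $k_F = k$. Hence the braid and quadratic relations match, all structure constants coincide, and the linear isomorphism is an algebra isomorphism. Passing to centers yields the claimed identification.

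The main obstacle is the compatibility of the parameters $q_s$ on the two sides: concretely, one must check that the PZ-spreading $\uG/\calO[t^{\pm 1}]$ produces, for each simple affine reflection, affine root subgroups whose $k$-valued quotients have the same cardinality in both the specialization $t\mapsto\varpi$ and $\varpi \mapsto 0$. Equivalently, the Bruhat-Tits data attached to the walls of $\bba$ are the same for $(G,\calG)$ and $(G',\calG')$, which is already built into the construction: by \cite[4.1.3]{PZ13} the \'etale-local group schemes attached to affine root subgroups are transported by the spreading, and the residue field is unchanged. An alternative, more conceptual route avoiding explicit parameter tracking is the sheaf-function dictionary: the special fiber of the Pappas-Zhu Beilinson-Drinfeld Grassmannian $\Gr_\calG$ over $\calO_F$ is canonically isomorphic to $\Fl_{\calG'}$ (both being cut out of the same ind-scheme attached to $\ucG/\calO[t^{\pm 1}]$), so the $L^+\calG'$-equivariant derived categories and their convolution structures coincide on the two sides, and the resulting Grothendieck group / function space identification realizes the Hecke algebra isomorphism in a manifestly compatible way.
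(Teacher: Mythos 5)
Your proof is correct, but it takes a different route from the paper. The paper's actual argument is shorter and more conceptual: it invokes the Bernstein isomorphism $\bar{\mathbb Q}_\ell[\Lambda_M]^{W_0} \cong \calZ(G(F),\calG(\calO_F))$ from \cite[Thm 11.10.1]{Hai14}, together with Lemma \ref{Iwahorilem} applied to the minimal Levi $M$ (giving $\Lambda_M = M(F)/M_1 = M'(F')/M'_1 = \Lambda_{M'}$) and the observation that the relative Weyl groups $W_0(G,A)$ and $W_0(G',A')$ agree compatibly. That identifies the two centers directly without needing to know anything about the structure constants of the full Hecke algebras.

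What you are doing instead is constructing an isomorphism of the \emph{full} Hecke algebras by matching Iwahori--Matsumoto presentations — a bijection of double cosets via $W_\bbf\backslash W/W_\bbf$ and a comparison of the parameters $q_s$ — and then passing to centers. This is a legitimate and in fact stronger statement than the lemma claims (the authors considered exactly this route), but it is heavier: the compatibility of the parameters $q_s$ is the delicate point, and your justification is somewhat imprecise. Citing \cite[4.1.3]{PZ13} gives the apartment identification, but the actual mechanism for matching $q_s$ is that the affine root filtration groups $\mathcal U_{b,x_0,f}$ are defined over $\calO[t]$ in \cite[4.2.2]{PZ13} and specialize compatibly under $t\mapsto\varpi$ and $\varpi\mapsto 0$, so that the index computation $q_s = [U^\natural_{b,m}:U^\natural_{b,n}]$ agrees on the two sides; one also needs to say how the parahoric case is reduced to the Iwahori case via the idempotent $e_J$. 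Your final ``more conceptual route'' via the sheaf-function dictionary is a bit too glib as stated: the special fiber of $\Gr_\calG$ is indeed canonically $\Fl_{\calG'}$, which realizes the equal-characteristic Hecke algebra as Frobenius-trace functions, but the $p$-adic Hecke algebra $\mathcal H(G(F),\calG(\calO_F))$ has no direct sheaf-theoretic realization, so this re-packaging does not actually avoid the orbit-cardinality/parameter comparison. In short: your approach works if the $q_s$-matching is tightened up, but the Bernstein-isomorphism argument in the paper is cleaner precisely because it sidesteps all of this.
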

\begin{proof} Let $M$ (resp. $M'$) be the centralizer of $A$ (resp. $A'$) in $G$ (resp. $G'$) which is a minimal Levi. 
Let $\uM$ denote the centralizer in $\uG$ of $\uA$. Then we have identifications of groups $M = \uM \otimes_{\calO [t^\pm 1]} F$ (resp.\,$M' = \uM \otimes_{\calO[t^\pm 1]} k\rpot{t}$). Applying the above discussion to $(\uM, \uA, \uS, \uT)$, we get the identification of apartments $\scrA(M,A) = \scrA(M',A')$. Then we may apply Lemma \ref{Iwahorilem} for $M$, and we obtain an identification of abelian groups
\[
\La_M:= M(F)/M_1=M'(k\rpot{t})/M'_1=:\La_{M'},
\] 
where $M_1$ (resp. $M'_1$) is the unique parahoric group scheme of $M(F)$ (resp. $M'(k\rpot{t})$).  The result now follows via the Bernstein isomorphisms \cite[Thm 11.10.1]{Hai14}
$$
\bar{\mathbb Q}_\ell[\Lambda_M]^{W_0(G,A)} \cong \calZ(G(F),\calG(\calO_F)),
$$
noting that the finite relative Weyl groups of $(G,A)$ and $(G',A')$ are isomorphic (compatible with the action on $\La_M=\La_{M'}$).
\end{proof}

Let us now return to the construction of torus actions. We cite the following theorem \cite[Thm 4.1; Cor 4.2]{PZ13}.

\begin{thm}\label{PZspreading}
There is a unique \textup{(}up to unique isomorphism\textup{)} smooth affine $\bbA^1_\calO$-group scheme $\ucG$ of finite type with connected fibers and with the following properties:\smallskip\\
i\textup{)} The group scheme $\ucG|_{\calO[t,t^{-1}]}$ is the group scheme $\uG$.\smallskip\\
ii\textup{)} The base change of $\ucG$ under $\Spec(\calO)\to \bbA^1_\calO$ given by $t\mapsto \varpi$ is the parahoric group $\calG=\calG_\bbf$.\smallskip\\
iii\textup{)} The base change of $\ucG$ under $\calO_{F}[t]\to \kappa\pot{t}$, $t\mapsto t$ for both $\kappa=F,k$ is the parahoric group scheme for $\uG_{\kappa\rpot{t}}$ attached to $\bbf$ under \eqref{apartments_kappa}.
\end{thm}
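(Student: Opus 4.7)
The plan is to follow the Bruhat-Tits strategy as adapted by Pappas-Zhu. The existence statement is the hard part; uniqueness will follow by a schematic-density argument once existence is in hand.

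\textbf{Uniqueness.} Suppose $\ucG_1, \ucG_2$ are two smooth affine $\bbA^1_\calO$-groups with connected fibers satisfying 1)--3). Over the dense open $\calO[t,t^{\pm 1}]$ they are both equal to $\uG$. Because each $\ucG_i$ is smooth of finite type with connected fibers over $\bbA^1_\calO$, and because the two specializations at $t=\varpi$ (giving $\calG$) and at $\varpi = 0$ (giving $\calG'$) uniquely determine the completions of $\ucG_i$ along the corresponding closed subschemes (this is the usual uniqueness statement for Bruhat-Tits parahoric group schemes, cf.\,Proposition \ref{spreadprop} iii) and its analogue in mixed characteristic), the identification $\ucG_{1,\calO[t,t^{\pm 1}]}=\ucG_{2,\calO[t,t^{\pm 1}]}$ extends uniquely across the divisors $\{t=\varpi\}$ and $\{\varpi=0\}$. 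A Beauville-Laszlo-type gluing along these divisors, combined with the observation that the two closed loci together with the open complement cover $\bbA^1_\calO$ in the fpqc-topology after suitable completions, yields $\ucG_1\simeq \ucG_2$.

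\textbf{Existence.} The strategy is to construct $\ucG$ from its big open cell, assembled out of a torus part and root subgroups. The torus $\uT\subset \uG$ extends (using the tame hypothesis and Néron models for induced tori) to a smooth $\bbA^1_\calO$-group scheme $\ucT$ whose fibers at $t=\varpi$ and at $\varpi=0$ are the connected Néron lft models $\calT$ and $\calT'$ of $T$ and $T'$. For each relative affine root $\psi$ of $(\uG,\uA)$, the theory of Chevalley-Steinberg pinnings supplies a one-parameter subgroup $\uU_\psi$ over $\calO[t,t^{\pm 1}]$; one then spreads these over $\bbA^1_\calO$ using the standard affine-root filtration indexed by the facet $\bbf=\bbf'$ (which makes sense uniformly because the apartments and facets are identified via \eqref{apartments}). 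The big cell
\[
\uU^-_\bbf \times \ucT \times \uU^+_\bbf \longto \ucG
\]
is constructed over $\bbA^1_\calO$ by taking the product (in a fixed ordering) of the spread-out affine root subgroups together with $\ucT$. One checks directly that its specializations at $t=\varpi$ and $\varpi=0$ recover the standard big cells of $\calG_\bbf$ and $\calG_{\bbf'}$ respectively.

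\textbf{Gluing to a group scheme.} Applying translations by the $N_{\uG}(\uA)$-action (which also spread to $\bbA^1_\calO$) produces a covering family of open cells whose pairwise overlaps are governed by the commutator relations among root subgroups. One verifies that these relations hold integrally over $\bbA^1_\calO$, so the big cells satisfy a birational group law in the sense of Artin--Weil. Extending this to a genuine group scheme structure gives a smooth affine $\bbA^1_\calO$-group scheme $\tilde\ucG$ of finite type whose generic fiber recovers $\uG$. Taking the fiberwise neutral component (which exists as a smooth closed $\bbA^1_\calO$-subgroup scheme when the fibers of $\tilde\ucG$ have finitely many components, as is the case here) yields $\ucG$.

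\textbf{Verification of specializations.} By construction, the specializations at $t=\varpi$ and at $\varpi=0$ yield smooth affine group schemes with connected fibers extending $G$ and $G'$ respectively, whose big open cells coincide with those of the parahoric models $\calG_\bbf$ and $\calG_{\bbf'}$. By the uniqueness of parahoric group schemes given their big cells (cf.\,\cite{BT84}), one identifies these specializations with $\calG$ and $\calG'$. The main technical obstacle is the uniform construction of the affine-root subgroups and the verification that their integral commutator relations hold over the two-dimensional base $\bbA^1_\calO$ rather than just in equal or mixed characteristic separately; this is precisely where the tameness hypothesis on $G$ is used, as it guarantees that the relevant Galois-cocycle obstructions vanish and that $\uT$ has a well-behaved Néron model.
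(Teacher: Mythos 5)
The paper does not prove this theorem: it is stated with the preamble ``We cite the following theorem \cite[Thm 4.1; Cor 4.2]{PZ13}'', i.e.\ the statement is an import from Pappas--Zhu and there is no proof in the manuscript to compare against. Your sketch is therefore not being measured against anything in this paper; what it should be measured against is the actual argument in \cite{PZ13}.

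Against that standard, the high-level outline --- build the torus part $\ucT$, spread the affine root subgroups over $\bbA^1_\calO$ along the facet-filtration, assemble the big open cell, glue by a birational group law \`a la Artin--Weil, then take neutral components and match the two specializations --- is indeed the shape of the Pappas--Zhu construction. But two points need more care. First, the uniqueness paragraph has a real gap: over a two-dimensional regular base such as $\bbA^1_\calO$ there is no N\'eron/\'etoff\'e property, so a smooth affine group scheme with connected fibers is \emph{not} determined by its restriction to a dense open together with two Cartier-divisor slices. Conditions 2) and 3) pin down the restrictions to $\{t=\varpi\}$ and to $\Spec(k\pot{t})$, both of which pass through the origin, but they do not a priori determine the group scheme on a full neighborhood of the divisor $\{t=0\}$; a ``Beauville--Laszlo-type gluing'' of these two slices with the open locus is not an fpqc cover of $\bbA^1_\calO$ and does not by itself force $\ucG_1\simeq\ucG_2$. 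In \cite{PZ13} the uniqueness is instead obtained from the specific construction (the two models are both the schematic closure of the same big open cell, hence the identity on the open locus extends). Second, the assertion that ``taking the fiberwise neutral component'' produces a smooth closed $\bbA^1_\calO$-subgroup scheme is not automatic for group schemes over a base of dimension $\ge 2$ and should not be treated as a black box; in the Pappas--Zhu argument the construction is arranged so that the fibers are connected from the start. If you want a genuine proof rather than a citation, you should follow \cite[\S4]{PZ13} for both points.
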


Let $X=\bbA^1_\calO$, and defined the BD-Grassmannian $\Gr_{\ucG,X}$ as the functor on the category of $\calO$-algebras $R$ given by the set isomorphism classes of triples $(x, \calF,\al)$ with
\begin{equation}\label{PZmoduli}
\begin{cases}
\text{$x\in X(R)$ is a point};\\
\text{$\calF$ a $\ucG_{X_R}$-torsor on $X_R$};\\
\text{$\al\co \calF|_{X_R\bslash\Ga_x}\simeq \calF^0|_{X_R\bslash\Ga_x}$ a trivialization},
\end{cases}
\end{equation}
where $\calF^0$ denotes the trivial torsor, and $\Ga_x\subset X_R$ is the graph of $x$. Note that the definition of $\Gr_{\ucG,X}$ makes sense for any smooth affine group scheme. We cite the following result \cite[Prop 6.5]{PZ13}. 

\begin{lem} \label{PZreplem}
Let $\ucG$ be a smooth affine group scheme with connected fibers. The BD-Grassmannian $\Gr_{\ucG,X}\to X$ is representable by a separated ind-scheme of ind-finite type. If $\ucG$ is as in Theorem \ref{PZspreading}, then $\Gr_{\ucG,X}\to X$ is ind-projective.  
\end{lem}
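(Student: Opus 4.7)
The plan is to reduce, via a faithful representation, to the case $\ucG = \GL_{n,X}$, where everything can be described explicitly via lattices. First I would choose a closed immersion of $X$-groups $\ucG \hookto \GL_{n,X}$ such that the fppf quotient $\GL_{n,X}/\ucG$ is representable by a quasi-affine $X$-scheme; such an embedding exists for any smooth affine group scheme with connected fibers by the arguments of \cite[Prop 1.3]{PR08} (see also \cite[\S 2 Ex (1)]{He10}). By functoriality of \eqref{PZmoduli}, this induces a morphism
\[
i \co \Gr_{\ucG,X} \longto \Gr_{\GL_{n,X},X},
\]
and the quasi-affineness of $\GL_{n,X}/\ucG$ implies that $i$ is representable by a quasi-compact immersion (this is the argument of \cite[Prop 1.2.6]{Zhu} carried out relative to $X$).

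Next, the ind-representability and ind-finite typeness of $\Gr_{\GL_{n,X},X}\to X$ are handled by the classical lattice description: for each $N\geq 0$ the subfunctor classifying triples $(x,\calF,\al)$ such that the corresponding $R[\Ga_x]$-lattice $\La$ satisfies $z^N\La_0 \subset \La \subset z^{-N}\La_0$ is representable by a closed subscheme of a relative classical Grassmannian $\on{Quot}(V_N)_X$ for a finite free $\calO_X$-module $V_N$. Writing $\Gr_{\GL_{n,X},X}$ as the filtered colimit of these bounded pieces under the natural closed immersions, one obtains an ind-projective $X$-ind-scheme; in particular it is separated and of ind-finite type over $X$. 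Since quasi-compact immersions are preserved under pullback along the closed-immersion bounded subschemes of the target, we conclude that $\Gr_{\ucG,X}$ is a separated $X$-ind-scheme of ind-finite type.

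For the second assertion, assume $\ucG$ is the $\bbA^1_\calO$-group scheme of Theorem \ref{PZspreading}. The extra input needed is that one can arrange the embedding above to have $\GL_{n,X}/\ucG$ \emph{affine} rather than merely quasi-affine; this is exactly the content of \cite[Prop 1.3 b)]{PR08} (applied here to the smooth affine $X$-group $\ucG$ whose fibers away from the parahoric loci are reductive and whose specialized fibers $\calG$ and $\calG'$ are parahoric, hence admit faithful representations with affine quotient; a spreading/approximation argument globalizes this over $\bbA^1_\calO$). Then the map $i$ is representable by a closed immersion, and ind-projectivity of the source follows from that of the target.

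The main obstacle is the last step, namely constructing a closed embedding $\ucG \hookto \GL_{n,X}$ with affine quotient when $\ucG$ is the Pappas–Zhu spreading of a parahoric. Fiberwise, this relies on the fact that parahoric group schemes $\calG_\bbf$ admit faithful representations with affine quotient; the globalization over $\bbA^1_\calO$ uses the uniqueness statement in Theorem \ref{PZspreading} together with the approximation of $\calO[t]$-models by $\calO[t^{\pm1}]$-models as in \cite[\S 4]{PZ13}. Once this embedding is in hand, the two assertions of the lemma follow formally as above.
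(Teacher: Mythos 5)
Your argument for the first statement (separated and of ind-finite type) is essentially the paper's: embed $\ucG$ in $\GL_{n,X}$ with quasi-affine quotient, get a quasi-compact (locally closed) immersion $\Gr_{\ucG,X}\hookto\Gr_{\GL_n,X}$, and conclude by the explicit ind-projective lattice model for the target. The paper cites \cite[Cor 11.7]{PZ13} for the existence of the representation over $\bbA^1_\calO$ rather than PR08, but that is a minor difference of references.

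However, the second part of your proof has a genuine gap. You claim that $\ucG\hookto\GL_{n,X}$ can be arranged so that $\GL_{n,X}/\ucG$ is \emph{affine}, citing \cite[Prop 1.3 b)]{PR08} and appealing to parahoric group schemes admitting faithful representations with affine quotient. This is false. PR08 Prop 1.3 b) produces a quasi-affine quotient, not an affine one, and more fundamentally, the quotient $\GL_n/\calG$ cannot be affine for a parahoric $\calG$ unless $\calG$ is reductive: the special fiber $\calG_k$ has a nontrivial unipotent radical (with reductive quotient a proper parabolic of a reductive group), so $\GL_{n,k}/\calG_k$ fibers over a positive-dimensional flag variety and is not affine; by semicontinuity the $\calO$-scheme $\GL_n/\calG$ cannot be affine either. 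Consequently one only gets a quasi-compact (not closed) immersion $\Gr_{\ucG,X}\hookto\Gr_{\GL_n,X}$, and ind-projectivity of the source does not follow formally from ind-projectivity of the target. The paper's proof handles ind-projectivity by a different mechanism entirely, quoting \cite[Prop 6.5]{PZ13}, which establishes properness of the bounded strata using properties specific to the parahoric group scheme and its fibers (e.g.\ that both the generic-fiber affine Grassmannian and the special-fiber affine flag variety are ind-projective) rather than by upgrading the quotient to affine.
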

\begin{proof} The first statement follows as in the proof of \cite[Prop 6.5]{PZ13} from the existence of a closed embedding $\ucG\hookto \Gl_{n,X}$ such that $\Gl_{n,X}/ \ucG$ is quasi-affine, cf. \cite[Cor 11.7]{PZ13}. Then the map $\Gr_{\ucG,X}\to \Gr_{\Gl_n,X}$ is representable by a locally closed immersion. As $\Gr_{\Gl_n,X}$ is ind-projective the ind-scheme $\Gr_{\ucG,X}$ is separated of finite type. The rest is \cite[Prop 6.5]{PZ13}. 
\end{proof}

For any $\calO$-algebra, let $R\pot{\Ga_x}\simeq R\pot{t-x}$ be the ring of regular functions on the formal affine $R$-scheme given by the completion of $X_R=\bbA^1_R$ along $\Ga_x$. Then $\Ga_x\subset \Spec(R\pot{\Ga_x})$ defines a Cartier divisor (in particular locally principal), and hence its complement is an affine scheme with ring of regular functions denoted by $R\rpot{\Ga_x}\simeq R\pot{t-x}[(t-x)^{-1}]$. The \emph{global loop group} is the functor on the category of $\calO$-algebras given by
\begin{equation}\label{PZgloballoop}
\calL_X\ucG\co R\mapsto \{(x,g)\;|\; \text{$x\in X(R)$ and $g\in \ucG(R\rpot{\Ga_x})$}\},
\end{equation}
which is representable by an ind-affine ind-group scheme over $X$ (cf. \cite[6.2.4]{PZ13}). By replacing $R\rpot{\Ga_x}$ with $R\pot{\Ga_x}$ in \eqref{globalloop}, one defines the \emph{global positive loop group} $\calL_X^+\ucG$ which is a flat affine $X$-group scheme. Again by the Beauville-Laszlo gluing lemma \cite{BL95} (cf. also \cite[Lem 6.1]{PZ13}) there is a natural isomorphism $\Gr_{\ucG,X}\simeq \calL_X\ucG/\calL^+_X\ucG$, and we obtain a transitive action morphism
\begin{equation}\label{PZact}
\calL_X\ucG\times \Gr_{\ucG,X}\longto \Gr_{\ucG,X},
\end{equation}
cf. \cite[6.2.4]{PZ13}.

\begin{dfn} \label{PZBDGrass_Def}
The (Pappas-Zhu) BD-Grassmannian\footnote{In\cite[6.2.6]{PZ13} the ind-scheme $\Gr_{\calG}$ is denoted $\Gr_{\calG,\calO}$.} $\Gr_\calG$ together with the action map
\begin{equation}\label{PZbasechange}
\calL\calG\times\Gr_\calG\longto\Gr_\calG
\end{equation}
is the base change of \eqref{PZact} along the map $\calO[t]\to \calO, t\mapsto \varpi$.
\end{dfn}

\begin{rmk}\label{PZBDGrass_rmk}
Fix the spreading $\uG$ of $G$ in \eqref{unequalspread}, and a uniformizer $\varpi\in \calO$. Then the ind-scheme $\Gr_\calG$ together with the action map \eqref{PZbasechange} depends up to unique isomorphism on the data $(\underline{G},\calG, \varpi)$, cf. Theorem \ref{PZspreading}. We refer the reader to \cite[Rmk 3.2]{PZ13} for a discussion of the uniqueness of the spreading $\underline{G}$. It is likely that \eqref{PZbasechange} is independent of the choice of $\varpi$, but we will not address this question here. Let us point out that in \cite[Conj.\,21.4.1]{SW}, Scholze predicts the existence of local models which canonically depend only on the data $(G,\calG,\{\mu\})$. 
\end{rmk}

By \cite[Cor 6.6]{PZ13}, the generic fiber of \eqref{PZbasechange} is identified with the usual affine Grassmannian \eqref{affineact} over $F$ (formed using an additional formal parameter), and the special fiber is identified with the twisted affine flag variety \eqref{flagact} for the $k\pot{t}$-group $\calG'$.  

Note that the construction of $\ucG$ is compatible with the chain of $\calO[t^{\pm1}]$-tori $\uA\subset \uS\subset \uT$, and we obtain a chain of commutative smooth closed $\bbA^1_\calO$-subgroup schemes 
\[
\ucA\subset \ucS\subset \ucT
\]
of $\ucG$ where $\ucA$ is a split $\bbA^1_\calO$-torus, $\ucS$ a split $\bbA^1_{\breve{\calO}}$-torus defined over $\bbA^1_\calO$, and $\ucT$ is a smooth commutative group scheme whose base change $\ucT\otimes_{\calO[t]}\calO$ along $t\mapsto \varpi$ is the connected lft Ner\'on model of $T$. The base change $\calO[t]\to \calO, t\mapsto \varpi$ gives the chain of group schemes $\calA\subset \calS\subset\calT$ as above.

\subsubsection{Torus actions}\label{PZ_Torus_Sec}
Let $\chi\co \bbG_{m,\calO}\to \calG$ be any cocharacter whose generic fiber $\chi_F$ factors through $A$. Then $\chi$ factors through $\calA$ because it is a maximal split torus in $\calG$. As the curve $X$ is connected, the cocharacter $\chi$ spreads uniquely to a cocharacter $\underline{\chi}\co \bbG_{m,X}\to \ucA$. The cocharacter $\chi$ acts via conjugation on $\ucG$, and we denote by $\ucM$ the fixed points, and by $\ucP^+$ (resp. $\ucP^-$) the attractor (resp. repeller) subgroup scheme. If $M$ denotes the centralizer of $\chi_F$ (which is a Levi subgroup), then the group scheme $\ucM|_{\calO[t^{\pm1}]}$ is a spreading $\uM$ associated with $M$. 

\begin{lem}\label{PZgroups} i\textup{)} The group schemes $\ucM$ and $\ucP^\pm$ are smooth closed subgroup schemes of $\ucG$ with geometrically connected fibers.\smallskip\\
ii\textup{)} The centralizer $\ucM$ is a parahoric group scheme for $\uM$ in the sense of Theorem \ref{PZspreading}.\smallskip\\
iii\textup{)} There is a semidirect product decomposition $\ucP^\pm=\ucM\ltimes \ucN^\pm$ where $\ucN^\pm$ is a smooth affine group scheme with geometrically connected fibers.
\end{lem}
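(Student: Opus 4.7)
The plan is to adapt the proof of Lemma \ref{groups} to the Pappas-Zhu setup, invoking Martin's general results \cite{Mar15} on $\bbG_m$-actions over arbitrary base schemes, together with the uniqueness assertion in Theorem \ref{PZspreading}.

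For (i) and (iii), I would apply \cite[Thm 1.1, Rmk 1.2 \& Rmk 3.3]{Mar15} to the conjugation action of $\bbG_m$ on the smooth affine $\bbA^1_\calO$-group scheme $\ucG$ via $\underline{\chi}$. These results are stated over an arbitrary base with an \'etale locally linearizable $\bbG_m$-action, and yield that $\ucG^0 = \ucM$ and $\ucG^\pm = \ucP^\pm$ are smooth closed subschemes of $\ucG$ and that the canonical retraction $\ucP^\pm \to \ucM$ is smooth with kernel a smooth closed normal subgroup scheme $\ucN^\pm$. This produces the semidirect product decomposition. Geometric connectedness of all fibers then follows by applying \cite[Cor 1.12]{Ri} fiberwise, using that $\ucG$ has geometrically connected fibers.

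For (ii), I would invoke the uniqueness part of Theorem \ref{PZspreading}: it suffices to check that $\ucM$ is a smooth affine $\bbA^1_\calO$-group scheme of finite type with connected fibers satisfying the three defining properties for the parahoric spreading associated to $\uM$ and an appropriate facet of $M$. Namely: (a) $\ucM|_{\calO[t,t^{-1}]}$ equals the spreading $\uM$ of $M$ (immediate since $\ucM$ is flat over $\bbA^1_\calO$ and its restriction to $\calO[t,t^{-1}]$ is the centralizer $Z_{\uG}(\underline{\chi})$, which equals $\uM$ by the construction of $(\uG,\uA)$ in \cite[\S 3]{PZ13}); (b) the base change along $t \mapsto \varpi$ is the parahoric group scheme $\calM = \calM_\bbf$ for $M$ attached to $\bbf$ regarded in $\scrA(M,A) \supset \scrA(G,A)$; (c) the base change along $\varpi \mapsto 0$ is the parahoric group scheme for $M'$ attached to the corresponding facet $\bbf'$, which is exactly the content of the equal characteristic Lemma \ref{groups}(ii) applied to $\calG'$.

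The main obstacle is (b), i.e.\ identifying the $\bbG_m$-centralizer in the parahoric $\calO$-group scheme $\calG_\bbf$ with the parahoric group scheme $\calM_\bbf$ of $M$. I would first pass to $\breve{\calO}$, where $\chi$ factors through a maximal $\breve{F}$-split torus $\breve{\calS} \subset \calG_{\breve{\calO}}$. Using the explicit description of $\calG_{\breve{\bbf}}(\breve{\calO})$ via the Bruhat-Tits affine root datum (cf.\ \cite{BT84}, and as implemented in the Pappas-Zhu setting via the spreading of affine root subgroups over $\calO[t]$ in \cite[4.2.2]{PZ13}), one checks that the $\bbG_m$-fixed subgroup is generated by $\breve{\calS}$ together with precisely the affine root subgroups $\calU_{b,\breve{\bbf}}$ for roots $b$ trivial on $\chi$, which assemble into the parahoric group scheme $\calM_{\breve{\bbf}}$ for $M_{\breve{F}}$. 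Smoothness and connectedness of both sides, together with agreement over $\breve{\calO}$, allow one to descend to $\calO$ by Galois descent, establishing (b). The uniqueness in Theorem \ref{PZspreading} then gives (ii).
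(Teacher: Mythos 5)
Your proof of smoothness in (i) and (iii) via \cite[Thm 1.1, Rmk 1.2 \& Rmk 3.3]{Mar15} matches the paper exactly, as does the final appeal to \cite[Cor 1.12]{Ri} for the connectedness of $\ucP^\pm$. However, there is a logical gap in how you establish the geometric connectedness of $\ucM$. You write that this follows from \cite[Cor 1.12]{Ri} ``using that $\ucG$ has geometrically connected fibers,'' but that reference only gives connectedness of the fibers of the retraction $\ucP^\pm \to \ucM$; it says nothing about deducing connectedness of the \emph{fixed-point} scheme $\ucG^0 = \ucM$ from that of $\ucG$ (and indeed no such general implication holds). The correct order is the one the paper takes: first prove (ii), i.e.\ that $\ucM|_{\calO}$ and $\ucM|_{k\pot{t}}$ are parahoric (and $\ucM|_{\calO[t^{\pm 1}]} = \uM$ is reductive), which gives geometrically connected fibers of $\ucM$ by definition of parahoric group schemes, and only then invoke \cite[Cor 1.12]{Ri} to propagate connectedness to $\ucP^\pm$.

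For (ii), your route differs genuinely from the paper's. You propose a hands-on verification of property (2) of Theorem \ref{PZspreading} by passing to $\breve{\calO}$, matching $\ucM|_{\breve{\calO}}$ against $\calM_{\breve{\bbf}}$ via the Bruhat--Tits affine root datum (and the $\calO[t]$-spreadings of root subgroups from \cite[4.2.2]{PZ13}), and descending. This is the classical Bruhat--Tits approach and should work, but it is considerably more work: one must verify that the BT filtration subgroups are respected by the $\bbG_m$-action and assemble into $\calM_{\breve{\bbf}}$, which requires care. The paper instead uses a much shorter argument that entirely bypasses affine root data: since $\ucM|_{\calO}$ is a smooth (hence flat) closed subscheme of $\calG$ with generic fiber $M$, it is automatically the flat closure of $M$ inside $\calG$, and \cite[Lem A.1]{Ri16a} says precisely that this flat closure is the parahoric group scheme for $M$. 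The same argument applies uniformly to $\ucM|_{k\pot{t}}$, covering your properties (b) and (c) in one stroke. The flat-closure argument is the key trick you missed: it trades an explicit root-group computation for an abstract characterization.
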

\begin{proof}
The groups $\ucM$, $\ucP^\pm$ and the map $\ucP^\pm\to \ucM$ are smooth by \cite[Rmk 1.2, Thm 1.1 \& Rmk 3.3]{Mar15}. For part ii), (in view of the uniqueness statement in Theorem \ref{PZspreading}) it suffices to check that $\ucM|_\calO$ (resp. $\ucM|_{k\pot{t}}$) is a parahoric group for $M$ (resp. $M'$). As $\ucM|_\calO$ (resp. $\ucM|_{k\pot{t}}$) is smooth, it agrees with the flat closure of $M$ (resp. $M'$) inside $\calG$ (resp. $\calG'$), and hence $\ucM|_\calO$ (resp. $\ucM|_{k\pot{t}}$) is parahoric by \cite[Lem A.1]{Ri16a}. In particular, $\ucM$ has geometrically connected fibers which implies $\ucP^\pm$ having geometrically connected fibers by \cite[Cor 1.12]{Ri19}. Part i) and ii) follow. The scheme $\ucN^\pm$ is the kernel of $\ucP^\pm\to \ucM$, and hence smooth with geometrically connected fibers. The lemma follows.
\end{proof}

Using the functoriality of the loop group construction, we obtain via the composition
\begin{equation}\label{flowBD}
\bbG_{m,\calO}\subset\calL^+\bbG_{m,\calO}\overset{\calL^+\!\chi\phantom{h}}{\longto} \calL^+\calA\subset \calL^+\calG \subset \calL\calG
\end{equation}
a fiberwise $\bbG_{m}$-action on $\Gr_\calG\to \Spec(\calO)$.

\begin{lem}\label{PZloclinBD}
The $\bbG_m$-action on $\Gr_\calG$ is Zariski locally linearizable.
\end{lem}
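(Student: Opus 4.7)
The plan is to imitate the argument in Lemma \ref{loclinBD}, using the embedding theorem of Pappas-Zhu in place of the ad-hoc embedding available in the equal characteristic case. By \cite[Cor 11.7]{PZ13}, there exists a closed immersion $\ucG\hookto \Gl_{n,X}$ of $X$-group schemes (with $X=\bbA^1_\calO$) whose fppf quotient $\Gl_{n,X}/\ucG$ is representable by a quasi-affine $X$-scheme. By functoriality and \cite[Prop 1.2.6]{Zhu} (or the argument of Lemma \ref{PZreplem}), the induced map on global BD-Grassmannians
\[
\Gr_{\ucG,X}\longto \Gr_{\Gl_{n,X},X}
\]
is representable by a quasi-compact immersion; since $\Gr_{\ucG,X}\to X$ is ind-proper (Lemma \ref{PZreplem}), this is a closed immersion. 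Base changing along $\calO[t]\to\calO,\; t\mapsto\varpi$ gives a closed immersion $\Gr_\calG\hookto \Gr_{\Gl_{n,X},X}|_{\Spec(\calO)}$, which is $\bbG_m$-equivariant for the action induced via $\bbG_{m,\calO}\overset{\chi}{\to}\calG\to\Gl_{n,\calO}$. Thus it suffices to exhibit a Zariski local linearization of the $\bbG_m$-action on $\Gr_{\Gl_{n,X},X}|_{\Spec(\calO)}$.

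Since $\Gl_n$ is split and $\Spec(\calO[t,t^{-1}])$ is semi-local, any two maximal split tori in $\Gl_{n,X}$ are conjugate (cf.\,\cite[Prop 6.2.11]{Co14}), so after applying such an inner automorphism we may assume $\bbG_{m,X}\to \Gl_{n,X}$ factors through the diagonal torus. In particular the cocharacter descends to one defined over $\bbZ$, and hence the $\bbG_m$-action on $\Gr_{\Gl_{n,X},X}|_{\Spec(\calO)}$ is pulled back from a $\bbG_m$-action on $\Gr_{\Gl_n,\bbZ}\times_{\bbZ} \Spec(\calO)$, where $\Gr_{\Gl_n,\bbZ}$ is the (global) BD-Grassmannian for the constant group $\Gl_n$ over $\bbZ$. (One checks, using the Beauville-Laszlo gluing statement of \cite[Lem 6.1]{PZ13}, that for a constant group the global BD-Grassmannian is isomorphic, locally on $X$, to the product of $X$ with the usual affine Grassmannian, in a $\bbG_m$-equivariant manner.)

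Since being Zariski locally linearizable is preserved under base change of the base, it remains to linearize the $\bbG_m$-action on $\Gr_{\Gl_n,\bbZ}$ induced by a diagonal cocharacter; but this is carried out exactly as in the proof of Lemma \ref{loclinaffine}: one presents $\Gr_{\Gl_n,\bbZ}=\on{colim}_i\Gr_{\Gl_n,i}$ as a colimit of lattice-bounded Quot schemes, uses the $\bbG_m$-equivariant closed immersions $p_i\co \Gr_{\Gl_n,i}\hookto \on{Quot}(V_i)$ into finite-dimensional Quot schemes where the $\bbG_m$-action on $V_i=z^{-i}\La_0/z^i\La_0$ preserves the canonical basis, and then takes the cover of $\on{Quot}(V_i)$ by the standard affine opens associated to this basis. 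These affine opens are $\bbG_m$-stable and the $\bbG_m$-action on each is linear, which furnishes the desired Zariski local linearization. The main technical step is really the first one, namely exhibiting the $\bbG_m$-equivariant closed embedding $\Gr_\calG\hookto \Gr_{\Gl_n,X}$, but this is already built into the Pappas-Zhu framework; the remaining reductions are routine.
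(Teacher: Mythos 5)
Your proposal follows the same route as the paper's own proof: reduce to $\Gl_{n,X}$ via the closed immersion afforded by \cite[Cor 11.7]{PZ13} (equivalently, Lemma \ref{PZreplem}), use \cite[Prop 6.2.11]{Co14} to conjugate the cocharacter into the diagonal torus, observe that the resulting $\bbG_m$-action is then ``constant'' in the sense that it is pulled back from the usual affine Grassmannian $\Gr_{\Gl_n,\bbZ}$, and finally invoke the explicit linearization of Lemma \ref{loclinaffine}, noting its argument works over any base ring. One small inaccuracy: the conjugacy of maximal split tori must be applied over $X=\bbA^1_\calO$ (where the cocharacter $\underline{\chi}$ actually lives), and the relevant hypothesis is that $X$ is affine with $\Pic(X)=0$ — not, as you write, that $\Spec(\calO[t,t^{-1}])$ is semi-local (it isn't). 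This does not affect the validity of the overall argument since the correct hypothesis holds.
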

\begin{proof} We follow the proof of Lemma \ref{loclinBD}: First, we reduce to the case of $\ucG=\Gl_{n,X}$ as in the proof of Lemma \ref{PZreplem}. Since $X$ is affine and $\Pic(X)=0$, we may by \cite[Prop 6.2.11]{Co14} reduce to the case that the image of $\underline{\chi}\co \bbG_{m,X}\to \Gl_{n,X}$ lies in the diagonal matrices. Then the $\bbG_m$-action on $\Gr_{\Gl_{n,X}}$ is constant, i.e. comes from a $\bbG_m$-action on $\Gr_{\Gl_{n,\bbZ}}$ over the integers. Now the result follows from Lemma \ref{loclinaffine} noting that the argument for $\Gl_n$ given there works over any ring.
\end{proof}

In light of Theorem \ref{Gmthm}, Lemma \ref{loclinBD} implies that there are maps of separated $\calO$-ind-schemes of ind-finite type
\begin{equation}
(\Gr_\calG)^0\leftarrow (\Gr_\calG)^\pm\to \Gr_\calG,
\end{equation}
where $(\Gr_\calG)^0$ are the fixed points and $(\Gr_\calG)^+$ (resp. $(\Gr_\calG)^-)$ is the attractor (resp. repeller), cf. \S 1. We denote by $\Gr_\calM$ (resp. $\Gr_{\calP^\pm}$) the BD-Grassmannians associated with the $X$-group schemes $\ucM$ (resp. $\ucP^\pm$).

\begin{rmk} \label{locally_closed_stratum_rem2}
As in Remark \ref{locally_closed_stratum_rem}, $(\Gr_\calG)^\pm \rightarrow \Gr_{\calG}$ induces locally closed immersions when restricted to the connected components of the source.
\end{rmk}

\begin{thm}\label{PZgctgeoBD}
The maps \eqref{hyperlocBD} induce a commutative diagram of $\calO$-ind-schemes
\[
\begin{tikzpicture}[baseline=(current  bounding  box.center)]
\matrix(a)[matrix of math nodes, 
row sep=1.5em, column sep=2em, 
text height=1.5ex, text depth=0.45ex] 
{\Gr_\calM & \Gr_{\calP^\pm} & \Gr_\calG \\ 
(\Gr_{\calG})^0& (\Gr_{\calG})^\pm& \Gr_{\calG}, \\}; 
\path[->](a-1-2) edge node[above] {}  (a-1-1);
\path[->](a-1-2) edge node[above] {}  (a-1-3);
\path[->](a-2-2) edge node[below] {}  (a-2-1);
\path[->](a-2-2) edge node[below] {} (a-2-3);
\path[->](a-1-1) edge node[left] {$\iota^0$} (a-2-1);
\path[->](a-1-2) edge node[left] {$\iota^\pm$} (a-2-2);
\path[->](a-1-3) edge node[left] {$\id$} (a-2-3);
\end{tikzpicture}
\]
whose generic fiber \textup{(}resp. special fiber\textup{)} is the diagram constructed in Proposition \ref{gctgeo} \textup{(}resp.\,Proposition \ref{gctgeoflag} for $\calG'/k\pot{t}$\textup{)}. Further, the maps $\iota^0$ and $\iota^\pm$ are closed immersions which are open immersions on reduced loci.
\end{thm}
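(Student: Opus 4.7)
The strategy mirrors the proof of Theorem \ref{gctgeoBD} in equal characteristic, with Lemma \ref{gctgeo_over_Z} replacing Proposition \ref{gctgeo} where needed to work over the base $\bbA^1_{\calO}$. First, I would construct the diagram by functoriality. Lemma \ref{PZgroups} provides smooth closed $\bbA^1_\calO$-subgroup schemes $\ucM\subset \ucG$ and $\ucP^\pm\subset \ucG$ with geometrically connected fibers, and the cocharacter $\underline{\chi}$ spreading $\chi$ induces these as fixed point/attractor/repeller schemes of $\ucG$ under conjugation. Applying $\Gr_{(-),X}$ and base-changing along $\calO[t]\to\calO$, $t\mapsto \varpi$, yields the horizontal maps of the square; the factorizations $\iota^0$, $\iota^\pm$ exist because the $\bbG_m$-action on $\Gr_\calM$ is trivial and because of the Rees-style construction of the attractor section (as in the lines around \eqref{Reesmap} and Lemma \ref{repmap} ii)).

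Next, I would establish that $\iota^0$ is a closed immersion. By Lemma \ref{PZgroups} ii), $\ucM$ is parahoric in the sense of Theorem \ref{PZspreading}, and $\ucG/\ucM$ is quasi-affine fiberwise (using \cite[Thm 2.4.1]{Co14}), so $\Gr_\calM\to\Gr_\calG$ is a quasi-compact immersion, closed because $\Gr_\calM$ is ind-projective by Lemma \ref{PZreplem}. For $\iota^\pm$, I would follow the argument surrounding \eqref{commsquareBD}: choose an embedding $\ucG\hookto \Gl_{n,X}$ with quasi-affine quotient (\cite[Cor 11.7]{PZ13}), let $P_X^\pm\subset \Gl_{n,X}$ be the attractor/repeller of the composite cocharacter; since $\underline{\chi}$ can be conjugated into the diagonal torus (using triviality of $\Pic(\bbA^1_\calO)$ and \cite[Prop 6.2.11]{Co14} as in Lemma \ref{PZloclinBD}), Lemma \ref{gctgeo_over_Z} gives $\Gr_{P_X^\pm}\simeq (\Gr_{\Gl_{n,X}})^\pm$. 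The Cartesian diagram $\Gr_\ucG\times_{\Gr_{\Gl_{n,X}}}(\Gr_{\Gl_{n,X}})^\pm=(\Gr_\ucG)^\pm$ and the quasi-affineness of $\ucP^\pm/P_X^\pm$ then produce the quasi-compact immersion $\iota^\pm\co \Gr_{\calP^\pm}\to (\Gr_\calG)^\pm$ after base change.

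Identifying the fibers is formal: over $F$, the construction restricts to that of Proposition \ref{gctgeo} applied to $(G,\chi_F)$; over $k\pot{t}$, Theorem \ref{PZspreading} (uniqueness) gives $\ucG|_{k\pot{t}}=\calG'$ and $\ucM|_{k\pot{t}}=\calM'$, $\ucP^\pm|_{k\pot{t}}=\calP'^\pm$ (the parahoric for $M'$ and the subgroups coming from $\underline{\chi}|_{k\pot{t}}$), so we recover the diagram of Proposition \ref{gctgeoflag} for $\calG'/k\pot{t}$. The upshot is that $\iota^0$ and $\iota^\pm$ are fiberwise closed immersions, open on reduced loci.

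Finally, to upgrade to the global statement, I would reproduce Proposition \ref{conn_comp_BD} verbatim in the mixed characteristic setting: valuative lifts of geometric cocharacters from $\sF$ to $\bar{\calO}$ together with the specialization $X_*(T)\twoheadrightarrow X_*(T)_I=\pi_1(M)_I$ (and Henselianness applied to the ind-proper $\Gr_{\calM,\breve\calO}$ via \cite[Arcata; IV-2; Prop 2.1]{SGA4}) identify the connected components of $\Gr_\calM$ with an open and closed sub-ind-scheme $(\Gr_\calG)^{0,c}\subset (\Gr_\calG)^0$, and similarly for $(\Gr_\calG)^{\pm,c}\subset (\Gr_\calG)^\pm$. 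Since $\iota^0$ and $\iota^\pm$ factor through these and are topologically bijective onto them by the fiberwise results of Propositions \ref{gctgeo} and \ref{gctgeoflag}, Lemma \ref{qcimm} upgrades them to closed immersions which are isomorphisms on reduced loci, hence open immersions on reduced loci. The main obstacle I expect is the bookkeeping in the last step: properly matching the Kottwitz-type specialization of connected components in mixed characteristic with its equal-characteristic counterpart so that Lemma \ref{qcimm} can be invoked; but this is essentially a transcription of Proposition \ref{conn_comp_BD} using that the special fiber of $\Gr_{\calM}$ is $\Fl_{\calM'}$ via Theorem \ref{PZspreading}.
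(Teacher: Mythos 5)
Your proposal follows essentially the same route as the paper's own proof: construct $\iota^0$ via quasi-affineness of $\ucG/\ucM$ plus ind-properness, construct $\iota^\pm$ via embedding $\ucG\hookto \Gl_{n,X}$ with quasi-affine quotient and invoke Lemma \ref{gctgeo_over_Z} after conjugating the cocharacter to a constant one, identify fibers via the uniqueness in Theorem \ref{PZspreading} (the paper cites \cite[Cor 6.6]{PZ13} for this), and finish with the mixed-characteristic analogue of the connected-components decomposition (the paper's Proposition \ref{conn_comp_BDPZ}) together with Lemma \ref{qcimm}. One small slip: you write ``quasi-affineness of $\ucP^\pm/P_X^\pm$'' where the relevant quotient is $P_X^\pm/\ucP^\pm$ (one has $\ucP^\pm = \ucG\times_{\Gl_{n,X}} P_X^\pm \subset P_X^\pm$), but this is cosmetic and the argument otherwise matches the paper's.
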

\begin{proof}
As in Theorem \ref{gctgeoBD}, the map $\Gr_\calM\to \Gr_\calG$ is representable by a closed immersion (because $\ucG/\ucM$ is quasi-affine by \cite[Thm 2.4.1]{Co14} and $\Gr_\calM$ is ind-proper). The $\bbG_m$-action on $\Gr_\calM$ is trivial, and hence we obtain the closed immersion $\iota^0\co \Gr_\calM\to (\Gr_\calG)^0$. The quasi-compact immersion $\iota^\pm$ is constructed on the spreadings $\Gr_{\ucP^{\pm},X}\to (\Gr_{\ucG,X})^\pm$ analogous to the construction below Theorem \ref{gctgeoBD}: choose $\ucG\hookto \Gl_{n,X}$ such that $\Gl_{n,X}/\ucG$ is quasi-affine, cf. \cite[Cor 11.7]{PZ13}. Let $P_X^+\subset \Gl_{n,X}$ (resp. $P_X^-\subset \Gl_{n,X}$) be the attractor (resp. repeller) subgroup defined by the cocharacter $\bbG_{m,X}\overset{\chi}{\simeq}\ucG\hookto \Gl_{n,X}$. Then $\ucP^\pm=\ucG\times_{\Gl_{n,X}}P_X^\pm$. Further, $P_X^\pm$ (resp. $\ucP^\pm$) is smooth affine with geometrically connected fibers by the proof of Lemma \ref{PZgroups} iii). Hence the fppf-quotient $P_X^\pm/\ucP^\pm$ is a quasi-projective scheme by \cite[Cor 11.5]{PZ13}, and again by Zariski's main theorem applied to the map $P_X^\pm/\ucP^\pm\to \Gl_{n,X}/\ucG$ is quasi-affine. We have the same diagram \eqref{commsquareBD} as above. The isomorphism $\Gr_{P_X^\pm}\simeq (\Gr_{\Gl_{n,X}})^\pm$ follows from Lemma \ref{gctgeo_over_Z} using that $\bbG_{m,X}\to \Gl_{n,X}$ is defined over $\bbZ_p$ after conjugation, cf. the proof of Lemma \ref{PZloclinBD}. The rest of the construction of $\iota^\pm$ is literally the same. We do not repeat the full construction here, but instead refer the reader to Theorem \ref{gctgeoBD}. The assertion on the fibers is \cite[Cor 6.6]{PZ13}. The rest of the proof is the same as in Theorem \ref{gctgeoBD} using Lemma \ref{basicgeoBD_2} and Proposition \ref{conn_comp_BDPZ} below.
\end{proof}

We have the following lemma which is proven analogously to Lemmas \ref{basicgeoflag} and \ref{repmap}.

\begin{lem} \label{basicgeoBD_2}
i\textup{)} The map $ (\Gr_\calG)^\pm\to \Gr_\calG$ is schematic.\smallskip\\
ii\textup{)} The map $(\Gr_\calG)^\pm\to (\Gr_\calG)^0$ is ind-affine with geometrically connected fibers, and induces an isomorphism on the group of connected components $\pi_0((\Gr_{\calG})^\pm)\simeq\pi_0((\Gr_{\calG})^0)$. \smallskip\\
iii\textup{)} The map $\Gr_{\calP^\pm}\to \Gr_\calM$ has geometrically connected fibers.
\end{lem}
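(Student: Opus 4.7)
The plan is to mirror exactly the arguments used for Lemmas~\ref{basicgeoflag} and~\ref{repmap}, since all the required inputs are available in the Pappas--Zhu setting: the Zariski local linearizability of the $\bbG_m$-action on $\Gr_\calG$ (Lemma~\ref{PZloclinBD}), the fact that $\Gr_\calG$ is a separated $\calO$-ind-scheme of ind-finite type (Lemma~\ref{PZreplem}), and the smooth Levi decomposition $\calP^\pm=\calM\ltimes\calN^\pm$ of Lemma~\ref{PZgroups}~iii).

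For (i) and (ii), I would feed these inputs into Theorem~\ref{Gmthm}: part~(ii) of that theorem directly yields that $(\Gr_\calG)^\pm\to \Gr_\calG$ is schematic and that $(\Gr_\calG)^\pm\to (\Gr_\calG)^0$ is ind-affine with geometrically connected fibers. The isomorphism on $\pi_0$ then follows formally, because a map of ind-schemes of ind-finite type that is ind-affine with geometrically connected fibers induces a bijection on connected components of the underlying topological spaces.

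For (iii), by the $\calL\calP^\pm$-equivariance of the map $\Gr_{\calP^\pm}\to\Gr_\calM$ and the transitivity of $\calL\calM$ on $\Gr_\calM$, it suffices to analyze the fiber over the base point. The semidirect product $\calP^\pm=\calM\ltimes\calN^\pm$ identifies this fiber with the BD-Grassmannian $\Gr_{\calN^\pm}$. To see that $\Gr_{\calN^\pm}$ has geometrically connected fibers over $\Spec(\calO)$, I would use the surjectivity of $\calL\calN^\pm\to\Gr_{\calN^\pm}$ and reduce to the connectedness of $\calL\calN^\pm$; filtering $\calN^\pm$ by its descending central series produces successive extensions by vector groups, and the claim reduces to the case $\calN^\pm=\bbG_a$, where $\calL\bbG_a$ is visibly connected on each fiber.

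The main obstacle is verifying that the unipotent filtration argument indeed carries over to the mixed-characteristic BD-setting, where $\ucN^\pm$ is defined over $\bbA^1_\calO$ rather than over a field. However, since $\ucN^\pm$ is a smooth affine group scheme with geometrically connected fibers (Lemma~\ref{PZgroups}~iii) whose fibers are unipotent, its descending central series exists fiberwise, and the loop construction preserves surjectivity and connectedness through each extension; this reduces the connectedness verification to the elementary case of $\bbG_a$ over $\Spec(\calO)$, which is routine.
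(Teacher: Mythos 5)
Your proposal is correct and spells out exactly what the paper leaves implicit: the paper gives no written proof of Lemma~\ref{basicgeoBD_2} beyond the remark that it is ``proven analogously to Lemmas~\ref{basicgeoflag} and~\ref{repmap},'' and your argument is precisely that analogy, with the correct inputs (Theorem~\ref{Gmthm}~ii) via Lemma~\ref{PZloclinBD} for parts~i) and~ii), and the fiber identification $\Gr_{\calN^\pm}$ plus the unipotent filtration as in Lemma~\ref{repmap}~iii) for part~iii)). The only point worth sharpening: geometric connectedness of the fibers of $\Gr_{\calN^\pm}\to\Spec(\calO)$ is checked separately on the generic and special fibers, so the $\bbG_a$-filtration one actually needs is on the \emph{generic fibers} of the two Laurent-series groups $N^\pm/F$ and $N'^\pm/k\rpot{t}$ (the unipotent radicals of $F$- resp.\ $k\rpot{t}$-parabolics, which are always $F$- resp.\ $k\rpot{t}$-split), not on $\ucN^\pm$ over all of $\bbA^1_\calO$ --- this is exactly the fiberwise reduction you allude to, and it removes the ``main obstacle'' you flagged.
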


The following proposition is the analogue of Proposition \ref{conn_comp_BD}.

\begin{prop}\label{conn_comp_BDPZ}
Let either $N=\ucN^+\otimes F$ or $N=\ucN^-\otimes F$ with $\ucN^\pm$ as in Lemma \ref{PZgroups} iii\textup{)}. There exists an open and closed $\calO$-ind-subscheme $(\Gr_\calG)^{0,c}$ \textup{(}resp. $(\Gr_\calG)^{\pm,c}$\textup{)} of $(\Gr_\calG)^0$ \textup{(}resp. $(\Gr_\calG)^{\pm}$\textup{)} together with a disjoint decomposition, depending up to sign on the choice of $N$, as $\calO$-ind-schemes
\[
(\Gr_\calG)^{0,c}=\coprod_{m\in \bbZ}(\Gr_\calG)^{0}_m  \;\;\;\;\text{\textup{(}resp. $(\Gr_\calG)^{\pm,c}=\coprod_{m\in \bbZ}(\Gr_\calG)^{\pm}_m$\textup{)}},
\]
which has the following properties.\smallskip\\
i\textup{)} The map $\iota^0\co \Gr_\calM\to (\Gr_\calG)^{0}$ \textup{(}resp. $\iota^\pm\co \Gr_{\calP^\pm}\to (\Gr_\calG)^{\pm}$\textup{)} factors through $(\Gr_\calG)^{0,c}$ \textup{(}resp. $(\Gr_\calG)^{\pm,c}$\textup{)} inducing a closed immersion $\iota^{0,c}\co \Gr_\calM\to (\Gr_\calG)^{0,c}$ \textup{(}resp. $\iota^{\pm,c}\co \Gr_{\calP^\pm}\to (\Gr_\calG)^{\pm,c}$\textup{)} which is an isomorphism on reduced loci. \smallskip\\
ii\textup{)} The decomposition gives in the generic fiber decomposition \eqref{decomposition} and in the special fiber decomposition \eqref{decompositionflag}.\smallskip\\
iii\textup{)} The complement $(\Gr_\calG)^{0}\bslash (\Gr_\calG)^{0,c}$ \textup{(}resp. $(\Gr_\calG)^{\pm}\bslash (\Gr_\calG)^{\pm,c}$ \textup{)} has empty generic fiber, i.e., is concentrated in the special fiber.
\end{prop}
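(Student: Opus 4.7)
The plan is to mimic the proof of Proposition \ref{conn_comp_BD}, carrying the construction through the mixed-characteristic setting by using Lemma \ref{basicgeoBD_2} in place of Lemma \ref{basicgeoBD_1}, and by identifying the group of connected components of $\Gr_\calM$ via its special fiber. First, I would identify $\pi_1(M) = \pi_1(\uM_F) = \pi_1(\uM_{k\rpot{t}}) = \pi_1(M')$ compatibly (this is canonical because $\uM$ is defined over $\calO[t^{\pm 1}]$ and the algebraic fundamental group is locally constant), and verify that the corresponding inertia actions on $X_*(\uT)$ agree; for tamely ramified $G$ both reduce to the action of the tame quotient which is determined by the splitting datum inside $\uG$ over $\calO[t^{\pm 1}]$.

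Next, for each $\nu \in \pi_1(M)$ I would pick a lift $\dot\nu \in X_*(\uT)$, giving a map $\Spec(\sF) \to \Gr_{M} = \Gr_{\calM,F}$; by ind-properness of $\Gr_\calM \to \Spec(\calO)$ (cf.\,Lemma \ref{PZreplem}) this extends uniquely to $\dot\nu : \Spec(\bar\calO) \to \Gr_\calM$. Using the Henselian property of $\breve\calO$ and the corresponding result for the special fiber $\Fl_{\calM'}$ (cf.\,\eqref{conncompflag}), I would obtain
\[
\pi_0(\Gr_{\calM,\breve\calO}) \,\overset{\simeq}{\longto}\, \pi_0(\Fl_{\calM',\bar k}) \,=\, \pi_1(M')_{I'} \,=\, \pi_1(M)_I,
\]
so that $\Gr_{\calM,\breve\calO} = \coprod_{\bnu \in \pi_1(M)_I} (\Gr_{\calM,\breve\calO})_\bnu$ decomposes into open and closed sub-ind-schemes with the expected generic and special fibers. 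Invoking Lemma \ref{basicgeoBD_2} ii) then embeds $\pi_1(M)_I = \pi_0(\Gr_{\calM,\breve\calO})$ into $\pi_0((\Gr_{\calG,\breve\calO})^0) = \pi_0((\Gr_{\calG,\breve\calO})^\pm)$ via the (schematic) closed immersion $\iota^0$ (cf.\,Theorem \ref{PZgctgeoBD}), and I would denote the resulting connected components of $(\Gr_{\calG,\breve\calO})^0$ and $(\Gr_{\calG,\breve\calO})^\pm$ by $(\Gr_\calG)^0_\bnu$ and $(\Gr_\calG)^\pm_\bnu$ respectively.

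Now, for the fixed choice of $N = \ucN^\pm \otimes F$, the integer $n_\bnu = \langle 2\rho_N, \dot\nu\rangle$ is well-defined on $\pi_1(M)_I$ and constant on Galois orbits (because $T$ and $N$ are defined over $F$), so the indexed disjoint unions
\[
(\Gr_\calG)^0_m := \coprod_{n_\bnu = m} (\Gr_\calG)^0_\bnu, \qquad (\Gr_\calG)^\pm_m := \coprod_{n_\bnu = m} (\Gr_\calG)^\pm_\bnu
\]
descend to open and closed $\calO$-ind-subschemes. Setting $(\Gr_\calG)^{0,c} := \coprod_m (\Gr_\calG)^0_m$ and $(\Gr_\calG)^{\pm,c} := \coprod_m (\Gr_\calG)^\pm_m$, property ii) is immediate from construction via the descriptions \eqref{decomposition} and \eqref{decompositionflag}, and property iii) follows because every $\sF$-point of $(\Gr_{\calG})^0$ lies in the image of $\iota^0$ by Proposition \ref{gctgeo}.

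For property i), the factorizations $\iota^{0,c}$ and $\iota^{\pm,c}$ exist because the generic and special fiber images of $\iota^0$ (resp.\,$\iota^\pm$) lie in the indexed components by ii). They are closed immersions (resp.\,quasi-compact immersions) because $\iota^0$ and $\iota^\pm$ are so by Theorem \ref{PZgctgeoBD}; they are bijections on topological spaces because the generic fiber of $\iota^\pm$ is an isomorphism (Proposition \ref{gctgeo}) and the special fiber of $\iota^\pm$ is, by Lemma \ref{gctflagpoints} applied to $\calG'$, bijective onto the union of the components indexed by $\pi_1(M)_I$. Lemma \ref{qcimm} then upgrades these bijective quasi-compact immersions to isomorphisms on reduced loci.

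The main obstacle I anticipate is step one, the compatible identification of $\pi_1(M)_I$ with $\pi_1(M')_{I'}$ together with the matching of the associated integers $n_\bnu$ on both sides; this is already implicit in the comparison of Iwahori-Weyl groups in Lemma \ref{Iwahorilem} and in the spreading construction of Pappas-Zhu, but one has to be careful that the $2\rho_N$-pairings genuinely transport along the specialization $t \mapsto \varpi$ versus $\varpi \mapsto 0$. Once this compatibility is in place, the remainder of the argument is formal and parallel to the equal-characteristic case treated in Proposition \ref{conn_comp_BD}.
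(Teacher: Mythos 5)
Your proposal follows the same overall route as the paper: pick a representative $\dot\nu\in X_*(T)$ for $\nu\in\pi_1(M)$, extend it by ind-properness to a $\Spec(\bar\calO)$-point of $\Gr_\calM$, use that $\breve\calO$ is Henselian and \eqref{conncompflag} to identify $\pi_0(\Gr_{\calM,\breve\calO})\simeq\pi_1(M)_I$, push through $\iota^0$ (resp.\,$\iota^\pm$), and then sort connected components by the $\langle 2\rho_N,\str\rangle$-pairing. Properties i)--iii) are handled exactly as you say.

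The one place you (rightly) flag as the main obstacle is the compatibility question: why does the specialization map from $\pi_0(\Gr_{M,\breve F})=\pi_1(M)$ to $\pi_0(\Fl_{\calM',\bar k})=\pi_1(M')_{I'}$ equal the projection $X_*(T)\to X_*(T)_I$, and why do the pairings $\langle 2\rho_N,\str\rangle$ transport across the specializations $t\mapsto\varpi$ versus $\varpi\mapsto 0$? Your sketch --- identify $\pi_1(\uM_F)=\pi_1(\uM_{k\rpot{t}})$ as locally constant objects over $\calO[t^{\pm1}]$ and then match the tame inertia actions --- is plausible, but it only gives an abstract compatibility of Galois modules; it does not in itself compute the geometric specialization map (where the extension $\dot\nu:\Spec(\bar\calO)\to\Gr_\calM$ actually lands in the special fiber). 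This is precisely what the paper's citation of \cite[Lem 9.8]{PZ13} supplies: it states directly that the special fiber of the unique extension of $\dot\nu$ is the image of $\dot\nu$ under the canonical projection $X_*(T)\to X_*(T)_I$, which at once pins down the specialization of components and ensures that $n_\nu=n_{\bnu}$ on both sides. With that one reference inserted where you wave your hands, your argument and the paper's coincide; the rest of your write-up matches the paper's ``arguing as in Proposition \ref{conn_comp_BD}'' verbatim.
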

\begin{proof} Let $\pi_1(M)=X_*(T)/X_*(T_{M_\scon})$ be the algebraic fundamental group of $M$, cf.\,\eqref{fundamentalgroup}. For $\nu\in \pi_1(M)$, denote by $\dot{\nu}\in X_*(T)$ a representative which gives rise to a map
\[
\dot{\nu}\co \Spec(\sF)\,\to\, \Gr_{M}=\Gr_{\calM,F}\,\hookto\, \Gr_\calM.
\]
By the ind-properness of $\Gr_\calM\to \Spec(\calO)$, which follows from Lemma \ref{PZgroups} ii) together with Lemma \ref{PZreplem}, the map $\dot{\nu}$ extends uniquely to a map (still denoted)
\[
\dot{\nu}\co \Spec(\bar{\calO})\,\to\, \Gr_\calM.
\]
Here $\bar{\calO}\subset \sF$ is the valuation subring of integral elements. Now by Lemma \cite[Lem 9.8]{PZ13}, the special fiber $\bnu$ of $\dot{\nu}$ is the image under the canonical projection $X_*(T)\to X_*(T)_I$. Arguing as in Proposition \ref{conn_comp_BD}, we obtain, for the choice of $\ucP$, a decomposition into open and closed $\calO$-sub-ind-schemes
\[
(\Gr_\calG)^{0,c}\defined\coprod_{m\in \bbZ}(\Gr_\calG)^{0}_m  \;\;\;\;\text{(resp. $(\Gr_\calG)^{\pm,c}\defined\coprod_{m\in \bbZ}(\Gr_\calG)^{\pm}_m$\,)},
\]
with the desired properties. The proof of i)-iii) is the same as in Proposition \ref{conn_comp_BD}. 
\end{proof}

\section{Constant terms on affine flag varieties}\label{constermflag}

\subsubsection{Nearby cycles} \label{NBCycSec}
Let us briefly recall some general facts about nearby cycles. Let $(S,s,\eta)$ be a Henselian trait, i.e.,\,$S$ is the spectrum of a Henselian discrete valuation ring, $s\in S$ (resp. $\eta\in S$) the closed (resp. open) point. Let $\bar{\eta}\to \eta$ be a geometric point, and denote by $\Ga=\Gal(\bar{\eta}/\eta)$ the Galois group. Let $\bar{S}$ be the normalization of $S$ in $\bar{\eta}$, and let $\bar{s}\in \bar{S}$ be the closed point. We obtain the seven-tuple $(S,s,\eta,\bar{S},\bar{s},\bar{\eta},\Ga)$.  

In the following all schemes are assumed to be separated and of finite type. As coefficients for the derived categories, we take $\algQl$ for a fixed prime $\ell$ which is invertible on $S$. For a scheme $X$ over $s$, we denote by $D_c^b(X\times_s \eta)$ as in \cite[Exp. XIII]{SGA7} the bounded derived category of $\algQl$-sheaves on $X_{\bar{s}}$ with constructible cohomologies, and with a continuous action of $\Ga$ compatible with the action on $X_{\bar{s}}$. If $X\hookto Y$ is a closed immersion of $k$-schemes which is an isomorphism on reduced loci, then by the topological invariance of \'etale cohomology \cite[Tag 03SI]{StaPro} there is an equivalence of categories $D_c^b(X\times_s\eta)\simeq D_c^b(Y\times_s\eta)$. 

Recall from \cite[Exp.\,XIII]{SGA7} (cf.\,also \cite[Appendix]{Il94}) that for a $S$-scheme $X$, there is the functor of nearby cycles
\begin{equation}\label{nearbycycles}
\Psi_X\co D_b^c(X_\eta)\longto D_b^c(X_s\times_s\eta).
\end{equation}
If $f\co X\to Y$ is a map of $S$-schemes, then there is a natural transformation of functors $D_b^c(X_\eta)\to D_b^c(Y_s\times_s\eta)$ (resp. $D_b^c(Y_\eta)\to D_b^c(X_s\times_s\eta)$) as
\begin{equation}\label{nearbycomp}
f_{\bar{s},!}\circ\Psi_X\longto \Psi_Y\circ f_{\eta,!} \;\;\;\text{(resp.\;} f_{\bar s}^*\circ \Psi_Y\longto  \Psi_X\circ f_\eta^*\;\text{)},
\end{equation}
which is an isomorphism if $f$ is proper (resp. if $f$ is smooth). Furthermore, by \cite[Thm 4.2, Thm 4.7]{Il94} nearby cycles commute with Verdier duality and box products
\begin{equation}\label{nearbycomp2}
D_{\bar{s}}\circ\Psi_X\simeq \Psi_X\circ D_{\eta} \;\;\;\text{and\;\;\;} \Psi_{X\times_S Y}\simeq \Psi_X\boxtimes \Psi_Y. 
\end{equation}
By \cite{BBD82}, nearby cycles preserve perversity, and restrict to a functor on perverse complexes
\[
\Psi_X\co \on{Perv}(X_\eta,\algQl)\longto \on{Perv}(X_s\times_s\eta).
\]
The construction of nearby cycles extends to separated $S$-ind-schemes of ind-finite type; see the discussion in \cite[10.1]{PZ13} for more details.

\subsubsection{Hyperbolic Localization}\label{hl_sec}  Let $R$ be a ring. Let $X$ be an $R$-ind-scheme locally of finite presentation with an \'etale locally linearizable $\bbG_m$-action, cf. \S \ref{CollectGm}. The fixed points $X^0$ and the attractor $X^+$ (resp. repeller $X^-$) are as in \eqref{hyperloc} related by the maps of $R$-ind-schemes
\[
X^0\,\overset{\;\; q^\pm}{\leftarrow}\, X^\pm\,\overset{\,p^\pm\,}{\to}\,X,
\] 
cf. Theorem \ref{Gmthm} for the representability properties of $X^0$ and $X^\pm$. As in \cite{Br03, DG15} (cf. also \cite[Cons 2.2]{Ri19}), there is a natural transformation of functors $D^+(X,\algQl)\to D(X^0,\algQl)$,
\begin{equation}\label{Bradentrafohl}
(q^-)_*\circ (p^-)^! \,\longto\, (q^+)_!\circ (p^+)^*,
\end{equation}
where $D^+(\str)$ (resp. $D(\str)$) is the category of bounded below complexes (resp. full derived category). We say that a complex $\calA\in D^+(X,\algQl)$ is (naively) $\bbG_m$-equivariant if there exists an isomorphism $p^*\calA\simeq a^*\calA$ in $D_c^b(\bbG_{m,S}\times_SX,\algQl)$, where $p$ (resp. $a$) denotes the projection (resp. action) $\bbG_{m,S}\times_SX\to X$. Following the method in \cite{Br03} for normal varieties over algebraically closed fields, it is shown in \cite[Thm 2.6]{Ri19} that the transformation \eqref{Bradentrafohl} is an isomorphism when restricted to the full subcategory of $\bbG_m$-equivariant complexes (the extension to ind-schemes is immediate). 

Now specialize to the case where $R$ is a Henselian trait, and set $S=\Spec(R)$. Let $(S,s,\eta,\bar{S},\bar{s},\bar{\eta},\Ga)$ be as in \S \ref{NBCycSec}. The following theorem is the analogue of \cite[Thm 3.3]{Ri19} for ind-schemes.

\begin{thm}\label{nearbyhlthm}
Let $X$ be a separated $S$-ind-scheme of ind-finite type with an \'etale locally linearizable $\bbG_m$-action. Then, for $\calA\in D^b_c(X_\eta,\algQl)$, there is a commutative diagram of arrows in $D_c^b(X_s^0\times_S\eta,\algQl)$ 
\begin{equation}\label{hldiagthm}
\begin{tikzpicture}[baseline=(current  bounding  box.center)]
\matrix(a)[matrix of math nodes, 
row sep=1.5em, column sep=2em, 
text height=1.5ex, text depth=0.45ex] 
{(q^-_{\bar{s}})_* \circ (p^-_{\bar{s}})^! \circ\Psi_X(\calA) & \Psi_{X^0}\circ (q^-_\eta)_*\circ (p^-_\eta)^!(\calA)\\ 
(q^+_{\bar{s}})_!\circ (p^+_{\bar{s}})^*\circ \Psi_X(\calA) & \Psi_{X^0}\circ (q^+_\eta)_!\circ (p^+_\eta)^*(\calA),\\ }; 
\path[->](a-1-2) edge (a-1-1); 
\path[->](a-1-1) edge  (a-2-1); 
\path[->](a-2-1) edge (a-2-2);
\path[->](a-1-2) edge  (a-2-2);
\end{tikzpicture}
\end{equation}
and all arrows are isomorphisms if $\calA$ is \textup{(}naively\textup{)} $\bbG_m$-equivariant. 
\end{thm}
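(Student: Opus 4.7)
The plan is to reduce the ind-scheme statement to its scheme-theoretic counterpart \cite[Thm 3.3]{Ri} by exploiting the compatibility of all the functors involved with the ind-structure. First, fix a $\bbG_m$-stable presentation $X = \on{colim}_i X_i$ by separated $S$-schemes $X_i$ of finite type, each carrying an \'etale locally linearizable $\bbG_m$-action, and with the transition maps $\iota_{ij}\co X_i \hookto X_j$ being closed immersions. By Theorem \ref{Gmthm} the induced presentations $X^0 = \on{colim}_i X_i^0$ and $X^\pm = \on{colim}_i X_i^\pm$ are again ind-schemes of ind-finite type, and the maps $q^\pm, p^\pm$ are recovered as colimits of their restrictions $q_i^\pm\co X_i^\pm \to X_i^0$ and $p_i^\pm\co X_i^\pm \to X_i$. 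Moreover, since each $X_i \subset X$ is $\bbG_m$-stable, the squares
\[
\begin{tikzpicture}[baseline=(current  bounding  box.center)]
\matrix(a)[matrix of math nodes, row sep=1.5em, column sep=2em, text height=1.5ex, text depth=0.45ex]
{X_i^\pm & X^\pm \\ X_i & X, \\};
\path[right hook->](a-1-1) edge (a-1-2);
\path[->](a-1-1) edge node[left]{$p_i^\pm$} (a-2-1);
\path[->](a-1-2) edge node[right]{$p^\pm$} (a-2-2);
\path[right hook->](a-2-1) edge (a-2-2);
\end{tikzpicture}
\qquad
\begin{tikzpicture}[baseline=(current  bounding  box.center)]
\matrix(a)[matrix of math nodes, row sep=1.5em, column sep=2em, text height=1.5ex, text depth=0.45ex]
{X_i^\pm & X^\pm \\ X_i^0 & X^0 \\};
\path[right hook->](a-1-1) edge (a-1-2);
\path[->](a-1-1) edge node[left]{$q_i^\pm$} (a-2-1);
\path[->](a-1-2) edge node[right]{$q^\pm$} (a-2-2);
\path[right hook->](a-2-1) edge (a-2-2);
\end{tikzpicture}
\]
are cartesian (again by Theorem \ref{Gmthm}); in particular $p^{\pm,*}$, $p^{\pm,!}$, $q^\pm_*$, $q^\pm_!$ are compatible with the closed immersions via proper base change and $(-)^* (-)^!$-base change for the closed immersions.

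Next, given $\calA \in D^b_c(X_\eta, \algQl)$, by quasi-compactness of the support of $\calA$ there is some index $j$ and some $\calA_j \in D^b_c(X_{j,\eta}, \algQl)$ with $\calA = \iota_{j,\eta,*}\calA_j$. The nearby cycles functor $\Psi_X$ on the ind-scheme $X$ is defined by $\Psi_X(\iota_{j,\eta,*}\calA_j) = \iota_{j,s,*}\Psi_{X_j}(\calA_j)$, and analogous formulas hold for $\Psi_{X^0}$ and $\Psi_{X^\pm}$; this follows from the commutation of nearby cycles with closed immersions (which are proper). Combining these identifications with the base change isomorphisms from the previous paragraph, the entire diagram \eqref{hldiagthm} for $(X, \calA)$ is obtained by applying the fully faithful pushforward $(\iota_{j,s}^0)_*$ to the corresponding diagram for the scheme $X_j$ and the complex $\calA_j$. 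Since commutativity is preserved by a fully faithful functor, it suffices to prove commutativity at the finite level $X_j$, which is the content of \cite[Thm 3.3]{Ri}.

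For the second assertion, suppose $\calA$ is (naively) $\bbG_m$-equivariant, i.e.\ $a^*\calA \simeq p^*\calA$ where $a, p\co \bbG_{m,S} \times_S X \to X$ denote the action and projection. Both sides are supported on $\bbG_{m,S} \times_S X_j$ (since $X_j$ is $\bbG_m$-stable), hence restrict to an isomorphism $a_j^*\calA_j \simeq p_j^*\calA_j$ on $X_j$, so $\calA_j$ is $\bbG_m$-equivariant. Applying the isomorphism clause of \cite[Thm 3.3]{Ri} to $(X_j, \calA_j)$ and pushing forward along $(\iota_{j,s}^0)_*$ yields the isomorphism claim for $(X, \calA)$.

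The main obstacle is not conceptual but bookkeeping: one must carefully verify that the four natural transformations forming the diagram \eqref{hldiagthm} are each compatible with restriction to the closed sub-ind-scheme $X_j$, i.e.\ that the base change morphisms defining them are themselves compatible with the closed immersions $\iota_j$, $\iota_j^0$, $\iota_j^\pm$. For the nearby-cycles horizontal arrows in \eqref{hldiagthm} this is the compatibility of the natural transformations in \eqref{nearbycomp} with composition, applied to the cartesian squares above (noting that closed immersions are proper). For the Braden arrows this is the naturality of the transformation \eqref{Bradentrafohl} with respect to pushforward along closed $\bbG_m$-equivariant immersions, already built into the construction of \cite[\S 2]{Ri}. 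Once these compatibilities are in place, the proof reduces cleanly to the scheme case, and no new geometric input beyond Theorem \ref{Gmthm} and \cite[Thm 3.3]{Ri} is needed.
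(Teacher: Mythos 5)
Your proposal is correct and follows essentially the same route as the paper: write $X=\on{colim}_i X_i$ with a $\bbG_m$-stable presentation, observe that $\calA$ is supported on some $X_{j,\eta}$, invoke \cite[Thm 3.3]{Ri} on $X_j$, and transport the result along the closed immersion using the compatibility of nearby cycles and the Braden transformation with pushforward along closed $\bbG_m$-equivariant immersions (the paper pins the latter compatibility down by citing \cite[Lem 2.22]{Ri}, which is exactly the "bookkeeping" step you flag at the end).
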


\begin{rmk} 
More generally, the theorem holds when ``$\bbG_m$-equivariant'' is replaced by ``$\bbG_m$-monodromic'', cf. \cite{Ri19}. We do not need this more general statement in the paper.
\end{rmk}

\begin{proof}[Proof of Theorem \ref{nearbyhlthm}] The horizontal maps in \eqref{hldiagthm} are constructed from the usual functorialities of nearby cycles \eqref{nearbycomp}. The vertical maps in \eqref{hldiagthm} are given by \eqref{Bradentrafohl} in the generic (resp. special) fiber. When $X$ is a scheme, the theorem is \cite[Thm 3.3]{Ri19}. The case of ind-schemes is deduced as follows. Write $X=\on{colim}_iX_i$ where $X_i$ are separated $S$-schemes of finite type with an \'etale locally linearizable $\bbG_m$-action. By definition of $D^b_c(X_\eta,\algQl)$, there is an $X_i$ such that the support $\Supp(\calA)$ is contained in $X_{i,\eta}$, and all maps in \eqref{hldiagthm} are defined when using $X_i$ instead of $X$. Since nearby cycles (cf. \eqref{nearbycomp}) and the map \eqref{Bradentrafohl} (cf. \cite[Lem 2.22]{Ri19}) commute with push forward along closed immersions, the isomorphism is independent of the choice of $X_i$ with $i>\!\!>0$. The theorem for ind-schemes follows.
\end{proof}

\subsubsection{The data}\label{data_consterm}
Let us specialize to our set-up. Let $F$ be a non-archimedean local field, i.e., either $F/\bbQ_p$ a finite extension or $F\simeq\bbF_q\rpot{t}$. Take $S=\Spec(\calO_F)$, and the rest of the data $(S,s,\eta,\bar{S},\bar{s},\bar{\eta},\Ga_F)$ with the obvious meaning.

We fix a triple $(G,\calG,\chi)$ where $G$ is a connected reductive $F$-group, $\calG$ is a parahoric $\calO_F$-group scheme with generic fiber $G$, and $\chi\co\bbG_{m,\calO_F}\to \calG$ is a cocharacter defined over $\calO_F$. If $F/\bbQ_p$, we assume $G$ to split over a tamely ramified extension of $F$, fix a uniformizer $\varpi$ and a spreading $\ucG$ as in Theorem \ref{PZspreading}. If $F\simeq\bbF_q\rpot{t}$, we fix a spreading $\ucG$ over a pointed curve $(X,x_0)$ as in Proposition \ref{spreadprop}. Let $\calM$ and $\calP^\pm$ denote the smooth closed $\calO_F$-subgroup schemes of $\calG$ associated with $\chi$ by Lemma \ref{groups}, \ref{PZgroups}. 

In the following, we treat the case of $F/\bbQ_p$ and the case of $F\simeq\bbF_q\rpot{t}$ in complete analogy. If $F/\bbQ_p$, the notation $\Fl_\calG$, $L\calG$, $L^+\calG$, $\Fl_\calM$... means $\Fl_{\calG'}$, $L\calG'$, $L^+\calG'$, $\Fl_{\calM'}$... in the notation of \S \ref{PZBDGrass_Sec}.

\subsection{Geometric constant terms for affine flag varieties} 
The affine flag variety $\Fl_\calG$ is equipped with a left action of the pro-smooth affine $k$-group $L^+\calG$. As in \cite[10.1.3]{PZ13}, we make the following definition.

\begin{dfn}\label{EquiGaloisCat}
The category $\on{Perv}_{L^+\calG}(\Fl_\calG\times_s\eta)$ is the category of pairs $(\calA, \theta_\calA)$ with $\calA \in \on{Perv}(\Fl_\calG\times_s\eta)$ together with an isomorphism $\theta_\calA\co m^*(\calA)\simeq p^*(\calA)$ in $\on{Perv}(\Fl_\calG\times_s\eta)$ satisfying a cocycle condition. Here $m,p\co L^+\calG\times\Fl_{\calG}\to\Fl_\calG$ is the action, resp. the projection.
\end{dfn}

Recall from \eqref{globalact}, resp. Definition \ref{PZBDGrass_Def} that the Beilinson-Drinfeld Grassmannian $\Gr_\calG$ is an ind-proper $\calO_F$-ind-scheme with generic fiber $\Gr_{\calG,\eta}=\Gr_G$ and special fiber $\Gr_{\calG,s}=\Fl_\calG$.
The global positive loop group $\calL^+\calG$ is a flat affine $\calO_F$-group scheme which acts on $\Gr_\calG$. 
Its generic fiber is the $F$-group $L^+G$, and its special fiber is the $k$-group $L^+\calG$.  
Furthermore, $\calL^+\calG \simeq \on{lim}_{n\geq 0}\calL^+_n\calG$ is an inverse limit of smooth affine $\calO$-group schemes $\calL^+_n\calG$.
Here for $R/\calO$ one has $\calL^+_n\calG(R)=\ucG\big(R[t]/(t-\varpi)^{n+1}\big)$ if $F/\bbQ_p$ and $\calL^+_n\calG(R)=\ucG\big(R[z]/(z-t)^{n+1}\big)$ if $F\simeq k\rpot{t}$.

For each finite dimensional $\calL^+\calG$-invariant closed subscheme in $\Gr_\calG$, the $\calL^+\calG$-action factors over the smooth $\calO_F$-group scheme $\calL^+_n\calG$ for $n>\!\!> 0$.

As any object in $\on{Perv}_{L^+G}(\Gr_G)$ has by definition a finite dimensional support, smooth base change (cf. \eqref{nearbycomp}) shows that the nearby cycles 
\begin{equation}\label{equi_nearby}
\Psi_{\Gr_\calG}\co \on{Perv}_{L^+G}(\Gr_G)\;\longto\; \on{Perv}_{L^+\calG}(\Fl_\calG\times_s\eta).
\end{equation}
take values in $L^+\calG$-equivariant objects. 

In Theorem \ref{gctgeoBD}, Proposition \ref{conn_comp_BD} (for $F\simeq\bbF_q\rpot{t}$), and Theorem \ref{PZgctgeoBD}, Proposition \ref{conn_comp_BDPZ} (for $F/\bbQ_p$ finite), we constructed a commutative diagram of separated $\calO_F$-ind-schemes
\begin{equation}\label{restrictBD}
\begin{tikzpicture}[baseline=(current  bounding  box.center)]
\matrix(a)[matrix of math nodes, 
row sep=1.5em, column sep=2em, 
text height=1.5ex, text depth=0.45ex] 
{\Gr_\calM & \Gr_{\calP^\pm} & \Gr_\calG \\ 
(\Gr_{\calG})^{0,c}& (\Gr_{\calG})^{\pm,c}& \Gr_{\calG}, \\}; 
\path[->](a-1-2) edge node[above] {$q^\pm$}  (a-1-1);
\path[->](a-1-2) edge node[above] {$p^\pm$}  (a-1-3);
\path[->](a-2-2) edge node[below] {}  (a-2-1);
\path[->](a-2-2) edge node[below] {} (a-2-3);
\path[->](a-1-1) edge node[left] {$\iota^{0,c}$} (a-2-1);
\path[->](a-1-2) edge node[left] {$\iota^{\pm,c}$} (a-2-2);
\path[->](a-1-3) edge node[left] {$\id$} (a-2-3);
\end{tikzpicture}
\end{equation}
whose generic fiber is the diagram in Proposition \ref{gctgeo}, and whose special fiber is diagram \eqref{restrictflag} for $\calG$ (resp. for $\calG'$ if $F/\bbQ_p$). The maps $\iota^{0,c}$ and $\iota^{\pm,c}$ are nilpotent thickenings by Proposition \ref{conn_comp_BD}, \ref{conn_comp_BDPZ} i), and we may and do identify their derived categories of $\ell$-adic complexes in what follows. Then there is a natural isomorphism of functors $D_c^b(\Gr_M,\algQl)\to D_c^b(\Fl_{\calM}\times_s\eta,\algQl)$,
\begin{equation}\label{nearbyiso}
\Psi_{\Gr_\calM}\simeq \Psi_{(\Gr_{\calG})^{0,c}}.
\end{equation}
We write $\Psi_\calG=\Psi_{\Gr_\calG}$ (resp. $\Psi_\calM=\Psi_{\Gr_\calM}$) in the following. 

Since $\iota^{0,c}$ and $\iota^{\pm,c}$ are nilpotent thickenings, Proposition \ref{conn_comp_BD}, \ref{conn_comp_BDPZ} gives a decomposition 
\[
q^\pm=\coprod_{m\in\bbZ}q_m^\pm\co  \;\Gr_{\calP^\pm}\,=\,\coprod_{m\in\bbZ}\;\Gr_{\calP^\pm,m}\,\longto\, \coprod_{m\in\bbZ}\Gr_{\calM,m}\,=\,\Gr_\calM,
\]
according to the choice of the unipotent group $N:=\calN^\pm\otimes F$. We use the special fiber of diagram \eqref{restrictBD} to define the geometric constant term functors on affine flag varieties as follows.

\begin{dfn} \label{gctnormal}
The \emph{\textup{(}normalized\textup{)} geometric constant term} is the functor $\on{CT}_\chi^+\co D_c^b(\Fl_\calG\times_s\eta)\to D_c^b(\Fl_\calM\times_s\eta)$ (resp. $\on{CT}_\chi^-\co D_c^b(\Fl_\calG\times_s\eta)\to D_c^b(\Fl_\calM\times_s\eta)$) defined as the shifted pull-push functor
\[
\on{CT}_\chi^+\defined \bigoplus_{m\in\bbZ}(q^+_{m,s})_!(p^+_s)^*\langle m\rangle \;\;\;\text{(resp. $\on{CT}_\chi^-\defined \bigoplus_{m\in\bbZ}(q^-_{m,s})_*(p^-_s)^!\langle m\rangle$).}
\]
\end{dfn}

As in \eqref{Bradentrafo}, there is a natural transformation of functors
\begin{equation}\label{Bradentrafoflag}
\on{CT}_\chi^-\longto \on{CT}_\chi^+,
\end{equation}
which is an isomorphism for $\bbG_m$-equivariant complexes by Braden's theorem, cf. \S \ref{hl_sec}. In particular, if we restrict both functors to the category $\on{Perv}_{L^+\calG}(\Fl_\calG\times_s\eta)$ of $L^+\calG$-equivariant perverse sheaves (the $\bbG_m$-action factors through the $L^+\calG$-action), then \eqref{Bradentrafoflag} is an isomorphism of functors. The functor $\on{Perv}_{L^+\calG}(\Fl_\calG\times_s\eta)\to D_b^c(\Fl_\calM\times_s\eta)$ is defined as
\begin{equation}\label{consttermflag}
\on{CT}_\calM\defined \on{CT}^+_\chi|_{\on{Perv}_{L^+\calG}(\Fl_\calG\times_s\eta)}.
\end{equation}
As in Theorem \ref{nearbyhlthm}, the usual functorialities of nearby cycles \eqref{nearbycomp} give a natural transformation of functors $\Sat_G\to D_c^b(\Fl_\calM\times_s\eta)$ as
\begin{equation}\label{commctnearby}
\on{CT}_\calM\circ \Psi_{\calG}\longto \Psi_{\calM}\circ \on{CT}_M,
\end{equation}
where we use that the decomposition $q^+=\amalg_{m\in\bbZ}q_m^+$ is compatible with the decomposition $q^+_\eta=\amalg_{m\in\bbZ}q_{\eta,m}^+$ (resp. $q^+_s=\amalg_{m\in\bbZ}q_{s,m}^+$) in \eqref{decomposition} (resp. \eqref{decompositionflag}). 

\begin{thm}\label{commctnearbythm}
The transformation \eqref{commctnearby} is an isomorphism of functors $\Sat_G\to D_c^b(\Fl_\calM\times_s\eta)$. In particular, for every $\calA\in \Sat_G$, the complex $\on{CT}_\calM\circ \Psi_{\calG}(\calA)$ is naturally in $\on{Perv}_{L^+\calM}(\Fl_\calM\times_s\eta)$.
\end{thm}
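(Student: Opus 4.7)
The plan is to deduce the claim from the general ind-schematic commutation result between nearby cycles and hyperbolic localization, Theorem \ref{nearbyhlthm}, applied to the Beilinson--Drinfeld Grassmannian $\Gr_\calG$ equipped with the fiberwise $\bbG_m$-action induced by $\chi$. By Lemmas \ref{loclinBD} and \ref{PZloclinBD} this action is Zariski, hence \'etale, locally linearizable, so Theorem \ref{nearbyhlthm} applies. Any $\calA \in \Sat_G$ is $L^+G$-equivariant; since the $\bbG_m$-action on $\Gr_\calG$ factors through $\calL^+\calG$, $\calA$ is automatically $\bbG_m$-equivariant. Thus Theorem \ref{nearbyhlthm} yields a canonical isomorphism
\[
(q^+_{\bar s})_!(p^+_{\bar s})^*\Psi_\calG(\calA)\;\simeq\;\Psi_{(\Gr_\calG)^0}\bigl((q^+_\eta)_!(p^+_\eta)^*\calA\bigr),
\]
in which $q^+,p^+$ refer to the full hyperbolic localization diagram for the $\bbG_m$-action on $\Gr_\calG$.

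To rewrite both sides in terms of $\on{CT}_\calM$, $\on{CT}_M$ and $\Psi_\calM$, I invoke Theorem \ref{gctgeoBD} (resp.\,Theorem \ref{PZgctgeoBD}) together with Proposition \ref{conn_comp_BD} (resp.\,\ref{conn_comp_BDPZ}): the full attractor and fixed-point ind-schemes decompose as open-and-closed disjoint unions $(\Gr_\calG)^0=(\Gr_\calG)^{0,c}\amalg Z^0$ and $(\Gr_\calG)^+=(\Gr_\calG)^{+,c}\amalg Z^+$, where $Z^0$ and $Z^+$ are concentrated in the special fiber, and the $c$-parts are further graded by $m\in\bbZ$ via $\bar\nu\mapsto n_{\bar\nu}=\langle 2\rho_N,\dot\nu\rangle$, a grading on $\pi_1(M)_I$ which agrees in the generic fiber with the grading on $\pi_1(M)$ of \eqref{decomposition}. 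Since $Z^0_\eta=\emptyset=Z^+_\eta$, the pull-push on the generic fiber only sees the $c$-parts, and nearby cycles of objects supported in the generic fiber of $(\Gr_\calG)^{0,c}$ vanish along $Z^0$; the displayed isomorphism may therefore be restricted to the $c$-components on both sides (this also forces the contribution of $Z^+_s$ to vanish a posteriori, which is the geometric content alluded to after the statement of Theorem A).

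Applying the shifts $\langle m\rangle$ componentwise and summing over $m$, the generic fiber side becomes $\on{CT}_M(\calA)$ by Definition \ref{NormConsTermGrass} combined with Proposition \ref{gctgeo} (which identifies $\iota^0,\iota^\pm$ as isomorphisms in the generic fiber). On the special fiber side, the nilpotent thickenings $\iota^{0,c}$ and $\iota^{\pm,c}$ induce equivalences of derived categories of $\ell$-adic complexes, under which the shifted pull-push along the $c$-diagram agrees with the functor $\on{CT}_\calM$ of Definition \ref{gctnormal}; combined with the identification $\Psi_{(\Gr_\calG)^{0,c}}\simeq\Psi_\calM$ of \eqref{nearbyiso}, this yields the desired isomorphism $\on{CT}_\calM\circ\Psi_\calG(\calA)\simeq\Psi_\calM\circ\on{CT}_M(\calA)$. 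One then checks, by unwinding the construction in Theorem \ref{nearbyhlthm}, that this is precisely the natural transformation \eqref{commctnearby} built from proper base change along $q^+$ and the base change map along $p^+$.

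For the final assertion, Theorem \ref{GeoSat2} gives $\on{CT}_M(\calA)\in\Sat_M$, hence it is $L^+M$-equivariant and perverse; the nearby cycles functor $\Psi_\calM$ preserves perversity by \cite{BBD82} and, as in \eqref{equi_nearby}, sends $L^+M$-equivariant perverse sheaves to $L^+\calM$-equivariant perverse sheaves on $\Fl_\calM\times_s\eta$. The isomorphism just established then places $\on{CT}_\calM\circ\Psi_\calG(\calA)$ canonically in $\on{Perv}_{L^+\calM}(\Fl_\calM\times_s\eta)$. The main obstacle I anticipate is the bookkeeping between the full hyperbolic localization diagram for $\Gr_\calG$, to which Theorem \ref{nearbyhlthm} applies directly, and the $c$-restricted, shift-enriched diagram used to define $\on{CT}_\calM$; the key geometric fact that makes the translation possible is precisely that the non-$c$ parts $Z^0, Z^+$ are concentrated in the special fiber.
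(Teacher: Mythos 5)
Your proof is correct and follows essentially the same route as the paper: invoke Theorem \ref{nearbyhlthm} for the $\bbG_m$-action on $\Gr_\calG$ (justified by Lemmas \ref{loclinBD}/\ref{PZloclinBD} and $\bbG_m$-equivariance of Satake objects), then restrict to the $c$-components using that the complement in $(\Gr_\calG)^0$ has empty generic fiber so its nearby cycles vanish, and translate via the nilpotent thickenings $\iota^{0,c}, \iota^{\pm,c}$ and the grading by $m$. The only cosmetic difference is the ordering: the paper first reduces to the isomorphism statement via perversity of $\Psi_\calM\circ\on{CT}_M(\calA)$, whereas you establish the isomorphism first and deduce perversity at the end.
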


\begin{rmk} It is possible to define the full subcategory $D_c^b(\Fl_\calG\times_s\eta)^{\bbG_m\on{-mon}}$ of $\bbG_m$-monodromic complexes in $D_c^b(\Fl_\calG\times_s\eta)$. If we restrict in \eqref{consttermflag} the functor $\on{CT}^+_\chi$ to this subcategory, then the transformation \eqref{commctnearby} is still an isomorphism (by the same proof). We do not need this more general statement in the paper.
\end{rmk}

\begin{proof} In light of \eqref{equi_nearby} combined with Theorem \ref{GeoSat2} i), we have $$\Psi_{\calM}\circ \on{CT}_M(\calA)\in \on{Perv}_{L^+\calM}(\Fl_\calM\times_s\eta),$$ and it is enough to show that \eqref{commctnearby} is an isomorphism.

Let $'q^+\co (\Gr_\calG)^{+}\to (\Gr_\calG)^0$ (resp. $'p^+\co (\Gr_\calG)^+\to \Gr_\calG$) which agrees with $q^+$ (resp. $p^+$) in the generic fiber. As each object in $\Sat_G$ is $\bbG_m$-equivariant, there is by Theorem \ref{nearbyhlthm} a natural isomorphism of functors $\Sat_G\to D_c^b((\Gr_\calG)^0\times_s\eta)$ as
\begin{equation}\label{commthm:eq1}
('q^+)_{s,!} \circ('p^+)_s^*\circ\Psi_\calG\overset{\simeq}{\longto} \Psi_{(\Gr_\calG)^0}\circ ('q^+)_{\eta,!} \circ('p^+)_\eta^*.
\end{equation}
The map $(\Gr_{\calG})^{0,c}\subset (\Gr_{\calG})^{0}$ is an open and closed immersion which is  which is an isomorphism on generic fibers, and we denote by $C^0$ its complement. Then $C^0$ has empty generic fiber by Proposition \ref{conn_comp_BD}, \ref{conn_comp_BDPZ} iii), and we obtain
\[
\Psi_{(\Gr_\calG)^0}|_{C^0}=0,
\]
by smooth base change \eqref{nearbycomp}. Together with \eqref{commthm:eq1}, we obtain $('q^+)_{s,!} \circ('p^+)_s^*\circ\Psi_\calG|_{C^0}=0$. The theorem follows using diagram (\ref{restrictBD}).
\end{proof}

In \S \ref{Iwahoricase}, we explain what Theorem \ref{commctnearbythm} means in a special case in terms of cohomology groups. Let us point out two applications: the construction of geometric constant terms for the Satake equivalence \S \ref{ConsTermRamGeoSat}, and applications to local models \S \ref{AppLocMod}.

\subsection{Geometric constant terms for ramified groups} \label{ConsTermRamGeoSat}
We proceed with the data and notation as in \S \ref{data_consterm}. Let $\calG=\calG_\bbf$, and assume that the facet $\bbf_M\in\scrB(M,F)$ in the Bruhat-Tits building associated with $\calM=\calM_{\bbf_M}$ is very special, i.e. the image under $\scrB(M,F)\hookto \scrB(M,\bF)$ is special.  

Building upon the work \cite{Zhu15}, the second named author defined in \cite[Def 5.10]{Ri16a} a semi-simple full subcategory $\Sat_\calM$ of ${\rm Perv}_{L^+\calM}(\Fl_\calM\times_s\eta)$ which is stable under the convolution of perverse sheaves. It is shown that $\Sat_\calM$ is neutral Tannakian, and that the global cohomology functor
\begin{equation}\label{ramgaloistwist}
\begin{aligned}
\om_\calM\co & \Sat_\calM\longto \Rep_{\algQl}(\Ga_F) \\
&\calA  \longmapsto \bigoplus_{i\in \bbZ}\on{H}^i(\Fl_{\calM,\bar{k}}, \calA_{\bar{k}})(\nicefrac{i}{2}).
\end{aligned}
\end{equation}
defines a Tannakian equivalence $\Sat_\calM \simeq \Rep_{\algQl}({^LM}_r)$, cf. \cite[Thm 5.11]{Ri16a}. Here ${^LM}_r$ is the algebraic $\algQl$-group ${^LM}_r=\widehat{M}^{I_F}\rtimes \Ga_F$ where the inertia group $I_F$ acts on $\widehat{M}$ by pinning preserving automorphisms, cf. the discussion around Theorem \ref{GeoSat1}. Note that the group $\widehat{M}^{I_F}$ is reductive, but in general not connected. 

For any $\calA\in\Sat_M$, the nearby cycles $\Psi_\calM(\calA)$ belong to $\Sat_\calM$, and the functor $\Psi_\calM\co \Sat_M\to\Sat_\calM$ admits a unique structure of a tensor functor together with an isomorphism $\om_\calM\circ\Psi_\calM\simeq \om_M$. By \cite{Zhu14}, \cite[Thm 5.11 iii)]{Ri16a} (if $F\simeq\bbF_q\rpot{t}$) and \cite[Thm 10.18]{PZ13} (if $F/\bbQ_p$), there is a commutative diagram of neutral Tannakian categories
\begin{equation}\label{ramGeoSat}
\begin{tikzpicture}[baseline=(current  bounding  box.center)]
\matrix(a)[matrix of math nodes, 
row sep=1.5em, column sep=2em, 
text height=1.5ex, text depth=0.45ex] 
{\Sat_M&\Sat_{\calM} \\ 
\Rep_{\algQl}({^LM})&\Rep_{\algQl}({^LM}_r), \\}; 
\path[->](a-1-1) edge node[above] {$\Psi_\calM$} (a-1-2);
\path[->](a-2-1) edge node[above] {$\res$} (a-2-2);
\path[->](a-1-1) edge node[right] {$\om_M$} (a-2-1);
\path[->](a-1-2) edge node[right] {$\om_\calM$} (a-2-2);
\end{tikzpicture}
\end{equation}
where $\res\co V\mapsto V|_{{^LM}_r}$ denotes the restriction of representations along the inclusion ${^LM}_r\subset {^LM}$. The diagram is compatible with pullback $\Sat_M\to \Sat_{M,\sF}$ in the obvious sense. The following theorem generalizes \cite[Thm 4]{AB09} from the case of split reductive group to general reductive groups.

\begin{thm} \label{ramConsTerm1}
Assume $\calM$ is a very special parahoric group scheme.\smallskip\\
i\textup{)} For every $\calA\in \Sat_G$, one has $\on{CT}_\calM\circ \Psi_\calG(\calA)\in \Sat_\calM$.\smallskip\\
ii\textup{)} The functor $\on{CT}_\calM\circ \Psi_\calG\co \Sat_G\to\Sat_\calM$ admits a unique structure of a tensor functor together with an isomorphism $\om_\calM\circ \on{CT}_\calM \circ\Psi_\calG\simeq \om_G$. Under the geometric Satake equivalence, it corresponds to the restriction of representations $\res\co \Rep_{\algQl}({^LG})\to \Rep_{\algQl}({^LM}_r)$ along the inclusion ${^LM}_r\subset {^LG}$.  
\end{thm}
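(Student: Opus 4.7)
The strategy is to deduce the theorem by combining the commutation isomorphism of Theorem \ref{commctnearbythm} with the Tannakian descriptions of $\on{CT}_M$ and $\Psi_\calM$ that are already available. Concretely, Theorem \ref{commctnearbythm} yields a natural isomorphism of functors
\[
\on{CT}_\calM\circ \Psi_\calG \;\simeq\; \Psi_\calM\circ \on{CT}_M\co \Sat_G\longto D_c^b(\Fl_\calM\times_s\eta),
\]
so I can analyze the right-hand side step by step using results in the paper.

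For part i), I would argue as follows. By Theorem \ref{GeoSat2} i), for any $\calA\in\Sat_G$ one has $\on{CT}_M(\calA)\in\Sat_M$. Since $\calM$ is very special, the functor $\Psi_\calM\co \Sat_M\to\Sat_\calM$ of \eqref{ramGeoSat} is defined and lands in $\Sat_\calM$. Hence $\Psi_\calM\circ \on{CT}_M(\calA)\in\Sat_\calM$, and the isomorphism above transports this to $\on{CT}_\calM\circ\Psi_\calG(\calA)\in\Sat_\calM$.

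For part ii), I would construct the tensor structure by transport: $\on{CT}_M$ is a tensor functor by Theorem \ref{GeoSat2} ii), and $\Psi_\calM$ is a tensor functor by the construction preceding \eqref{ramGeoSat}; therefore the composition $\Psi_\calM\circ \on{CT}_M$ carries a natural tensor structure, which I transport across the isomorphism of Theorem \ref{commctnearbythm} to endow $\on{CT}_\calM\circ\Psi_\calG$ with one. The identification $\om_\calM\circ \on{CT}_\calM\circ\Psi_\calG\simeq \om_G$ is obtained by composing the isomorphism $\om_\calM\circ\Psi_\calM\simeq \om_M$ of \eqref{ramGeoSat} with the isomorphism $\om_M\circ \on{CT}_M\simeq \om_G$ of Theorem \ref{GeoSat2} ii). Under the geometric Satake equivalences of Theorem \ref{GeoSat1} and \cite[Thm 5.11]{Ri16a}, Theorem \ref{GeoSat2} ii) identifies $\on{CT}_M$ with $\on{res}_{{^LM}}^{{^LG}}$, and \eqref{ramGeoSat} identifies $\Psi_\calM$ with $\on{res}_{{^LM}_r}^{{^LM}}$, so the composite corresponds to $\on{res}_{{^LM}_r}^{{^LG}}$ along the chain of inclusions ${^LM}_r\subset {^LM}\subset {^LG}$. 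Uniqueness of the tensor structure compatible with the fiber functor follows formally from the fact that $(\Sat_\calM,\om_\calM)$ is neutral Tannakian, so any two such structures on $\on{CT}_\calM\circ\Psi_\calG$ differ by a canonical isomorphism of tensor functors that is the identity on $\om_\calM$.

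The main potential obstacle is a bookkeeping issue rather than a serious geometric one: one must verify that the tensor structure on $\Psi_\calM\circ \on{CT}_M$ obtained by composition really matches, under the isomorphism of Theorem \ref{commctnearbythm}, the tensor structure induced on $\on{CT}_\calM\circ\Psi_\calG$ by the Beilinson-Drinfeld/fusion realization of convolution. This amounts to checking that the commutation isomorphism \eqref{commctnearby} is compatible with the global family structures used in the proof of Theorem \ref{GeoSat2} ii) (the BD-Grassmannian $\Gr_G^{\on{BD}}$ and its counterpart for $\calM$) and with the fusion/convolution comparison \eqref{Convnearby}. I expect this to follow from the functoriality of the constructions in \S \ref{gctBDgrass} and the compatibility of hyperbolic localization with external products, together with the invariance of $\iota^{0,c}$ and $\iota^{\pm,c}$ under the formation of BD-Grassmannians, but it is precisely this diagram chase that requires the most care.
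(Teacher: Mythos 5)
Your proof is correct and takes essentially the same route as the paper: the paper's proof is the single sentence ``The theorem follows from the canonical isomorphism $\on{CT}_\calM\circ \Psi_\calG\simeq \Psi_\calM\circ \on{CT}_M$ given by Theorem \ref{commctnearbythm}, and the corresponding statement for the latter functor, cf.\ Theorem \ref{GeoSat2} and \eqref{ramGeoSat}.'' Your expansion matches this logic exactly; the bookkeeping concern you raise in your final paragraph (compatibility of the transported tensor structure with the fusion structure) is a legitimate subtlety that the paper does not spell out, but it does not affect the validity of the shared approach.
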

\begin{proof} The theorem follows from the canonical isomorphism $\on{CT}_\calM\circ \Psi_\calG\simeq \Psi_\calM\circ \on{CT}_M$ given by Theorem \ref{commctnearbythm}, and the corresponding statement for the latter functor, cf. Theorem \ref{GeoSat2} and \eqref{ramGeoSat}. 
\end{proof}

The following corollary was announced in \cite{Ri14b}.

\begin{cor}  \label{ramConsTerm2}
Let $\calG$ be very special, and consequently $\calM$ is very special as well. \smallskip\\
i\textup{)} For every $\calA\in\Sat_\calG$, one has $\on{CT}_\calM(\calA)\in\Sat_\calM$. \smallskip\\
ii\textup{)} The functor $\on{CT}_\calM\co \Sat_\calG\to\Sat_\calM$ admits a unique structure of a tensor functor together with an isomorphism $\om_{\calM}\circ \on{CT}_\calM\simeq \om_{\calG}$. Under the geometric Satake equivalence \eqref{ramGeoSat}, it corresponds to the restriction of representations $\res\co \Rep_{\algQl}({^LG}_r)\to \Rep_{\algQl}({^LM}_r)$ along the inclusion ${^LM}_r\subset {^LG}_r$.
\end{cor}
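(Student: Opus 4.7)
The strategy is to bootstrap from Theorem~\ref{ramConsTerm1} using the ramified geometric Satake equivalence \eqref{ramGeoSat} applied to $\calG$ itself (rather than only to $\calM$). The key observation is that when $\calG$ is very special, $\Psi_\calG : \Sat_G \to \Sat_\calG$ is a tensor functor corresponding under Satake to the restriction $\Rep_{\algQl}({^LG}) \to \Rep_{\algQl}({^LG}_r)$ along the closed embedding $^LG_r = \hat{G}^{I_F} \rtimes \Ga_F \hookto {^LG}$. Since $\hat{G}^{I_F} \subset \hat{G}$ is an inclusion of reductive groups, restriction is exact and every simple $^LG_r$-representation appears as a direct summand of the restriction of some $^LG$-representation; equivalently, every simple object of $\Sat_\calG$ is a direct summand of $\Psi_\calG(\calA')$ for some $\calA' \in \Sat_G$.

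For part~i), given $\calA \in \Sat_\calG$, write it as a direct summand of $\Psi_\calG(\calA')$ for some $\calA' \in \Sat_G$. By Theorem~\ref{commctnearbythm} we have a canonical isomorphism $\on{CT}_\calM \circ \Psi_\calG(\calA') \simeq \Psi_\calM \circ \on{CT}_M(\calA')$, and the right-hand side lies in $\Sat_\calM$ by \eqref{ramGeoSat} applied to $\calM$ (noting $\on{CT}_M(\calA') \in \Sat_M$ by Theorem~\ref{GeoSat2}). Since $\Sat_\calM$ is a semisimple full subcategory closed under direct summands, $\on{CT}_\calM(\calA) \in \Sat_\calM$.

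For part~ii), the commutative triangle from Theorem~\ref{commctnearbythm} exhibits the composition $\on{CT}_\calM \circ \Psi_\calG \simeq \Psi_\calM \circ \on{CT}_M$ as a tensor functor $\Sat_G \to \Sat_\calM$ (since the right-hand side is a composite of tensor functors by Theorem~\ref{GeoSat2} and \eqref{ramGeoSat}). Under Tannakian duality, this composite corresponds to restriction along $^LM_r \subset {^LM} \subset {^LG}$, which canonically factors through $^LG_r$. Since the image of $\Psi_\calG$ together with direct summands generates all of $\Sat_\calG$, we may transport the tensor structure across $\Psi_\calG$ and identify $\on{CT}_\calM : \Sat_\calG \to \Sat_\calM$ as the tensor functor that corresponds under the equivalences of \eqref{ramGeoSat} (for both $\calG$ and $\calM$) to restriction along $^LM_r \subset {^LG}_r$. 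Uniqueness of the tensor structure follows since $\Psi_\calG$ is full onto a generating subcategory, so any two tensor structures on $\on{CT}_\calM$ compatible with the one on $\on{CT}_\calM \circ \Psi_\calG$ must agree.

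The main obstacle will be the descent in part~ii): rigorously transporting the tensor structure from $\on{CT}_\calM \circ \Psi_\calG$ to $\on{CT}_\calM$. This requires verifying that the constraints produced by Theorem~\ref{commctnearbythm} on pairs $(\calA'_1, \calA'_2) \in \Sat_G^2$ are compatible with the decomposition of arbitrary objects of $\Sat_\calG$ as summands in the image of $\Psi_\calG$, i.e.\ that the commutativity isomorphisms and associativity constraints descend coherently. The cleanest way to handle this is likely to argue directly on fiber functors: both $\om_\calM \circ \on{CT}_\calM$ and $\om_\calG$ are fiber functors on $\Sat_\calG$, the natural transformation $\om_\calM \circ \on{CT}_\calM \circ \Psi_\calG \simeq \om_G$ provided by part~i) of Theorem~\ref{ramConsTerm1} together with the Tannakian description of $\Psi_\calG$ pins down a canonical isomorphism $\om_\calM \circ \on{CT}_\calM \simeq \om_\calG$ on $\Sat_\calG$, and by Tannakian reconstruction this upgrades $\on{CT}_\calM$ uniquely to the required tensor functor corresponding to $^LM_r \hookto {^LG}_r$.
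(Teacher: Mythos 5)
Your proposal is correct and follows essentially the same route as the paper: you identify $\Psi_\calG$ as the tensor functor induced by the closed immersion ${^LG}_r \hookrightarrow {^LG}$, invoke the Deligne--Milne argument (\cite[Prop 2.21 (b)]{DM82}) to write every object of $\Sat_\calG$ as a direct summand of some $\Psi_\calG(\calB)$, and then bootstrap from the statement for $\on{CT}_\calM \circ \Psi_\calG$ (Theorem~\ref{ramConsTerm1}). The paper packages all of this into two sentences by calling the transport across $\Psi_\calG$ ``immediate''; your longer discussion of how to descend the tensor structure (ultimately via identifying fiber functors and Tannakian reconstruction) is a faithful unpacking of exactly why that step is automatic in this semisimple Tannakian setting, not a genuinely different argument.
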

\begin{proof} As the nearby cycles $\Psi_\calG\co \Sat_G\to \Sat_\calG$ are a tensor functor between semi-simple Tannakian categories inducing the closed immersion ${^LG}_r\subset {^LG}$ on Tannakian groups, every object $\calA\in \Sat_\calG$ is a direct summand of some object of the form $\Psi_\calG(\calB)$ with $\calB \in \Sat_G$, cf. \cite[Prop 2.21 (b)]{DM82}. The corollary is an immediate consequence of Theorem \ref{ramConsTerm1}.
\end{proof}

In \cite{HR10}, the Satake isomorphism for special parahoric Hecke algebras is constructed. If $\calM=\calT$ is the connected lft N\'eron model of a torus, Corollary \ref{ramConsTerm2} is a geometrization of the Satake isomorphism for a very special parahoric subgroup, cf. Lemma \ref{CT_RPsi_comp} below. 

\subsubsection{The case of an Iwahori} \label{Iwahoricase}
Let us make explicit what Theorem \ref{ramConsTerm1} means in a special case: let $k=\bar{k}$ be algebraically closed, i.e. $F=\bF$, and assume that $\calG$ is an Iwahori group scheme and that the cocharacter $\chi\co \bbG_{m,\calO}\to \calG$ is regular, i.e. $M=T$ is a maximal torus. Then the parahoric $\calT=\calM$ is the lft N\'eron model of $T$ which is very special. The parabolic subgroups $B^\pm=P^\pm$ are $F$-rational Borel subgroups, and we denote $\calB^\pm=\calP^\pm$. The Iwahori-Weyl group $W=W(G,A)$ (cf. Definition \ref{IwahoriWeyl}) sits in a short exact sequence
\[
1\to \La_T\to W\to W_0\to 1,
\]
where $\La_T=\Fl_\calT(k)=T(F)/\calT(\calO_F)$ is the subgroup of translation elements in $W$. Note that $X_*(T)_I\simeq \La_T$, $\bar{\la}\mapsto t^{\bar{\la}}$ under the Kottwitz isomorphism. The fixed point scheme is on reduced loci
\[
(\Fl_\calG)^0_\red=\underline{W},
\]
where $\underline{W}$ denotes the constant $k$-scheme. Hence, there is a decomposition into connected components
\[
(\Fl_{\calG})^\pm=\coprod_{w\in W} (\Fl_\calG)^{\pm}_w,
\]
and the image of the map $\Fl_{\calB^\pm}\hookto (\Fl_{\calG})^\pm$ identifies (on reduced loci) with the sum of the connected components $(\Fl_{\calG})^{\pm}_{t^{\bar{\la}}}$ for $\bar{\la}\in X_*(T)_I$. Let $2\rho\in X^*(T)$ denote the sum of the $B^+$-positive roots. For $\bar{\la}\in X_*(T)_I$, let $\la\in X_*(T)$ be any lift and define the integer 
\[
\lan2\rho,\bar{\la} \ran \defined\lan2\rho, \la\ran \in \bbZ,
\]
which is well-defined since $B^+$ is $F$-rational, and hence $\ga\cdot 2\rho=2\rho$ for all $\ga\in I$.

\begin{cor} \label{ABcor}
Let $V\in \Rep_{\algQl}({\widehat{G}})$, and denote by $\calA_V\in\Sat_{G,\sF}$ the object with $\om_\sF(\calA_V)=V$. For the compact cohomology group as $\algQl$-vector spaces
\[
\bbH_c^i((\Fl_{\calG})_{w}^+,\Psi_\calG(\calA_V))= \begin{cases} V({\bar{\la}}) &\text{if $w=t^{\bar{\la}}$ and $i=\langle2\rho,\bar{\la}\rangle$;}\\ 0 &\text{else,}\end{cases}
\]
where $V({\bar{\la}})$ is the $\bar{\la}$-weight space in $V|_{\widehat{T}^I}$.
\end{cor}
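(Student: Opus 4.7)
The strategy is to combine the compatibility of nearby cycles with hyperbolic localization (Theorem \ref{nearbyhlthm}) with the classical Mirković-Vilonen weight decomposition on the generic fiber. Every $\calA_V \in \Sat_G$ is $\bbG_m$-equivariant through the $L^+G$-action, so Theorem \ref{nearbyhlthm} applied to the full attractor diagram on $\Gr_\calG$ gives a canonical isomorphism
\[
(q^+_{\bar{s}})_! (p^+_{\bar{s}})^* \Psi_\calG(\calA_V) \;\simeq\; \Psi_{(\Gr_\calG)^0}\bigl((q^+_\eta)_! (p^+_\eta)^* \calA_V\bigr),
\]
where $q^+_s, p^+_s$ denote the full attractor maps on the special fiber. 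Passing to the stalk at $w \in W = (\Fl_\calG)^0_{\red}(k)$ computes $\bbH^*_c((\Fl_\calG)^+_w, \Psi_\calG(\calA_V))$.

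For $w \in W \setminus \La_T$, such $w$ lies in the complement $C^0 = (\Gr_\calG)^0 \setminus (\Gr_\calG)^{0,c}$, which by Proposition \ref{conn_comp_BD} (resp.\,Proposition \ref{conn_comp_BDPZ}) iii) is concentrated in the special fiber. Smooth base change for nearby cycles then forces the right-hand side of the above isomorphism to vanish on $C^0$, giving the claimed vanishing. For $w = t^{\bar\la} \in \La_T$, the generic fiber complex $(q^+_\eta)_! (p^+_\eta)^* \calA_V$ lives on $\Gr_T = \coprod_{\la \in X_*(T)} \{t^\la\}$, and by the classical Mirković-Vilonen computation (used already in the proof of Theorem \ref{GeoSat2}, cf.\,\cite[Thm 3.6]{MV07}) its stalk at $t^\la$ equals $V(\la)[-\lan 2\rho, \la\ran]$, where $V(\la)$ is the $\la$-weight space of $V|_{\hat{T}}$. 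The point $t^\la$ in the generic fiber of $\Gr_\calT$ specializes to $t^{\bar\la}$ with $\bar\la$ the image of $\la$ in $X_*(T)_I$, compatibly with the Kottwitz isomorphism $X_*(T)_I \simeq \pi_0(\Fl_{\calT,\bar k})$, so the nearby-cycles stalk at $t^{\bar\la}$ is the direct sum over all $\la \mapsto \bar\la$. Since $\lan 2\rho, \la\ran = \lan 2\rho, \bar\la\ran$ is $I$-invariant and hence constant on this sum, all contributions lie in the single degree $\lan 2\rho, \bar\la\ran$, and $\bigoplus_{\la \mapsto \bar\la} V(\la) = V(\bar\la)$ is exactly the $\bar\la$-weight space of $V|_{\hat{T}^I}$.

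The main technical point is the identification of the specialization of translation points on $\Gr_\calT$ described above; this is essentially built into the construction of the decomposition in Proposition \ref{conn_comp_BD}. Alternatively, one may bypass the direct computation by invoking Theorem \ref{ramConsTerm1}: it identifies $\on{CT}_\calT \circ \Psi_\calG(\calA_V) \in \Sat_\calT$ with $V|_{\hat T^I}$ under the ramified geometric Satake equivalence, so the direct summand supported at $t^{\bar\la} \in \Fl_\calT$ is by construction the $\bar\la$-weight space $V(\bar\la)$; unshifting the normalization $\langle \lan 2\rho, \bar\la\ran\rangle$ of $\on{CT}_\calT$ on the component indexed by $\bar\la$ then recovers the stated cohomological degree.
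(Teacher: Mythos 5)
Your proposal is correct and takes essentially the same route as the paper. The paper's terse $\Box$ signals that the corollary is meant to follow from Theorem \ref{ramConsTerm1} (your "alternative" approach) combined with the vanishing on $C^0$ established inside the proof of Theorem \ref{commctnearbythm}; your first, more explicit route via Theorem \ref{nearbyhlthm} plus the Mirković--Vilonen weight computation and the specialization $t^\la \rightsquigarrow t^{\bar\la}$ from Proposition \ref{conn_comp_BD} is precisely what underlies those two theorems, so it is an unwinding of the same argument rather than a genuinely different one.

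One small point worth making sharper if you ever write this out in full: the claim that the nearby-cycles stalk at $t^{\bar\la}$ of a compactly supported complex on $\Gr_T$ is $\bigoplus_{\la \mapsto \bar\la} (\cdot)_{t^\la}$ is not literally stated in Proposition \ref{conn_comp_BD}. It is cleanest to see it either through the toric case of the ramified Satake equivalence \eqref{ramGeoSat} (where $\Psi_\calT$ corresponds to restriction along $\hat{T}^I \hookrightarrow \hat{T}$, directly giving $V(\bar\la) = \bigoplus_{\la\mapsto\bar\la} V(\la)$) or by noting that the flat closure of the support in $\Gr_\calT$ becomes connected around $t^{\bar\la}$ after base change to $\bar{\calO}$, so that the stalk of $\bar{\imath}^*R\bar{\jmath}_*$ picks up all specializing generic points. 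Either justification suffices; only the former is actually quoted in the paper.
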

\hfill{\ensuremath{\Box}}

The following lemma is used in the proof of Theorem \ref{specialfiber} below. 

\begin{lem}\label{intersectionlem}
For $w\in W$, there is an inclusion of non-empty sets 
\[
(\Fl_\calG^{\leq w})^\pm_w(k)=(\Fl_\calG^{\leq w})(k)\cap(\Fl_\calG)^\pm_w(k)\subset \Fl_\calG^w(k).
\]
\end{lem}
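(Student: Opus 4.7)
The plan is to treat non-emptiness, the set-theoretic equality, and the inclusion into the open cell separately. Throughout I write $\dot w$ for the chosen lift $\dot w\cdot e_0\in\Fl_\calG^w(k)$ of $w\in W$, and work only with the attractor case $\pm=+$; the repeller case is identical after replacing $\chi$ by $\chi^{-1}$.

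For non-emptiness I would observe that $\dot w\in\Fl_\calG^w(k)\subset\Fl_\calG^{\leq w}(k)$. Since $\chi$ factors through the torus $T\subset\calM$, the element $\dot w^{-1}\chi(\lambda)\dot w\in T(F)$ lies in the parahoric $\calT(\calO)\subset L^+\calG$ which stabilises the base point $e_0$, so $\chi(\lambda)\cdot\dot w\cdot e_0=\dot w\cdot e_0$. Thus $\dot w$ is a $\bbG_m$-fixed point, and it corresponds under $(\Fl_\calG)^0_\red=\underline{W}$ to the component indexed by $w$. Hence $\dot w\in\Fl_\calG^{\leq w}(k)\cap(\Fl_\calG)^+_w(k)$. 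The set-theoretic equality $(\Fl_\calG^{\leq w})^+_w(k)=\Fl_\calG^{\leq w}(k)\cap(\Fl_\calG)^+_w(k)$ then follows from Corollary \ref{immersions} applied to the $\bbG_m$-equivariant closed immersion $\Fl_\calG^{\leq w}\hookto \Fl_\calG$, which gives $(\Fl_\calG^{\leq w})^+_w=\Fl_\calG^{\leq w}\times_{\Fl_\calG}(\Fl_\calG)^+_w$.

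The substance is therefore the inclusion into $\Fl_\calG^w(k)$. Given $x$ in the intersection, the Bruhat stratification $\Fl_\calG^{\leq w}=\coprod_{v\leq w}\Fl_\calG^v$ puts $x\in\Fl_\calG^v(k)$ for a unique $v\leq w$, and I would argue $v=w$ as follows. The cell $\Fl_\calG^v$ is an $L^+\calG$-orbit, hence $\bbG_m$-stable since the $\bbG_m$-action factors through $L^+\calT\subset L^+\calG$. Consequently $\bbG_m\cdot x\subset\Fl_\calG^v$, and its scheme-theoretic closure in $\Fl_\calG$ is contained in $\overline{\Fl_\calG^v}=\Fl_\calG^{\leq v}$. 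On the other hand, the hypothesis $x\in(\Fl_\calG)^+_w(k)$ gives a $\bbG_m$-equivariant extension $(\bbA^1)^+\to\Fl_\calG$ of the orbit map, sending $0$ into the $w$-component of the reduced fixed locus, namely to $\dot w$. The image of this extension lies in $\overline{\bbG_m\cdot x}\subset\Fl_\calG^{\leq v}$, so $\dot w\in\Fl_\calG^{\leq v}(k)$, which by the description of closure relations of Schubert cells forces $w\leq v$. Combined with $v\leq w$ this yields $v=w$, so $x\in\Fl_\calG^w(k)$.

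There is no real obstacle here; the only point that needs care is the assertion that the $\bbG_m$-stability of the locally closed stratum $\Fl_\calG^v$ (rather than of its closure) is enough to force the limit to lie in $\Fl_\calG^{\leq v}$. This follows because $\bbA^1$ is reduced and irreducible, so the extended map factors through the scheme-theoretic closure of $\bbG_m\cdot x$ inside the projective scheme $\Fl_\calG^{\leq w}$, and this closure is a priori contained in $\Fl_\calG^{\leq v}$.
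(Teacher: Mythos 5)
Your proof is correct, and it takes a genuinely different route from the paper's. The paper proves the inclusion into the open cell by induction on $l(w)$: the base case $l(w)=1$ is checked directly on $\Fl_\calG^{\leq s}\simeq\bbP^1_k$, and the inductive step uses the partial Demazure resolution $\Fl_\calG^{\leq w'}\tilde{\times}\Fl_\calG^{\leq s}\to\Fl_\calG^{\leq w}$, tracking the fixed points and connected components of the attractor through the map $(p,m)$ and invoking \cite[Prop 1.17 ii)]{Ri}. Your argument replaces all of this with a single closure-relation observation: since each Schubert cell $\Fl_\calG^v$ is an $L^+\calG$-orbit and hence $\bbG_m$-stable, the limit $\dot w$ of $\bbG_m\cdot x$ must lie in $\Fl_\calG^{\leq v}$, forcing $w\leq v$ and hence $v=w$. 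The two equalities you verify first (the fiber-product identity via Corollary~\ref{immersions} and non-emptiness via $\dot w$ being a fixed point because $\chi$ factors through the normalized torus) match what the paper does. The trade-off: the paper's Demazure-resolution induction produces along the way the more refined structural information $(\Fl_\calG^{\leq w})^+_w = m\big((\Fl_\calG^{w'}\tilde{\times}\Fl_\calG^{s})^+_{(w',w)}\big)$, whereas your argument is shorter, avoids convolution geometry entirely, and makes the role of the closure relations transparent. Your final paragraph correctly isolates the only delicate point — that $\bbG_m$-stability of the locally closed stratum (not just its closure) together with schematic density of $\bbG_m$ in $\bbA^1$ forces the extended map $\bbA^1\to\Fl_\calG$ to factor through $\Fl_\calG^{\leq v}$.
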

\begin{proof} As the class of $w$ is contained in all sets, these are non-empty. The first equality follows from the equality $(\Fl_\calG^{\leq w})^\pm=\Fl_\calG^{\leq w}\times_{\Fl_\calG}(\Fl_\calG)^\pm$ which holds since $\Fl_\calG^{\leq w}\hookto \Fl_\calG$ is a closed immersion. The case of repellers follows by inverting the $\bbG_m$-action from the case of attractors. We proceed by induction on the length $l(w)$. If $l(w)=1$, then $w$ is a simple reflection and $\Fl_\calG^{\leq w}\simeq \bbP^1_k$. In this case, either $(\Fl_\calG^{\leq w})^+_w=\{w\}$ or $(\Fl_\calG^{\leq w})^+_w=\Fl_\calG^w$ because $\chi$ is regular (hence the $\bbG_m$-action is non-trivial). 

Now let $l(w)\geq 2$, and write $w=w'\cdot s$ with $l(w)=l(w')+1$, and $s$ a simple reflection. The partial Demazure resolution embeds as a closed $\bbG_m$-invariant\footnote{For the diagonal $\bbG_m$-action on the target.} subscheme 
\[
(p,m)\co \Fl_\calG^{\leq w'}\tilde{\times}\Fl_\calG^{\leq s}\;{\hookto}\;\Fl_\calG^{\leq w'}\times \Fl_\calG^{\leq w},
\]
where $p\co (x,y)\mapsto x$ denotes the projection on the first factor, and $m\co (x,y)\mapsto  x\cdot y$ the ``multiplication'' map.
Identifying $\Fl_\calG^{\leq w'}\tilde{\times}\Fl_\calG^{\leq s}$ with its image under $(p,m)$, the multiplication map is given by projection onto the second factor. This implies the following description on fixed points
\[
(\Fl_\calG^{\leq w'}\tilde{\times}\Fl_\calG^{\leq s})^0(k)=\{(v_1,v_2)\in W^2\; ;\; v_1\leq w',\;\; v_1^{-1}\cdot v_2\leq s\}=:\mathscr{S}.
\]
Hence, the connected components of $(\Fl_\calG^{\leq w'}\tilde{\times}\Fl_\calG^{\leq s})^+$ are enumerated by the set $\mathscr{S}$ by general properties of attractors, cf. \cite[Prop 1.17 ii)]{Ri19}. The map $m$ is given on fixed points by $m^0\co \mathscr{S}\to W, \; (v_1,v_2)\mapsto v_2$, and since $s$ is a simple reflection we must have $m^{0,-1}(w)=\{(w',w)\}$. This implies for the preimage
\begin{equation}\label{intersectionlem:eq1}
m^{-1}\big((\Fl_\calG^{\leq w})^+_w\big) = (\Fl_\calG^{\leq w'}\tilde{\times}\Fl_\calG^{\leq s})^+_{(w',w)}.
\end{equation}
By the induction hypothesis, we have an inclusion $(\Fl_{\calG}^{\leq w'})^+_{w'}\subset \Fl_\calG^{w'}$ which implies the inclusion
\[
(\Fl_\calG^{\leq w'}\tilde{\times}\Fl_\calG^{\leq s})^+_{(w',w)}\subset p^{-1}\big(\Fl_\calG^{w'}\big)=\Fl_\calG^{w'}\tilde{\times}\Fl_\calG^{\leq s}.
\]
Hence, $(\Fl_\calG^{\leq w'}\tilde{\times}\Fl_\calG^{\leq s})^+_{(w',w)}$ identifies with a connected component of $(\Fl_\calG^{w'}\tilde{\times}\Fl_\calG^{\leq s})^+$, and we claim that $(\Fl_\calG^{\leq w'}\tilde{\times}\Fl_\calG^{\leq s})^+_{(w',w)}\subset \Fl_\calG^{w'}\tilde{\times}\Fl_\calG^{s}$. Indeed, if $(x,y)\in (\Fl_\calG^{w'}\tilde{\times}\Fl_\calG^{\leq s})^+_{(w',w)}(k)$, then $x\in (\Fl_\calG^{w'})^+_{w'}(k)$ (by the induction hypothesis), and either $y\in \Fl_\calG^{w}(k)$ or $y\in \Fl_\calG^{w'}(k)$ because $s$ is simple. In the second case, $(x,y)\in (\Fl_\calG^{\leq w'}\tilde{\times}\Fl_\calG^{\leq s})^+_{(w',w')}(k)$, and hence we must have $y\in \Fl_\calG^{w}(k)$ for $(x,y)\in (\Fl_\calG^{\leq w'}\tilde{\times}\Fl_\calG^{\leq s})^+_{(w',w)}(k)$. Together with \eqref{intersectionlem:eq1} it follows that 
\[
(\Fl_\calG^{\leq w})^+_w=m\big((\Fl_\calG^{\leq w'}\tilde{\times}\Fl_\calG^{\leq s})^+_{(w',w)}\big)\subset m\big(\Fl_\calG^{w'}\tilde{\times}\Fl_\calG^{ s}\big)=\Fl_\calG^{w},
\]
which is what we wanted to show.
\end{proof}

\subsection{Applications to local models} \label{AppLocMod}
We continue with the data and notation as in \S \ref{data_consterm}. Let $\{\mu\}$ be a $G(\sF)$-conjugacy class of geometric cocharacters with reflex field $E/F$. The following definition is \cite{PZ13} when $F/\bbQ_p$ and \cite{Zhu14, Ri14a} when $F\simeq\bbF_q\rpot{t}$.

\begin{dfn}\label{localmodel} The local model $M_{\{\mu\}}=M_{G,\calG,\{\mu\}}$ is the scheme theoretic closure of the locally closed subscheme
\[
\Gr_{G}^{\leq\{\mu\}}\hookto (\Gr_{G}\otimes_F E)_{\red}\hookto (\Gr_{\calG}\otimes_{\calO_F}\calO_E)_{\red},
\]
where $\Gr_G^{\leq \{\mu\}}$ is as in \eqref{schubertgrass}.
\end{dfn} 

The local model $M_{\{\mu\}}$ is a flat projective $\calO_E$-subscheme of $(\Gr_{\calG}\otimes_{\calO_F}\calO_E)_{\red}$ which is uniquely determined up to unique isomorphism by the data $(\uG,\calG,\{\mu\})$ and the choice of a uniformizer in $F$; for a further discussion see Theorem \ref{PZspreading} and Remark \ref{PZBDGrass_rmk} (if $F/\bbQ_p$), and Remark \ref{BDrmk} i) (if $F\simeq k\rpot{t}$). Its generic fiber $M_{\{\mu\}}\otimes E= \Gr_{G,E}^{\leq\{\mu\}}$ is a (geometrically irreducible) variety, and the special fiber $M_{\{\mu\}}\otimes k_E$ is equidimensional, cf. \cite[Thm 14.114]{GW10}. There is a closed embedding into the flag variety
\[
M_{\{\mu\}}\otimes k_E\hookto \Gr_{\calG}\otimes_{\calO_F}k_E=\Fl_{\calG, k_E},
\]
which identifies the reduced locus $(M_{\{\mu\}}\otimes k_E)_\red$ with a union of Schubert varieties in $\Fl_{\calG, k_E}$. We show how Theorem \ref{commctnearbythm} implies that these Schubert varieties are enumerated by the $\{\mu\}$-admissible set in the sense of Kottwitz-Rapoport.

\subsubsection{The special fiber of Local Models} Let us recall the definition of the $\{\mu\}$-admissible set. For the rest of this section, we assume that $k=k_E$ is algebraically closed, i.e. $F=\nF$. Let $\calG=\calG_\bbf$, and fix $A=S\subset T$ with notations as in \eqref{groupchain} above such that $\bbf$ is contained in the apartment $\scrA(G,A)$. Let $\bba\subset \scrA(G,A)$ be an alcove containing $\bbf$ in its closure. Let $W=W(G,A)$ be the Iwahori-Weyl group (cf. Definition \ref{IwahoriWeyl}), $W_0=W_0(G,A)$ be the relative Weyl group, and let $W_{0}^{\text{abs}}=W_0(G_\sF,T_\sF)$ be the absolute Weyl group. The class $\{\mu\}$ gives a well defined $W_{0}^{\text{abs}}$-orbit 
\[
W_{0}^{\text{abs}}\cdot \mu,
\]
where $\mu\in X_*(T)$ is any representative of $\{\mu\}$. Denote by $\tilde{\La}_{\{\mu\}}$ the set of elements $\la\in W_0^{\text{abs}}\cdot \mu$ such that $\la$ is dominant with respect to some $\breve{F}$-rational Borel subgroup of $G$ containing $T$. Let $\La_{\{\mu\}}$ be the image of $\tilde{\La}_{\{\mu\}}$ under the canonical projection $X_*(T)\to X_*(T)_I$. The $\{\mu\}$-admissible set $\Adm_{\{\mu\}}$ (relative to $\bba$) is the partially ordered subset of the Iwahori-Weyl group
\begin{equation}\label{muadmset}
\Adm_{\{\mu\}}\defined \big\{w\in W\;|\;\exists \bar{\la} \in \La_{\{\mu\}}: \;w\leq t^{\bar{\la}}\big\},
\end{equation}
where $\leq$ is the Bruhat order of $W$. Let $W_{\bbf}\subset W$ be the subgroup associated with $\bbf$, cf.\,(\ref{W_f_defn}). The $\{\mu\}$-admissible set $\Adm^\bbf_{\{\mu\}}$ relative to $\bbf$ is the partially ordered subset
\begin{equation}\label{fmuadmset}
\Adm_{\{\mu\}}^\bbf\defined W_\bbf\bslash \Adm_{\{\mu\}} / W_\bbf\subset W_\bbf\bslash W/ W_\bbf.
\end{equation}
This does not depend on the choice of the alcove $\bba\subset \scrA(G,A)$ containing $\bbf$ in its closure.

If $G$ splits over a tamely ramified extension and $p\nmid |\pi_1(G_\der)|$, then the following theorem is a weaker form of \cite[Thm 9.3]{PZ13} (if $F/\bbQ_p$) and \cite[Thm 3.8]{Zhu14} (if $F\simeq\bbF_q\rpot{t}$). Hence, the result is new when either $p \mid |\pi_1(G_\der)|$ or $F\simeq\bbF_q\rpot{t}$ and $G$ splits over a wildly ramified extension.

\begin{thm}\label{specialfiber}
The smooth locus $(M_{\{\mu\}})^{\on{sm}}$ is fiberwise dense in $M_{\{\mu\}}$, and on reduced subschemes
\[
(M_{\{\mu\},k})_\red=\bigcup_{w\in\Adm_{\{\mu\}}^\bbf}\Fl_\calG^{\leq w}.
\]
In particular, the special fiber $M_{\{\mu\},k}$ is generically reduced.
\end{thm}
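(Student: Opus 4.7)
The plan is to reduce to the Iwahori case with a regular cocharacter and then exploit Theorem \ref{commctnearbythm}, in the explicit form of Corollary \ref{ABcor}, to cut out the admissible set via the $\bbG_m$-attractor stratification on $\Fl_\calG$. Throughout I take $k=k_E$ algebraically closed. Let $\bba$ be an alcove whose closure contains $\bbf$. The smooth projective surjection $\pi\co \Fl_\bba\to \Fl_\bbf$ of Lemma \ref{0-fs} is $L^+\calG_\bba$-equivariant, so the preimage of $(M_{\{\mu\},k})_\red$ is on reduced loci a union of Schubert varieties in $\Fl_\bba$, and both the fibrewise density of the smooth locus and the identification of the support descend along $\pi$ using $\Adm_{\{\mu\}}^\bbf = W_\bbf\bslash\Adm_{\{\mu\}}/W_\bbf$. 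So I may assume $\calG=\calG_\bba$. I would then pick a regular cocharacter $\chi\co\bbG_{m,\calO_F}\to \calA$, making $\calM=\calT$ the connected N\'eron model of a maximal torus $T=M$; the $\bbG_m$-action has fixed points $\underline{W}_\red$ and stratifies $\Fl_\calG$ into attractors $(\Fl_\calG)^+_w$ indexed by $w\in W$. The support $\Supp\Psi_\calG(\IC_{\{\mu\}})$ equals $(M_{\{\mu\},k})_\red$: the inclusion $\supset$ comes from $M_{\{\mu\}}$ being the $\calO_E$-flat closure of $\Supp\IC_{\{\mu\}}=\Gr_{G,E}^{\leq\{\mu\}}$, and the reverse inclusion uses $\calO_E$-flatness of $M_{\{\mu\}}$ together with perversity of $\Psi_\calG(\IC_{\{\mu\}})$. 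Being $L^+\calG$-stable, this support is a finite union of Schubert varieties.

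Next I would apply Corollary \ref{ABcor}: for every $w\in W$,
\[
\bbH_c^i\bigl((\Fl_\calG)^+_w,\; \Psi_\calG(\IC_{\{\mu\}})\bigr)\;=\;\begin{cases} V_{\{\mu\}}(\bar\la) & w=t^{\bar\la},\; i=\lan 2\rho,\bar\la\ran,\\ 0 & \text{otherwise.}\end{cases}
\]
The set of weights of $V_{\{\mu\}}|_{\hat T^{I_F}}$ is precisely $\La_{\{\mu\}}$ (by Corollary \ref{GeoSat1_cor} and the Kottwitz identification $\pi_0(\Fl_\calT)=X_*(T)_{I_F}$), so the attractor cohomology is non-zero exactly at $w=t^{\bar\la}$ with $\bar\la\in\La_{\{\mu\}}$. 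Non-vanishing forces $t^{\bar\la}\in(M_{\{\mu\},k})_\red$, and by $L^+\calG$-stability $\Fl_\calG^{\leq t^{\bar\la}}\subset(M_{\{\mu\},k})_\red$, which gives $\bigcup_{w\in\Adm_{\{\mu\}}}\Fl_\calG^{\leq w}\subset(M_{\{\mu\},k})_\red$.

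For the opposite inclusion, I would use $\calO_E$-flatness of $M_{\{\mu\}}$ to deduce that the reduced special fibre is equidimensional of dimension $d_\mu$, so its irreducible components are $\Fl_\calG^{\leq w}$ with $l(w)=d_\mu$; Corollary \ref{ABcor} then confines the top $w$ to translation elements $t^{\bar\la}$ with $\bar\la$ extremal in $\La_{\{\mu\}}$, i.e.\,maximal elements of $\Adm_{\{\mu\}}$, and every lower Schubert variety in the support is dominated in Bruhat order by one of these, hence admissible. For the smoothness assertion the generic fibre is already covered by the dense open cell $\Gr_G^{\{\mu\}}\subset\Gr_G^{\leq\{\mu\}}$, and in the special fibre I would use a Rees-type $\bbG_m$-deformation: for each maximal $\bar\la\in\La_{\{\mu\}}$, the stratum $(M_{\{\mu\}})^+_{t^{\bar\la}}$ is a flat $\bbA^1_\calO$-family specialising a neighbourhood of the smooth point $z^\la\in\Gr_G^{\{\mu\}}$ to an open dense subset (via Lemma \ref{intersectionlem}) of $\Fl_\calG^{\leq t^{\bar\la}}$. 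Flatness propagates smoothness, so $(M_{\{\mu\}})^{\on{sm}}$ meets every irreducible component of the special fibre in a dense open, giving generic reducedness. The main obstacle is the converse inclusion: the attractor cohomology alone does not pin down the set of Schubert varieties in the support, and the argument closes only by combining the vanishing in Corollary \ref{ABcor} with the equidimensional dimension count from $\calO_E$-flatness of $M_{\{\mu\}}$ and the characterisation of $\Adm_{\{\mu\}}$ as the Bruhat closure of $\{t^{\bar\la}:\bar\la\in\La_{\{\mu\}}\}$.
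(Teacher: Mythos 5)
There is a genuine gap, and it stems from a factual error that short-circuits the hardest step of the proof. You assert that ``the set of weights of $V_{\{\mu\}}|_{\hat T^{I_F}}$ is precisely $\La_{\{\mu\}}$.'' This is false: $\La_{\{\mu\}}$ consists only of the \emph{extreme} weights (images in $X_*(T)_I$ of the Weyl orbit $W_0^{\rm abs}\cdot\mu$), whereas the full weight set of $V_{\{\mu\}}|_{\hat T^{I}}$ is saturated and typically much larger. Corollary \ref{ABcor} therefore tells you only that a nonzero attractor cohomology group at $w$ forces $w=t^{\bar\la}$ for $\bar\la$ \emph{some} weight of $V_{\{\mu\}}|_{\hat T^I}$; it does not by itself ``confine the top $w$ to translation elements with $\bar\la$ extremal.'' To close the converse inclusion one must combine the equidimensionality constraint $l(w)=d_\mu=\lan 2\rho,\bar\mu\ran$ with a non-obvious combinatorial fact about the \'echelonnage root system: among weights $\bar\la$ of $V_{\{\mu\}}|_{\hat G^I}$ one has $l(t^{\bar\la})\le\lan 2\rho,\bar\mu\ran$, with equality precisely for extreme $\bar\la$. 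The paper invokes \cite[Thm 4.2 and (7.11-12)]{Hai} for exactly this; your argument has no substitute for it, and the false claim about the weight set is what hides the need for it.

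A second, smaller gap: you never establish the \emph{non-vanishing} of the attractor cohomology at a maximal $w$ in the support, which is the input you need before Corollary \ref{ABcor} can be brought to bear. This is not automatic, since $\bbH^*_c$ on the full attractor stratum can vanish even when the stratum meets the support. The paper proves non-vanishing for maximal $w$ precisely via Lemma \ref{intersectionlem}: $\Fl_\calG^{\leq w}\cap(\Fl_\calG)^+_w$ is a nonempty subset of the open cell $\Fl_\calG^w$, on which $\Psi_\calG(\IC_{\{\mu\}})$ restricts to a shifted constant sheaf with nonzero multiplicity because $w$ is maximal. You cite Lemma \ref{intersectionlem} only for the smoothness/density claim, not for this step; the ``Rees-type deformation'' argument you sketch for density is also vaguer than what is needed (the paper instead uses density of the union of $\calL^+\calG_{\calO_{\sF}}$-orbits through the extensions $\tilde\la$ of $z^\la$ for $\la\in\tilde\La_{\{\mu\}}$). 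Your handling of the forward inclusion $\Adm^\bbf_{\{\mu\}}\subset\Supp^\bbf_{\{\mu\}}$ via closedness and $\bbG_m$-stability of the support is a legitimate alternative to the paper's citation of \cite[Lem 3.12]{Ri16a}, and your reduction to the alcove case matches the paper's.
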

\begin{proof} Once we have determined $(M_{\{\mu\},k})_\red$, the method of \cite[Cor 3.14]{Ri16a} shows in both cases (i.e. $F/\breve{\bbQ}_p$ or $F=\bar{\bbF}_p\rpot{t}$) that the special fiber of $(M_{\{\mu\}})^{\on{sm}}$ is dense in $M_{\{\mu\},k}$: each $\la\in \tilde{\La}_{\{\mu\}}$ determines a point $\Spec(\sF)\to \Gr_\calG$ which extends uniquely (by ind-properness) to a map $\tilde{\la}\co \Spec(\calO_{\sF})\to \Gr_\calG$. The $\calL^+\calG_{\calO_\sF}$-orbit of $\tilde{\la}$ is then representable by an open and smooth subscheme of $M_{\{\mu\}}\otimes\calO_{\sF}$, and the union of the orbits for $\la\in\tilde{\La}_{\{\mu\}}$ is open, smooth and fiberwise dense in $M_{\{\mu\}}$.

Now let $\IC_{\{\mu\}}$ be the normalized intersection complex on the generic fiber $M_{\{\mu\},E}=\Gr_{G,E}^{\leq \{\mu\}}$. The support of the nearby cycles $\Psi_\calG(\IC_{\{\mu\}})$ is a $L^+\calG$-equivariant reduced closed subscheme of $\Fl_\calG$ and as such
\[
(M_{\{\mu\},k})_\red=\Supp\Psi_\calG(\IC_{\{\mu\}})\subset \Fl_\calG,
\]
by \cite[Lem 7.1]{Zhu14}. In particular, the support is a union of Schubert varieties in $\Fl_\calG$, and we let $\Supp_{\{\mu\}}^\bbf$ denote the subset of the classes in $W_\bbf\bslash W/ W_\bbf$ belonging to these Schubert varieties. We have to show 
\[
\Adm_{\{\mu\}}^\bbf\overset{!}{=}\Supp_{\{\mu\}}^\bbf,
\] 
as subsets of $W_\bbf\bslash W/W_\bbf$. By \cite[Lem 3.12]{Ri16a}, we already know $\Adm_{\{\mu\}}^\bbf\subset \Supp_{\{\mu\}}^\bbf$. We proceed in two steps.\smallskip\\
\emph{Reduction to the case $\bbf=\bba$ is an alcove.} Let $\bba\subset \scrA(G,A)$ be an alcove containing $\bbf$ in its closure. The map of group schemes $\calG_\bba\to \calG_\bbf$ induces a proper map of $\calO_E$-schemes
\[
f\co M_{(G,\calG_\bba,\{\mu\})}\to M_{(G,\calG_\bbf,\{\mu\})}. 
\]
The compatibility of nearby cycles with proper push forward implies that $f_*\circ \Psi_{\calG_\bba}(\IC_{\{\mu\}})=\Psi_{\calG_\bbf}(\IC_{\{\mu\}})$, and hence the map $\Supp_{\{\mu\}}^\bba\to \Supp_{\{\mu\}}^\bbf$ is surjective. We obtain a commutative diagram of sets
\[
\begin{tikzpicture}[baseline=(current  bounding  box.center)]
\matrix(a)[matrix of math nodes, 
row sep=1.5em, column sep=2em, 
text height=1.5ex, text depth=0.45ex] 
{\Adm_{\{\mu\}}^\bba&\Supp_{\{\mu\}}^\bba \\ 
\Adm_{\{\mu\}}^\bbf&\Supp_{\{\mu\}}^\bbf, \\}; 
\path[right hook->](a-1-1) edge node[above] {} (a-1-2);
\path[right hook->](a-2-1) edge node[above] {} (a-2-2);
\path[->](a-1-1) edge node[right] {} (a-2-1);
\path[->](a-1-2) edge node[right] {} (a-2-2);
\end{tikzpicture}
\]
with the vertical maps being surjective. Thus, the equality $\Adm_{\{\mu\}}^\bba=\Supp_{\{\mu\}}^\bba$ implies the equality $\Adm_{\{\mu\}}^\bbf=\Supp_{\{\mu\}}^\bbf$.\smallskip\\
\emph{Proof in the case of an alcove.} Let $\bbf=\bba$ be an alcove, and drop the superscript from the notation. We show that the maximal elements of $\Supp_{\{\mu\}}$ are precisely the $t^{\bar{\la}}$ for $\bar{\la}\in \La_{\{\mu\}}$ (cf. \eqref{muadmset} above) which proves the theorem.

We choose a regular cocharacter $\chi\co \bbG_{m,\calO}\to \calG$ as in \S \ref{Iwahoricase}, and use the notation introduced there. Let $\bar{\mu}$ be the $B^+$-dominant element in $\La_{\{\mu\}}$. Now let $w\in \Supp_{\{\mu\}}$ be maximal, i.e. $\Fl_\calG^{\leq w}$ is an irreducible component of $(M_{\{\mu\},k})_\red$. By the equidimensionality, we have for the length
\[
l(w)=\dim(\Fl_\calG^{\leq w})=\dim(\Gr_{G,E}^{\{\mu\}})=\lan2\rho,\bar{\mu}\ran.
\]
Further, as $\algQl$-vector space
\[
\bbH^*_c((\Fl_\calG)_{w}^+, \Psi_\calG(\IC_{\{\mu\}}))\not = 0,
\]
because $\Fl_\calG^{\leq w}\cap (\Fl_\calG)_{w}^+\subset\Fl_\calG^w$ is non-empty by Lemma \ref{intersectionlem}, and $\Psi_\calG(\IC_{\{\mu\}})|_{\Fl_\calG^w}=\algQl\lan l(w)\ran^m$ with $m>0$. Now Corollary \ref{ABcor} implies that $w=t^{\bar{\la}}$ for some $\bar{\la}\in X_*(T)_I$ which is also a weight in $V_{\{\mu\}}|_{\widehat{G}^I}$. We conclude $\bar{\la}\in \La_{\{\mu\}}$ by citing \cite[Thm 4.2 and (7.11-12)]{Hai}.
\end{proof}

\begin{rmk} \label{geometry_minuscule_rmk}
The proof of Theorem \ref{specialfiber} can be used to obtain additional information on the $\bbG_m$-stratification $(M_{\{\mu\}})^+\to M_{\{\mu\}}$ if $\{\mu\}$ is minuscule, $G$ is split and $\calG$ is an Iwahori, cf.~also \S\ref{local_model_intro_sec}.
In this case, the generic fibre $M_{\{\mu\}}\otimes E$ is the variety of type $\{\mu\}$ parabolic subgroups in $G_E$, which is smooth.
In particular, it is contained in the smooth locus $(M_{\{\mu\}})^{\on{sm}}$ which is an $\bbG_m$-invariant open subscheme of $M_{\{\mu\}}$.
By Corollary \ref{immersions}, the induced map on the attractor 
\begin{equation}\label{open_immersion_minuscule_eq}
(M_{\{\mu\}})^{\on{sm},+}\longto (M_{\{\mu\}})^+ 
\end{equation}
is an open immersion. 
We claim that \eqref{open_immersion_minuscule_eq} induces an isomorphism on each connected component with non-empty generic fiber.
Since $(M_{\{\mu\}})^{\on{sm},+}\to \Spec(\calO_E)$ is smooth by Lemma \ref{0-perm} ii), this implies that every connected component of $(M_{\{\mu\}})^+$ with non-empty generic fiber is smooth as well.
To prove the claim, observe that every $\la\in \tilde{\La}_{\{\mu\}}=W_0\cdot \mu$ determines an $\bbG_m$-fixed point $\Spec(\sF)\to M_{\{\mu\}}$ which extends uniquely (by properness) to an $\bbG_m$-fixed point $\tilde{\la}\co \Spec(\calO_{\sF})\to M_{\{\mu\}}$. 
The $\calL^+\calG_{\calO_\sF}$-orbit of $\tilde{\la}$ is an open smooth subscheme of $M_{\{\mu\}}\otimes \calO_{\sF}$ with generic fibre $M_{\{\mu\}}\otimes \sF$ (because $\{\mu\}$ is minuscule) and special fiber $\Fl_\calG^{t^{\bar\la}}$.
At least set theoretically, this orbit contains by Lemma \ref{intersectionlem} the unique connected component $(M_{\{\mu\}})^+_\la$ of $(M_{\{\mu\}})^+$ containing $\la$.
Therefore, $(M_{\{\mu\}})^+_\la$ is set theoretically contained in $(M_{\{\mu\}})^{\on{sm}}$.
Hence, it identifies with a connected component of $(M_{\{\mu\}})^{\on{sm},+}$ because \eqref{open_immersion_minuscule_eq} is an open immersion. 
As every connected component of $(M_{\{\mu\}}\otimes E)^+$ passes through some $\la\in W_0\cdot \mu$, this implies the claim. 
\end{rmk}

\subsection{Central sheaves}
 We continue with the data and notation as in \S \ref{data_consterm}. As in Definition \ref{EquiGaloisCat}, let $\on{Perv}_{L^+\calG}(\Fl_\calG\times_s\eta)$ the category of $L^+\calG$-equivariant perverse sheaves compatible with a continuous Galois action. 

Recall that for objects in $\on{Perv}_{L^+\calG}(\Fl_\calG\times_s\eta)$ there is the convolution product defined by Lusztig \cite{Lu81}. Consider the convolution diagram
\[
\Fl_\calG\times\Fl_\calG\overset{\;q}{\leftarrow} L\calG\times \Fl_\calG \overset{p}{\to} L\calG\times^{L^+\calG}\Fl_\calG=:\Fl_\calG\tilde{\times}\Fl_\calG\overset{m\;}{\to} \Fl_\calG. 
\]
For $\calA,\calB\in \on{Perv}_{L^+\calG}(\Fl_\calG\times_s\eta)$, let $\calA\tilde{\times}\calB$ be the (unique up to canonical isomorphism) complex on $\Fl_\calG\tilde{\times}\Fl_\calG$ such that $q^*(\calA\boxtimes \calB)\simeq p^*(\calA\tilde{\times} \calB)$. By definition
\begin{equation}\label{convol_sheaf}
\calA\star \calB\defined m_*(\calA\tilde{\times} \calB)\in D_{c}^b(\Fl_\calG\times_s\eta, \algQl).
\end{equation}
In the following, we consider ${\rm Perv}_{L^+\calG}(\Fl_\calG)$ as a full subcategory of $ {\rm Perv}_{L^+\calG}(\Fl_\calG\times_s\eta)$. 

Fix a chain of tori $A\subset S\subset T$ as in \eqref{groupchain}, and let $W=W(G,A)$ be the associated Iwahori-Weyl group over $F$, cf. Definition \ref{IwahoriWeyl} ii). For each $w\in W$, the associated Schubert variety $\Fl_\calG^{\leq w}\subset \Fl_\calG$ is defined over $k$. Let $j\co \Fl_\calG^{w}\hookto \Fl_\calG^{\leq w}$, and and denote by $\IC_w=j_{!*}(\algQl[\dim(\Fl_\calG^{w})])$ the intersection complex. As in \eqref{equi_nearby}, we have the functor of nearby cycles
\[
\Psi_{\calG}\co \on{Perv}_{L^+G}(\Gr_{G})\longto \on{Perv}_{L^+\calG}(\Fl_{\calG}\times_{s}\eta).
\]
The next theorem follows from \cite[Thm 10.5]{PZ13} (if $F/\bbQ_p$) and \cite[Thm 7.3]{Zhu14} (if $F\simeq\bbF_q\rpot{t}$) which are both built upon ideas of \cite{Ga01}:

\begin{thm}[Gaitsgory, Zhu, Pappas-Zhu] \label{CentralNB}
For each $\calA\in \on{Perv}_{L^+G}(\Gr_{G})$, and $w\in W$, both convolutions $\Psi_{\calG}(\calA)\star\IC_w$, $\IC_w\star\Psi_{\calG}(\calA)$ are objects in $P_{L^+\calG}(\Fl_\calG\times_s\eta)$, and as such there is a canonical isomorphism
\[
\Psi_{\calG}(\calA)\star\IC_w\simeq \IC_w\star\Psi_{\calG}(\calA).
\]
\end{thm}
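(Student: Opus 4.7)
My plan is to reconstruct the common strategy of \cite{Ga01, Zhu14, PZ13}, which centers on a two-point global Beilinson-Drinfeld Grassmannian. Using the spreadings $\ucG$ from Section \ref{gctBDgrass}, I would first construct $\Gr_\calG^{(2)}$ over $\Spec(\calO_F)\times\Spec(\calO_F)$ parameterizing $\ucG$-torsors with two independent modifications at points $x_1, x_2$, equipped with a multiplication map $m$ to $\Gr_\calG$. Its fiber over $(s,s)$ is the twisted product $\Fl_\calG\tilde{\times}\Fl_\calG$ carrying the convolution multiplication, over $(\eta,s)$ it is $\Gr_G\times_F\Fl_\calG$, over $(s,\eta)$ it is $\Fl_\calG\times_F\Gr_G$, and over $(\eta,\eta)$ it factorizes into $\Gr_G\times\Gr_G$ on the open locus $\{x_1\neq x_2\}$ and $\Gr_G\tilde{\times}\Gr_G$ on the diagonal.

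For the perversity of $\Psi_\calG(\calA)\star\IC_w$, I would spread $\calA$ along the $x_1$-parameter and use $\IC_w$ constantly in $x_2$, forming the twisted external product on $\Gr_\calG^{(2)}|_{x_2=s}$. Combining compatibility of nearby cycles with proper pushforward along $m$, $t$-exactness of $\Psi$, and the classical fact (going back to Lusztig) that convolution on $\Fl_\calG$ preserves perversity when one factor is $L^+\calG$-equivariant, I obtain
\[
\Psi_\calG(\calA)\star\IC_w \;\simeq\; m_*\,\Psi_{\Gr_\calG^{(2)}}(\tilde{\calA}\,\boxtilde\,\IC_w)
\]
as an object of $P_{L^+\calG}(\Fl_\calG\times_s\eta)$. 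The reverse order is treated symmetrically by interchanging the roles of $x_1$ and $x_2$.

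For the commutativity isomorphism, the key is the evident swap symmetry of $\Gr_\calG^{(2)}$ exchanging $(x_1,x_2)$: it is an automorphism away from the diagonal, and over the open locus inside $(\eta,\eta)$, where the fiber is simply $\Gr_G\times\Gr_G$, it realizes the commutativity constraint of the Satake category $\Sat_G$. Specializing this symmetric picture to $(s,s)$ via the two distinct degeneration routes $(\eta,s)\to(s,s)$ and $(s,\eta)\to(s,s)$ yields $\Psi_\calG(\calA)\star\IC_w$ and $\IC_w\star\Psi_\calG(\calA)$ respectively (the choice of which point degenerates first selects the order of convolution in the twisted product on the special fiber), and the swap descends to the desired canonical isomorphism between them.

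The main obstacle is identifying precisely which convolution order each limiting procedure produces. The twisted product $\Fl_\calG\tilde{\times}\Fl_\calG$ is inherently non-symmetric, so one must carefully track how the Beilinson-Drinfeld factorization structure interacts with specialization via nearby cycles to confirm that the swap exchanges the two multiplication orders rather than mapping each to itself. This compatibility is the technical heart of the arguments in \cite{Ga01, Zhu14, PZ13}, and any clean proof must verify that the unipotent monodromy on the generic fiber of $\Gr_\calG^{(2)}$ aligns correctly with the twisted convolution structure on the special fiber; in the mixed characteristic case this requires the additional care inherent to the Pappas-Zhu spreading over $\calO[t]$.
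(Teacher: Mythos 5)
There is a genuine gap. Your two-point ind-scheme $\Gr_\calG^{(2)}$ over $\Spec(\calO_F)\times\Spec(\calO_F)$, as you describe it (with twisted products $\Fl_\calG\tilde{\times}\Fl_\calG$ and $\Gr_G\tilde{\times}\Gr_G$ appearing at the colliding loci), is the iterated or convolution Grassmannian, which is inherently ordered: the first and second modifications play distinct roles, and it carries no $\mathbb Z/2$-swap automorphism covering the swap on the base. That symmetry exists only for the genuine fusion Beilinson--Drinfeld Grassmannian (torsors trivialized away from $\{x_1\}\cup\{x_2\}$, whose fiber over $(s,s)$ is $\Fl_\calG$, not $\Fl_\calG\tilde{\times}\Fl_\calG$), and that object does not record a convolution order. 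More seriously, for any swap argument you need a sheaf on the full two-parameter family whose restriction to $\{x_2=s\}$ is $\tilde{\calA}\boxtilde\IC_w$; but $\IC_w$ lives only on the special fiber $\Fl_\calG$ and has no canonical spreading in the $x_2$-direction, so the ``symmetric picture over $(\eta,\eta)$'' you want to specialize through two routes is never constructed.

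The argument in Gaitsgory, Zhu and Pappas--Zhu which the theorem cites is structured differently. One uses a \emph{single} moving point $x$ and a \emph{fixed} auxiliary point $x_0$ sitting in the special fiber, and forms a BD-type Grassmannian $\Gr^{\rm BD}_\calG$ parameterizing torsors trivialized away from $\Gamma_x \cup \Gamma_{x_0}$: its generic fiber is $\Gr_G\times\Fl_\calG$, carrying $\calA\boxtimes\IC_w$, and its special fiber is $\Fl_\calG$. There are two proper convolution resolutions $\Gr^{\rm C}_\calG \to \Gr^{\rm BD}_\calG \leftarrow \Gr^{\rm C, sw}_\calG$, both isomorphisms on generic fibers but differing by the order in which the modifications at $x$ and at $x_0$ are performed. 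Since nearby cycles commute with proper pushforward, both resolutions compute $\Psi^{\rm BD}(\calA\boxtimes\IC_w)$; reading off the two gives $\Psi_\calG(\calA)\star\IC_w$ and $\IC_w\star\Psi_\calG(\calA)$ respectively, which therefore coincide. No swap symmetry is needed; two distinct convolution resolutions of one and the same ind-scheme do the work. Your perversity argument (spread $\calA$, freeze $\IC_w$, use Lusztig's theorem and $t$-exactness of $\Psi$) is essentially sound, but the commutativity mechanism you propose does not exist as stated.
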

\begin{proof} {\it Mixed Characteristic.} Let $F/\bbQ_p$, and hence we assumed $G$ to be tamely ramified. If $\calA= \IC_{\{\mu\}}$ where $\{\mu\}$ is a class which is defined over $F$, then the theorem is a special case of \cite[Thm 10.5]{PZ13}. However, the proof given there works for general objects $\calA\in \on{Perv}_{L^+G}(\Gr_{G})$, and only uses that the support $\Supp(\calA)$ is finite dimensional and defined over $F$. \smallskip\\
{\it Equal Characteristic.} Let $F\simeq k\rpot{t}$. If $G$ is tamely ramified, and if $\calA= \IC_{\{\mu\}}$, then the theorem is a special case of \cite[Thm 7.3]{Zhu14}. However, the arguments given in \cite[\S 7.2]{Zhu14} suffice to treat the case of a general (possibly wildly ramified) connected reductive group $G$. Here we use \cite[Thm 2.19]{Ri16a} to justify the ind-properness of the Beilinson-Drinfeld and Convolution Grassmannians which are used in the proof. We do not repeat the arguments here.    
\end{proof}

\section{Application to the test function conjecture}\label{testfunctions}

\subsection{From sheaves to functions}
\subsubsection{The semi-simple trace} 
Let us collect some facts about the sheaf function dictionary for semi-simple traces. For further details, we refer to \cite[3.1]{HN02} (cf.\,also \cite[10.4]{PZ13}). The notion of semi-simple trace is due to Rapoport. 

For a separated $k$-scheme $X$ of finite type, we have $D_c^b(X\times_s \eta)$ as in \S \ref{constermflag} above. For a complex $\calA\in D_c^b(X\times_s\eta)$, we consider the semi-simple trace of geometric Frobenius function
\[
\tau_\calA^{\on{ss}}\co X(k)\to \algQl, \;\; x\mapsto \sum_{i\in \bbZ}(-1)^i\tr^{\on{ss}}(\Phi\,|\,\calH^i(\calA)_{\bar{x}}),
\]
where $\tr^{\on{ss}}$ is the trace on the inertia-fixed vectors in the associated graded of a Galois stable filtration on which the inertia group acts via a finite quotient. If $f\co X\to Y$ is a map of separated $k$-schemes of finite type, then there are the identities
\begin{equation}\label{funcid}
\tau^{\on{ss}}_{f_!\calA}(y)=\sum_{x\in f^{-1}(y)}\tau_\calA^{\on{ss}}(x) \;\;\;\text{and}\;\;\; \tau^{\on{ss}}_{f^*\calA}(x)=\tau_\calA^{\on{ss}}(f(x)).
\end{equation}
For shifts and twists one has
\begin{equation}\label{shiftandtwist}
\tau^{\on{ss}}_{\calA[m]}=(-1)^m\tau^{\on{ss}}_{\calA}\;\;\;\text{and}\;\;\;\tau^{\on{ss}}_{\calA(\nicefrac{m}{2})}=q^{-\nicefrac{m}{2}}\tau^{\on{ss}}_{\calA}.
\end{equation}
The construction carries over to the case of separated $k$-ind-schemes of finite type, cf. \cite[10.4]{PZ13} for details.

\subsubsection{The Hecke algebra} We proceed with the data and notation from \S \eqref{data_consterm}. Let $A\subset S\subset T$ be a chain of tori as in \eqref{groupchain}. Fix a Haar measure on $G(F)$ giving the compact open subgroup $\calG(\calO)$ volume $1$. The parahoric Hecke algebra is the $\algQl$-algebra
\[
\calH(G(F),\calG(\calO_F))\defined C_c(\calG(\calO)\bslash G(F)/\calG(\calO); \algQl)
\]
of bi-$\calG(\calO_F)$-invariant compactly supported, locally constant functions on $G(F)$. The algebra structure is given by convolution of functions, and is for $f_1,f_2 \in \calH(G(F),\calG(\calO_F))$ given by the formula
\begin{equation}\label{convol_function}
(f_1\star f_2)(x)=\int_{G(F)}f_1(g)f_2(g^{-1}x)dg.
\end{equation}
We write $\calZ(G(F),\calG(\calO_F))$ for the center of $\calH(G(F),\calG(\calO_F))$.

\begin{rmk} \label{equal_equals_mixed}
If $F/\bbQ_p$ is of mixed characteristic, then we fix a spreading $\uG$ of $G$ as in \eqref{unequalspread}. With the notation of Lemma \ref{hecke_identification} we can identify $\calZ(G(F),\calG(\calO_F))=\calZ(G'(k\rpot{t}),\calG'(k\pot{t}))$ as algebras in a way compatible with an identification on Iwahori-Weyl groups $W(G,A)=W(G',A')$. We will use this identification freely in what follows.
\end{rmk}

\subsubsection{Constant terms} \label{HA_CT_sec} In \cite[\S 11.11]{Hai14}, the first named author constructed the constant term map $c^G_M\co \calZ(G(F),\calG(\calO_F))\to \calZ(M(F),\calM(\calO_F))$ abstractly using the Bernstein center, and then showed it is given by the formula
\begin{equation} \label{C_defn}
c^G_M(f)(m)= \delta_{P^+}^{1/2}(m)\cdot \int_{N^+(F)}f(mn)dn,
\end{equation}
where $dn$ is normalized such that $\calN^+(\calO_F)$ gets volume $1$, and in $\calZ(M(F),\calM(\calO_F))$ we give $\calM(\calO_F)=\calG(\calO_F)\cap M(F)$ volume $1$. 

It will be convenient to work with another normalization which matches better with the geometric constant term.  Denote by $\bar{\nu}_m$ the image of $m \in M(F)$ under the Kottwitz homomorphism $M(F) \rightarrow \pi_1(M)_I^\Phi$, and let $\dot{\nu}_m \in X_*(T)$ be any lift of $\bar{\nu}_m$. The integer $\langle 2\rho_{N^+}, \bar{\nu}_m \rangle := \langle 2\rho_{N^+}, \dot{\nu}_m \rangle$ does not depend on the choice of $\dot{\nu}_m$, cf. \S \ref{conncompflagsec}. For $f \in \calZ(G, \calG)$, define $^pc^G_M(f) \in \calZ(M, \calM)$ by the formula
\begin{equation} \label{pC_defn}
^pc^G_M(f)(m) = (-1)^{\langle 2\rho_{N^+}, \bar{\nu}_m\rangle} \, \delta_{P^+}^{1/2}(m) \cdot \int_{N^+(F)} f(mn) dn.
\end{equation}

\begin{lem} \label{CT_RPsi_comp} Let $\calA \in \on{Perv}_{L^+\calG}(\Fl_\calG\times_s\eta)$ be an equivariant perverse sheaf, cf.\,Definition \ref{EquiGaloisCat}.\smallskip\\
i\textup{)} The function $\tau^{\on{ss}}_\calA$ is an element in the Hecke algebra $\calH(G,\calG)$. \smallskip\\ 
ii\textup{)} If $\tau_\calA^{\on{ss}}\in \calZ(G(F),\calG(\calO_F))$, then as functions in $\calZ(M(F),\calM(\calO_F))$ there is an equality
\[
^pc^G_M(\tau_\calA^{\on{ss}})=\tau^{\on{ss}}_{\on{CT}_\calM(\calA)}.
\]

\end{lem}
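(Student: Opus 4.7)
For part i), the idea is to combine Lemma \ref{Bruhatflag} -- which identifies $\calG(\calO_F)\bslash G(F)/\calG(\calO_F)$ with the set of $L^+\calG(k)$-orbits on $\Fl_\calG(k)$ -- with the $L^+\calG$-equivariance of $\calA$: the function $\tau^{\on{ss}}_\calA$ is constant on each orbit, and since the support of the perverse sheaf $\calA$ meets only finitely many Schubert cells, the resulting function is compactly supported.

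For part ii), my plan is to evaluate both sides on an arbitrary class $[m'] \in \Fl_\calM(k) = M(F)/\calM(\calO_F)$ and match them pointwise. Using the definition $\on{CT}_\calM = \bigoplus_m (q^+_{m,s})_! (p^+_s)^* \langle m \rangle$ together with the sheaf-function rules \eqref{funcid}--\eqref{shiftandtwist}, I would rewrite
\[
\tau^{\on{ss}}_{\on{CT}_\calM(\calA)}([m']) = \sum_{m\in\bbZ} (-1)^m q^{-m/2} \sum_{x\in (q^+_{m,s})^{-1}([m'])} \tau^{\on{ss}}_\calA\bigl(p^+_s(x)\bigr).
\]
The decomposition \eqref{decompositionflag} forces only the summand $m = m_0 := \langle 2\rho_{N^+}, \bar\nu_{m'}\rangle$ to contribute, where $\bar\nu_{m'}$ denotes the image of $m'$ under the Kottwitz homomorphism.

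The main geometric step is the explicit identification of this fiber. Proposition \ref{conn_comp_BD} (resp.\ \ref{conn_comp_BDPZ})(i) ensures that the special fiber of $\iota^{+,c}$ is an isomorphism on reduced loci, so the sum can be computed over the fiber of $q^+ \co \Fl_{\calP^+} \to \Fl_\calM$ instead. Exploiting the Levi decomposition $\calP^+ = \calM \ltimes \calN^+$ together with Lang's theorem for the smooth, geometrically connected group $\calN^+$ (Lemma \ref{groups} iii)), this fiber over $[m']$ is canonically $N^+(F)/\calN^+(\calO_F)$, and $p^+_s$ sends the coset of $n$ to $m' n\cdot \calG(\calO_F)$. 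The inner sum thus becomes the integral $\int_{N^+(F)} \tau^{\on{ss}}_\calA(m'n)\, dn$ normalized so that $\calN^+(\calO_F)$ has volume $1$.

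To finish, I would match the prefactors with \eqref{pC_defn}: the sign is immediate from the definition of $m_0$; for the power of $q$, I would note that $\delta_{P^+}(m') = |\chi^\vee(m')|_F$ for the character $\chi^\vee = \sum_{\alpha \in \Phi(N^+,T)} \alpha = 2\rho_{N^+}$ of $T$, and a short valuation computation via the Kottwitz map yields $|\chi^\vee(m')|_F = q^{-m_0}$, so that $q^{-m_0/2} = \delta_{P^+}^{1/2}(m')$. The most delicate part of the argument is ensuring the bijection $(q^+_{m_0,s})^{-1}([m'])(k) = N^+(F)/\calN^+(\calO_F)$: without the $L^+\calG$-equivariance of $\calA$, which via Lemma \ref{gctflagpoints} lets us replace the attractor components by $\Fl_{\calP^+}$, one would pick up contributions from the non-flat strata of $(\Fl_\calG)^+$ concentrated in the special fiber and obtain an incorrect formula.
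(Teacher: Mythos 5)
Your proof is correct and follows essentially the same route as the paper's (terse) argument: part i) via Lemma \ref{Bruhatflag} and the sheaf-function dictionary, and part ii) by combining Lemma \ref{gctflagpoints} with the functorialities \eqref{funcid}, \eqref{shiftandtwist} and the $\delta_{P^+}^{1/2}$ valuation computation via the Kottwitz map. Your version spells out the fiber calculation $(q^+_{m_0,s})^{-1}([m']) \cong N^+(F)/\calN^+(\calO_F)$ in more detail, which is a helpful elaboration.

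One small conceptual correction: in your closing remark you attribute the restriction to $\Fl_{\calP^+}$ (as opposed to the full attractor $(\Fl_\calG)^+$) to the $L^+\calG$-equivariance of $\calA$. In fact that restriction is built into Definition \ref{gctnormal} itself, since $\on{CT}_\chi^+$ is defined via the special fiber of diagram \eqref{restrictBD}, i.e.\ using the connected-component ind-scheme $(\Fl_\calG)^{+,c}$ which is a nilpotent thickening of $\Fl_{\calP^+}$. Lemma \ref{gctflagpoints} is a purely geometric statement about $k$-points, independent of any sheaf. The $L^+\calG$-equivariance of $\calA$ is used elsewhere: to ensure the Braden transformation $\on{CT}_\chi^- \to \on{CT}_\chi^+$ is an isomorphism (so the functor is well-behaved and independent of that choice), and in part i) to show that $\tau^{\on{ss}}_\calA$ descends to a function on the double coset space. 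This does not affect the correctness of your computation, but the attribution of roles should be straightened out.
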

\begin{proof} As an element of $\calH(G(F),\calG(\calO_F))$ is the same as a finitely supported function on the double coset $\calG(\calO)\bslash G(F)/\calG(\calO)$, part i) follows from Lemma \ref{Bruhatflag} together with \eqref{funcid} (to check that equivariance as a sheaf translates to equivariance as a function). For part ii), we use Corollary \ref{gctflagpoints} and the functorialities \eqref{funcid}, \eqref{shiftandtwist}. It remains to explain, why $\delta_{P^+}^{1/2}$ agrees with the normalization in Definition \ref{gctnormal}. For $m\in M(F)$, by definition
\[
\delta_{P^+}(m)= |\on{det}\big(\on{Ad}(m)\,|\,\on{Lie}(N^+(F))\big)|_F
\]
where $|x|_F=q^{-\on{val}_F(x)}$ with $\on{val}_F(t)=1$ and $\on{Ad}(m)(\frakn)= m\cdot \frakn\cdot m^{-1}$ for $\frakn\in {\rm Lie}(N^+(F))$. The Kottwitz map gives an isomorphism
\begin{equation}\label{Kotthelp}
M(F)/M(F)^\circ \simeq \pi_1(M)_I^\Phi
\end{equation}
where $M(F)^\circ=(LM)^\circ(F)$ is the neutral component. Note that the classes in $\pi_1(M)_I$ where $\Phi$ acts non-trivially do not contribute. Consider the character
\[
\chi^{\vee} \co M\overset{\on{Ad}\;}{\longto} \underline{\Aut}({\rm Lie}(N^+))\overset{\on{det}\;}{\longto} \bbG_m.
\]
This gives the formula 
\[
\delta_{P^+}(m)=|\chi^\vee(m)|_F=q^{-\lan\chi^\vee|_T,\bnu_m\ran},
\]
where the class $[m]\in M(F)/M(F)^\circ$ corresponds to $\bnu_m \in \pi_1(M)_I$ under \eqref{Kotthelp}. On the other hand $\chi^\vee|_T=2\rho_{N^+}$, and we obtain 
\[
\delta_{P^+}^{1/2}(m)=q^{-\lan\rho_{N^+},\bnu_m\ran}. 
\]
Hence, the normalizations match, and the lemma follows.
\end{proof}

\subsubsection{Central functions}
As in \eqref{equi_nearby}, we have the functor of nearby cycles
\[
\Psi_{\calG}\co \on{Perv}_{L^+G}(\Gr_{G})\longto \on{Perv}_{L^+\calG}(\Fl_{\calG}\times_{s}\eta).
\]
The following theorem is an immediate consequence of Theorem \ref{CentralNB}.

\begin{thm}\label{centralfunction}
For each $\calA\in \on{Perv}_{L^+G}(\Gr_{G})$, the function $\tau_{\Psi_{\calG}(\calA)}^{\on{ss}}$ naturally is an element in the center $\calZ(G(F),\calG(\calO_F))$ in the following sense: \smallskip\\
i\textup{)} If $F\simeq k\rpot{t}$, then the function $\tau_{\Psi_{\calG}(\calA)}^{\on{ss}}$ depends canonically only on the data $(\uG,\calG,\calA)$.\smallskip\\
ii\textup{)} Let $F/\bbQ_p$, and recall that we fixed a spreading $\uG$ in Remark \ref{equal_equals_mixed}. Then the function $\tau_{\Psi_{\calG}(\calA)}^{\on{ss}}$ depends canonically only on the data $(\uG,\calG,\calA)$.
\end{thm}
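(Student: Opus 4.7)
The plan is to combine the sheaf-level centrality of Theorem \ref{CentralNB} with the sheaf-function dictionary. First, since $\Psi_\calG(\calA) \in \on{Perv}_{L^+\calG}(\Fl_\calG \times_s \eta)$ by \eqref{equi_nearby}, Lemma \ref{CT_RPsi_comp}(i) immediately gives $\tau^{\on{ss}}_{\Psi_\calG(\calA)} \in \calH(G(F),\calG(\calO_F))$. (In mixed characteristic I interpret this function on $\Fl_{\calG'}(k)$ using Lemma \ref{hecke_identification}.) For centrality, by Theorem \ref{CentralNB} there is, for every $w \in W$, an isomorphism $\Psi_\calG(\calA) \star \IC_w \simeq \IC_w \star \Psi_\calG(\calA)$ in $\on{Perv}_{L^+\calG}(\Fl_\calG \times_s \eta)$. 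Applying $\tau^{\on{ss}}$ to this isomorphism and using the standard compatibility $\tau^{\on{ss}}_{\calB_1 \star \calB_2} = \tau^{\on{ss}}_{\calB_1} \star \tau^{\on{ss}}_{\calB_2}$—which follows from unwinding the convolution diagram
\[
\Fl_\calG \times \Fl_\calG \overset{q}{\leftarrow} L\calG \times \Fl_\calG \overset{p}{\to} \Fl_\calG \tilde{\times} \Fl_\calG \overset{m}{\to} \Fl_\calG
\]
together with the functorialities \eqref{funcid} for the smooth map $p$ and proper map $m$ (and matching the Haar measure with volumes of fibres)—gives the commutation relation $\tau^{\on{ss}}_{\Psi_\calG(\calA)} \star \tau^{\on{ss}}_{\IC_w} = \tau^{\on{ss}}_{\IC_w} \star \tau^{\on{ss}}_{\Psi_\calG(\calA)}$ for every $w \in W$.

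To upgrade this to centrality in the full Hecke algebra, I argue that the family $\{\tau^{\on{ss}}_{\IC_w}\}$ spans $\calH(G(F),\calG(\calO_F))$. By Lemma \ref{Bruhatflag}, the characteristic functions $\mathbf{1}_{\calG(\calO) \dot{w}\calG(\calO)}$ for $w \in W_\bbf \bslash W / W_\bbf$ with $[w]$ fixed by $\Sig$ form a basis. Since $\IC_w$ is perverse of support $\Fl^{\leq w}_\calG$ and restricts on the open cell $\Fl^w_\calG$ to $\algQl\lan \dim \Fl^w_\calG \ran$, the transition matrix $(\tau^{\on{ss}}_{\IC_v}(\dot{w}))_{v,w}$ is upper triangular with invertible diagonal. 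Consequently $\{\tau^{\on{ss}}_{\IC_w}\}$ is a basis and $\tau^{\on{ss}}_{\Psi_\calG(\calA)}$ is central.

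For the canonical dependence, in the equal characteristic case Remark \ref{BDrmk}(i) asserts that $\Gr_\calG$ with its $\calL\calG$-action is determined up to unique isomorphism by $(\uG, \calG, t)$. Hence the nearby cycles functor $\Psi_\calG$, and with it the trace function, depend canonically only on $(\uG, \calG, \calA)$. For the mixed characteristic case, Remark \ref{PZBDGrass_rmk} gives the same canonical dependence on $(\uG, \calG, \varpi)$; the identification $\calZ(G(F), \calG(\calO_F)) = \calZ(G'(k\rpot{t}), \calG'(k\pot{t}))$ of Lemma \ref{hecke_identification} is canonical (uniformizer-free), and using Theorem \ref{PZspreading}(2)--(3) one sees that under this identification $\tau^{\on{ss}}_{\Psi_\calG(\calA)}$ is matched with the equal characteristic trace function built from $(\ucG \otimes k\rpot{t}, \calG', \calA \otimes_F F')$. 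Since the latter is already known to be canonical by the previous case, the dependence on $\varpi$ drops out.

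The main obstacle is the sheaf-function compatibility for the semi-simple trace under the twisted convolution $\star$ in the $L^+\calG$-equivariant setting with Galois monodromy: one must verify that $m$ is proper and that the $L^+\calG$-torsor structure underlying $q$ and $p$ allows descent of trace functions, and handle the semi-simple Frobenius carefully (the inertia-fixed graded pieces behave well under direct image because $\Psi_\calG(\calA)$ itself lives in the $\Ga$-equivariant category). A secondary, more technical point is the matching in the mixed characteristic case: verifying that nearby cycles on the Pappas--Zhu family specialize compatibly under $t \mapsto \varpi$ and $t \mapsto 0$ so that the trace on $\Fl_{\calG'}(k)$ is genuinely independent of $\varpi$; this is largely a bookkeeping exercise using the biflat structure $\calO[t] \to \calO_F$, $\calO[t] \to k\pot{t}$ of Theorem \ref{PZspreading}.
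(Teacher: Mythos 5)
Your proposal takes essentially the same route as the paper: sheaf-level centrality from Theorem \ref{CentralNB}, passage to functions by the sheaf--function dictionary, upgrading to full centrality via a triangularity argument for the $\tau^{\on{ss}}_{\IC_w}$ basis, and canonical dependence via Remarks \ref{BDrmk} and \ref{PZBDGrass_rmk}. However, there is one genuine inaccuracy you should repair, and one secondary point that is muddled.

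The claimed identity $\tau^{\on{ss}}_{\calB_1 \star \calB_2} = \tau^{\on{ss}}_{\calB_1} \star \tau^{\on{ss}}_{\calB_2}$ is \emph{not} a standard compatibility that holds for arbitrary $\calB_1, \calB_2 \in D_c^b(\Fl_\calG\times_s\eta)$. The problem is multiplicativity of the semi-simple trace under tensor products of Galois modules: for $V,W$ with non-trivial finite-order inertia action, $\operatorname{gr}(V\otimes W)^I \neq \operatorname{gr}(V)^I \otimes \operatorname{gr}(W)^I$ in general, so $\tr^{\on{ss}}(\Phi\,|\,V\otimes W) \neq \tr^{\on{ss}}(\Phi\,|\,V)\cdot\tr^{\on{ss}}(\Phi\,|\,W)$. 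Since $\calA\boxtimes\calB$ has tensor-product stalks, this infects the first step of the convolution computation. The formula does hold when one of the two factors has \emph{trivial} (more generally, unipotent) inertia action, which is exactly the hypothesis the paper imposes by restricting to $\calB\in\on{Perv}_{L^+\calG}(\Fl_\calG)$, i.e., sheaves pulled back from the special fiber with no inertia. Your application only ever uses $\calB=\IC_w$, which does have trivial inertia, so the conclusion is fine --- but the general formula as you state it is false, and your parenthetical about ``$\Psi_\calG(\calA)$ living in the $\Gamma$-equivariant category'' does not address the real obstruction. You should either state the multiplicativity only for $\calB\in\on{Perv}_{L^+\calG}(\Fl_\calG)$, or explicitly note that the required factor has trivial inertia.

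On the $\varpi$-independence for $F/\bbQ_p$: your ``matching to the equal-characteristic trace function built from $(\ucG\otimes k\rpot{t}, \calG', \calA\otimes_F F')$'' does not parse --- $\calA$ is a sheaf on $\Gr_{G}$ over the $p$-adic field $F$, and base change to $F'=k\rpot{t}$ does not make sense. The intended (and simpler) point is the one the paper makes in one line: two uniformizers differ by a unit in $\calO_F^\times$, and the resulting automorphism of the data induces the identity automorphism on the Hecke algebra $\calZ(G(F),\calG(\calO_F))$, so the function is well-defined independently of $\varpi$. You do not need to ``transport'' anything to equal characteristic beyond what is already built into Lemma \ref{hecke_identification}.
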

\begin{proof} If $F/\bbQ_p$, then we use Remark \ref{equal_equals_mixed} to identify $\calZ(G(F),\calG(\calO_F))=\calZ(G'(k\rpot{t}),\calG'(k\pot{t}))$. Part i) (resp. ii)) follows from Remark \ref{BDrmk} i) (resp. Remark \ref{PZBDGrass_rmk}). Note that different choices of uniformizers in $\calO_F$ differ by elements in $\calO_F^\times$ which induced the identity automorphism on Hecke algebras. Let us show that $\tau_{\Psi_{\calG}(\calA)}^{\on{ss}}$ defines a central function. Using the sheaf function dictionary, especially \eqref{funcid}, and the definition of the convolution product \eqref{convol_function}, resp. \eqref{convol_sheaf}, we obtain that
\[
\tau_{\Psi_{\calG}(\calA)\star \calB}^{\on{ss}}= \tau_{\Psi_{\calG}(\calA)}^{\on{ss}}\star \tau_{\calB}^{\on{ss}},
\]
for every $\calB\in \on{Perv}_{L^+\calG}(\Fl_{\calG})$\footnote{More generally, the formula holds when the inertia action on $\calB$ is unipotent. We will not need this fact.}. In particular, by Theorem \ref{CentralNB}, the function $\tau_{\Psi_{\calG}(\calA)}^{\on{ss}}$ commutes with all the functions $\tau_w:=\tau_{\IC_w}^{\on{ss}}$ for $w\in W=W(G,A)$ in the Iwahori-Weyl group. But it is easy to see from Lemma \ref{Bruhatflag} that the algebra $\calH(G(F),\calG(\calO_F))$ is generated by the functions $\tau_w$ for $w\in W$: if $\calG$ is an Iwahori for example, we can write 
\[
\tau_w=(-1)^{l(w)}\left(\mathds{1}_{w} \;+\; \sum_{v<w}c_{v,w}\cdot \mathds{1}_{v}\right),
\]
where, for $v\in W$, the function $\mathds{1}_{v}$ is the characteristic function on the double coset $\calG(\calO_F)\cdot \dot{v}\cdot\calG(\calO_F)$, and $l\co W\to \bbZ_{\geq 0}$ denotes the length function. The fact that $\mathds{1}_{w}$ appears with multiplicity $1$, follows from the identity $\IC_w=j_{!*}(\algQl[\dim(\Fl_\calG^{w})])|_{\Fl_\calG^w}=\algQl[\dim(\Fl_\calG^{w})]$. Thus, by induction on $l(w)$, we get that all functions $\mathds{1}_{w}$ are contained in the $\algQl$-vector subspace generated by the $\tau_w$'s, which is of course the full Hecke algebra $\calH(G(F),\calG(\calO_F))$. The general parahoric case is similar using Lemma \ref{Bruhatflag}. 
\end{proof}

\subsection{Review of Satake parameters and definition of $z^{\rm ss}_{\mathcal G, I(V)}$} \label{Sat_trans_sec}

We review the construction of the Satake parameter of a representation with parahoric fixed vectors \cite{Hai15, Hai17}. 

Let $E/F$ be a finite extension field, and let $E_0/F$ be the maximal unramified subextension. Let $G$ be a connected reductive group over $E_0$, with usual data $A, S, T, M$ as in \eqref{groupchain}.  Let $G^*$ denote the $E_0$-quasisplit inner form of $G$, with corresponding data $A^*, S^*, T^* = M^*$. Let $W^*_0$ (resp.\,$W^*_{0,E}$) denote the relative Weyl group of $G^*$ (resp.\,$G^*_E$). Then $W^*_0 = (W^*_{0,E})^{{\rm Gal}(E/E_0)}$. The geometric Frobenius is insensitive to the extension $E/E_0$: $\Phi_E = \Phi_{E_0}$; abbreviate it by $\Phi$. We have $G_{\bar{E}_0} = G^*_{\bar{E}_0}$ but the Galois actions differ by a 1-cocycle in $G^*_{\ad, \breve{E}_0}$. Thus we use $\Phi^*$ and $\Phi$  to distinguish the actions of $\Phi$ related to $G^*$ and $G$ (although on dual groups there is no difference, and $^LG_E = \,^LG^*_E$). There is a canonical finite morphism of algebraic varieties over $\bar{\mathbb Q}_\ell$
\begin{equation} \label{s_defn}
s : (Z(\widehat{M})^{I_{E_0}})_{\Phi}/W_0 \longrightarrow (\widehat{T^*}^{I_{E_0}})_{\Phi^*}/W^*_0 \longrightarrow  (\widehat{T^*}^{I_E})_{\Phi^*}/W^*_{0,E} \,\cong \, [\widehat{G^*}^{I_{E}} \rtimes \Phi^* ]_{\rm ss}/\widehat{G^*}^{I_{E}}
\end{equation}
(see \cite[(9.1), Prop 6.1]{Hai15} and \cite[Lem 8.2']{Hai17}).  

By \cite[Thm 11.10.1]{Hai14}, the central algebra $\mathcal Z(G(E_0), \mathcal G(\mathcal O_{E_0}))$ is the ring of regular functions on the variety $(Z(\widehat{M})^{I_{E_0}})_{\Phi}/W_0$, which is the union of the components of the Bernstein variety corresponding to representations of $G(E_0)$ with parahoric-fixed vectors.

Let $V \in {\rm Rep}(\,^LG_E)$ be an algebraic representation. Let $I(V) = {\rm Ind}^{\,^LG_{E}}_{\,^LG_{E_0}}(V)$. For $t^* \in \widehat{T^*}^{I_E}$, the map $t^* \mapsto {\rm tr}(t^* \rtimes \Phi^* \, | \, V^{1 \rtimes I_E})$ gives a regular function on $(\widehat{T^*}^{I_E})_{\Phi^*}/W^*_{0,E}$.  Pulling back along $s$, we obtain the regular function $\chi \mapsto {\rm tr}(s(\chi) \, | \, V^{1 \rtimes I_E})$.  This is precisely the definition of $z^{\rm ss}_{\mathcal G, I(V)}$. In other words, $z^{\rm ss}_{\mathcal G, I(V)}$ is the element of $\mathcal Z(G(E_0), \mathcal G(\mathcal O_{E_0}))$ which, for any irreducible smooth representation $\pi$ of $G(E_0)$ with supercuspidal support $(M(E_0), \chi)_{G}$, acts on $\pi^{\mathcal G(\mathcal O_{E_0})}$ by the scalar $${\rm tr}(s(\chi) \, | \, V^{1 \rtimes I_E}) = {\rm tr}(s(\chi) \, | \, I(V)^{1 \rtimes I_{E_0}}).$$ 
(See \cite[Lem 8.1]{Hai}.) The Satake parameter of $\pi$ is by definition $s(\pi) := s(\chi)$. 

\begin{rmk}
One can construct unconditionally an element $Z_{I(V)}$ of the stable Bernstein center of $G(E_0)$ as in \cite[5.7]{Hai14}. If one accepts the enhanced local Langlands conjecture LLC+, then there is a map from the stable Bernstein center to the usual Bernstein center of $G(E_0)$ (cf.\,\cite[Cor 5.5.2]{Hai14}). Denote also by $Z_{I(V)}$ the resulting $G(E_0)$-invariant distribution on $G(E_0)$. We obtain a function $Z_{I(V)} \star 1_J \in \mathcal Z(G(E_0), J)$ for any compact open subgroup $J \subset G(E_0)$. If $J = \mathcal G(\mathcal O_{E_0})$ is parahoric, then $z^{\rm ss}_{\mathcal G, I(V)}$ is an unconditional version of $Z_{I(V)} \star 1_J$.
\end{rmk}

\subsection{Statement of the test function conjecture for local models} \label{TFC_stmt_sec}

\subsubsection{The data}

We consider the fields $E, E_0, F$ as before (we discuss equal and mixed characteristic settings uniformly), and the data $(G, \calG, \{\mu\}, {\rm Gr}_{\mathcal G})$. Instead of requiring $E$ to be the field of definition of $\{\mu\}$, it is enough for us to assume that $E$ is a finite unramified extension thereof. Let $k_{E} = k_{E_0}$ be the common residue field of $E$ and $E_0$, and let $\Phi_E = \Phi_{E_0} = \Phi_F^{[E_0:F]}$ be the common geometric Frobenius element in $\Gamma_E \subset \Gamma_{E_0}$. Let $I_E \subset \Gamma_E$ (resp.\,$I_{E_0} \subset \Gamma_{E_0}$) be the inertia subgroup. 

\subsubsection{The representation side}

Let $(V_{\{\mu\}}, r_{\{ \mu \}})$ be the representation of $^LG_E = \widehat{G} \rtimes \Gamma_E$ constructed as in \cite[$\S6.1$]{Hai14}. We write $(I(V_{\{\mu\}}), i_{\{\mu\}})$ for the induced representation 
$$
I(V_{\{\mu\}}) = {\rm Ind}^{\widehat{G} \rtimes \Gamma_{E_0}}_{\widehat{G} \rtimes \Gamma_E}(V_{\{\mu\}}).
$$
Then we define $z^{\rm ss}_{\calG, \{ \mu\}}$ (written $z^{\rm ss}_{\{\mu\}}$ when $\calG$ is understood) to be the function $z^{\rm ss}_{\calG, I(V_{\{\mu\}})}$.

\subsubsection{Nearby cycles side}

The conjugacy class $\{ \mu \}$ gives rise as usual to a Schubert variety in ${\rm Gr}_{G, \bar{F}}$; it is a finite dimensional projective scheme to which we give the reduced structure. It is stable under the action of $\Gamma_E$, hence is defined over $E$.  Let $M_{\{\mu\},E}$ denote the resulting $E$-variety in ${\rm Gr}_{G, E}$. We let $M_{\{\mu\}}$ denote the flat closure of $M_{\{\mu\}, E}$ in ${\rm Gr}_{\mathcal G, \mathcal O_E}$, with reduced structure. Let $d_\mu$ be the dimension of the generic fiber  $M_{\{\mu\},E}$ over $E$.

Therefore we have a closed embedding
$$
M_{\{\mu\}} \, \hookrightarrow \, \Big({\rm Gr}_{\mathcal G} \otimes_{\mathcal O} \mathcal O_{E}\Big)_{\rm red} = \Big({\rm Gr}_{\mathcal G, \mathcal O_{E_0}} \otimes_{\mathcal O_{E_0}} \mathcal O_E \Big)_{\rm red}.
$$
Write the base change projection as
$$
f\co {\rm Gr}_{\mathcal G, \mathcal O_{E_0}} \otimes_{\mathcal O_{E_0}} \mathcal O_E \, \longrightarrow \, {\rm Gr}_{\mathcal G, \mathcal O_{E_0}}.
$$
Recall ${\rm IC}_{\{\mu\}}$ denotes the intersection complex on $M_{\{\mu\}, E}$, normalized as in \eqref{normalized_IC} so that it is perverse and weight zero. Then we have 
$$
{\rm IC}_{\{\mu\}} \in {\rm Perv}_{L^+G_E}({\rm Gr}_{G, E}, \overline{\mathbb Q}_\ell).
$$
Our goal is to understand the function 
\begin{equation} \label{starting_pt}
{\rm tr}^{\rm ss}(\Phi_E \, | \, \Psi_{{\rm Gr}_{\calG, \mathcal O_{E}}}({\rm IC}_{\{\mu\}}))\co M_{\{\mu\}}(k_E) \longrightarrow \bar{\mathbb Q}_\ell.
\end{equation}

The operation $f_{\bar s, *}$ corresponds to induction of Galois representations, cf. \cite[Exp XIII 1.2.7 b)]{SGA7}. Therefore by the analogue of \cite[Lem 8.1]{Hai} and the equality $\Phi_{E_0} = \Phi_E$, we can rewrite (\ref{starting_pt}) as
$$
{\rm tr}^{\rm ss}(\Phi_E \, | \, \Psi_{{\rm Gr}_{\calG, \mathcal O_{E}}}({\rm IC}_{\{\mu\}})) = {\rm tr}^{\rm ss}\big(\Phi_{E_0} \, | \, f_{\bar s, *} \, \Psi_{{\rm Gr}_{\calG, \mathcal O_{E}}}({\rm IC}_{\{\mu\}})). 
$$
On the generic fiber we have the sheaf $f_{\eta, *}{\rm IC}_{\{\mu\}} \in {\rm Perv}_{L^+_zG_{E_0}}({\rm Gr}_{G, E_0}, \overline{\mathbb Q}_\ell)$.  Since $f$ is proper and defined over $E_0$, there is an isomorphism
$$
f_{\bar{s}, *} \,  \Psi_{{\rm Gr}_{\calG, \mathcal O_{E}}}({\rm IC}_{\{\mu\}})  =  \Psi_{{\rm Gr}_{\mathcal G, \mathcal O_{E_0}}}(f_{\eta, *}{\rm IC}_{\{\mu\}}) 
$$
in the category ${\rm Perv}_{L^+\mathcal G_{k_{E_0}}}( {\rm Gr}_{\calG, \mathcal O_{E_0}} \times_{s_{E_0}} \eta_{E_0}, \, \overline{\mathbb Q}_\ell)$. This yields
\begin{equation}\label{starting_pt2}
{\rm tr}^{\rm ss}(\Phi_E \, | \, \Psi_{{\rm Gr}_{\calG, \mathcal O_{E}}}({\rm IC}_{\{\mu\}})) = {\rm tr}^{\rm ss}(\Phi_{E_0} \, | \, \Psi_{{\rm Gr}_{\calG, \mathcal O_{E_0}}}( f_{\eta, *} {\rm IC}_{\{\mu\}})).
\end{equation}
Let ${\rm Sat}(V_{\{\mu\}})$ denote the perverse sheaf corresponding to $V_{\{\mu\}}$ under the geometric Satake equivalence given by Theorem \ref{GeoSat1}. Recall ${\rm Sat}(V_{\{\mu\}}) = {\rm IC}_{\{\mu\}}$ by Corollary \ref{GeoSat1_cor}.
Also, $f_{\eta, *} {\rm Sat}(V_{\{\mu\}}) \simeq {\rm Sat}(I(V_{\{\mu\}}))$ by Proposition \ref{GeoSat1_IndRes}. Therefore as functions on $M_{\{\mu\}}(k_E)$,
\begin{equation}\label{starting_pt3}
{\rm tr}^{\rm ss}(\Phi_E \, | \, \Psi_{{\rm Gr}_{\calG, \mathcal O_{E}}}({\rm IC}_{\{\mu\}})) = {\rm tr}^{\rm ss}(\Phi_{E_0} \, | \, \Psi_{{\rm Gr}_{\calG, \mathcal O_{E_0}}}({\rm Sat}(I(V_{\{\mu\}}))).
\end{equation}
By Theorem \ref{centralfunction}, the right hand side belongs to $\mathcal Z(G(E_0), \calG(\mathcal O_{E_0}))$. This explains how we view the left hand side also as an element of that algebra.  We remark that we really needed to pass to ${\rm Gr}_{\calG, \mathcal O_{E_0}}$ and not just to ${\rm Gr}_{\calG, \mathcal O_E}$, since $\calG_{\mathcal O_{E_0}}$ is a parahoric group scheme, whereas ${\calG}_{\mathcal O_E}$ might not be.

\subsubsection{The conjecture}

 By the above discussion, the Test Function Conjecture  announced in \cite{Hai14} (more precisely, the local model version) can be rephrased in the case of parahoric level as follows.

\begin{conject} \label{TFC_mu} Recall $d_\mu = {\rm dim} \, M_{\{\mu\}, E}$. As elements of $\mathcal Z(G(E_0), \mathcal G(\mathcal O_{E_0}))$,
$$
{\rm tr}^{\rm ss}\big(\Phi_E\, | \, \Psi_{M_{\{\mu\}}}({\rm IC}_{\{\mu\}})\big) \;=\;   (-1)^{d_\mu} \cdot z^{\rm ss}_{\{\mu\}}.
$$
\end{conject}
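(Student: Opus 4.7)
Both sides of the identity lie in $\calZ(G(E_0),\calG(\calO_{E_0}))$: the left-hand side by Theorem \ref{centralfunction} (centrality of the nearby-cycle trace) together with Lemma \ref{CT_RPsi_comp}(i) (it is a Hecke function), while the right-hand side is central by construction. The proof follows the three-step strategy a)--c) outlined in the introduction. Choose a minimal Levi $M\subset G$ in good position with respect to $\calG$ and a cocharacter $\chi$ producing parahoric $\calM$ and smooth $\calP^\pm\subset \calG$, as in \S\ref{data_consterm}.

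I would carry out step a) (reduction to $M$) as follows. The constant term map
$$c_M\co \calZ(G(E_0),\calG(\calO_{E_0}))\hookto \calZ(M(E_0),\calM(\calO_{E_0}))$$
is injective, and the normalized variant $^pc^G_M$ of \eqref{pC_defn} differs from $c^G_M$ by an invertible sign character on $\pi_1(M)_I^\Phi$; so it suffices to match $^pc^G_M$ applied to both sides. On the geometric side, Lemma \ref{CT_RPsi_comp}(ii) gives $^pc^G_M(\tau^{\on{ss}}_{\{\mu\}})=\tau^{\on{ss}}_{\on{CT}_\calM\circ\Psi_\calG(\IC_{\{\mu\}})}$, and Theorem \ref{commctnearbythm} (Theorem A) rewrites this as $\tau^{\on{ss}}_{\Psi_\calM\circ\on{CT}_M(\IC_{\{\mu\}})}$. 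Geometric Satake (Theorem \ref{GeoSat2}) decomposes $\on{CT}_M(\IC_{\{\mu\}})\in\Sat_M$ as a direct sum of shifted normalized intersection complexes $\IC_{\{\la\}}$ indexed by the $^LM$-irreducible constituents of $V_{\{\mu\}}|_{^LM}$, each with positive integer multiplicity $m_\la$. On the spectral side, the compatibility of the Bernstein isomorphism with the constant term \cite[(11.11.1)]{Hai14} yields an entirely parallel decomposition $^pc^G_M(z^{\on{ss}}_{\{\mu\}})=\sum_\la m_\la\, z^{\on{ss}}_{\calM,\{\la\}}$ (up to the same shifts). Matching term-by-term reduces the Main Theorem for $(G,\calG,\{\mu\})$ to the Main Theorem for each $(M,\calM,\{\la\})$ with $M$ a minimal Levi.

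The base cases are handled by steps c) and b). For c), quasi-split $G$: applying a) again one is reduced to $M=T$ a maximal torus. For $T$ the affine Grassmannian is ind-discrete and ind-proper, the nearby cycles $\Psi_\calT(\IC_{\{\la\}})$ become (up to shift and twist) skyscraper sheaves supported on the $\pi_1(T)_I$-points, and the identity reduces to an explicit calculation that matches the Kottwitz homomorphism with the character of $\hat T^{I_E}\rtimes\Phi$ acting on $V_{\{\la\}}^{1\rtimes I_E}$, which is (\ref{s_defn}) for a torus. For b), $G$ anisotropic modulo center: $M_{\{\mu\}}$ has a single $k_E$-point, so the Grothendieck--Lefschetz trace formula yields
$$\tau^{\on{ss}}_{\{\mu\}}=(-1)^{d_\mu}\sum_i(-1)^i\on{tr}^{\on{ss}}\!\bigl(\Phi_E\,\big|\,\bbH^i(M_{\{\mu\},\bar E},\IC_{\{\mu\}})(d_\mu/2)\bigr),$$
which by Corollary \ref{GeoSat1_cor} equals $\on{tr}^{\on{ss}}(\Phi_E\mid V_{\{\mu\}}^{1\rtimes I_E})$. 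The normalized transfer homomorphism of \cite{Hai14}, applied to the identification of Satake parameters under inner twisting (compatible with the map $s$ of (\ref{s_defn})), expresses $z^{\on{ss}}_{\{\mu\}}$ in terms of the same trace for the quasi-split inner form $G^*$, which is what c) has already computed.

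The main obstacle is step a), and within it the use of Theorem A. The difficulty is that $\Psi_\calG(\IC_{\{\mu\}})$ is not in general supported on the ``flat'' part of $M_{\{\mu\}}$, and is typically nontrivial on the non-flat $\bbG_m$-strata of the special fiber; a priori there is no reason the pull-push on the attractor ind-scheme $(\Gr_\calG)^\pm$ should see only the flat strata. The rescue is the combination of Theorem \ref{nearbyhlthm} (commutation of nearby cycles with hyperbolic localization for $\bbG_m$-equivariant complexes, which applies since $\IC_{\{\mu\}}$ is $L^+G$-equivariant) with Theorem B, i.e.\,the identification $\Gr_{\calP^\pm}\hookto(\Gr_\calG)^{\pm,c}$ as a nilpotent thickening (Proposition \ref{conn_comp_BD}, resp.\,\ref{conn_comp_BDPZ}) together with the fact that the complement $(\Gr_\calG)^\pm\setminus(\Gr_\calG)^{\pm,c}$ is concentrated in the special fiber. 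Smooth base change then makes the non-flat contribution to $\Psi_{(\Gr_\calG)^0}$ vanish, so that $\on{CT}_\calM\circ\Psi_\calG\simeq\Psi_\calM\circ\on{CT}_M$ on $\Sat_G$. After invoking Theorem A the matching with the spectral side is essentially formal via tensor-functoriality of geometric Satake and the constant-term--Bernstein compatibility.
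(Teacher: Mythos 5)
Your proposal is correct and follows essentially the same route as the paper: both sides are central by Theorem \ref{centralfunction}, step a) uses the injectivity of $c_M$, Lemma \ref{CT_RPsi_comp}, Theorem \ref{commctnearbythm}, and the geometric Satake compatibility of Theorem \ref{GeoSat2} matched against the Bernstein constant-term compatibility (the paper's Lemmas \ref{C_lem}, \ref{pC_lem}, \ref{vm}), step b) uses the single-point structure of $M_{\{\mu\}}$, Grothendieck–Lefschetz, Corollary \ref{GeoSat1_cor} and the normalized transfer homomorphism, and step c) reduces quasi-split groups to tori via step a). The only items you elide that the paper handles explicitly are the initial reduction from the conjugacy class $\{\mu\}$ over $E$ to a general $^LG$-representation over $E_0$ via $I(V_{\{\mu\}})$ and Proposition \ref{GeoSat1_IndRes}, the restriction to irreducible constituents $W$ with $W|_{\hat M\rtimes I}$ irreducible via Lemma \ref{vanishing}, and the precise sign bookkeeping via Lemma \ref{vm}; also your Grothendieck–Lefschetz display in step b) carries an extraneous $(d_\mu/2)$ Tate twist, since $\IC_{\{\mu\}}$ is already normalized.
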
 

Because of the nature of the proof which goes via a reduction to minimal Levi subgroups, we also need a more flexible version of this statement as follows.  

Now suppose $V$ is any irreducible algebraic representation of $^LG=\widehat{G}\rtimes \Ga_F$. Let $V_0$ be an irreducible constituent of $V|_{\widehat{G}}$. Then as $^LG$-representations $V = \sum_{\gamma \in \Gamma_F} \gamma(V_0)$. This means that the $\widehat{B}$-highest $\widehat{T}$-weights $\lambda$ appearing in $V|_{\widehat{G}}$ are all $\Gamma_F$-conjugate.  We will use the weights $\lambda$ to define a sign attached to $V$; but it will be convenient to phrase the definition in terms of $\lambda$ viewed as cocharacters of $G$.  To this end, we choose an $F$-rational maximal torus $T \subset G$ which is the centralizer of an $F$-rational maximal $\breve{F}$-split torus $S \subset G$ as in \eqref{groupchain}. We have abstractly an identification $X_*(T) = X^*(\widehat{T})$, which in general does not respect $\Gamma_F$-actions.  However, because of the careful choice of $T$, the two natural actions of $\gamma \in \Gamma_F$ differ from each other by the action of an element $w_\gamma \in W(G,S)(\breve{F})$ (this follows from the discussion around \cite[(11.2.2)]{HR10}).  Therefore, transporting the $\Gamma_F$-action on $\widehat{T}$ yields a twisted $\Gamma_F$-action on $X^*(T)$ which still permutes the sets of simple roots corresponding to the various Borel subgroups $B \supset T$.  Now we may view $\lambda \in X_*(T)$, and if $\rho_B$ denotes the half-sum of the $B$-positive $T$-roots, we define the parity
\begin{equation} \label{dV_defn}
d_V \defined \langle 2\rho_B, \lambda \rangle \,\,\mbox{mod $2$}.
\end{equation}
This is well-defined independent of the choice of $\lambda$  and $B$ (if $B'$ is another Borel subgroup containing $T$, then $\rho_{B}-\rho_{B'} \in X^*(T)$ (a sum of roots), hence $\langle \rho_B - \rho_{B'}, \lambda \rangle \in \mathbb Z$; if $\gamma \in \Gamma_F$, then $\langle 2\rho_B, \gamma\lambda \rangle = \langle 2\rho_{\gamma^{-1}B}, \lambda \rangle$), where $\rho_B \mapsto \rho_{\gamma^{-1}{B}}$ refers to the twisted action of $\gamma \in \Gamma_F$ mentioned above.

If $V$ is any algebraic (hence semi-simple) representation of $^LG$ , then we can write $V=V^+\oplus V^-$, where all irreducible constituents of $V^+$ (resp. $V^-$) have even (resp. odd) parity. Let ${\rm Sat}(V)=\Sat(V^+)\oplus\Sat(V^-)$ be the object in ${\rm Perv}_{L^+G}({\rm Gr}_{G}, \bar{\mathbb Q}_\ell)$ which corresponds to $V$ under the geometric Satake equivalence of Theorem \ref{GeoSat1}. We define as a function on $\Fl_\calG(k_F)$,
\begin{equation}\label{tau_dfn}
\tau_{\calG,V}^{\on{ss}}\defined \tr^{\on{ss}}(\Phi_F\,|\, \Psi_{\Gr_\calG}(\Sat(V^+)))- \tr^{\on{ss}}(\Phi_F\,|\, \Psi_{\Gr_\calG}(\Sat(V^-))).
\end{equation}
Note that if $V$ is irreducible, then
\begin{equation} \label{tau_sign_fact}
(-1)^{d_V} \tau^{\rm ss}_{\calG, V} = {\rm tr}^{\rm ss}(\Phi\, | \, \Psi_{\Gr_\calG}(\Sat(V))).
\end{equation}
We define $z^{\rm ss}_{\calG, V}$ to be the unique function in $\mathcal Z(G(F), \calG(\mathcal O_F))$ such that, if $\pi^{\calG(\mathcal O_F)} \neq 0$, then $z^{\rm ss}_{\calG, V}$ acts on $\pi^{\calG(\mathcal O_F)}$ by the scalar ${\rm tr}^{\rm ss}(s(\pi) \, | \, V^{1 \rtimes I_F})$. Note that this is consistent with our earlier notation: if $V \in \,^LG_E$, then the above sense of $z^{\rm ss}_{\mathcal G_{\mathcal O_{E_0}}, I(V)} \in \mathcal Z(G(E_0), \calG({\mathcal O}_{E_0}))$ coincides with what we defined in section \ref{Sat_trans_sec}.

We have the following theorem.

\begin{thm} \label{TFC_V} Let $V$ be an algebraic representation of $^LG$. Then as elements of $\mathcal Z(G(F), \mathcal G(\mathcal O_{F}))$,
\[
\tau^{\rm ss}_{\mathcal G, V}\; = \;  z^{\rm ss}_{\mathcal G, V}.
\]
\end{thm}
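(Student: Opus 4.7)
The plan is to prove $\tau^{\on{ss}}_{\calG,V}=z^{\on{ss}}_{\calG,V}$ by comparing both sides after applying a normalized constant term map to a minimal Levi, and then to handle the resulting anisotropic-modulo-center case by the Grothendieck-Lefschetz trace formula combined with the ramified geometric Satake equivalence of Corollary \ref{ramConsTerm2}.

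\textbf{Setup and reduction to a minimal Levi.} First I would choose a minimal Levi $M\subset G$ in good position relative to $\calG$: concretely, a cocharacter $\chi\co\bbG_{m,\calO_F}\to\calG$ whose centralizer $\calM\subset\calG$ is a parahoric $\calO_F$-group scheme for $M$, together with the associated parabolic subgroups $\calP^\pm\subset\calG$ of Lemma \ref{groups}. By Theorem \ref{centralfunction} both $\tau^{\on{ss}}_{\calG,V}$ and $z^{\on{ss}}_{\calG,V}$ belong to $\calZ(G(F),\calG(\calO_F))$, hence are determined by their images under the injective normalized constant term map ${}^pc^G_M$ from \eqref{pC_defn}. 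The task becomes showing the identity
\[
{}^pc^G_M(\tau^{\on{ss}}_{\calG,V})\;=\;{}^pc^G_M(z^{\on{ss}}_{\calG,V})
\]
inside $\calZ(M(F),\calM(\calO_F))$.

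\textbf{Matching the geometric and spectral sides modulo a Levi.} On the geometric side, Lemma \ref{CT_RPsi_comp}(ii) identifies ${}^pc^G_M(\tau^{\on{ss}}_{\calG,V})$ with the semisimple trace of Frobenius function of $\on{CT}_\calM\circ\Psi_\calG(\Sat(V))$; here the parity signs $(-1)^{d_V}$ in \eqref{tau_dfn} and $(-1)^{\langle 2\rho_{N^+},\bar\nu_m\rangle}$ in \eqref{pC_defn} match term-by-term on the weight decomposition of $V|_{\hat M}$. Theorem \ref{commctnearbythm} then supplies a canonical isomorphism $\on{CT}_\calM\circ\Psi_\calG\simeq\Psi_\calM\circ\on{CT}_M$ on $\Sat_G$, and Theorem \ref{GeoSat2}(ii) identifies $\on{CT}_M(\Sat(V))$ with $\Sat(V|_{{}^LM})$. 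Hence ${}^pc^G_M(\tau^{\on{ss}}_{\calG,V})=\tau^{\on{ss}}_{\calM,V|_{{}^LM}}$. On the spectral side, the companion compatibility ${}^pc^G_M(z^{\on{ss}}_{\calG,V})=z^{\on{ss}}_{\calM,V|_{{}^LM}}$ follows from the Bernstein-center interpretation of ${}^pc^G_M$ in \cite[\S11]{Hai14} together with the compatibility of the Satake transform $s$ of \eqref{s_defn} with restriction along $\hat M\hookto\hat G$. Combining these two compatibilities reduces the theorem to the case $G=M$, i.e., to the case where $G$ is anisotropic modulo center.

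\textbf{The anisotropic-modulo-center case.} When $G$ is anisotropic modulo center, $\Fl_\calG(k_F)=M(F)/\calM(\calO_F)$ is discrete and indexed via the Kottwitz map by $\pi_1(M)_{I_F}^{\Phi_F}$. I would first handle the quasi-split sub-case, where $\calM$ is automatically very special parahoric: Corollary \ref{ramConsTerm2} shows that $\Psi_\calG$ intertwines ordinary with ramified geometric Satake, so $\tau^{\on{ss}}_{\calG,V}$ represents the character function $t\mapsto\on{tr}^{\on{ss}}(t\rtimes\Phi_F\,|\,V^{1\rtimes I_F})$ on $\Fl_\calG(k_F)$, and unwinding \S\ref{Sat_trans_sec} shows that $z^{\on{ss}}_{\calG,V}$ is the \emph{same} character. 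For a general (possibly non-quasi-split) anisotropic-modulo-center $G$, I would combine the Grothendieck-Lefschetz trace formula applied componentwise on $\Fl_\calG$ with the fact that $\Psi_\calG$ preserves global cohomology, so the total Galois trace equals $\on{tr}^{\on{ss}}(\Phi_F\,|\,V)$ by Theorem \ref{GeoSat1}. The inner-form dependence enters only through the Galois twist on $V^{1\rtimes I_F}$ (Lemma \ref{homotopy_lem}) and a modulus factor; matching this against the normalized transfer homomorphism of \cite[\S4]{Hai14} applied to the quasi-split inner form $G^*$ yields the desired equality.

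\textbf{Main obstacle.} The geometric identity $\on{CT}_\calM\circ\Psi_\calG\simeq\Psi_\calM\circ\on{CT}_M$ is already in hand via Theorem \ref{commctnearbythm}, so the hardest part will be the spectral bookkeeping sketched above: pinning down the sign, shift, and half-twist conventions so that the \emph{geometric} constant term $\on{CT}_\calM$ produces the character of $V|_{{}^LM_r}$ on the nose (not a twisted variant), and tracking the behavior of the Satake parameter map $s$ of \eqref{s_defn} under passage from $G$ to its quasi-split inner form $G^*$. Once these compatibilities at a minimal Levi are established, the constant-term reduction of the first step closes the argument.
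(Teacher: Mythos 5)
Your proposal follows the paper's three-step scheme --- reduction to a minimal Levi via constant terms, reduction from anisotropic-mod-center groups to quasi-split groups via the normalized transfer homomorphism, and a direct verification for tori --- and is correct in outline. Two points deserve attention.

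First, the intermediate identity ${}^pc^G_M(z^{\on{ss}}_{\calG,V})=z^{\on{ss}}_{\calM,V|_{{}^LM}}$ is not quite right as stated. The paper's Lemma \ref{C_lem} gives the \emph{unnormalized} identity $c^G_M(z^{\on{ss}}_{\calG,V})=z^{\on{ss}}_{\calM,V_M}$; the sign $(-1)^{\langle 2\rho_{N^+},\bar\nu_m\rangle}$ separating ${}^pc^G_M$ from $c^G_M$ does not vanish on the support of the $W$-component. Rather, Lemma \ref{vm} computes it to be $(-1)^{d_V+d_W}$, so that the signs on the $\tau$-side (from Lemma \ref{pC_lem}) and the $z$-side cancel simultaneously, and the argument closes. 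Your gloss ``the parity signs match term-by-term'' is the right conclusion, but it conceals exactly the step the paper handles with Lemma \ref{vm}; if you apply ${}^pc^G_M$ to both sides you must track this sign explicitly.

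Second, for the quasi-split (i.e.\ torus) sub-case you propose to invoke Corollary \ref{ramConsTerm2}. This works, but the paper's route is more elementary: it uses that the anisotropic-mod-center local model $M_V$ has a single $k$-rational point (Lemma \ref{single_point}), applies the Grothendieck--Lefschetz trace formula, and evaluates $\mathbb H^*(\Gr_{T,\sF},\Sat(V))=V$ directly via Theorem \ref{GeoSat1}, with no appeal to the ramified Satake equivalence. Also note that in the non-quasi-split anisotropic-mod-center case, the transfer homomorphism reduces you to the result for the quasi-split inner form $G^*$, which is \emph{not} itself anisotropic mod center unless it is a torus; as in the paper's step c), one must rerun the Levi reduction for $G^*$ to land on tori. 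You use this implicitly, but it is worth making the double recursion explicit.
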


Theorem \ref{TFC_V} implies Conjecture \ref{TFC_mu} as follows. The statement of Conjecture \ref{TFC_mu} depends only on the data $G_{E_0}, \calG_{\mathcal O_{E_0}}$ and $V_{\{\mu\}} \in {\rm Rep}(\,^LG_E)$. Therefore, we may replace $E_0$ with $F$, i.e., we may assume $E_0=F$. By \eqref{starting_pt3} and the very definition of $z^{\rm ss}_{\mathcal G, V}$, the conjecture for $V_{\{\mu\}} \in {\rm Rep}(\,^LG_E)$ follows from Theorem \ref{TFC_V} for the induced representation $I(V_{\{\mu\}}) \in {\rm Rep}(\,^LG)$. It remains to discuss the sign given by the parity. Note that all irreducible constituents of $I(V_{\{\mu\}})$ have the same parity as $V_{\{\mu\}}$. Now if $V = V_{\{\mu\}}$, then $d_V \equiv d_{\mu}$. To prove this note that ${\rm Sat}(V_{\{\mu\}})$ is supported on ${\rm Gr}_{G_{\bar{F}}}^{\leq \{\mu\}}$, whose dimension is $\langle 2\rho_B, \mu\rangle$ by \cite[Prop 2.2, Cor 3.10]{Ri16a}. Hence, Conjecture \ref{TFC_mu} follows from Theorem \ref{TFC_V}.

Let us note that if $V\oplus W$ is a direct sum of algebraic representations of $^LG$, then $\tau^{\rm ss}_{\mathcal G, V\oplus W}=\tau^{\rm ss}_{\mathcal G, V}+\tau^{\rm ss}_{\mathcal G, W}$ and likewise, $z^{\rm ss}_{\mathcal G, V\oplus W}=z^{\rm ss}_{\mathcal G, V}+z^{\rm ss}_{\mathcal G, W}$. So the theorem for general $V$ follows from the theorem for irreducible $V$ which we will prove in the next sections.

\subsubsection{Notation} \label{Notation_TFC_sec} As the formulation of Theorem \ref{TFC_V} only makes reference to the field $F$, we will drop the subscript $F$ from the notation. In particular, $\calO=\calO_F$ is the ring of integers with residue field $k=k_F$. The Galois group is denoted $\Ga=\Ga_F$ with inertia subgroup $I=I_F$, and fixed geometric Frobenius $\Phi=\Phi_F$ etc. 

\subsubsection{More on irreducible representations of $^LG$}
The following lemma will be useful.

\begin{lem} \label{vanishing}
Suppose $V$ is an irreducible algebraic representation of $^LG$. Suppose that $V|_{\widehat{G} \rtimes I}$ is {\em not} irreducible.  Then $\tau^{\rm ss}_{\mathcal G, V} = z^{\rm ss}_{\mathcal G, V} = 0$.
\end{lem}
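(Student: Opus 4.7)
The plan is to exploit the Clifford-theoretic structure of $V$ to reduce both vanishing statements to the elementary fact that a cyclic permutation of $r > 1$ blocks has zero trace. First I would observe that, since $V$ is irreducible as $^LG = \hat{G} \rtimes \Ga$-representation while $V|_{\hat{G} \rtimes I}$ is reducible, Clifford theory together with the pro-cyclicity of $\Ga/I$ produces a proper open subgroup $\Ga_{F'} \subsetneq \Ga$ containing $I$ (corresponding to an unramified extension $F'/F$ of some degree $r > 1$) and an irreducible $\hat{G} \rtimes \Ga_{F'}$-representation $W$ with $V \simeq \Ind_{\hat{G} \rtimes \Ga_{F'}}^{\,^LG}(W)$. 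In particular, one gets a decomposition $V|_{\hat{G} \rtimes I} \simeq \bigoplus_{i=0}^{r-1} \Phi^i \cdot W$, where $\Phi$ cyclically permutes the $r$ summands with no fixed point.

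For the vanishing of $z^{\mathrm{ss}}_{\calG, V}$, I would apply the Clifford decomposition to $V^{1 \rtimes I} = \bigoplus_{i=0}^{r-1} (\Phi^i \cdot W)^{I}$. For any $\calG(\calO)$-spherical irreducible $\pi$ with Satake parameter $s(\pi) = g \rtimes \Phi$, $g \in \hat{G}^I$, the element $g$ preserves each summand while $\Phi$ cyclically permutes them; the associated block matrix of $g \rtimes \Phi$ has zero diagonal blocks (since $r > 1$), so its trace vanishes. Via the characterizing property of $z^{\mathrm{ss}}_{\calG, V}$ reviewed in \S\ref{Sat_trans_sec}, this forces $z^{\mathrm{ss}}_{\calG, V} = 0$.

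For the vanishing of $\tau^{\mathrm{ss}}_{\calG, V}$, I would introduce $\calG' = \calG \otimes_{\calO} \calO_{F'}$; since $F'/F$ is unramified, $\calG'$ is a parahoric for $G_{F'}$ with the same facet data, and its BD-Grassmannian is the unramified base change $\Gr_{\calG'} = \Gr_{\calG} \otimes_{\calO} \calO_{F'}$. Let $\pi\co \Gr_{\calG'} \to \Gr_{\calG}$ be the resulting finite \'etale map. By Proposition \ref{GeoSat1_IndRes} one has $\Sat(V) \simeq \pi_{\eta, *} \Sat(W)$, and the compatibility of nearby cycles with proper push-forward (\S\ref{NBCycSec}) then gives an isomorphism
\[
\Psi_{\calG}(\Sat(V)) \;\simeq\; \pi_{s, *}\, \Psi_{\calG'}(\Sat(W))
\]
in $D^b_c(\Fl_{\calG} \times_s \eta)$. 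For any $x \in \Fl_{\calG}(k)$, the geometric fiber $\pi_s^{-1}(\bar{x})$ consists of the $r$ points parametrized by the embeddings $k_{F'} \hookrightarrow \bar{k}$ and is cyclically permuted by $\Phi$; the inertia subgroup $I = I_{F'}$ preserves each factor individually. Consequently the stalk $(\pi_{s,*} \Psi_{\calG'}(\Sat(W)))_{\bar{x}}$ carries a cyclic $\Phi$-permutation of $r > 1$ blocks, and the same block-matrix argument as above applied in each cohomological degree, after passing to the semi-simplification of the $I$-action, gives $\tau^{\mathrm{ss}}_{\calG, V}(x) = 0$ for all $x$.

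The main technical point to check is the identification $\Psi_{\calG} \circ \pi_{\eta, *} \simeq \pi_{s, *} \circ \Psi_{\calG'}$ together with the identification of $\Gr_{\calG'}$ as the unramified base change of $\Gr_{\calG}$, which one expects to read off directly from the construction of the BD-Grassmannians recalled in \S\ref{gctBDgrass} and \S\ref{PZBDGrass_Sec} (unramified base change commutes with the spreading procedure, so $\pi$ is simply the base change map and is finite \'etale and proper). Once this is in place, both assertions reduce to the cyclic-permutation observation.
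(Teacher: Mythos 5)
Your proof is correct, and it takes a genuinely different route for the $\tau^{\rm ss}$ part. The paper's argument simply picks one irreducible constituent $V_0$ of $V|_{\hat{G}\rtimes I}$, observes that $V = \oplus_{i=0}^{d-1} \Phi^i V_0$ with $d>1$ (since $\Phi V_0 = V_0$ would contradict irreducibility of $V$), deduces the vanishing of $z^{\rm ss}_{\calG,V}$ from the fixed-point-free permutation of $V^I = \oplus_i \Phi^i(V_0^I)$, and then disposes of $\tau^{\rm ss}_{\calG,V}$ with the one-line remark that ``similar reasoning applied to $\Sat(V)$'' works -- i.e.\ the decomposition of $\Sat(V)_{\breve{F}}$ into $\Phi$-permuted summands passes directly through the nearby cycles functor and gives cyclically permuted blocks in each stalk. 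Your version packages the same decomposition as $V \simeq \Ind_{\hat{G}\rtimes\Gamma_{F'}}^{\,^LG}(W)$ via Clifford theory (which is fine here since $\Gamma/I$ is pro-cyclic, so the Schur-multiplier obstruction vanishes and the multiplicity is $1$), and then implements the geometric side by moving the induction to a push-forward $\pi_{\eta,*}\Sat(W)$ along the finite \'etale base-change map and invoking proper-push-forward compatibility of $\Psi$. What this buys you is a more structural formulation in which the $\Phi$-orbit becomes literally the set of points in a fiber of $\pi_s$. What it costs is that you must verify two compatibilities that the paper's terser route sidesteps: that $\Gr_{\calG'}$ is the unramified base change $\Gr_\calG \otimes_\calO \calO_{F'}$ (which requires tracking the spreading construction through the unramified extension), and that $\Psi_\calG \circ \pi_{\eta,*} \simeq \pi_{s,*}\circ \Psi_{\calG'}$ where the source trait changes (this does hold, because $\Spec(\calO_{F'}) \to \Spec(\calO)$ is \'etale and nearby cycles over an unramified extension of traits agree after restricting the Galois action, but it is an extra point to check). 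Both proofs ultimately reduce to the same observation that a fixed-point-free cyclic permutation of $r>1$ blocks has zero semi-simple trace.
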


\begin{proof}
Suppose $V_0 \subsetneq V|_{\widehat{G} \rtimes {I}}$ is an irreducible constituent. Let $r$ be the order of $\Phi$ acting as an automorphism of $V$. By the irreducibility of $V$, $\sum_{i \in \mathbb Z} \Phi^iV_0 = V$.  Let $d \geq 1$ be minimal such that $\Phi^dV_0 = V_0$. Then $d > 1$ and $V = \oplus_{i=0}^{d-1} \Phi^iV_0$ as $^LG$-modules. Moreover $V^{I} = \oplus_{i=0}^{d-1} \Phi^i(V_0^{I})$.  Since $\Phi$ permutes these direct summands without fixed points, the trace of any $s(\pi) \in \widehat{G}^{I} \rtimes \Phi$ \ on $V^{I}$ is zero.  This proves that $z^{\rm ss}_{\mathcal G, V} = 0$. Similar reasoning applied to ${\rm Sat}(V)$ proves $\tau^{\rm ss}_{\mathcal G, V} = 0$.
\end{proof}


\subsection{Reduction to minimal $F$-Levi subgroups}  \label{Levi_red_subsec}
Let $M$ be a minimal\footnote{Everything is valid for general $F$-Levi subgroups, but we do not need it in the manuscript.} $F$-Levi subgroup of $G$. There is a choice of embedding $\widehat{M} \hookrightarrow \widehat{G}$ such that the canonical $\Gamma$-action on $\widehat{M}$ is inherited from the $\Gamma$-action on $\widehat{G}$ (\cite[Lem 2.1]{Hai17}). Fix this choice from now on.  The embedding extends canonically to an $L$-embedding $^LM = \widehat{M} \rtimes \Gamma \hookrightarrow \,^LG = \widehat{G} \rtimes \Gamma$.

The group $M(F) \cap \mathcal G(\mathcal O)$ is a parahoric subgroup of $M(F)$ (cf.\,\cite[Lem 4.1.1]{HR10}); let $\mathcal M$ be the associated parahoric group scheme over $\mathcal O$, which is endowed with a closed immersion of $\mathcal O$-group schemes $\mathcal M \hookto \mathcal G$.

Fix an irreducible algebraic representation $V$ of $^LG$, and let $V_M := V|_{\,^LM}$ denote its restriction to $^LM$.  Write $V_M = \oplus_W W^{\oplus m_W}$ where $W$ ranges over the irreducible representations of $^LM$ appearing in $V_M$, with multiplicity $m_W \in \bbZ_{>0}$.

\begin{lem} \label{C_lem} Recall the constant term homomorphism
$$
c^G_M \, : \, \mathcal Z(G(F), \calG(\calO)) \, \rightarrow \, \mathcal Z(M(F), \calM(\calO))
$$
defined in \cite[11.11]{Hai14}, cf.\,\textup{(}\ref{C_defn}\textup{)}. Then $c^G_M(z^{\rm ss}_{\mathcal G, V}) = z^{\rm ss}_{\mathcal M, V_M} = \sum_W m_W \, z^{\rm ss}_{\mathcal M, W}$.
\end{lem}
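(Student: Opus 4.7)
The second equality $z^{\rm ss}_{\mathcal M, V_M} = \sum_W m_W \, z^{\rm ss}_{\mathcal M, W}$ is immediate from the definition of $z^{\rm ss}_{\mathcal M,-}$ in section \ref{Sat_trans_sec}: it is linear in the representation (applied to $V_M = \oplus_W W^{\oplus m_W}$), since the defining scalars ${\rm tr}(s(\pi_M) \, | \, - )$ are additive in the representation.

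The plan for the first equality is to verify that both sides act by the same scalar on every $\calM(\calO)$-spherical smooth irreducible $\bar{\mathbb Q}_\ell$-representation $\pi_M$ of $M(F)$, and then invoke uniqueness of such an element in $\mathcal Z(M(F),\mathcal M(\mathcal O))$. First, by definition $z^{\rm ss}_{\mathcal M, V_M}$ acts on $\pi_M^{\mathcal M(\mathcal O)}$ by the scalar ${\rm tr}(s(\pi_M) \, | \, V_M^{1 \rtimes I})$, where $s(\pi_M) \in [\widehat{M}^{I} \rtimes \Phi]_{\rm ss}/\widehat{M}^{I}$ is the Satake parameter of $\pi_M$ from \cite{Hai15}. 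Since we chose the embedding $\widehat{M} \hookrightarrow \widehat{G}$ so that the $L$-embedding $^LM \hookto \,^LG$ is $\Gamma$-equivariant, this scalar equals ${\rm tr}(s(\pi_M) \, | \, V^{1 \rtimes I})$ where we view $s(\pi_M)$ via the induced map $[\widehat{M}^{I} \rtimes \Phi]_{\rm ss}/\widehat{M}^{I} \to [\widehat{G}^{I} \rtimes \Phi]_{\rm ss}/\widehat{G}^{I}$.

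The key step is then to identify this with the scalar by which $c^G_M(z^{\rm ss}_{\mathcal G, V})$ acts on $\pi_M^{\mathcal M(\mathcal O)}$. This follows from the compatibility of the constant term map with the Bernstein isomorphism, namely \cite[(11.11.1)]{Hai14}: the diagram
\[
\begin{tikzpicture}[baseline=(current  bounding  box.center)]
\matrix(a)[matrix of math nodes,
row sep=1.5em, column sep=2em,
text height=1.5ex, text depth=0.45ex]
{\mathcal Z(G(F),\mathcal G(\mathcal O))&\mathcal Z(M(F),\mathcal M(\mathcal O)) \\
\bar{\mathbb Q}_\ell[(Z(\widehat{M})^{I})_\Phi]^{W_0}&\bar{\mathbb Q}_\ell[(Z(\widehat{M})^I)_\Phi], \\};
\path[->](a-1-1) edge node[above] {$c^G_M$} (a-1-2);
\path[->](a-2-1) edge node[above] {$\subset$} (a-2-2);
\path[->](a-1-1) edge node[right] {$\simeq$} (a-2-1);
\path[->](a-1-2) edge node[right] {$\simeq$} (a-2-2);
\end{tikzpicture}
\]
commutes, where the vertical Bernstein isomorphisms identify the scalar action of a central function $f$ on a spherical representation with Satake parameter $s$ with the value at $s$ of the corresponding regular function. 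In particular, the function $c^G_M(z^{\rm ss}_{\mathcal G,V})$ acts on $\pi_M^{\mathcal M(\mathcal O)}$ by the same scalar as $z^{\rm ss}_{\mathcal G,V}$ acts on the (unique) $\calG(\calO)$-spherical irreducible constituent of ${\rm Ind}_{P^+(F)}^{G(F)}(\pi_M)$ (normalized parabolic induction), whose Satake parameter is the image of $s(\pi_M)$ in $[\widehat{G}^I \rtimes \Phi]_{\rm ss}/\widehat{G}^I$. By the defining property of $z^{\rm ss}_{\mathcal G, V}$, this scalar equals ${\rm tr}(s(\pi_M) \, | \, V^{1 \rtimes I})$, which matches the scalar computed above. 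The main obstacle is therefore not a new computation but rather the careful bookkeeping to cite \cite[(11.11.1)]{Hai14} in the generality of a parahoric group scheme $\calG$ and a minimal (not necessarily $F$-split) Levi $M$; once this is done, uniqueness of central functions acting by prescribed scalars on all spherical irreducibles yields the claim.
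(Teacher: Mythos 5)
Your proof is correct and takes essentially the same approach as the paper's. Both verify the first equality by comparing the scalars by which each side acts on $\mathcal M(\mathcal O)$-spherical representations (the paper works directly with weakly unramified characters $\chi$ of $M(F)$, which for the minimal Levi $M$ is the same thing), using the compatibility of the constant term map with the Bernstein isomorphism \cite[(11.11.1)]{Hai14} on one side and the compatibility of Satake parameters $s^M(\chi) = s(\chi)$ from \cite[(9.1)]{Hai15} on the other.
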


\begin{proof}
Let $\chi$ be any weakly unramified character of $M(F)$. By definition, $z:= z^{\rm ss}_{\mathcal G, V}$ acts on $i^G_P(\chi)^{\calG(\calO)}$ by the scalar ${\rm tr}(s(\chi) \, | \, V^{1 \rtimes I})$, where $s(\chi) \in [\widehat{G}^{I} \rtimes \Phi]_{\rm ss}/\widehat{G}^{I}$ is the Satake parameter associated to an irreducible representation of $G(F)$ with supercuspidal support $(M(F), \chi)_{G(F)}$; cf.\,\cite{Hai15}. By a property of the Bernstein isomorphism $ S \co \mathcal Z(G(F), \mathcal G(\mathcal O)) \overset{\sim}{\rightarrow} \bar{\mathbb Q}_\ell[\Lambda_M]^{W_{0}}$, $z$ acts by $S(z)(\chi)$, where $S(z)$ is viewed as a regular function on the quotient variety $Z(\widehat{M})^{I}_{\Phi}/W_{0}$ (cf.\,\cite[$\S$11.8]{Hai14}).  By the definition of $c^G_M$ (cf.\,\cite[(11.11.1]{Hai14}), $S(z)(\chi) = S(c^G_M(z))(\chi)$.  As above, this is also the scalar by which $c^G_M(z)$ acts on the 1-dimensional representation $\chi$, this time by a property of the Bernstein isomorphism $S$ for $M$ applied to $c^G_M(z)$.  

On the other hand, by definition $z^{\rm ss}_{\mathcal M, V_M}$ acts by the same scalar on $\chi$, namely by
$$
{\rm tr}(s^M(\chi) \, |\, V_M^{1 \rtimes I}) = {\rm tr}(s(\chi) \, |\, V^{1 \rtimes I}).
$$
Here $s^M(\chi)$ denotes the Satake parameter for the group $M$ instead of $G$, but clearly by the construction of Satake parameters in \cite[(9.1)]{Hai15}, $s^M(\chi) = s(\chi)$. This justifies the equality displayed above, and thus the fact that $c^G_M(z)$ and $z^{\rm ss}_{\mathcal M, V_M}$ act by the same scalar on $\chi$. These remarks imply the lemma.
 \end{proof}

\begin{lem} \label{pC_lem}
Recall the normalized variant $^pc^G_M$ of the constant term homomorphism $c^G_M$, defined in \textup{(}\ref{pC_defn}\textup{)}. Then
\begin{equation} \label{pC_on_tau}
^pc^G_M((-1)^{d_V}\tau^{\rm ss}_{\calG, V}) = \sum_{W} m_W \cdot (-1)^{d_W} \cdot \tau^{\rm ss}_{\calM, W}.
\end{equation}
\end{lem}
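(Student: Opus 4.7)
The plan is to geometrize both sides and apply, in sequence, the sheaf-function dictionary (Lemma \ref{CT_RPsi_comp}), the commutation of nearby cycles with geometric constant terms (Theorem \ref{commctnearbythm}), and the Tannakian description of $\on{CT}_M$ via restriction of representations (Theorem \ref{GeoSat2}). The bookkeeping on the representation side is captured by the parities $d_V$ and $d_W$ in \eqref{tau_sign_fact}.

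\textbf{Step 1: Reformulate the left-hand side.} Since $V$ is irreducible, \eqref{tau_sign_fact} gives
\[
(-1)^{d_V}\tau^{\rm ss}_{\calG,V} \;=\; \tau^{\rm ss}_{\Psi_\calG(\on{Sat}(V))}.
\]
By Theorem \ref{centralfunction} this function lies in $\calZ(G(F),\calG(\calO))$, so the hypotheses of Lemma \ref{CT_RPsi_comp}(ii) are satisfied for $\calA = \Psi_\calG(\on{Sat}(V))$, yielding
\[
{}^pc^G_M\bigl((-1)^{d_V}\tau^{\rm ss}_{\calG,V}\bigr) \;=\; \tau^{\rm ss}_{\on{CT}_\calM\circ\Psi_\calG(\on{Sat}(V))}.
\]

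\textbf{Step 2: Interchange $\on{CT}_\calM$ and $\Psi_\calG$.} By Theorem \ref{commctnearbythm}, the functorial map
\[
\on{CT}_\calM\circ\Psi_\calG(\on{Sat}(V)) \;\overset{\sim}{\longto}\; \Psi_\calM\circ\on{CT}_M(\on{Sat}(V))
\]
is an isomorphism in $\on{Perv}_{L^+\calM}(\Fl_\calM\times_s\eta)$. Hence the right-hand side of Step~1 equals $\tau^{\rm ss}_{\Psi_\calM\circ\on{CT}_M(\on{Sat}(V))}$.

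\textbf{Step 3: Decompose $\on{CT}_M(\on{Sat}(V))$ via Satake.} By Theorem \ref{GeoSat2}(i), $\on{CT}_M(\on{Sat}(V))\in\Sat_M$, and by Theorem \ref{GeoSat2}(ii) it corresponds under $\om_M$ to $V|_{{}^LM} = \bigoplus_W W^{\oplus m_W}$. Since $\Sat_M\simeq \Rep_{\algQl}({}^LM)$ is an equivalence, we obtain an isomorphism
\[
\on{CT}_M(\on{Sat}(V)) \;\simeq\; \bigoplus_W \on{Sat}(W)^{\oplus m_W}
\]
in $\Sat_M$. Applying $\Psi_\calM$, passing to semi-simple trace of Frobenius (which is additive in direct sums by \eqref{funcid}), and using \eqref{tau_sign_fact} for the minimal Levi $M$ gives
\[
\tau^{\rm ss}_{\Psi_\calM\circ\on{CT}_M(\on{Sat}(V))} \;=\; \sum_W m_W\,\tau^{\rm ss}_{\Psi_\calM(\on{Sat}(W))} \;=\; \sum_W m_W\,(-1)^{d_W}\,\tau^{\rm ss}_{\calM,W},
\]
which is the desired identity.

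\textbf{Main obstacle.} The proof is a clean concatenation of results already established in the paper, so there is no genuine difficulty; the argument is essentially a diagram chase. The delicate point that merits care is the compatibility of normalizations: the shift-and-twist $\lan m\ran$ defining the normalized geometric constant term $\on{CT}_\calM$ in Definition \ref{gctnormal} must be shown to match, after passing to functions, the factor $\delta_{P^+}^{1/2}\cdot(-1)^{\lan 2\rho_{N^+},\bar\nu_m\ran}$ that distinguishes $^pc^G_M$ from the ordinary constant term $c^G_M$ in \eqref{pC_defn}. This compatibility is precisely what Lemma \ref{CT_RPsi_comp}(ii) records, where the exponent $\lan 2\rho_{N^+},\bar\nu_m\ran$ arises from the integer $m$ indexing the decomposition $q^+=\coprod_m q^+_m$ of \eqref{normalizedecom} and \eqref{decompositionflag}. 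Once this identification is in hand, no further obstacle remains.
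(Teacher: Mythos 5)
Your proof is correct and takes the same route as the paper's own proof, which cites exactly the three ingredients you use: Lemma \ref{CT_RPsi_comp}(ii), Theorem \ref{commctnearbythm}, and Theorem \ref{GeoSat2}. The paper states the conclusion as "immediate"; you spell out the intermediate diagram chase explicitly, including the correct invocation of \eqref{tau_sign_fact} for the irreducible $W$'s on the Levi side.
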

\begin{proof}
By definition \eqref{tau_dfn}, we have
\[
(-1)^{d_V}\cdot\tau^{\rm ss}_{\calG, V}\;=\; \tr^{\on{ss}}(\Phi\,|\, \Psi_{\Gr_\calG}(\Sat(V))).
\]
The lemma follows immediately from Lemma \ref{CT_RPsi_comp} ii), Theorem \ref{commctnearbythm} and Theorem \ref{GeoSat2}. 
\end{proof}

Next we must unwind what (\ref{pC_on_tau}) means. By Lemma \ref{vanishing}, we may assume $W$ ranges only over the $W$ such that $W|_{\widehat{M} \rtimes I}$ is irreducible. Suppose for such a $W$ that $W|_{\widehat{M}}$ has $\widehat{B}$-highest $\widehat{T}$-weights $\lambda_1, \dots, \lambda_n$; view $\lambda_i \in X^*(\widehat{T})$ and give the latter the $\Gamma$-action coming from that on $(\widehat{G}, \widehat{B}, \widehat{T})$.  Because $W|_{\widehat{M} \rtimes I}$ is irreducible, the $\lambda_i$ are $I$-conjugate. 

Now we view $\lambda_i \in X_*(T)$, for an $F$-rational maximal torus $T \subset M$ chosen carefully as above (\ref{dV_defn}). We will consider the images $\bar{\lambda}_i \in \pi_1(M)_{I}$. The natural $\Gamma$-action on $X_*(T)$ is not compatible with that on $X^*(\widehat{T})$; rather, the latter is compatible with the Galois action on $X_*(T^*)$, for an $F$-rational maximal torus $T^* \subset M^*$ in a quasi-split $F$-inner form $M^*$ of $M$. However, $\pi_1(M) = \pi_1(M^*)$ as $\Gamma$-modules, so there is no ambiguity and we can conclude that the $\bar{\lambda}_i \in \pi_1(M)_{I}$ are $I$-conjugate. Therefore they all coincide. Since $\Phi$ permutes the set $\{ \lambda_i \}$, this common image belongs to $(\pi_1(M)_{I})^{\Phi}$.

Using the surjectivity of the Kottwitz homomorphism $\kappa_{M}(F) : M(F) \twoheadrightarrow \pi_1(M)_{I}^{\Phi}$, we may choose $m \in M(F)$ mapping to the element $\bar{\lambda}_i$. In previous notation, we let $\bar{v}_m \in \pi_1(M)_{I}^{\Phi}$ denote that image and we let $\dot{v}_m \in X_*(T)$ be an arbitrary lift of $\bar{v}_m$.

 As $\widehat{M}$-representations, $W|_{\widehat{M}} = \oplus_i W_{\lambda_i}^{\oplus m_i}$ for certain multiplicities $m_i \in \mathbb N$. Therefore the sheaf ${\rm Sat}(W)$ is supported on the union of the generic fibers of the local models $M_{\{\lambda_i\}, \sF}$. Since nearby cycles are supported on the closure of the generic fiber, the function $\tau^{\rm ss}_{\calM, W}$ is supported on the fiber of the Kottwitz homomorphism $M(F) \rightarrow \pi_1(M)_{I}^{\Phi}$ over the common image of the elements $\bar{\lambda}_i \in \pi_1(M)_{I}^{\Phi}$, in other words, on the fiber containing $m \in M(F)$ above.

Finally we can relate $d_V, d_W,$ and $\dot{v}_m$ as follows. Let $B$ be any $\sF$-rational Borel subgroup in $P^+ = MN^+$ containing $N^+$ and $T$, and let $B_M = B \cap M$. 

\begin{lem}\label{vm} Assume $W|_{\widehat{M} \rtimes I}$ is irreducible and let $\lambda_i$ and $m$ be constructed as above. We have for each $i$,
$$\langle 2\rho_{N^+}, \dot{v}_m \rangle \equiv  \langle 2\rho_{N^+}, \lambda_i \rangle \equiv d_V + d_W \,\, \mbox{{\rm mod} $2$}.$$
\end{lem}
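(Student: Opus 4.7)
The plan is to treat the two congruences separately. For the second one, I would choose an $\sF$-rational Borel $B\supset T$ contained in $P^+=MN^+$, and set $B_M=B\cap M$. Then $B_M$ is a Borel of $M$ containing $T$, and we have the decomposition $2\rho_B=2\rho_{B_M}+2\rho_{N^+}$ of characters in $X^*(T)$. Let $\lambda\in X_*(T)$ denote a $\widehat{B}$-highest $\widehat{T}$-weight of $V|_{\widehat{G}}$; by definition $d_V\equiv\langle 2\rho_B,\lambda\rangle\bmod 2$, and similarly $d_W\equiv\langle 2\rho_{B_M},\lambda_i\rangle\bmod 2$ since $\lambda_i$ is a $\widehat{B_M}$-highest $\widehat{T}$-weight of $W|_{\widehat M}$.

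The key observation is that $\lambda_i$ appears as a $\widehat T$-weight of $V|_{\widehat G}$, so that $\lambda-\lambda_i$ is a non-negative integer combination of simple $\widehat{B}$-roots of $\widehat G$. Under the identification $X^*(\widehat{T})=X_*(T)$, these simple roots become simple $B$-coroots $\alpha_i^\vee$ of $G$, and $\langle 2\rho_B,\alpha_i^\vee\rangle=2$ for every simple $B$-root $\alpha_i$. Hence $\langle 2\rho_B,\lambda-\lambda_i\rangle$ is even, which gives $d_V\equiv\langle 2\rho_B,\lambda_i\rangle\bmod 2$. Using the decomposition $2\rho_B=2\rho_{B_M}+2\rho_{N^+}$, we conclude
\[
d_V+d_W\;\equiv\;\langle 2\rho_B,\lambda_i\rangle+\langle 2\rho_{B_M},\lambda_i\rangle\;\equiv\;\langle 2\rho_{N^+},\lambda_i\rangle\pmod 2,
\]
since $-1\equiv 1\bmod 2$. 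This settles the second congruence.

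For the first congruence, I would use that both $\dot{v}_m$ and $\lambda_i$ project to the common image $\bar{v}_m=\bar{\lambda}_i\in\pi_1(M)_I$. Therefore the difference $\dot{v}_m-\lambda_i$ lies in the kernel of the canonical map $X_*(T)\to\pi_1(M)_I$, which (using either the natural $\Gamma$-action on $X_*(T)$ coming from the $F$-structure of $T$, or the twisted one; these agree on $\pi_1(M)=\pi_1(M^*)$) is the subgroup generated by $X_*(T_{M_{\rm sc}})$ and the elements $(1-\gamma)x$ for $\gamma\in I$ and $x\in X_*(T)$. The function $\langle 2\rho_{N^+},\cdot\rangle$ vanishes on each summand: on $X_*(T_{M_{\rm sc}})$ because $M$ normalizes $N^+$, so $W_M$ permutes the $T$-roots of $N^+$, making $2\rho_{N^+}$ a $W_M$-invariant element of $X^*(T)$ which consequently pairs trivially with the coroot lattice $X_*(T_{M_{\rm sc}})$ of $M$; and on $(1-\gamma)X_*(T)$ for $\gamma\in I$ because $N^+$ is $F$-rational, hence $I$-stable, so $\gamma\cdot 2\rho_{N^+}=2\rho_{N^+}$ under the natural action, giving $\langle 2\rho_{N^+},(1-\gamma)x\rangle=0$. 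Thus in fact $\langle 2\rho_{N^+},\dot{v}_m-\lambda_i\rangle=0$, stronger than the required congruence.

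The only real subtlety I anticipate is bookkeeping around the twisted versus natural $\Gamma$-action on $X_*(T)$ used to view the $\lambda_i$ (which originally live in $X^*(\widehat T)$) inside $X_*(T)$: this is exactly the twist mentioned in the discussion around \eqref{dV_defn}. However this twist is irrelevant for the two inputs above, since the pairing $\langle 2\rho_B,\alpha_i^\vee\rangle=2$ in the first part is purely a property of the abstract root datum, and the vanishing of $\langle 2\rho_{N^+},\cdot\rangle$ on the kernel in the second part only uses the natural action (which is the one for which $N^+$, being $F$-rational, has its sum of roots fixed). Hence the twisted-action subtleties do not obstruct either step, and this should complete the proof.
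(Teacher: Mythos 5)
Your proof is correct and follows essentially the same route as the paper's. For the second congruence you use the identical decomposition $2\rho_B = 2\rho_{B_M} + 2\rho_{N^+}$ together with the observation that $\mu-\lambda_i$ (equivalently, your $\lambda-\lambda_i$) lies in the coroot lattice, so $\langle 2\rho_B,\mu-\lambda_i\rangle$ is even. For the first congruence the paper is terser: it simply chooses $\dot{v}_m = \lambda_i$ (a valid lift) and invokes the well-definedness of $\langle 2\rho_{N^+},\cdot\rangle$ mod the kernel of $X_*(T)\to\pi_1(M)_I$, which was already established in \S\ref{conncompflagsec}; your argument reproves that well-definedness from scratch (vanishing on $X_*(T_{M_{\rm sc}})$ by $W_M$-invariance of $2\rho_{N^+}$ and on $(1-\gamma)X_*(T)$ by $I$-invariance), which is the same underlying content. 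Your remark about the twisted versus natural $\Gamma$-action being immaterial is a sensible sanity check that the paper handles implicitly.
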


\begin{proof}
By construction, each $\lambda_i \in X_*(T)$ is a lift of $\bar{v}_m \in \pi_1(M)_{I}^{\Phi}$ so can be taken to be $\dot{v}_m$.  But then we also have
\begin{align*}
\langle 2\rho_{N^+}, \lambda_i \rangle &= \langle 2\rho_B, \lambda_i \rangle - \langle 2\rho_{B_M}, \lambda_i \rangle \\
&\equiv \langle 2\rho_B, \mu \rangle + \langle 2\rho_{B_M}, \lambda_i \rangle \\
&\equiv d_V + d_W \,\,\, \mbox{mod $2$}.
\end{align*}
Here $\mu$ denotes one of the highest weights appearing in $V|_{\widehat{G}}$, and we can assume that $\lambda_i$ lies in the weight space for the corresponding representation of $\widehat{G}$. We have used that $\mu - \lambda_i$ is a sum of coroots for $G$, hence $\langle \rho_B, \mu - \lambda_i \rangle \in \mathbb Z$.
\end{proof}

For each $W$ such that $W|_{\widehat{M} \rtimes I}$ is irreducible (which we may assume by Lemma \ref{vanishing}), let $m \in M(F)$ be chosen as above. Using Lemma \ref{vm}, we deduce from (\ref{pC_on_tau}) that 
\begin{align}
c^G_M((-1)^{d_V}\tau^{\rm ss}_{\mathcal G, V}) &= \sum_{W} m_W \, \tau^{\rm ss}_{\mathcal M, W} \cdot (-1)^{d_W} \cdot (-1)^{\langle 2 \rho_{N^+}, \dot{v}_m \rangle} \notag\\
&= \sum_W m_W \, \tau^{\rm ss}_{\mathcal M, W} \cdot (-1)^{d_W} \cdot (-1)^{d_V + d_W}. \label{dVdW}
\end{align}
We have used for the first equality that $\tau^{\rm ss}_{\mathcal M, W}$ is supported on the fiber containing $m$.

Now suppose Theorem \ref{TFC_V} holds for every $W$. Then using Lemma \ref{C_lem} and (\ref{dVdW}), we see
\[
c^G_M\big(\tau^{\rm ss}_{\mathcal G, V}\big) \,=\,  \sum_W m_W \, \tau^{\rm ss}_{\mathcal M, W} \,=\,  \sum_W m_W \, z^{\rm ss}_{\mathcal M, W} \,=\, c^G_M\big( z^{\rm ss}_{\mathcal G, V} \big).
\]
Since $c^G_M$ is injective, we conclude $\tau^{\rm ss}_{\mathcal G, V} = z^{\rm ss}_{\mathcal G, V}$. Therefore to prove Theorem \ref{TFC_V} for $G$ it is enough to prove it for a minimal $F$-Levi subgroup of $G$.  

We close this section with a definition made possible by the above arguments. Note that when $V|_{\widehat{G} \rtimes I}$ is irreducible, then the $\widehat{B}$-highest $\widehat{T}$-weights appearing in $V|_{\widehat{G}}$ are $I$-conjugate.

\begin{dfn} \label{omega_V_def}
For $V \in {\rm Rep}(\,^LG)$ such that $V|_{\widehat{G} \rtimes I}$ is irreducible, let $\omega_V \in \Omega_{\bf a} \cong \pi_1(G)_{I}^{\Phi}$ be the common image of the $\lambda_i \in X_*(T)$ appearing as $\widehat{B}$-highest $\widehat{T}$-weights in $V|_{\widehat{G}}$. 
\end{dfn}

\subsection{Reduction from anisotropic mod center groups to quasi-split groups}\label{reduction_aniso_mod_center}

By section \ref{Levi_red_subsec}, Theorem \ref{TFC_V} holds for $(\mathcal G, V)$ if it holds for $(\mathcal M, V_{M})$.  Therefore, it is enough to prove Theorem \ref{TFC_V} when $G$ is $F$-anisotropic mod center. Assume this. Let $\mathcal G$ be the unique parahoric $\mathcal O_F$-group scheme with generic fiber $G$.

Let $G^*$ be a quasi-split inner form of $G$ over $F$. Let $\calG^*$ be any parahoric $\mathcal O_F$-group scheme for $G^*$. We fix once and for all an inner twisting $G \rightarrow G^*$ as in \cite{Hai14} which is needed to define the normalized transfer homomorphism \cite[11.12]{Hai14} 
$$\tilde{t}: \mathcal Z(G^*(F),\mathcal G^*(\mathcal O)) \rightarrow \mathcal Z(G(F), \calG(\mathcal O)).$$ This special choice of twist induces an isomorphism $G_{\breve F} \overset{\sim}{\rightarrow} G^*_{\breve F}$; hence we may assume $G$ and $G^*$ are the same group over $\breve{F}$, endowed with different actions $\Phi$ and $\Phi^*$ of the geometric Frobenius element. Similar comments apply to ${\rm Gr}_{G}$ and ${\rm Gr}_{G^*}$. 

\begin{lem} \label{connected_trivial}
For any representation $V$ of $^LG = \,^LG^*$, we have
$$
{\rm tr}^{\rm ss}(\Phi \,  | \, {\mathbb H}^*({\rm Gr}_{G, \bar{F}}, {\rm Sat}(V))) = {\rm tr}^{\rm ss}(\Phi^* \,  | \, {\mathbb H}^*({\rm Gr}_{G^*, \bar{F}}, {\rm Sat}(V))).
$$
\end{lem}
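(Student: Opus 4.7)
The proof will be essentially a citation and mild strengthening of the argument already given in the proof of Corollary \ref{GeoSat1_cor}. The key observation is that the inner twisting $G \rightarrow G^*$ has been chosen so that $G_{\breve F} \overset{\sim}{\rightarrow} G^*_{\breve F}$, and consequently $\operatorname{Gr}_{G,\bar F} \overset{\sim}{\rightarrow} \operatorname{Gr}_{G^*, \bar F}$ as ind-schemes; only the Galois actions differ. The cocycle $c\co \Gamma_F \to \operatorname{Aut}(G^*_{\bar F})$, $\gamma \mapsto \operatorname{Int}(g_\gamma)$ defining $G$ takes values in $G^*_{\rm ad}(\breve F)$, and twisting the $\Gamma_F$-action on $\operatorname{Gr}_{G^*,\bar F}$ by $c$ recovers the $\Gamma_F$-action on $\operatorname{Gr}_{G,\bar F}$.

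First, I would make this precise on the level of cohomology: for $\mathcal A = \operatorname{Sat}(V) \in \operatorname{Sat}_G = \operatorname{Sat}_{G^*}$, the canonical identification
$$\omega_G(\mathcal A)\;=\; \mathbb H^*(\operatorname{Gr}_{G,\bar F}, \mathcal A)\;\simeq\; \mathbb H^*(\operatorname{Gr}_{G^*,\bar F}, \mathcal A)\;=\;\omega_{G^*}(\mathcal A)$$
transports the action of $\gamma \in \Gamma_F$ on the left to the action of $\operatorname{Int}(g_\gamma)\cdot \gamma^*$ on the right. Next, I would invoke Lemma \ref{homotopy_lem} to conclude that $\operatorname{Int}(g_\gamma)$, lying in the $\bar F$-points of the smooth geometrically connected algebraic group $G^*_{\rm ad}$ acting on $\operatorname{Gr}_{G^*,\bar F}$, acts trivially on $\mathbb H^*(\operatorname{Gr}_{G^*,\bar F}, \mathcal A)$. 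Hence $\gamma$ and $\gamma^*$ induce the \emph{same} endomorphism of cohomology; in particular this holds for $\gamma = \Phi$.

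Finally, I would upgrade equality of the usual trace to equality of the semi-simple trace. Since the $\Gamma_F$-actions on $\omega_G(\mathcal A)$ and $\omega_{G^*}(\mathcal A)$ coincide pointwise under the canonical identification, any $\Gamma_F$-stable filtration on which inertia acts through a finite quotient on one side is such a filtration on the other side, and the graded pieces and their inertia-invariants match identically. The semi-simple traces of $\Phi$ and $\Phi^*$ are therefore computed from the same linear data, so
$$
\operatorname{tr}^{\operatorname{ss}}(\Phi \,|\, \mathbb H^*(\operatorname{Gr}_{G,\bar F}, \operatorname{Sat}(V)))\;=\;\operatorname{tr}^{\operatorname{ss}}(\Phi^*\,|\, \mathbb H^*(\operatorname{Gr}_{G^*,\bar F}, \operatorname{Sat}(V))).
$$
The only nontrivial ingredient is Lemma \ref{homotopy_lem} (homotopy invariance of intersection cohomology under actions of smooth connected groups), which is already in hand; so there is no essential obstacle — the lemma is a direct corollary of the reasoning used earlier to show that $\omega_G \simeq \omega_{G^*}$ as $\Gamma_F$-representations.
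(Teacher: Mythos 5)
Your argument is correct and follows exactly the same route as the paper: invoke Lemma \ref{homotopy_lem} via the reasoning in the proof of Corollary \ref{GeoSat1_cor}, since the two Galois actions on cohomology differ only by the action of the $\bar{F}$-points of the smooth connected group $G^*_{\rm ad}$, which is trivial on cohomology. The only detail you leave implicit — and the paper makes it explicit in a one-line remark — is that Lemma \ref{homotopy_lem} as stated concerns $\mathbb{H}^*(X, \mathrm{IC}_X)$, so to apply it to $\mathbb{H}^*(\mathrm{Gr}_{G^*, \bar F}, \mathcal{A})$ one should note that $\mathrm{Sat}(V)_{\bar F}$ is a direct sum of (normalized) intersection complexes on the various orbit closures.
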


\begin{proof}
This follows from Lemma \ref{homotopy_lem} as in the proof of Corollary \ref{GeoSat1_cor}. Note that $\Sat(V)_\sF$ is a direct sum of intersection complexes. 
\end{proof}

Fix an irreducible $^LG$-representation $V$ such that $V|_{\widehat{G}\rtimes I}$ is irreducible, and define $\omega_V$ as in Definition \ref{omega_V_def}. Let $M_V$ be the closed $\calO$-subscheme of $\Gr_\calG$ which is given by the scheme theoretic closure of the support of $\Sat(V)$ considered as a sheaf on $\Gr_G$. Likewise, denote by $M^*_V$ the closed $\calO$-subscheme of $\Gr_{\calG^*}$ given by $\Sat(V)$ considered as a sheaf on $\Gr_{G^*}$. Since $V|_{\widehat{G} \rtimes I}$ is irreducible, the special fiber $M_{V,k}$ (resp. $M_{V,k}^*$) is geometrically connected. To check this we use the fact that ${\rm Sat}(V|_{\widehat{G} \rtimes I}) = (\oplus_{\gamma \in I/I_{\{\lambda\}}}{\rm IC}_{\gamma \cdot \{\lambda\}}) \otimes \mathcal L$ for some $\lambda \in X_*(T)$ and an irreducible local system $\mathcal L$ on ${\rm Spec}({\breve F})$ (cf.\,(\ref{satobjects})). This reflects the fact that the highest $\widehat{T}$-weights appearing in $V|_{\widehat{G}}$ are $I$-conjugate. Then we have on reduced loci $M_{V, \bar{k}} = {\rm Supp}(\Psi({\rm Sat}(V|_{\widehat{G} \rtimes I}))) = M_{\gamma \cdot \{\lambda\}, \bar{k}}$, for any $\gamma \in I$.

\begin{lem} \label{single_point}
If $k$ denotes the residue field, then $M_V(k) = \{x_V\}$, i.e., the $\calO$-scheme $M_V$ has a single $k$-rational point.
\end{lem}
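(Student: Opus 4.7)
The plan is to combine the geometric connectedness of $M_{V,\bar{k}}$ (noted just above the lemma) with the anisotropy-mod-center hypothesis on $G$ to pin down $M_V(k)$ as a single element of $\pi_1(G)_I^\Phi$.

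First I would locate $(M_{V,\bar k})_{\rm red}$ in a single component of $\Fl_{\calG,\bar k}$. Connected components of $\Fl_{\calG,\bar k}$ are labelled by $\pi_1(G)_I$ via the Kottwitz homomorphism \eqref{conncompflag}. Since $V|_{\widehat{G}\rtimes I}$ is irreducible, $M_{V,\bar k}$ is geometrically connected, so $(M_{V,\bar k})_{\rm red}$ is confined to a single component, labelled by some $\bar\nu\in\pi_1(G)_I$. As $M_V$ descends to $\calO$, the reduced special fiber is $\Phi$-stable, and hence $\bar\nu\in\pi_1(G)_I^\Phi$. Unravelling the construction of $\Sat(V)$ and comparing with Definition \ref{omega_V_def}, this element is exactly $\omega_V$.

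Next I would exploit the anisotropy-mod-center hypothesis to show that the component labelled by $\omega_V$ contains precisely one $k$-point of $\Fl_\calG$. By Lemma \ref{Bruhatflag} together with Lang's theorem (applicable since the parahoric $\calG$ has connected fibers), $\Fl_\calG(k)=G(F)/\calG(\calO)$. Since $G$ is $F$-anisotropic modulo center, the maximal $F$-split torus $A=Z(G)_s$ is central and the $F$-relative Weyl group $W_0(G,A)$ is trivial; in this situation the Kottwitz homomorphism $\kappa_G\co G(F)\to \pi_1(G)_I^\Phi$ is surjective with kernel equal to the unique maximal bounded subgroup of $G(F)$ modulo the split center, which coincides with $\calG(\calO)$. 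Hence $\kappa_G$ descends to a bijection $G(F)/\calG(\calO)\cong\pi_1(G)_I^\Phi$, and the preimage of $\omega_V$ is a single element $x_V$. Every $k$-point of $M_V$ must lie over $\omega_V$ by the first step, so $M_V(k)=\{x_V\}$.

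The main obstacle is justifying $G(F)/\calG(\calO)\cong\pi_1(G)_I^\Phi$ via Kottwitz in full generality for $G$ anisotropic modulo center -- i.e., that the unique parahoric $\calG(\calO)$ coincides with the Kottwitz kernel modulo the split central torus. This uses that the Bruhat--Tits building of $G_{\ad}$ over $F$ consists of a single facet, and must be handled uniformly across the equal- and mixed-characteristic cases.
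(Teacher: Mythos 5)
Your proof is correct and follows essentially the same route as the paper's: both combine the geometric connectedness of $M_{V,\bar{k}}$ (which confines it to the component labelled by $\omega_V$) with the anisotropic-mod-center hypothesis to show that each component of $\Fl_\calG$ has exactly one $k$-point. The difference is in bookkeeping. You establish $\Fl_\calG(k) = G(F)/\calG(\calO) \cong \pi_1(G)_I^\Phi$ directly via surjectivity of the Kottwitz homomorphism and the identity $\calG(\calO) = \ker(\kappa_G)$; the paper instead works with the Iwahori--Weyl group, citing \cite[Lem 3.0.1 (III)]{HR10} for $W = \breve{W}^\Phi = \Omega_{\bf a}$, so that each $\Phi$-fixed $L^+\calG(\bar{k})$-orbit in Lemma \ref{Bruhatflag}'s Bruhat decomposition is a single length-zero point. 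These are two phrasings of the same fact: since $W_0(G,A)$ is trivial, one has $W \cong \Omega_{\bf a} \cong \pi_1(G)_I^\Phi$ and the parahoric is the Kottwitz kernel. The potential gap you flag at the end --- justifying $\calG(\calO) = \ker(\kappa_G)$ uniformly --- is exactly what the paper's appeal to \cite{HR10} supplies (one also uses $\calG_\bbf(\calO) = G(F)_\bbf \cap \ker \kappa_G$ from \cite{HR08}, with $G(F)_\bbf = G(F)$ since the $F$-building is a point). The paper also opens its proof by explicitly replacing $(G,\calG,F)$ with the function-field analogues $(G',\calG',F')$ in mixed characteristic; you note this is required but leave it implicit.
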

\begin{proof}
For convenience, in the mixed characteristic case, write $(G, \calG)$ in place of the function field analogues $(G', \calG')$ of section \ref{PZBDGrass_Sec}, and assume $F =  k(\!(t)\!)$. Suppose $y \in \Fl_\calG(\bar{k}) = G(\breve{F})/\calG(\breve \calO)$ is a $\Phi$-fixed point in $M_V(\bar{k})$. Then it belongs to an $L^+\calG(\bar{k})$-orbit fixed by $\Phi$, which by Lemma \ref{Bruhatflag} is indexed by an element of the set $W_{\bf f}\backslash W/W_{\bf f}$.  But $W = \breve{W}^\Phi = [\breve{W}_{\rm sc} \rtimes \breve{\Omega}_{\breve {\bf a}}]^\Phi = \Omega_{\bf a}$, by e.g.\,\cite[Lem 3.0.1 (III)]{HR10}. On the other hand since $M_{V, k}$ is geometrically connected, it meets only the connected component containing the image $x_V\in \Fl_{\calG}(k)$ of the element $\omega_V  \in \Omega_{\bf a}$. 
Therefore $y=x_V$ is that image. 
Conversely, since $\{x_V\}$ is the unique closed Iwahori orbit in this connected component, it must be contained in $M_{V,k}$. 
This shows $M_V(k)=M_V(\bar k)^\Phi = \{x_V\}$.
\end{proof}

From the Grothendieck-Lefschetz fixed point formula combined with Lemmas \ref{connected_trivial}, \ref{single_point}, we obtain the following equalities
\begin{align*}
\tau^{\rm ss}_{\mathcal G, V}(x_V) &:= (-1)^{d_V}{\rm tr}^{\rm ss}\left(\Phi \, | \, \Psi_{{\rm Gr}_{\calG}}({\rm Sat}(V))_{\bar{x}_V}\right) \\
&= (-1)^{d_V} \tr^{\on{ss}}\left(\Phi\, |\, \bbH^*(\Fl_{\calG,\bar{k}},\Psi_{\Gr_\calG}(\Sat(V)))\right)\\
&=   (-1)^{d_V}{\rm tr}^{\rm ss}\left(\Phi\, |\, {\mathbb H}^*({\rm Gr}_{G, \bar{F}}, {\rm Sat}(V))\right)  \\
&=  (-1)^{d_V} {\rm tr}^{\rm ss}\left(\Phi^* \, | \, {\mathbb H}^*({\rm Gr}_{G^*, \bar{F}}, {\rm Sat}(V))\right),
\end{align*}

Now we turn to the function $z^{\rm ss}_{\mathcal G, V}$. By construction of the functions and of $\tilde{t}$, we have the identity
\begin{equation} \label{t(z)}
\tilde{t}(z^{\rm ss}_{\mathcal G^*, V}) = z^{\rm ss}_{\mathcal G, V}.
\end{equation}
Since $G$ is anisotropic mod center, a basic property of $\tilde{t}$ is that $\tilde{t}(z^*)(\omega)$ for $\omega \in \Omega_{\bf a}$ is calculated by summing the values of $z^*$ over the preimage of $\omega$ under the Kottwitz homomorphism for $G^*$ (see \cite[Prop 11.12.6]{Hai14}). We are assuming that $z^{\rm ss}_{\calG^*, V} =  \tau^{\rm ss}_{\calG^*, V}$, and we know that this function is supported on the connected component indexed by $\omega_V$ (as above Lemma \ref{single_point}, $M^*_{V, k}$ is geometrically connected). Thus as $G$ is anisotropic mod center
$$z^{\rm ss}_{\mathcal G, V} = C \cdot \mathds{1}_{\omega_V},$$ a function supported on the single (double) coset indexed by $\omega_V$. Therefore, assuming Theorem \ref{TFC_V} holds for $(\calG^*, V)$, we obtain
$$
C = \sum_{x\in M_V^*(k)} z^{\rm ss}_{\mathcal G^*, V}(x) = (-1)^{d_V} \sum_{x \in M_V^*(k)} {\rm tr}^{\rm ss}(\Phi^* \, | \, \Psi_{{\rm Gr}_{\calG^*}}({\rm Sat}(V))_{\bar{x}}).
$$
Again by the Grothendieck-Lefschetz fixed point formula, we see 
$$
C =   (-1)^{d_V} {\rm tr}^{\rm ss}(\Phi^*\, |\, {\mathbb H}^*({\rm Gr}_{G^*, \bar{F}}, {\rm Sat}(V))) = \tau^{\rm ss}_{\mathcal G, V}(x_V)
$$
Of course $x_V$ is the point corresponding to $\omega_V$. This yields Theorem \ref{TFC_V} for $(\calG, V)$.

\begin{ex}
Let $G=D^\times$ where $D$ is a central division algebra over $F$ as in the proof of Lemma \ref{single_point}. Let $V=V_{\{\mu\}}$ with $\mu=(1,0,\ldots,0)$. Then $M^*_{\{\mu\},F}=\bbP^{n-1}_F$, and $M_{\{\mu\},F}$ is the Severi-Brauer form associated with $D$. Then
\[
\tau_{\{\mu\}}^{\on{ss}}(x_V)\;=\;(-1)^{n-1}\;\tr(\Phi\;|\; \bbH^*(\bbP^{n-1}_\sF,\algQl\lan n-1\ran))\;=\;q^{-\nicefrac{(n-1)}{2}}(1+q+\ldots +q^{n-1}),
\]
which is the trace of the Satake parameter of the trivial representation $\pi=1_{D^\times}$ on $V$. 
\end{ex}

\subsection{Proof in the quasi-split case}\label{quasi_split_sec}

Now we assume $G$ is quasi-split over $F$, so that its minimal $F$-Levi subgroup is an $F$-torus $T$. We may run our reduction steps again.  By section \ref{Levi_red_subsec}, to prove Theorem \ref{TFC_V} for $(G, \mathcal G)$ (and any irreducible representation $V$ of $^LG$), it is enough to prove it for $(T, \mathcal T)$, where $\mathcal T$ is the unique parahoric $\mathcal O$-group scheme with generic fiber $T$. Let $V$ be a representation of $^LT$ such that $V|_{\widehat{T}\rtimes I}$ is irreducible. Tori are anisotropic modulo center, and by the reasoning of \S \ref{reduction_aniso_mod_center}, it remains to show that
\begin{equation}\label{proof_tori}
z^{\on{ss}}_{\mathcal T,V}(\om_V)\,=\, \tr^{\on{ss}}(\Phi\,|\, \bbH^*(\Gr_{T,\sF},\Sat(V))),
\end{equation}
where we use $d_V=0$ because $\dim(\Gr_{T,\sF})=0$. We have $\bbH^*(\Gr_{T,\sF},\Sat(V))=\bbH^0(\Gr_{T,\sF},\Sat(V))= V$ as $^LT$-representations under the geometric Satake isomorphism. This gives
\[
\tr^{\on{ss}}(\Phi\,|\, \bbH^*(\Gr_{T,\sF},\Sat(V)))\,=\,\tr(\Phi\,|\, V^{1 \rtimes I}),
\] 
which is $z^{\on{ss}}_{\mathcal T, V}(\om_V)$ by the definition of $z^{\on{ss}}_{\mathcal T, V}$. Explicitly, the representation $V^{1\rtimes I}$ has a single $\widehat{T}^I$-weight $\bar{\lambda}$, which identifies with $\omega_V \in X^*(\widehat{T}^I)^\Phi = X_*(T)_I^\Phi \cong T(F)/T(F)_1$. 
The function $z^{\rm ss}_{\mathcal T, V}$ acts on a weakly unramified character $\chi: T(F)/T(F)_1 \rightarrow \bar{\mathbb Q}_\ell^\times$ (i.e.,\,$\chi \in \widehat{T}$) by the scalar
$$
{\rm tr}(s(\chi) \, | \, V^{1 \rtimes I}) = {\rm tr}(\chi \rtimes \Phi \, | \, V^{1 \rtimes I}) = \chi(\omega_V) \, {\rm tr}(\Phi \, | \, V^{1 \rtimes I}),
$$
hence $z^{\rm ss}_{\mathcal T, V} = {\rm tr}(\Phi \, | \, V^{1 \rtimes I})  \, \mathds{1}_{\omega_V}$.

This implies the Main Theorem from the introduction in the case of quasi-split groups, and by the preceding reductions in full generality.

\subsection{On values of the test functions} \label{values_sec}

Recall $q = p^m$ is the cardinality of the residue field of $F$ and $q_{E_0} = q^{[E_0: F]}$. Recall the quasi-split inner form $G^*$ with its usual data $A^*, S^* = T^*, M^*, B^*, W^*_0, \calG^*$ parallel to the data $A, S, T, M, P, W_0,\calG$ for $G$. We may assume $\calG^*$ is an Iwahori group scheme.

The objects $\widehat{G}$ and $V_{\{\mu\}}|_{\widehat{G}}$ can be defined over $\mathbb Q$. In addition, the $\Gamma$-action on $\widehat{G}$ can be defined over $\mathbb Q$, as can the full representation $V_{\{\mu\}}$ of $^LG$.

\begin{thm} \label{indep_thm}
The function $q_{E_0}^{d_\mu/2}z^{\rm ss}_{\calG, \{\mu\}}$ takes values in $\mathbb Z$, and it is independent of the choice of $\ell \neq p$ and $q^{1/2} \in \bar{\mathbb Q}_\ell$.
\end{thm}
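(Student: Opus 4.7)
By the Main Theorem, $z^{\rm ss}_{\calG, \{\mu\}} = \tau^{\rm ss}_{\{\mu\}}$, so it suffices to prove the assertions for $q_{E_0}^{d_\mu/2} z^{\rm ss}_{\calG, \{\mu\}}$. I would attack the problem through the Bernstein isomorphism combined with the $\bbQ$-rationality of the $L$-group datum. The Bernstein isomorphism
\[
S \colon \calZ(G(E_0), \calG(\calO_{E_0})) \xrightarrow{\sim} \algQl[\La_M]^{W_0}, \qquad \La_M = X^*\bigl(Z(\hat M)^{I_{E_0}}\bigr)^{\Phi_{E_0}},
\]
of \cite[Thm 11.10.1]{Hai14} identifies $z^{\rm ss}_{\calG, \{\mu\}}$ with the regular function $\chi \mapsto \tr(s(\chi) \,|\, I(V_{\{\mu\}})^{1 \rtimes I_{E_0}})$ on $(Z(\hat M)^{I_{E_0}})_{\Phi_{E_0}}/W_0$. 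Decomposing $I(V_{\{\mu\}})^{1 \rtimes I_{E_0}}$ into $\hat M^{I_{E_0}} \rtimes \Phi_{E_0}$-weight spaces yields the Bernstein expansion
\[
z^{\rm ss}_{\calG, \{\mu\}} \;=\; \sum_{[\bar\la] \in \La_M/W_0} n_{[\bar\la]} \cdot z^{\calG}_{[\bar\la]}, \qquad n_{[\bar\la]} \defined \tr\bigl(\Phi_{E_0} \,\big|\, I(V_{\{\mu\}})[\bar\la]\bigr),
\]
with $z^{\calG}_{[\bar\la]} \defined S^{-1}([\bar\la])$ the associated parahoric Bernstein function. Since $\hat G$, its pinning-preserving $\Ga_F$-action (which factors through a finite quotient), and the representation $V_{\{\mu\}}$ all admit $\bbQ$-forms (by Theorem \ref{GeoSat1} and the combinatorial construction of $V_{\{\mu\}}$ in \cite[\S 6.1]{Hai14}), and $\Phi_{E_0}$ acts on each weight space $I(V_{\{\mu\}})[\bar\la]$ through a finite quotient of $\Ga_{E_0}$, each $n_{[\bar\la]}$ is a Galois-invariant sum of roots of unity in a cyclotomic field, hence lies in $\bbZ$ and is manifestly independent of $\ell$ and of $q^{1/2}$.

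It then remains to show that $q_{E_0}^{d_\mu/2}\, z^{\calG}_{[\bar\la]}$ takes $\bbZ$-values on $\calG(\calO_{E_0})$-double cosets and is independent of $\ell$ and $q^{1/2}$ whenever $n_{[\bar\la]} \neq 0$. Choosing an alcove $\bba$ with $\bbf \subset \bar{\bba}$ and letting $\calG_\bba$ denote the corresponding Iwahori group scheme, the injection
\[
\calZ(G(E_0), \calG(\calO_{E_0})) \hookto \calZ(G(E_0), \calG_\bba(\calO_{E_0})), \qquad f \mapsto f \star 1_{\calG(\calO_{E_0})},
\]
sends $z^{\calG}_{[\bar\la]}$ to the $W_\bbf$-symmetrization of the standard Iwahori Bernstein function $\Theta_{\bar\la}$. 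By the Iwahori-Matsumoto and Bernstein presentations of the affine Hecke algebra over $\bbZ[v^{\pm 1}]$ with $v = q_{E_0}^{1/2}$ — valid for general connected reductive groups by \cite{Ro15} — one has the integrality $v^{\lan 2\rho_B, \bar\la \ran}\Theta_{\bar\la} \in \bbZ[v^{\pm 2}]\langle T_w \rangle_{w \in W}$, where $\rho_B$ is the half-sum of positive $T$-roots in an $E_0$-rational Borel $B$ of the quasi-split inner form $G^*$. Combining with the bound $\lan 2\rho_B, \bar\la\ran \leq d_\mu$ and parity $\lan 2\rho_B, \bar\la \ran \equiv d_\mu \pmod 2$ (both consequences of the fact that every $\bar\la$ appearing lifts to a $\hat{T}$-weight of $V_{\{\mu\}}$ in the same coset of the coroot lattice as $\mu$; compare Lemma \ref{vm}), we obtain
\[
q_{E_0}^{d_\mu/2}\, z^{\rm ss}_{\calG, \{\mu\}} \;=\; \sum_{[\bar\la]} n_{[\bar\la]} \cdot q_{E_0}^{(d_\mu - \lan 2\rho_B, \bar\la\ran)/2} \cdot \bigl(q_{E_0}^{\lan 2\rho_B, \bar\la\ran/2} z^{\calG}_{[\bar\la]}\bigr)
\]
as a $\bbZ$-linear combination of $\bbZ$-valued $\ell$- and $q^{1/2}$-independent functions on parahoric double cosets, yielding the theorem.

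The main obstacle is the integrality statement $v^{\lan 2\rho_B, \bar\la\ran}\Theta_{\bar\la} \in \bbZ[v^{\pm 2}]\langle T_w \rangle$ in the full generality of affine Hecke algebras with possibly unequal parameters that arise from general ramified reductive groups. This is classical for split groups with equal parameters (\cite[2.7]{GH07}, \cite[\S 11]{Hai14}), but in our setting it requires invoking the extended Bernstein presentation of \cite{Ro15} and a careful compatibility analysis between the standard Iwahori generators $T_w$ and the Bernstein lattice elements $\Theta_{\bar\la}$.
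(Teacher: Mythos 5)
Your plan shares the paper's skeleton — reduce to an Iwahori, expand $z^{\rm ss}_{\calG,\{\mu\}}$ in the Bernstein basis, get integrality of the coefficients from the "cyclotomic $\cap$ rational = integer" trick, and control the power of $q^{1/2}$ appearing in the Bernstein functions. But it has a genuine gap, and it is exactly where the paper does the real work.

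The gap is your assertion that $n_{[\bar\la]} = \tr\bigl(\Phi_{E_0}\,\big|\,I(V_{\{\mu\}})[\bar\la]\bigr)$ gives the coefficients of $z^{\rm ss}_{\calG,\{\mu\}}$ in the Bernstein basis of $\calZ(G(E_0),\calG(\calO_{E_0}))$. This is true for quasi-split $G$, where $M=T$ and the Satake parameter map is the naive identification; it is what the paper proves as the first lemma in the proof of Theorem \ref{indep_thm}, applied to $G^*$. But for general $G$ the map $s$ in \eqref{s_defn}, and hence its pullback $s^*$ implementing $z^{\rm ss}_{\calG,V}=S^{-1}(s^*(\,\cdot\,))$, is not the naive restriction along $Z(\hat M)^{I}\hookrightarrow \hat{T^*}^{I}$; it agrees with the normalized transfer $\tilde{t}_{A^*,A}$, which introduces the factors $c_{m_0}=\sum_{t^*\mapsto m}\delta_{B^*}^{-1/2}(t^*)\delta_P^{1/2}(m)\in\mathbb Z[q^{\pm 1/2}]$ of Lemma \ref{tilde{t}_on_Bern}. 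So your $n_{[\bar\la]}$ are not a priori integers, and the integer-valued weight-space traces are the coefficients of $z^{\rm ss}_{\calG^*,\{\mu\}}$ in the $G^*$-Bernstein basis, not of $z^{\rm ss}_{\calG,\{\mu\}}$ in the $G$-Bernstein basis. The paper's proof circumvents this by first expanding $z^{\rm ss}_{\calG^*,\{\mu\}}$ over $G^*$ (where your argument is valid), then applying $\tilde t$ and tracking the $c_{m_0}$'s explicitly, and finally proving in Lemma \ref{final_lem} that the product $q^{d_\mu/2}c_{m_0}z_{m_0}$ (not $q^{d_\mu/2}z_{m_0}$ alone) takes values in $\mathbb Z[q]$.

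Even for that last step, your bound $\langle 2\rho_B,\bar\la\rangle\leq d_\mu$ with the right parity is not by itself the statement you need. The exponent governing the Bernstein function $z_{m_0}$ in the Hecke algebra of $G$ is the unequal-parameter length $l_L(v_{m_0})$, not $\langle 2\rho_B,\cdot\rangle$ for a Borel of $G^*$; identifying the two, and then absorbing the half-power coming from $c_{m_0}$, is precisely the content of Claims 1--3 and the three-term inequality \eqref{final_equality} (which has the form $\langle\rho_{B^*},\mu\rangle+\langle\rho_{B^*_{M^*}},v_{t^*}\rangle-|\langle\rho_{N^*},v_{t^*}\rangle|\geq 0$, with a correction term $\langle\rho_{B^*_{M^*}},v_{t^*}\rangle$ coming from the Levi which your two-term inequality omits). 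You flag the length identity as "the main obstacle" but leave it unresolved; in the non-split, unequal-parameter setting it is not a formal consequence of \cite{Ro15} and requires the arguments of the paper's Claims. In short: your approach works verbatim for quasi-split $G$, but for general $G$ one must pass through $G^*$ and the normalized transfer, and the coefficient-of-$q^{1/2}$ bookkeeping is more delicate than a single inequality $\langle 2\rho_B,\bar\la\rangle\leq d_\mu$.
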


\begin{proof} We may reduce to $E_0 = F$, so $E/F$ is a totally ramified extension. It is enough to consider $\calG$ an Iwahori group scheme. In the following we will use freely the notation of \cite{Hai}. Write $V_\mu = V_{\{\mu\}}$ for a representative $\mu \in X_*(T^*)$.

\begin{lem} The function $z^{\rm ss}_{\calG^*, \{\mu\}} \in \calZ(G^*(F), \calG^*(\mathcal O))$ is a $\mathbb Z$-linear combination of Bernstein elements  $z_{\bar{\nu}^*}$ where $\bar{\nu}^* \in {\mathcal Wt}(\bar{\mu})^{+,\Phi^*}$,
\begin{equation} \label{qs_exp}
z^{\rm ss}_{\calG^*, \{\mu\}} = \sum_{\bar{\nu}^*} a^*_{\bar{\nu}^*, \{\mu\}} \, z_{\bar{\nu}^*}.
\end{equation}
\end{lem}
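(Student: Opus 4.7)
The plan is to expand the regular function representing $z^{\rm ss}_{\calG^*,\{\mu\}}$ under the Bernstein isomorphism, identify each Bernstein coefficient with a trace of $\Phi^*$ on a weight space of $V_\mu$, and then establish integrality from an integral structure on $V_\mu$.

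First I would recall from \cite[Thm 11.10.1]{Hai14} that in the Iwahori case the Bernstein isomorphism is
$$
\calZ(G^*(F),\calG^*(\calO))\;\overset{\simeq}{\longto}\;\algQl[X_*(T^*)_I^{\Phi^*}]^{W^*_0},
$$
with $\bbZ$-basis $\{z_{\bar{\nu}^*}\}$ indexed by $W^*_0$-orbits $\bar{\nu}^*$ in $X_*(T^*)_I^{\Phi^*}$, each $z_{\bar{\nu}^*}$ corresponding to the orbit-sum of the characters $\bar{\nu}$. By construction $z^{\rm ss}_{\calG^*,\{\mu\}}$ corresponds to the regular function $\chi\mapsto \tr(s(\chi)\,|\,V_\mu^{1\rtimes I})$, and using the explicit form of $s$ in \eqref{s_defn} this is
$$
\tr\bigl(\chi\rtimes\Phi^*\,\big|\,V_\mu^{I}\bigr).
$$
The $\hat T^{I}$-weights of $V_\mu^I$ are exactly the images in $X^*(\hat T^{I})=X_*(T^*)_I$ of the $\hat T$-weights of $V_\mu$; so the weights contributing to the trace lie in the set $\mathcal{Wt}(\bar\mu)$, and among these only the $\Phi^*$-fixed ones contribute. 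This gives
$$
\tr\bigl(\chi\rtimes\Phi^*\,\big|\,V_\mu^{I}\bigr)\;=\;\sum_{\bar{\nu}\,\in\,\mathcal{Wt}(\bar\mu)^{\Phi^*}}\tr\bigl(\Phi^*\,\big|\,V_\mu^{I}[\bar{\nu}]\bigr)\cdot\chi(\bar{\nu}).
$$

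Next I would invoke the fact that the function above is $W^*_0$-invariant (because $s(\chi)$ depends only on the $W^*_0$-orbit of $\chi$), so upon grouping terms by $W^*_0$-orbits and passing to dominant representatives $\bar{\nu}^*\in\mathcal{Wt}(\bar\mu)^{+,\Phi^*}$, one obtains an expansion of the form \eqref{qs_exp} with coefficients
$$
a^*_{\bar{\nu}^*,\{\mu\}}\;=\;\tr\bigl(\Phi^*\,\big|\,V_\mu^{I}[\bar{\nu}^*]\bigr).
$$

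The main obstacle is integrality of these coefficients. For this I would use that $\hat G$, the pinning $(\hat G,\hat B,\hat T,\hat X)$, the $\Ga$-action on $\hat G$ via outer automorphisms, and the representation $V_\mu$ of $\hat G\rtimes\Ga$ can all be defined over $\bbZ$: since $V_\mu$ is the unique irreducible representation of highest weight $\mu$, it admits a $\bbZ$-form stable under the algebraic action of $\hat G\rtimes\Ga$ and compatible with the $\hat T$-weight decomposition. The inertia group $I$ acts through a finite quotient, so $V_\mu^{I}[\bar{\nu}^*]$ inherits a $\bbZ$-structure on which $\Phi^*$ (acting through its finite image in the outer automorphism group) operates by a matrix with entries in $\bbZ$. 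Hence $\tr(\Phi^*\,|\,V_\mu^{I}[\bar{\nu}^*])\in\bbZ$, and \eqref{qs_exp} is a $\bbZ$-linear combination as claimed.
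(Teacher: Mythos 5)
Your route is genuinely different from the paper's, and your decomposition step is essentially correct in the case $E=F$: expanding $\tr(s(\chi)\,|\,V_\mu^{1\rtimes I})$ over $\hat T^I$-weight spaces and grouping by $W^*_0$-orbits gives the Bernstein expansion with $a^*_{\bar\nu^*,\{\mu\}}=\tr(\Phi^*\,|\,V_\mu^I[\bar\nu^*])$. The paper instead invokes \cite[Thm 7.5, 7.11]{Hai}, which factors each coefficient as a product of an integral Kazhdan--Lusztig value $P_{w_{\bar\nu},w_{\bar\lambda}}(1)$ and the trace of $\Phi$ on the ``highest weight'' subspace $\mathbb H_\mu(\bar\lambda)$; your version bypasses this refinement at the cost of not exposing that extra structure, but is arguably more elementary.

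Two points need attention. First, your integrality step rests on the assertion that $V_\mu$ has a $\mathbb Z$-form stable under $\hat G\rtimes\Gamma$ and compatible with taking $I$-invariants and with the $\hat T^I$-weight decomposition. While this is plausible (Kostant $\mathbb Z$-forms, saturation of fixed points, etc.), it is left unjustified and involves several small compatibilities that would need to be checked. The paper's integrality argument is both shorter and avoids this entirely: since $\Phi^*$ has finite order $n$ (as it acts through a finite quotient of $\Gamma$) on the relevant subspace, the trace lies in $\mathbb Z[\zeta_n]$, and since $V_\mu^{1\rtimes I}$ is defined over $\mathbb Q$ the trace also lies in $\mathbb Q$; hence it lies in $\mathbb Z$. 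You could replace your $\mathbb Z$-form argument with this and tighten the proof considerably. Second, the lemma must hold for a general totally ramified $E\supsetneq F$ (after reducing to $E_0=F$). Your sketch conflates $I_E$-invariants and $I_F$-indexing: the $\hat T^{I_E}$-weights of $V_\mu^{I_E}$ must be pushed forward along $X^*(\hat T^{*I_E})\to X^*(\hat T^{*I_F})$, and orbit-sums for $W^*_{0,E}$ have to be rewritten as sums of orbit-sums for $W^*_0$. The paper addresses this explicitly by replacing $z_{\bar\nu}$ with $\sum_{\bar\nu'} z_{\bar\nu^*}$ over representatives of the $W^*_0$-orbits in $W^*_{0,E}\cdot\bar\nu$, using \cite[Thm 6.1]{Hai} to control the image of \'echelonnage coroots under restriction. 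That bookkeeping is needed for your approach as well.
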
 
\begin{proof} First consider the case where $E = F$, so that $I(V_{\{\mu\}}) = V_{\{\mu\}}$.
By \cite[Thm 7.5, 7.11]{Hai}, we may write
\begin{equation} \label{E_0=E_case}
z^{\rm ss}_{\calG^*, \{\mu\}}  = z^{\rm ss}_{\calG^*, V_{\{\mu\}}} = \sum_{\bar{\lambda} \in {\mathcal Wt}(\bar{\mu})^{+,\Phi^*}} {\rm tr}(\Phi \, | \, {\mathbb H}_{\mu}(\bar{\lambda})) \, \sum_{\bar{\nu} \in {\mathcal Wt}(\bar{\lambda})^{+,\Phi^*}} P_{w_{\bar{\nu}}, w_{\bar{\lambda}}}(1) \, z_{\bar{\nu}},
\end{equation}
where ${\mathbb H}_\mu(\bar{\lambda})$ is the space of ``vectors with highest weight $\bar{\lambda}$'' in $V^{1 \rtimes I}_\mu|_{\widehat{G}^I}$. By construction $P_{w_{\bar{\nu}}, w_{\bar{\lambda}}}(1) \in \mathbb Z$. Since $\Phi$ has finite order $n$ in $\widehat{G} \rtimes \Gamma_{F'/F}$ (if $F'/F$ is a finite extension splitting $G$) and stabilizes ${\mathbb H}_\mu(\bar{\lambda})$, we see ${\rm tr}(\Phi \, | \, {\mathbb H}_\mu(\bar{\lambda})) \in \mathbb Z[\zeta_n]$, for a primitive $n$-th root of unity $\zeta_n \in \bar{\mathbb Q}_\ell$. On the other hand this trace belongs to $\mathbb Q$ because $V^{1 \rtimes I}_\mu$ is defined over $\mathbb Q$; hence ${\rm tr}(\Phi \, | \, {\mathbb H}_\mu(\bar{\lambda})) \in \mathbb Z$. 

Now consider the general case where $E \supseteq F$ is totally ramified.  Now $z^{\rm ss}_{\calG^*, \{\mu\}}$ is the function in $\mathcal Z(G^*(F), \calG^*(\mathcal O_{F}))$ which is the regular function on the variety $(\widehat{T^*}^{I_{F}})_{\Phi^*}/W^*_{F}$ sending the weakly unramified character $\chi \in (\widehat{T^*}^{I_{F}})_{\Phi^*}$ to 
$$
{\rm tr}( \chi \rtimes \Phi^* \, | \, I(V_{\{\mu\}})^{1 \rtimes I_{F}}) = {\rm tr}(\chi \rtimes  \Phi^* \, | \, V_{\{\mu\}}^{1 \rtimes I_E}).
$$
The same argument which produced (\ref{E_0=E_case}) shows that this function takes the form of (\ref{E_0=E_case}), except that ${\mathbb H}_\mu(\bar{\lambda})$, ${\mathcal Wt}(\bar{\lambda})$ and ${\mathcal Wt}(\bar{\mu})$ are replaced by their analogues ${\mathbb H}_{\mu, E}(\bar{\lambda})$, ${\mathcal Wt}(\bar{\lambda})_E$ and ${\mathcal Wt}(\bar{\mu})_E$ for the group $G_E$, and $z_{\bar{\nu}}$ is replaced with the sum
$$
\sum_{\bar{\nu}'} z_{\bar{\nu}^*}.
$$
Here $\bar{\nu}'$ ranges over a set of representatives for the $W^*_{F}$-orbits contained in $W^*_{E} \cdot \bar{\nu} \subset X^*(\widehat{T^*}^{I_{E}})$, and $\bar{\nu}^*$ denotes the image of $\bar{\nu}'$ in $X^*(\widehat{T}^{I_{F}})$. Also, note that the restriction map $X^*(\widehat{T^*}^{I_{E}}) \rightarrow X^*(\widehat{T^*}^{I_{F}})$ sends the \'echelonnage coroots for $G_{\breve{E}}$ to those for $G_{\breve{F}}$, by \cite[Thm 6.1]{Hai}. Therefore restriction sends ${\mathcal Wt}(\bar{\lambda})_E$ to ${\mathcal Wt}(\bar{\lambda})$. This concludes the proof of the lemma.
\end{proof}

To pass to general groups $G$, we use the normalized transfer homomorphism $\tilde{t}$. Recall \cite[$\S11.12$]{Hai14} that $\tilde{t}$ is canonical but its construction uses a choice of a triple $(B^*, P, \psi)$ where $\psi: G \rightarrow G^*$ is an inner twisting compatible with $B^*$ and $P$ in a certain sense.

We know that $\tilde{t}(z^{\rm ss}_{\calG^*, \{\mu\}}) = z^{\rm ss}_{\calG, \{\mu\}}$. We need to express 
$\tilde{t}(z_{\bar{\nu}^*})$ in an explicit way. To this end, recall following \cite[(11.12.1)]{Hai14} that our choice of inner twisting $G \rightarrow G^*$ induces an inner twisting $M \rightarrow M^*$ and a surjective homomorphism $t_{A^*, A}$:
$$
\xymatrix{
T^*(F)/T^*(F)_1 \ar@{=}[d] \ar[r] & M^*(F)/M^*(F)_1 \ar@{=}[d] \ar[r]^{\sim} & M(F)/M(F)_1 \ar@{=}[d] \\
X^*(\widehat{T^*})^{\Phi^*}_{I} \ar[r] & X^*(Z(\widehat{M^*}))^{\Phi^*}_I \ar[r]^{\sim} & X^*(Z(\widehat{M}))^\Phi_I
}
$$
We also have the normalized version on the level of group algebras defined in \cite[Lem 11.12.4]{Hai14}
$$
\tilde{t}_{A^*, A} \, : \, \bar{\mathbb Q}_\ell[\Lambda_{T^*}]^{W^*_0} \, \longrightarrow \, \bar{\mathbb Q}_\ell[\Lambda_M]^{W_0},
$$
which induces $\tilde{t}: \mathcal Z(G^*(F), \mathcal G^*(\mathcal O)) \rightarrow \mathcal Z(G(F), \calG(\mathcal O))$ via the Bernstein isomorphisms.

We fix $\bar{\nu}^* \in X_*(T^*)_I^{+, \Phi^*} = T^*(F)/T^*(F)_1 = \Lambda_{T^*}$. We form the ``monomial'' sum 
$$
{\rm mon}_{\bar{\nu}^*} = \sum_{t^* \in W^*_0(\bar{\nu}^*)} \mathds{1}_{t^*} \in \bar{\mathbb Q}_\ell[\Lambda_{T^*}]^{W^*_0}.
$$
We need to compute $\tilde{t}_{A^*, A}({\rm mon}_{\bar{\nu}^*}) \in \bar{\mathbb Q}_\ell[\Lambda_M]^{W_0}$. By definition,
$$
\tilde{t}_{A^*, A}({\rm mon}_{\bar{\nu}^*}) = \sum_{m} \Big(\sum_{t^* \mapsto m} \delta_{B^*}^{-1/2}(t^*) \, \delta^{1/2}_P(m)\Big) \, \mathds{1}_m  
$$
where $m \in M(F)/M(F)_1 = \Lambda_M$, and $t^* \mapsto m$ means $t^* \in W^*_0(\bar{\nu}^*)$ and $t_{A^*,A}(t^*) = m$. The proof of \cite[Lem 11.12.4]{Hai14} shows the set of $m$ in the support of $\tilde{t}_{A^*,A}({\rm mon}_{\bar{\nu}^*})$ is $W_0$-stable, and also the terms
$$
c_m = \sum_{t^* \mapsto m} \delta_{B^*}^{-1/2}(t^*) \, \delta^{1/2}_P(m) \hspace{.3in}(\in \mathbb Z[q^{1/2}, q^{-1/2}])
$$
are independent of $m \in W_0(m_0)$ for $m_0$ fixed, and are even independent of the choice of compatible triple $(B^*, P, \psi)$. Let $m_0$ range over a set of representatives for the $W_0$-action on the set of $m$ appearing above. We obtain:
\begin{equation}\label{tilde_1}
\tilde{t}_{A^*, A}({\rm mon}_{\bar{\nu}^*}) = \sum_{m_0} c_{m_0} \sum_{m \in W_0(m_0)}  \mathds{1}_m  =: \sum_{m_0} c_{m_0} \, {\rm mon}_{m_0}~.
\end{equation}
Let $\theta_P : \bar{\mathbb Q}_\ell[\Lambda_M]^{W_0} \overset{\sim}{\rightarrow} \mathcal Z(G(F), \calG(\mathcal O))$ be the Bernstein isomorphism described in \cite[Thm 11.10.1]{Hai14}. By definition, $z_{\bar{\nu}^*} := \theta_{B^*}({\rm mon}_{\bar{\nu}^*})$ and $z_{m_0} := \theta_P({\rm mon}_{m_0})$.  Therefore as  $\tilde{t}$ is induced by $\tilde{t}_{A^*, A}$ via $\theta_P$ and $\theta_{B^*}$, we have an explicit understanding of how $\tilde{t}$ behaves:

\begin{lem} \label{tilde{t}_on_Bern}
In the notation above, $\tilde{t}(z_{\bar{\nu}^*}) = \sum_{m_0} c_{m_0} \, z_{m_0}$.
\end{lem}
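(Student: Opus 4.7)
The statement is essentially a definition chase, and the plan is to reduce it to the explicit computation \eqref{tilde_1} together with $\bar{\mathbb Q}_\ell$-linearity of the Bernstein isomorphism. The normalized transfer homomorphism $\tilde{t}$ was introduced in \cite[$\S11.12$]{Hai14} precisely as the unique map fitting into the commutative square
\[
\xymatrix{
\bar{\mathbb Q}_\ell[\Lambda_{T^*}]^{W^*_0} \ar[r]^{\tilde{t}_{A^*,A}} \ar[d]_{\theta_{B^*}}^{\wr} & \bar{\mathbb Q}_\ell[\Lambda_M]^{W_0} \ar[d]_{\theta_{P}}^{\wr} \\
\mathcal Z(G^*(F), \calG^*(\mathcal O)) \ar[r]^{\tilde{t}} & \mathcal Z(G(F), \calG(\mathcal O)).
}
\]
The first step of the proof will be to recall this commutativity, so that for any $f \in \bar{\mathbb Q}_\ell[\Lambda_{T^*}]^{W^*_0}$ one has $\tilde{t}(\theta_{B^*}(f)) = \theta_{P}(\tilde{t}_{A^*, A}(f))$.

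The second step will be to apply this identity to $f = \mathrm{mon}_{\bar{\nu}^*}$, which by the very definition $z_{\bar{\nu}^*} := \theta_{B^*}(\mathrm{mon}_{\bar{\nu}^*})$ yields
\[
\tilde{t}(z_{\bar{\nu}^*}) \;=\; \theta_{P}\bigl(\tilde{t}_{A^*, A}(\mathrm{mon}_{\bar{\nu}^*})\bigr).
\]
By the computation \eqref{tilde_1} carried out just before the lemma, the right-hand side equals $\theta_{P}\bigl(\sum_{m_0} c_{m_0}\, \mathrm{mon}_{m_0}\bigr)$. Using $\bar{\mathbb Q}_\ell$-linearity of $\theta_{P}$ and the definition $z_{m_0} := \theta_{P}(\mathrm{mon}_{m_0})$, this in turn equals $\sum_{m_0} c_{m_0}\, z_{m_0}$, which is the desired formula.

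There is no real obstacle here: the lemma is bookkeeping, and its content was already extracted in the calculation leading to \eqref{tilde_1}. The only points to verify are that $\tilde{t}_{A^*,A}$ indeed sends $\mathrm{mon}_{\bar{\nu}^*}$ into the $W_0$-invariants (so that $\theta_P$ can be applied), and that the scalars $c_{m_0}$ are well-defined independent of the choice of representative $m_0$ and of the compatible triple $(B^*, P, \psi)$; both facts are recorded in \cite[Lem 11.12.4]{Hai14} and have already been invoked above.
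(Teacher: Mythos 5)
Your proposal is correct and takes essentially the same route as the paper: the paper's justification for this lemma is exactly the observation that $\tilde{t}$ is induced by $\tilde{t}_{A^*,A}$ through the Bernstein isomorphisms $\theta_{B^*}$ and $\theta_P$, combined with the computation \eqref{tilde_1}. Your write-up just makes the commuting square and the linearity of $\theta_P$ explicit, which is a fine (and slightly more detailed) rendering of the same argument.
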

Therefore, applying $\tilde{t}$ to (\ref{qs_exp}), we need only to prove the following lemma.
\begin{lem} \label{final_lem} For each $m_0$ as above, the element $q^{d_\mu/2} c_{m_0} \, z_{m_0}$ takes values in $\mathbb Z$, independent of $\ell$ and the choice of $q^{1/2} \in \bar{\mathbb Q}_\ell$.
\end{lem}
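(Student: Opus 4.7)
\textit{Proof proposal.} The plan is to combine a parity analysis of the exponents in $c_{m_0}$ with an integrality normalization of the Bernstein central element $z_{m_0}$ in the standard basis of the parahoric Hecke algebra. Integrality of the resulting coefficients in $\mathbb Z[q]$ automatically yields independence of $\ell$ and of the choice of $q^{1/2}$, since $\mathbb Z[q] \subset \bar{\mathbb Q}_\ell$ and $q = |k|$ is independent of $\ell$.

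First, I would analyze $c_{m_0}$ via the following parity observation. The elements $t^*$ summed over in the definition of $c_{m_0}$ range over $W_0^* \cdot \bar\nu^*$, where $\bar\nu^*$ is a $\widehat T$-weight of $V_{\{\mu\}}|_{\widehat G}$. Any such weight differs from the highest weight $\mu$ by a sum of simple coroots $\alpha^\vee$ of $G$ (viewed as simple roots of $\widehat G$), and $\langle 2\rho_{B^*}, \alpha^\vee \rangle = 2$ for each simple coroot. Hence $\langle 2\rho_{B^*}, \bar\nu_{t^*}\rangle \equiv d_\mu \pmod 2$ for every $t^*$ in the sum. Combined with the formulas $\delta_{B^*}^{-1/2}(t^*) = q^{\langle \rho_{B^*}, \bar\nu_{t^*}\rangle}$ and $\delta_P^{1/2}(m_0) = q^{-\langle \rho_P, m_{0,+}\rangle}$ (writing $m_{0,+}$ for the $P$-dominant representative of $W_0\cdot m_0$), this yields
\[
q^{d_\mu/2}\, c_{m_0} \; = \; q^{-\langle \rho_P, m_{0,+}\rangle}\, P(q) \qquad \text{for some } P(q) \in \mathbb Z[q],
\]
of degree bounded by $d_\mu$.

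Second, I would invoke the explicit structure of $z_{m_0}$ under the Bernstein isomorphism $\theta_P$ of \cite[Thm.~11.10.1]{Hai14}. In the Iwahori setting, $\Theta_\lambda = q^{-\ell(t_\lambda)/2}\, T_{t_\lambda}$ for $P$-dominant $\lambda$, with $\ell(t_\lambda) = \langle 2\rho_P, \lambda\rangle$; the Bernstein--Zelevinsky relations then extend this to describe $\Theta_{w\lambda}$ for $w \in W_0$ in the Iwahori--Matsumoto basis. Summing over the $W_0$-orbit and passing to the parahoric algebra via the idempotent $e_{\calG(\calO)}$, one expects $q^{\langle \rho_P, m_{0,+}\rangle}\, z_{m_0}$ to have $\mathbb Z[q]$-coefficients in the standard basis $\{\mathds{1}_{\calG(\calO)\dot w \calG(\calO)}\}$.

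Combining the two ingredients formally gives $q^{d_\mu/2}\, c_{m_0}\, z_{m_0} \in q^{-2\langle \rho_P, m_{0,+}\rangle}\,\mathbb Z[q]$ in the standard basis, so the final step is to verify the sharper bound: the negative power $q^{-2\langle \rho_P, m_{0,+}\rangle}$ must be absorbed. I expect this to follow from a refinement of the parity analysis: namely, the constraint $t_{A^*,A}(t^*) = m_0$ forces every nonzero term $q^{\langle \rho_{B^*}, \bar\nu_{t^*}\rangle}$ entering $P(q)$ to satisfy $\langle \rho_{B^*}, \bar\nu_{t^*}\rangle \geq \langle \rho_P, m_{0,+}\rangle - d_\mu/2$, so that $P(q)$ is divisible by $q^{\langle \rho_P, m_{0,+}\rangle}$ in $\mathbb Z[q]$. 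The main obstacle is precisely this divisibility claim: it reflects a compatibility between the weight structure of $V_{\{\mu\}}^{1 \rtimes I_E}$ (encoded in $c_{m_0}$) and the support/lowest-degree structure of Bernstein functions (encoded in $z_{m_0}$), and should ultimately follow from Macdonald--Kato--Lusztig type formulas for Bernstein central elements, adapted to the parahoric setting.
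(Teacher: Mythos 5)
Your proposal correctly identifies the architecture of the argument: factor $q^{d_\mu/2}c_{m_0}$ into a power of $q$ times a $\mathbb{Z}[q]$-polynomial by parity, then cancel the resulting negative $q$-power against the $q^{1/2}$-denominators that show up when $z_{m_0}$ is expanded in the Iwahori--Matsumoto basis. This is indeed the correct shape of the proof and matches the structure of the paper's argument. The parity observation in your first step (weights of $V_{\{\mu\}}|_{\hat G}$ differ from $\mu$ by a sum of coroots, hence pairing with $2\rho_{B^*}$ preserves parity) is the right one, and the claim that $q^{\ell/2}z_{m_0}$ is $\mathbb{Z}[q]$-valued, with $\ell$ the common length of the translations in $W_0(m_0)$, is also the correct normalization; the paper obtains this from the Haines--Rapoport minimal expression $\theta_{v_m} = \tilde{T}_{s_1}^{\epsilon_1}\cdots\tilde{T}_{s_l}^{\epsilon_l}T_{\omega_{v_m}}$.

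The genuine gap is exactly where you say it is, but you stop short of closing it. "Should ultimately follow from Macdonald--Kato--Lusztig type formulas adapted to the parahoric setting" is a wish, not an argument, and in fact the inequality that must hold turns out to be elementary rather than requiring any such heavy machinery. What is actually needed is the concrete claim that for all $t^*$ in the sum defining $c_{m_0}$,
\[
\langle \rho_{B^*}, \mu\rangle + \langle \rho_{B^*_{M^*}}, v_{t^*}\rangle - |\langle \rho_{N^*}, v_{t^*}\rangle| \;\geq\; 0,
\]
together with the identification of $\ell/2 = l_L(v_m)/2$ with $|\langle \rho_{N^*}, v_{t^*}\rangle|$. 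The paper reaches these via three lemmas: that $l_L$ agrees with the Bruhat--Tits length $\breve{l}$ (so that $q$-parameters can be computed geometrically); that $\delta_P(m) = q^{-l_L(v_m)}$ for $P$-dominant $v_m$ (which rests on $v_m$ being a \emph{straight} element, a nontrivial observation about translation elements); and the transfer compatibility $l_L(v_m) = |\langle 2\rho_{N^*}, v_{t^*}\rangle|$. With these in hand, the displayed inequality drops to $\langle \rho_{B^*}, \mu\rangle - |\langle \rho_{B^*}, v_{t^*}\rangle| \geq 0$, which is just the statement that $v_{t^*}$ lies in the weight polytope of $V_\mu$ (cited from \cite{Hai}). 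Your framing conflates the two separate appearances of $\delta_P^{1/2}$ --- one inside $c_{m_0}$, one inside the definition of $\theta_P(\mathds{1}_m)$ --- into a single factor $q^{-2\langle\rho_P, m_{0,+}\rangle}$ rather than recognizing that the transfer identity already absorbs one of them, so the claimed divisibility is stated in a form that does not cleanly reduce to the weight inequality. You should make the length-function lemmas and the straightness of $v_m$ explicit; that is the real content of the proof.
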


\begin{proof}
We fix $P, B^*$ used to define the inner twisting $G \rightarrow G^*$, in such a way that $B^*$- and $P$-positive roots in $X^*(T^*)$ take negative values on the alcove $\breve{\bf a}$ which determines $\calG$ (recall our conventions for the Bruhat order in section \ref{Sch_var_sec}). The explicit formula for $\theta_{P}$ (cf.\,\cite[11.8+refs therein]{Hai14}) is the following:
\begin{equation} \label{BK_defn}
\theta_P({\rm mon}_{m_0}) =\sum_{m \in W_0(m_0)} \theta_{P}(\mathds{1}_m) = \sum_{m \in W_0(m_0)} \delta_P^{1/2}(m) \, T_{m_1} T_{m_2}^{-1},
\end{equation}
where $m = m_1 m_2^{-1}$ for any choice of $P$-dominant elements $m_i \in M(F)$. Here, we say $m \in M(F)$ is $P$-dominant if this property holds for the corresponding element $v_m \in \Lambda_M \subset W = \Lambda_M \rtimes W_0$.

By the multiplication in the Hecke algebra $\mathcal H(G(F), \calG(\mathcal O))$, the function $T_{m_1}T_{m_2}^{-1}$ takes values in $\mathbb Z[q,q^{-1}]$ and is independent of $\ell$.  Hence it suffices to show that negative powers of $q^{1/2}$ do not appear when $q^{d_\mu/2} c_{m_0}$ times (\ref{BK_defn}) is expressed as a linear combination of the basis elements $T_w$.

\medskip

\noindent {\bf Claim 1:} Recall $\mathcal H(G(F), \calG(\mathcal O))$ is the specialization at $v = q^{1/2}$ of an affine Hecke algebra with parameter system $L$, coming from the based root data $(\Lambda_M, \Sigma_0, \Pi_0)$, where $\Sigma_0$ is the \'echelonnage root system for $G$ (cf.\,e.g.\,\cite{Ro15}). For a simple affine reflection $s \in W_{\rm aff}$, write $q_s := q^{L(s)}$. If $v \in W$ has reduced expession $v = s_1 \cdots s_l \omega_v$ ($\omega_v \in \Omega_{\bf a}$), then define $l_L(v)$ by the equality
$$
l_L(v) = \sum_{i=1}^l L(s_i).
$$
(This is well-defined by the fact that one can get from a reduced expression for $v$ to any other by a sequence of braid relations, cf. \cite{Ti69}.) Then for $v \in W$, $l_L(v) = \breve{l}(v)$, where $\breve{l}: \breve{W} \rightarrow \mathbb Z_{\geq 0}$ is the length function on $\breve{W}$. 

\medskip

{\em Proof of Claim 1:}  
In light of the equality $q^{L(s)} = \#I\dot{s}I/I$ (cf.\,e.g.\,\cite{Ro15}), Claim 1 is \cite[Prop 1.11; Rmk 1.13 ii)]{Ri16b}.

\medskip

\noindent {\bf Claim 2:}
Then for $m  = \dot{v}_m \in M(F)$ corresponding to $v_m \in \Lambda_M$ which is $P$-dominant, we have 
$$
\delta_P(m) = q^{-l_L(v_m)}.
$$

\medskip

{\em Proof of Claim 2:} The element $v_m$ is straight: we have
$$
(I \dot{v}_m I)^c= I \dot{v}_m^c I,
$$
and consequently $l_L(v_m^c) = c \cdot l_L(v_m)$, for all $c \in \mathbb Z_{\geq 0}$. This can be proved by the same method as in \cite[Prop 13.1.3(ii)]{GHKR10}. Therefore, letting $c$ be sufficiently divisible so that $v_m^c = t_\lambda$ is a translation element in $\Lambda_T$ (which is automatically central with respect to the roots in ${\rm Lie}(M)$), it enough to prove the result for $m = t \in T(F)$ with $\kappa_T(t) = \lambda \in X_*(T)_I^{\Phi}$. But then 
$$
\delta_P(t) = q^{-\langle 2\rho_N, \lambda \rangle} = q^{-\langle 2\rho_B, \lambda \rangle} = q^{-\breve{l}(t_\lambda)} = q^{-l_L(t_\lambda)}
$$
by Claim 1. This proves Claim 2.

\medskip

\noindent {\bf Claim 3:} If $t^* \in T^*(F)$ and $m \in M(F)$ are related by $t_{A^*, A}(t^*) = m$, then $l_L(v_m) = |\langle 2\rho_{N^*}, v_{t^*} \rangle|$.

\medskip

{\em Proof of Claim 3:} By a suitable $W_0$-conjugation we easily reduce to the case where $m$ is $P$-dominant and then use the same argument as in Claim 2. \qed

\bigskip
From the claims it follows that $l_L(v_{m_i}) = \langle 2\rho_N, v_{m_i} \rangle$, and therefore
$$
\delta_P^{1/2}(m) = q^{-\langle \rho_N, v_m \rangle} =  q^{-l_L(v_{m_1})/2 \, +\, l_L(v_{m_2})/2}.
$$
It follows that $\theta_P(\mathds{1}_{m})$ coincides with the definition of $\theta_{v_m}$ as given, e.g., in \cite[\S14]{HR12}. Thus by \cite[14.2]{HR12} or \cite{Goe07} we have a {\em minimal expression} of the form
$$
\theta_P(\mathds{1}_m) = \tilde{T}_{s_1}^{\epsilon_1} \cdots \tilde{T}_{s_l}^{\epsilon_l} T_{\omega_{v_m}}
$$
where $\tilde{T}_s = q^{-1/2}_s T_s$ and $\epsilon_{s} \in \{ \pm1\}$ for each $s$. We deduce that
$$
q^{l_L(v_m)/2}\theta_P(\mathds{1}_m) \,\,\, \mbox{takes values in $\mathbb Z$}.
$$

Now we recall that for $m \in W_0(m_0)$,
$$
\sum_{t^* \mapsto m} \delta^{-1/2}_{B^*}(t^*) \, \delta_P^{1/2}(m) = \sum_{t^* \mapsto m} q^{\langle \rho_{B^*}, v_{t^*} \rangle \, - \, \langle \rho_N, v_m \rangle} = \sum_{t^* \mapsto m} q^{\langle \rho_{B^*_{M^*}} , v_{t^*} \rangle}.
$$
To prove the lemma, it is therefore enough to show the following inequality:
\begin{equation*} 
\langle \rho_{B^*}, \mu \rangle + \langle \rho_{B^*_{M^*}}, v_{t^*} \rangle-  l_L(v_m)/2 \geq 0,
\end{equation*}
or equivalently
\begin{equation} \label{final_equality}
\langle \rho_{B^*}, \mu \rangle + \langle \rho_{B^*_{M^*}}, v_{t^*} \rangle-  |\langle \rho_{N^*}, v_{t^*} \rangle| \geq 0
\end{equation}
But it is easy to see that 
$$
\langle \rho_{B^*}, \mu \rangle + \langle \rho_{B^*_{M^*}}, v_{t^*} \rangle-  |\langle \rho_{N^*}, v_{t^*} \rangle| \geq \langle \rho_{B^*} , \mu \rangle - |\langle \rho_{B^*}, v_{t^*} \rangle| \geq 0.
$$
The final inequality follows because $v_{t^*} = \bar{\nu}$, the image of an element $\nu \in {\mathcal Wt}(\mu)$ under $X_*(T^*) \rightarrow X_*(T^*)_I$; cf.\,\cite[(7.12)]{Hai}.
\end{proof}
This completes the proof of Lemma \ref{final_lem}, and thus Theorem \ref{indep_thm}.
\end{proof}



\begin{appendix}
\section{Spreading of connected reductive groups}
Let $F'$ be a discretely valued Henselian field with perfect residue field $k_{F'}$. The completion $F$ is a complete discretely valued field, and we fix a separable closure $\sF$. Let $\sF'\subset \sF$ be the separable closure of $F$. By the equivalence of Galois theories for $F'$ and $F$, their Galois groups are naturally isomorphic. Denote by $\bF'$ the maximal unramified subextension of $\sF'/F'$, and denote by $\bF$ its completion. Let $\sig\in {\rm Aut}(\bF/F)={\rm Aut}(\bF'/F')$ denote the Frobenius.

Let $G$ be a connected reductive $F$-group, and  fix a maximal $F$-split torus $A$, a maximal $\bF$-split torus $S$ containing $A$ and defined over $F$. Let $M=Z_G(A)$ denote the centralizer of $A$ which is a minimal Levi, and let $T=Z_G(S)$ be the centralizer of $S$. Then $T$ is a maximal torus because $G_\bF$ is quasi-split by Steinberg's theorem. Further fix a parabolic $F$-subgroup $P$ containing $M$.

\begin{prop} \label{extensionprop}
i\textup{)} There exists a connected reductive $F'$-group $\uG$ together with a tuple of closed $F'$-subgroups $(\uA,\uS,\uT,\uM,\uP)$ and an isomorphism of $F$-groups
\[
(\uG, \uA,\uS,\uT,\uM,\uP)\otimes_{F'}F\simeq (G,A,S,T,M,P),
\]
where $\uA$ is a maximal $F'$-split torus, $\uS$ a maximal $\bF'$-split torus defined over $F'$, $\uT$ its centralizer \textup{(}a maximal $F'$-torus\textup{)}, $\uM$ the centralizer of $\uA$ \textup{(}a minimal Levi\textup{)}, and $\uP$ a parabolic $F'$-subgroup with Levi $\uM$.\smallskip\\
ii\textup{)} The group $\uG$ is uniquely determined up to isomorphism, and the base change $\uG_{\bF'}$ is quasi-split. \smallskip\\
iii\textup{)} The isomorphism in i\textup{)} is compatible with the following constructions: the quasi-split outer form, restriction of scalars, passing to the adjoint \textup{(}resp. derived; resp. simply connected\textup{)} group. 
\end{prop}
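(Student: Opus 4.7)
The plan is to proceed in three steps: (a) spread the quasi-split inner form together with its pinning, (b) spread the inner twisting class, and (c) descend the full subgroup tuple. The key input throughout is that, since $F'$ is Henselian with completion $F$ and the same residue field, Krasner's lemma gives an equivalence between the categories of finite separable extensions of $F'$ and of $F$ via completion; in particular the restriction $\Ga_{F'}\to\Ga_F$ is an isomorphism, and the inertia subgroups correspond.

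For (a), let $G^*$ denote the quasi-split inner form of $G$ over $F$. The based root datum with outer $\Ga_F$-action classifying $G^*$ is a piece of combinatorial Galois data that, by the equivalence above, transports identically to $F'$-data. By the classification of quasi-split reductive groups there is thus a unique quasi-split $F'$-group $\uG^*$ with pinning $(\uT^*,\uB^*,\{\underline{X}^*_\al\})$ such that $\uG^*\otimes_{F'}F\simeq G^*$ as pinned groups. Inside $\uG^*$ I choose a minimal $F'$-parabolic $\uP^*\supset\uB^*$ with Levi $\uM^*$, a maximal $F'$-split torus $\uA^*\subset Z(\uM^*)$, and a maximal $\bF'$-split torus $\uS^*$ with $\uA^*\subset\uS^*\subset\uT^*$; after base change these recover the quasi-split analogues of $(A,S,T,M,P)$.

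For (b), $G$ corresponds to an inner class $\xi\in H^1(F,G^*_\ad)$, and the crux of the argument is that the natural base-change map
\[
H^1(F',\uG^*_\ad)\longrightarrow H^1(F,G^*_\ad)
\]
is bijective. Since $\Ga_{F'}=\Ga_F$, surjectivity amounts to the claim that any continuous cocycle $c\co\Ga_F\to G^*_\ad(\sF)$, which necessarily factors through some finite Galois extension $E/F$ with corresponding $E'/F'$, is cohomologous to one valued in $\uG^*_\ad(E')$. This is Greenberg approximation: $\uG^*_\ad$ is smooth and $E'$ is Henselian with completion $E$, so $\uG^*_\ad(E')$ is dense in $\uG^*_\ad(E)=G^*_\ad(E)$; a sufficiently close approximation $c'$ of $c$ remains a cocycle (a closed condition) and is cohomologous to $c$ via an element of $\uG^*_\ad(E')$ obtained by approximating the identity in the coboundary map. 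Injectivity is obtained by the same kind of approximation applied to a conjugating element. Because the parabolic $P\subset G$ is $F$-rational, $\xi$ admits a representative valued in $P^*_\ad$ and the approximation yields $\xi'\in H^1(F',\uG^*_\ad)$ represented by a cocycle with values in $\uP^*_\ad$. Define $\uG$ as the inner twist of $\uG^*$ by this refined cocycle, and twist the tuple $(\uA^*,\uS^*,\uT^*,\uM^*,\uP^*)$ accordingly (which is meaningful because the cocycle values lie in $\uP^*_\ad$, which normalizes each member of the tuple) to obtain $(\uA,\uS,\uT,\uM,\uP)\subset \uG$ with the required base-change property.

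For (ii), uniqueness of $\uG$ is immediate from the bijectivity in (b) together with uniqueness of the quasi-split form; that $\uG_{\bF'}$ is quasi-split follows because it becomes $\uG^*_{\bF'}$ after an inner twist by a class in $H^1(\bF',\uG^*_\ad)$, and the latter vanishes by Steinberg's theorem. Part (iii) is a naturality statement: each of the listed operations (quasi-split outer form, Weil restriction, passage to adjoint/derived/simply connected cover) is functorial and determined by Galois-theoretic data transported identically under the equivalence for $F'/F$, so compatibility with the spreading is formal. The main obstacle is the Greenberg approximation step in (b): rigorously approximating cocycles while preserving both the cocycle and coboundary relations is delicate, and the cleanest way is to apply Greenberg's theorem to the finite-type $F'$-scheme cut out by the cocycle equations inside $\Hom(\Ga_{E/F},\uG^*_\ad)$, invoking the smoothness of this cocycle scheme at any given continuous cocycle.
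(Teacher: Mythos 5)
Your overall architecture matches the paper's: first spread the quasi-split form $\uG^*$ with its pinning and subgroup tuple by transporting combinatorial Galois data via the isomorphism $\Ga_{F'}\simeq\Ga_F$, then realize $G$ as an inner twist and lift the twisting cocycle to $F'$ by a Henselian-lifting result for $H^1$. That part is sound, and for the cohomological lifting it would be cleaner to cite the existing result for smooth group schemes over Henselian valued fields (Gabber--Gille--Moret-Bailly, Prop.\ 3.5.3(2), or \cite[Thm A.3]{Ri16a}) rather than re-derive it by Greenberg approximation, but that is a matter of exposition, not correctness.

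There is, however, a genuine gap in your step (b). You claim that the cocycle may be taken with values in $\uP^*_{\ad}$, ``which normalizes each member of the tuple.'' That is false: the image $\uP^*_{\ad}$ of $\uP^*$ in $\uG^*_{\ad}$ is a parabolic subgroup, and a parabolic does \emph{not} normalize $\uT^*$, $\uS^*$, $\uA^*$, or even its own Levi $\uM^*$ (its unipotent radical conjugates $\uM^*$ to other Levi factors, and conjugates $\uT^*$ to other maximal tori). Twisting $(\uA^*,\uS^*,\uT^*,\uM^*)$ by a cocycle merely valued in $\uP^*_{\ad}$ is therefore not well-defined. The fix is exactly the observation the paper makes: after arranging the inner twist $\psi$ to carry the whole tuple $(A,S,T,M,P)$ to $(A^*,S^*,T^*,M^*,P^*)$, the values $g_\ga = \psi\circ\ga(\psi)^{-1}$ simultaneously normalize $P^*$, $M^*$, and $S^*$, hence land in the much smaller group ${N^*}' = \on{Norm}_{{M^*}'}({S^*}')$ inside $G^*_{\ad}$ (which \emph{does} normalize each member of the tuple). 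The Henselian lifting bijection then has to be applied with coefficients in the smooth $F'$-group ${\uN^*}'$, not in $\uG^*_{\ad}$; a bijection for $H^1(-,\uG^*_{\ad})$ alone would produce $\uG$ but gives no control over whether the lifted cocycle preserves the subgroups. Replace your parabolic-valued step by this normalizer-valued one, and the rest of your argument goes through.
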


Let $\tF/F$ be a finite Galois extension which splits $G$, and denote the corresponding extension by $\tF'/F'$. Let $\Ga=\Gal(\tF/F)=\Gal(\tF'/F')$ be the Galois group. Choose a Chevalley group scheme $G_0/F'$ with $G_0\otimes_{F'}\tF\simeq G\otimes_F\tF$. The Galois (or \'etale\footnote{There are two equivalent points of view on these cohomology groups: $\Ga$ cochains with values in the abstract group $\Aut(G_{0,\tF})$; \'etale torsors for the sheaf of groups $\Aut(G_0)$ which have an $\tF$-section.}) cohomology set
\[
H^1(\tF/F,\Aut(G_0)) \;\;\;\text{(resp. $H^1(\tF'/F',\Aut(G_0))$)}
\]
classifies isomorphism classes of $F$-groups (resp. $F'$-groups) which become over $\tF$ (resp. $\tF'$) isomorphic to $G_{0,\tF}$ (resp. $G_{0,\tF'}$). In \cite[App 2]{Ri16a} it is shown that the canonical map of pointed sets
\[
H^1(\tF'/F', \Aut(G_0))\to H^1(\tF/F, \Aut(G_0)).
\]
is a bijection for general Henselian valued fields. This already implies the existence of $\uG$. Let us denote by $[c]\in H^1(\tF/F, \Aut(G_0))$ (resp. $[\underline{c}]\in H^1(\tF'/F', \Aut(G_0))$) the class corresponding to the isomorphism class of $G$ (resp. $\uG$). The aim of this appendix is to show the extra compatibilities claimed in Proposition \ref{extensionprop}. 

By \cite[7.1.9]{Co14}, the group functor $\Aut(G_0)$ is representable by a smooth separated $F'$-group, and there is a short exact sequence of $F'$-groups
\begin{equation}\label{exactsequence}
1\to G_{0,\ad}\to \Aut(G_0)\to \on{Out}(G_0)\to 1.
\end{equation}
The proof of Proposition \ref{extensionprop} follows \cite[\S 2]{PZ13} using the results from \cite[App 2]{Ri16a} and proceeds in two steps corresponding to the outer terms of \eqref{exactsequence}. We construct the quasi-split $F'$-form $\uG^*$ first, and then define $\uG$ by inner twisting from $\uG^*$.

\subsection{Outer twisting.} We choose a pinning $(T_0,B_0,X_0)$ of $G_0$, and hence the map of $F'$-groups 
\begin{equation}\label{splitiso}
\Aut(G_0,T_0,B_0,X_0)\to \Aut(G_0)\to \on{Out}(G_0)
\end{equation}
is an isomorphism. The isomorphism \eqref{splitiso} splits the extension \eqref{exactsequence}, and there is a semi-direct product decomposition
\begin{equation}\label{semidirect}
\Aut(G_0)=G_{0,\ad}\rtimes \on{Out}(G_0).
\end{equation}
Let $[c^*]\in H^1(\tF/F,\on{Out}(G_0))$ denote the image of the class $[c]$. Under the semi-direct product decomposition we may view $[c^*]$ as a class in $H^1(\tF/F,\Aut(G_0))$, and the (unique up to isomorphism) associated group $G^*$ is the quasi-split $F$-form of $G$. Let us construct a representative $c^*$ of the class $[c^*]$.

Let $(R_0,\Delta)$ denote the based root datum of $G_0$. Then we have an isomorphism of $F'$-groups 
\[
\Aut(G_0,T_0,B_0,X_0)\simeq \Aut(R_0,\Delta),
\] 
where the latter denotes the constant $F'$-group associated with the automorphisms of the based root datum. This gives via \eqref{splitiso} an identification
\[
H^1(\tF/F,\on{Out}(G_0))\,\simeq\, H^1(\tF/F,\Aut(R_0,\Delta))\,=\,\Hom(\Ga, \Aut(R_0,\Delta)), 
\]
and we denote the image of $[c^*]$ by
\begin{equation}\label{outeract}
c^*\co \Ga\to \Aut(R_0,\Delta).
\end{equation}
But now as $\Aut(R_0,\Delta)$ is an abstract group and as $\Ga=\Gal(\tF/F)=\Gal(\tF'/F')$, the natural map of pointed sets
\[
H^1(\tF'/F',\Aut(R_0,\Delta))\to H^1(\tF/F,\Aut(R_0,\Delta))
\]
is a bijection. Hence, we may view the class $[c^*]$ via \eqref{semidirect} as a class $[\underline{c}^*]$ in $H^1(\tF'/F',\Aut(G_0))$. We obtain a (unique up to isomorphism) quasi-split connected reductive $F'$-group scheme $\underline{G}^*$ extending $G^*$ such that as $\tF$-groups $\underline{G}^*_\tF\simeq G_{0,\tF}$. Concretely, Galois descent\footnote{Whenever we have a $\Ga$-torsor $\pi\co \tilde{X}\to X$, sheaves on $X$ are the same as $\Ga$-equivariant sheaves on $\tilde{X}$: a sheaf $\calF/X$ maps to the $\Ga$-equivariant sheaf $\pi^*\calF$; a $\Ga$-equivariant sheaf $\tilde{F}/\tilde{X}$ maps to $(\pi_*\tilde{F})^\Ga$. Note that it does not matter whether the order of $\Ga$ is divisible by the characteristic or not.} shows that the $F'$-group scheme $\underline{G}^*$ (resp. $G^*$) is given by 
\begin{equation}\label{explicit}
\underline{G}^*=\Res_{\tF'/F'}(G_0\otimes_{F'}\tF')^\Ga\;\;\;\text{(resp. $G^*=\Res_{\tF/F}(G_0\otimes_{F'}\tF)^\Ga$)},
\end{equation}
where $\Ga$ acts diagonally via $c^*(\ga)\otimes \ga$. After modifying the isomorphism $\varphi\co G_{0,\tF}\simeq G_\tF$ if necessary, we may assume that $\varphi(T_{0,\tF})=T_\tF$ and $\varphi(B_{0,\tF})\subset P_\tF$. Then the pair $\uT^*\subset \uB^*$ (resp. $T^*\subset B^*$) is constructed from the pair $T_0\subset B_0$ by the same formula \eqref{explicit}, and defines a maximal torus and a Borel subgroup. The minimal Levi $M$ defines a $\Ga$-invariant subset of simple roots in $\Delta$, and we denote by $M_0$ the Levi in $G_0$. Let $P_0=M_0\cdot B_0$ be the corresponding standard parabolic. By altering the inner twisting $G \rightarrow G^*$ if necessary, we may arrange that $c^*(\gamma) \otimes \gamma$ preserves $A_{0,\tF},  M_{0,\tF} , P_{0, \tF}$ (cf.\,\cite[11.12.1]{Hai14}). Again we construct the pair $\uM^*\subset \uP^*$ (resp. $M^*\subset P^*$) from the pair $M_0\subset P_0$ by \eqref{explicit}. Further, the chain of $F$-tori $A\subset S\subset T$ in $G$ gives rise to a chain of $F'$-tori $A_0\subset S_0\subset T_0$ in $G_0$ together with $\Ga$-actions on their scalar extensions to $\tF'$ (resp. $\tF$). Again this defines a chain of $F'$-tori $\uA^*\subset\uS^*\subset\uT^*$ (resp. $F$-tori $A^*\subset S^*\subset T^*$) with the following properties: the torus $\uS^*$ (resp. $S^*$) is a maximal $\bF'$-split (resp. $\bF$-split), and the torus $\uA^*$ (resp. $A^*$) is $F'$-split (resp. $F$-split). Note that the latter torus is \emph{not} maximal split in general. All in all, we obtain tuples such that as $F$-groups
\begin{equation}\label{quasisplit}
(\uG^*, \uA^*,\uS^*,\uT^*,\uM^*,\uP^*)\otimes_{F'}F\simeq (G^*,A^*,S^*,T^*, M^*,P^*), 
\end{equation}
where $G^*$ is the quasi-split $F$-form of $G$. Because Galois descent is compatible with taking centralizers (resp. normalizers), we have the relations $Z_{\uG^*}(\uS^*)=\uT^*$, and $Z_{\uG^*}(\uA^*)=\uM^*$.

\begin{rmk} 
Note that the spreading construction for tori is completely solved by \eqref{explicit} (because there are no inner twists). However, the spreading $\uA\subset \uS\subset \uT$ differs from $\uA^*\subset \uS^*\subset \uT^*$ in general: the first chain is constructed from $\Aut(T)=\on{Out}(T)$; the second chain is constructed from $\on{Out}(G)$.
\end{rmk}

The compatibilities claimed in Proposition \ref{extensionprop} are evident from the construction \eqref{explicit}. 

\subsection{Inner twisting.} Let us explain how to reconstruct the tuple $(G,A,S,T, M, P)$ from \eqref{quasisplit} via inner twisting. By construction, there is an isomorphism of $\tF$-groups
\begin{equation}\label{innertwist}
\psi\co G_\tF\overset{\varphi^{-1}}{\simeq}  G_{0,\tF}\simeq G^*_{\tF},
\end{equation}
where the last isomorphism comes from descent (the Galois action on $G_{0,\tF}$ is the outer action via $c^*$). If, for $\ga\in\Ga$, we denote $\ga(\psi)=\ga\circ \psi \circ \ga^{-1}$, then the image of $\psi\circ \ga(\psi)^{-1}$ in $\on{Out}(G^*)$ is trivial. Hence, for every $\ga\in \Ga$ there is an element $g_\ga\in G^*_\ad(\tF)$ with
\[
\psi\circ \ga(\psi)^{-1}= \on{Int}({g_\ga})\in \Aut(G^*_\tF),
\] 
where $\on{Int}(g_\ga)$ denotes the automorphism given by conjugation. The function $c^{\on{rig}}\co \ga\mapsto \psi\circ\ga(\psi)^{-1}$ is a $1$-cocycle, and its class defines an element $[c^{\on{rig}}]\in H^1(\tF/F,G^*_\ad)$. Conversely, the class $[c^{\on{rig}}]$ corresponds to a $\Ga$-stable $G^*_\ad(\tF)$-orbit of isomorphisms of $\tF$-groups $G_\tF\simeq G^*_{\tF}$. 

\begin{rmk}
Note that we can also consider $c^{\on{rig}}$ as a $1$-cocycle with values in $G_{0,\ad}$. If we do so, then under the semi-direct product decomposition \eqref{semidirect} the class of the cocycle $c^{\on{rig}}\rtimes c^*$ is the class $[c]$ we started with.
\end{rmk}

By construction, we have
\begin{equation}\label{compatible}
\psi((A,S,T,M,P))=(A^*,S^*,T^*,M^*,P^*).
\end{equation}
Let us denote by ${S^*}'$ (resp. ${M^*}'$) the image of $S^*$ (resp. $M^*$) in the adjoint group $G^*_\ad$.

\begin{lem} The element $g_\sig$ is contained in the $\tF$-points of the subgroup
\[
{N^*}'\defined \on{Norm}_{{M^*}'}({S^*}').
\] 
\end{lem}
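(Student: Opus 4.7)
The plan is to show that $\mathrm{Int}(g_\sigma)$ preserves each of the $F$-rational subgroups appearing in the tuple \eqref{compatible}, and then to exploit two classical self-normalization statements (for parabolic subgroups and for Levi subgroups inside their parabolics) to force $g_\sigma$ into $M^{*\prime}$; combined with the normalization of $S^{*\prime}$ this lands $g_\sigma$ in $N^{*\prime}$.

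First I would unfold the defining relation $\mathrm{Int}(g_\sigma) = \psi\circ\sigma(\psi)^{-1} = \psi\circ\sigma\circ\psi^{-1}\circ\sigma^{-1}$ as a $\tilde F$-automorphism of $G^*_{\tilde F}$. For every closed subgroup $H\subset G$ defined over $F$ whose image under $\psi$ is an $F$-rational subgroup $H^*\subset G^*$, the $\sigma$-linear Galois action preserves both $H_{\tilde F}$ and $H^*_{\tilde F}$ as subschemes; together with $\psi(H_{\tilde F})=H^*_{\tilde F}$ this forces $\mathrm{Int}(g_\sigma)(H^*_{\tilde F})\subset H^*_{\tilde F}$. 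Applied to each member of the tuple $(A,S,T,M,P)$ via \eqref{compatible}, this shows that $g_\sigma\in G^*_{\mathrm{ad}}(\tilde F)$ normalizes each of the subgroups $A^{*\prime},\,S^{*\prime},\,T^{*\prime},\,M^{*\prime},\,P^{*\prime}$.

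Next I would invoke the classical fact that a parabolic subgroup is its own normalizer, i.e.\ $N_{G^*_{\mathrm{ad}}}(P^{*\prime}) = P^{*\prime}$, to conclude that $g_\sigma\in P^{*\prime}(\tilde F)$. Writing the Levi decomposition $P^{*\prime} = M^{*\prime}\ltimes U^{*\prime}$ with $U^{*\prime}$ the unipotent radical, the second classical input is that $N_{P^{*\prime}}(M^{*\prime}) = M^{*\prime}$: if $u\in U^{*\prime}$ normalizes $M^{*\prime}$ then $[u,M^{*\prime}]\subset M^{*\prime}\cap U^{*\prime}=\{1\}$, whence $u\in C_{U^{*\prime}}(M^{*\prime})$, which is trivial. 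Combining this with the normalization of $M^{*\prime}$ by $g_\sigma$ established above yields $g_\sigma\in M^{*\prime}(\tilde F)$.

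Finally, intersecting with the already-established fact that $g_\sigma\in N_{G^*_{\mathrm{ad}}}(S^{*\prime})(\tilde F)$ gives
\[
g_\sigma\;\in\; M^{*\prime}(\tilde F)\cap N_{G^*_{\mathrm{ad}}}(S^{*\prime})(\tilde F)\;=\; N_{M^{*\prime}}(S^{*\prime})(\tilde F)\;=\; N^{*\prime}(\tilde F),
\]
as required. Note that nothing in the argument actually uses that $\gamma=\sigma$ is the Frobenius: the same reasoning produces $g_\gamma\in N^{*\prime}(\tilde F)$ for every $\gamma\in\Gamma$, reflecting the fact that the chosen $\psi$ is already compatible with the full minimal parabolic/Levi/maximal split torus structure over $F$. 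The only real obstacle is bookkeeping: one must distinguish carefully between $\psi$ viewed as an $\tilde F$-morphism and the $\sigma$-linear Galois action, and between the subgroups $A,S,T,M,P$ of $G$ and their primed counterparts in the adjoint quotient $G^*_{\mathrm{ad}}$; once these distinctions are in place the proof is a short chase.
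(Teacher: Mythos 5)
Your proof is correct and follows essentially the same route as the paper's: deduce from the compatibility \eqref{compatible} that $g_\sigma$ normalizes $P^*$, $M^*$, $S^*$, then use self-normalization of parabolics to place $g_\sigma$ in ${P^*}'$, self-normalization of the Levi inside its parabolic to place it in ${M^*}'$, and finally normalization of ${S^*}'$ to land in ${N^*}'$. The paper's proof is merely a compressed version of the same three-step chase; your write-up fills in the intermediate classical facts (self-normalization, $C_{U^{*\prime}}(M^{*\prime})=\{1\}$) explicitly and correctly.
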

\begin{proof} By \eqref{compatible}, the element $g_\sig$ normalizes $P^*$, and hence is contained in the parabolic ${P^*}'$ (the image of $P^*$ in $G^*_\ad$). As $g_\sig$ also normalizes $M^*$, it must be contained in ${M^*}'$. Finally, as $g_\sig$ also normalizes $S^*$ the lemma follows. 
\end{proof}

Let us further denote the $F'$-group by 
\[
{\uN^*}'=\on{Norm}_{{\uM^*}'}({\uS^*}'),
\]
where ${\uM^*}'$ (resp. ${\uS^*}'$) is the image of $\uM^*$ (resp. ${\uS^*}$) in $\uG^*_\ad$. Then ${\uN^*}'$ is a smooth affine group scheme, and we have ${\uN^*}'\otimes_{F'}F={N^*}'$. Hence, by the result of \cite[Prop 3.5.3 (2)]{GGM14} (see also \cite[Thm A.3]{Ri16a}) the natural map
\[
H^1(\tF'/F', {\uN^*}')\to H^1(\tF/F, {N^*}')
\]
is a bijection. In particular, the cocycle $[c^{\on{rig}}]$ corresponds to a unique cocycle $[\underline{c}^{\on{rig}}]\in H^1(\tF'/F', {\uN^*}')$ which defines via inner twisting of \eqref{quasisplit} the desired tuple
\[
(\uG,\uA,\uS,\uT,\uM,\uP).
\]
Concretely, if for $\ga\in \Ga$ the element $g_\ga^{\on{rig}}\in {\uN^*}'(\tF')$ is the value of $\underline{c}^{\on{rig}}$ at $\ga$, then we have for an $F'$-algebra $R$ the inner twisting
\[
\uG(R)=\uG^*(\tF'\otimes_{F'}R)^\Ga,
\]
where $\Ga$ acts via $\ga\mapsto \on{Int}({g_\ga^{\on{rig}}})\cdot \ga$. The same formulas hold for $(\uA,\uS,\uT,\uM,\uP)$ as every element in ${\uN^*}'$ preserves each of these groups. The compatibilities claimed in Proposition \ref{extensionprop} iii) are immediate from the descent construction.

\end{appendix}

\end{document}